\theoremstyle{plain}
\newtheorem{theorem}{Theorem}[section]
\newtheorem{thm}[theorem]{Theorem}
\newtheorem{lemma}[theorem]{Lemma}
\newtheorem{lem}[theorem]{Lemma}
\newtheorem{proposition}[theorem]{Proposition}
\newtheorem{prop}[theorem]{Proposition}
\newtheorem{cor}[theorem]{Corollary}
\theoremstyle{remark}
\newtheorem{example}[theorem]{Example}
\newtheorem{remark}[theorem]{Remark}
\DeclareMathOperator{\Hom}{Hom}
\DeclareMathOperator{\Aut}{Aut}
\DeclareMathOperator{\Stab}{Stab}
\DeclareMathOperator{\diag}{diag}
\DeclareMathOperator{\Irr}{Irr}
\DeclareMathOperator{\IBr}{IBr}
\DeclareMathOperator{\Ext}{Ext}
\DeclareMathOperator{\ext}{Ext}
\DeclareMathOperator{\End}{End}
\DeclareMathOperator{\tr}{tr}
\DeclareMathOperator{\Ind}{Ind}
\DeclareMathOperator{\GL}{GL}
\DeclareMathOperator{\SL}{SL}
\DeclareMathOperator{\SU}{SU}
\DeclareMathOperator{\SO}{SO}
\newcommand{\cA}{{\mathcal A}}
\newcommand{\cB}{{\mathcal B}}
\newcommand{\cD}{{\mathcal D}}
\newcommand{\cE}{{\mathcal E}}
\newcommand{\cG}{{\mathcal G}}
\newcommand{\cH}{{\mathcal H}}
\newcommand{\cM}{{\mathcal M}}
\newcommand{\cN}{{\mathcal N}}
\newcommand{\cO}{{\mathcal O}}
\newcommand{\cP}{{\mathcal P}}
\newcommand{\cQ}{{\mathcal Q}}
\newcommand{\cT}{{\mathcal T}}
\newcommand{\cU}{{\mathcal U}}
\newcommand{\cX}{{\mathcal X}}
\newcommand{\cZ}{{\mathcal Z}}
\newcommand{\gtwo}{{\mathrm{G}_2}}
\newcommand{\ff}{{\mathrm{F}_4}}
\newcommand{\df}{{\mathrm{D}_4}}
\newcommand{\btwo}{{\mathrm{B}_2}}
\newcommand{\abs}{{\rm {disc}}}
\newcommand{\rat}{{\rm {rat}}}
\newcommand{\bfC}{{\mathbf C}}
\newcommand{\bbF}{{\mathbb F}}
\newcommand{\bbN}{{\mathbb N}}
\newcommand{\bfN}{{\mathbf N}}
\newcommand{\bfO}{{\mathbf O}}
\newcommand{\bbP}{{\mathbb P}}
\newcommand{\bbQ}{{\mathbb Q}}
\newcommand{\bfZ}{{\mathbf Z}}
\newcommand{\AAA}{{\sf A}}
\newcommand{\SSS}{{\sf S}}
\newcommand{\bbone}{{\mathbbm{1}}}
\newcommand{\rss}{{\rm{ss}}}
\newcommand{\barFp}{{\overline{\bbF}_p}}
\newcommand{\eps}{\epsilon}
\newcommand{\la}{\lambda}
\newcommand{\p}{\partial}
\newcommand{\image}{{\mathrm {Im}}}
\newcommand{\Ker}{{\mathrm {Ker}}}
\newcommand{\Syl}{{\mathrm {Syl}}}
\newcommand{\Sym}{{\mathrm {Sym}}}
\newcommand{\Char}{{\mathrm {char}}}
\newcommand{\GP}{{G^+}}
\DeclareMathOperator{\Sp}{Sp}
\DeclareMathOperator{\soc}{soc}
\DeclareMathOperator{\PSL}{PSL}
\DeclareMathOperator{\PSU}{PSU}
\DeclareMathOperator{\PSp}{PSp}
\DeclareMathOperator{\PGL}{PGL}
\DeclareMathOperator{\GU}{GU}
\DeclareMathOperator{\OO}{O}
\DeclareMathOperator{\PIM}{\cP}
\DeclareMathOperator{\EE}{End}
\theoremstyle{plain}
\renewcommand{\mod}{\bmod \,}
\newcommand{\fpb}{\overline{\bbF}_p}
\newcommand{\tw}[1]{{}^#1}
\numberwithin{equation}{section}
\def\skipa{\vspace{-1.5mm} & \vspace{-1.5mm} & \vspace{-1.5mm}\\}
\newcommand{\dl}{\mathfrak{d}_p}
\newcommand{\codim}{{\mathrm {codim}}}
\newcommand{\St}{{\mathsf {St}}}
\newcommand{\rad}{{\mathrm {rad}}}
\newcommand{\hd}{{\mathrm {head}}}
\newcommand{\Out}{{\mathrm {Out}}}
\begin{document}

\author[R. Guralnick]{Robert Guralnick}
\address{Department of Mathematics, University of Southern California,
Los Angeles, CA 90089-2532, USA}
\email{guralnic@usc.edu}
\author[F. Herzig]{Florian Herzig}
\address{Department of Mathematics, University of Toronto,
40 St. George Street, Room 6290, Toronto, ON M5S 2E4, Canada}
\email{herzig@math.toronto.edu}
\author[P. Tiep]{Pham Huu Tiep}
\address{Department of Mathematics, University of Arizona,
Tucson, AZ 85721-0089, USA}
\email{tiep@math.arizona.edu}

\title{Adequate Subgroups and Indecomposable Modules}
\date{\today}

\thanks{The first author was partially supported by  NSF
  grants DMS-1001962, DMS-1302886 and the Simons Foundation Fellowship 224965.  He
  also thanks the Institute for Advanced Study for its support.
  The second author was partially supported by a Sloan Fellowship and an NSERC grant.
  The third author was partially supported by the NSF grant DMS-1201374 and the Simons Foundation
Fellowship 305247.}
\thanks{We thank Barry Mazur and Jack Thorne for thoughtful discussion of various questions 
considered in the paper, and Frank L\"ubeck and Klaus Lux for help with several computations. 
We also thank the referee for careful reading of the paper.}

\keywords{Artin-Wedderburn theorem,  irreducible representations, automorphic
representations, Galois representations, adequate representations,
complete reducibility, indecomposable module}

\subjclass[2010]{Primary 20C20; Secondary 11F80}

\begin{abstract}
The notion of adequate subgroups was introduced by Jack Thorne \cite{T}.
It is a weakening of the notion of big subgroups used by Wiles and Taylor
in proving   automorphy lifting theorems for certain Galois representations.
Using this idea, Thorne was able to strengthen
many automorphy lifting theorems.
 It was shown in \cite{GHTT} and \cite{GHT} that if the dimension is smaller than 
 the characteristic then almost all absolutely irreducible representations are
 adequate.   We extend the results by considering all absolutely
 irreducible modules in characteristic $p$  of dimension $p$.  This relies on a modified definition of 
 adequacy, provided by Thorne in \cite{T2}, which allows $p$ to divide the 
 dimension of the module. We  prove adequacy for
 almost all irreducible representations of 
 $\SL_2(p^a)$ in the natural characteristic and for finite groups of Lie type as long as the field
 of definition  is sufficiently large. 
 We also essentially classify indecomposable modules in characteristic $p$ of dimension
 less than $2p-2$ and answer a question of Serre concerning complete reducibility of 
 subgroups in classical groups of low dimension.   
 \end{abstract}

\maketitle

\tableofcontents

\section{Introduction}

Throughout the paper,
let $k$ be a field of characteristic $p$ and let $V$ be a finite dimensional vector space over $k$.
Let $\rho:G \rightarrow  \GL(V)$ be an  absolutely irreducible representation.
Thorne  \cite{T} called $(G,V)$ is {\it adequate}  if the following conditions hold
(we rephrase the conditions slightly by combining two of the properties into one):

\begin{enumerate}
\item  $p$ does not divide $\dim V$;
\item  $\ext^1_G(V,V)=0$; and
\item  $\EE(V)$ is spanned by the elements $\rho(g)$ with $\rho(g)$ semisimple.
\end{enumerate}

If $G$ is a finite group of order prime to $p$, then it is well known that $(G,V)$ is adequate.
In  this case, condition (iii) is often referred to as Burnside's Lemma, and it is a trivial
consequence of the  Artin-Wedderburn Theorem. Furthermore, if $G$ is a connected 
algebraic group over $k$ and $V$ is a faithful, absolutely irreducible rational $G$-module 
of dimension coprime to $p$, then $(G,V)$ is adequate (cf. \cite[Theorem 1.2]{Gapp} and 
Theorem \ref{thm:asymp}).

These conditions are a weakening of the conditions used by Wiles and Taylor
in studying the automorphic lifts of certain Galois representations.    See
\cite{CHT} for some applications.  Thorne \cite{T}
generalized various results assuming
the weaker hypotheses for $p$ odd.  We refer the reader to \cite{T} for more references and details.
See also \cite{D} for further applications.  Recently Thorne \cite[Corollary 7.3]{T2} has shown that one
can relax the condition that $p \nmid (\dim V)$, still with $p$ odd. 
   So more generally,
we say that an absolutely irreducible
representation $\rho:G \rightarrow  \GL(V)$ is {\it adequate} if:

\begin{enumerate}
 \item  $H^1(G,k)=0$;
 \item  $H^1(G, (V^* \otimes V)/k)=0$;
 \item  $\EE(V)$ is spanned by the elements $\rho(g)$ with $\rho(g)$ semisimple.
\end{enumerate}

Note that we allow the case $p=2$ in the definition. Thorne has used this extended notion of adequacy to prove an 
automorphy lifting theorem for $2$-adic Galois representations of unitary type over imaginary CM fields, see 
\cite[Theorem 5.1]{T2}.

Observe that if $p \nmid (\dim V)$,  $k$ is a direct summand of $V^* \otimes V$.
Thus, $\ext_G^1(V,V)=0$ implies that $H^1(G,k)=0$ in this case.  Also note that,
by the long exact sequence in cohomology, if $H^2(G,k)=0$, then $H^1(G, (V^* \otimes V)/k)=0$
follows from $\ext_G^1(V,V)=0$.   Thus, under the assumption that
either $p \nmid (\dim V)$ or
$H^i(G,k)=0$ for $i=1,2$, adequacy is equivalent to the two conditions:

\begin{enumerate}
 \item  $\ext_G^1(V,V)=0$;
 \item  $\EE(V)$ is spanned by the elements $\rho(g)$ with $\rho(g)$ semisimple.
\end{enumerate}

Following \cite{G2},  we say that the representation $\rho:G \to \GL(V)$, 
respectively the pair $(G,V)$, is {\it weakly adequate} if $\EE(V)$ is
spanned by the elements $\rho(g)$ with $\rho(g)$ semisimple.

It was shown  in \cite[Theorem 9]{GHTT} that:

\begin{theorem} \label{adequate}  Let $k$ be a field of characteristic $p$ and
$G$ a finite group.  Let $V$ be an absolutely irreducible faithful $kG$-module.
Let $G^+$ denote the subgroup generated by the $p$-elements of $G$.  If
$\dim W \leq (p-3)/2$ for an absolutely irreducible $kG^+$-submodule $W$ of $V$,
then   $(G,V)$ is adequate.
\end{theorem}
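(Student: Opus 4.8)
The plan is to verify the three conditions defining adequacy, after some reductions. We may assume $\GP\neq 1$, since otherwise $p\nmid|G|$ and $(G,V)$ is adequate by the Artin--Wedderburn theorem (Burnside's lemma). As $\GP\trianglelefteq G$, Clifford's theorem makes $V|_{\GP}$ semisimple; write $V|_{\GP}=\bigoplus_a V_a$ for its homogeneous components, with $V_a\cong e\,W_a$, where each $W_a$ is absolutely irreducible and $G$-conjugate to $W$, so $\dim W_a=\dim W\leq(p-3)/2$, and $G$ permutes the $V_a$ transitively. The number of homogeneous components and the multiplicity $e$ are both prime to $p$ (the latter because $e$ is the dimension of an irreducible module over a twisted group algebra of the $p'$-group $I_G(W)/\GP$), and $\dim W<p$, so $\dim V$ is prime to $p$. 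Hence $k$ is a direct summand of $V^*\otimes V=\EE(V)$ (with complement $\mathfrak{sl}(V)$), so conditions (i) and (ii) together are equivalent to $\ext^1_G(V,V)=0$. Finally, $O_p(G)=1$ since $V$ is faithful and irreducible, so $O_p(\GP)=1$; and every $p$-element $x\in G$ lies in $\GP$ and acts on each $W_a$ with all Jordan blocks of size $\leq\dim W_a<p$, so $x^p=1$. Thus $\GP$ is generated by elements of order $p$, has trivial $p$-core, and carries the small faithful module $V$; this forces (via the structure theory of such groups) that $\GP$ is essentially a central product of quasi-simple groups of Lie type in characteristic $p$, and this structural input underlies the rest of the proof.

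For $\ext^1_G(V,V)=0$: since $p\nmid[G:\GP]$, inflation--restriction gives $\ext^1_G(V,V)\cong\ext^1_{\GP}(V,V)^{G/\GP}=\bigl(\bigoplus_{a,b}\ext^1_{\GP}(W_a,W_b)\bigr)^{G/\GP}$, so it suffices that $\ext^1_{\GP}(W_a,W_b)=0$ for all $a,b$. Here the bound $\dim W\leq(p-3)/2$ is decisive: it confines the $W_a$ to the range (highest weights at most about $p/2$) in which $\ext^1$ between irreducibles vanishes, by the known results on $\Ext^1$ between irreducible modules of small highest weight in the defining characteristic for the quasi-simple Lie-type constituents of $\GP$, together with the K\"unneth formula to absorb the constituents of order prime to $p$.

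For condition (iii), weak adequacy: $\EE(V)=M_n(k)$ with $n=\dim V$ is $k$-spanned by $\{\rho(g):g\in G\}$ (absolute irreducibility). Let $S$ be the $k$-span of the semisimple $\rho(g)$. Since for $g=g_{p'}g_p$ the matrices $\rho(g_{p'})$ and $\rho(g_p)$ are the semisimple and unipotent parts of $\rho(g)$, $S$ is exactly the span of $\{\rho(h):h\text{ a }p'\text{-element}\}$; it is a $kG$-submodule of $\EE(V)=V\otimes V^*$ under conjugation, it contains $I=\rho(1)$, and it contains $k[\rho(h)]$ (hence all spectral idempotents of $\rho(h)$) for every $p'$-element $h$, because $\rho(h)^j=\rho(h^j)$. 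The engine is that for a $p'$-element $h$ the whole $G$-conjugacy class of $\rho(h)$ lies in $S$: conjugating a semisimple element by a root element of a Lie-type constituent and subtracting lands back among conjugates of semisimple elements, so produces nonzero nilpotent matrices in $S$; run along the Clifford decomposition $\EE(V)|_{\GP}=\bigoplus_{a,b}W_a\otimes W_b^*$, and using that $\dim W\leq(p-3)/2$ keeps each $W_a\otimes W_b^*$ semisimple with small composition factors (so no non-semisimple obstruction intervenes), this fills the diagonal blocks $\EE(W_a)$; the off-diagonal blocks are reached using elements of $G$ outside $\GP$ that permute the homogeneous components, and one concludes $S=M_n(k)$.

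The main obstacle is condition (iii). Weak adequacy can fail for small modules in general, so the hypothesis on $\dim W$ must enter essentially; the delicate points are to keep the conjugacy-class argument coherent with the Clifford decomposition of $V$ and to verify that $(p-3)/2$ is precisely the threshold below which the relevant tensor modules and their $\Ext$ groups behave as in characteristic zero. Once the structure of $\GP$ and the $\Ext^1$-vanishing for small Lie-type modules are in place, conditions (i) and (ii) are comparatively routine.
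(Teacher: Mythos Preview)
This theorem is not proved in the present paper: it is quoted verbatim from \cite[Theorem 9]{GHTT} as background, so there is no in-paper proof to compare your attempt against. I can only assess your argument on its own terms.

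Your reductions are correct. The Clifford-theoretic computation that $p\nmid\dim V$ is right (index of the inertia group, multiplicity $e$, and $\dim W$ are each prime to $p$), so the adequacy conditions collapse to $\Ext^1_G(V,V)=0$ plus weak adequacy. The observation that every $p$-element of $G$ has order exactly $p$ is also correct. Your structural claim about $\GP$ is in fact true in this range: since $\dim W\le(p-3)/2<p-3$ for $p\ge 5$, cases (a)--(e) of Theorem~\ref{bz} are excluded and one is forced into case (f). But you have asserted this rather than argued it, and you use it heavily downstream.

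For $\Ext^1$, once the Lie-type structure of $\GP$ is in hand your appeal to small-highest-weight vanishing is legitimate; alternatively, and more cleanly, one invokes \cite{Gcr} directly (as the present paper does in the proof of Theorem~\ref{small1} and in the proof of Theorem~\ref{ext}): any non-split extension of $W_a$ by $W_b$ would be a non-semisimple module of dimension at most $p-3$ for a group with trivial $p$-core.

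The real gap is condition (iii). What you have written for weak adequacy is not a proof but a heuristic. The sentence ``conjugating a semisimple element by a root element of a Lie-type constituent and subtracting lands back among conjugates of semisimple elements, so produces nonzero nilpotent matrices in $S$'' has no content: $S$ is a subspace, so differences of its elements already lie in $S$, and exhibiting \emph{some} nilpotent matrices in $S$ is very far from $S=\End(V)$. ``Run along the Clifford decomposition'' and ``this fills the diagonal blocks $\End(W_a)$'' are placeholders, not arguments; you have not explained which elements you take, why their span is all of $\End(W_a)$, or how the off-diagonal blocks are reached beyond saying that some element of $G$ permutes the components. You yourself flag this as ``the main obstacle'' and list the ``delicate points'' without resolving them. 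The actual argument in \cite{GHTT} requires the Lie-type structure (which you have only asserted) together with a genuine analysis of how semisimple elements span; your sketch does not substitute for that.
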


The example $G=\SL_2(p)$ with $V$ irreducible of dimension $(p-1)/2$ shows
that the previous theorem is best possible.  However, the counterexamples are
rare. In fact, as shown in \cite[Corollary 1.5]{GHT}, if $\dim V < p-3$, then the 
$(p\pm 1)/2$-dimensional representations of $\SL_2(p)$ are the only two 
counterexamples. More precisely, in \cite{GHT} we extend Theorem \ref{adequate} to  
the more general situation that $\dim W < p$ and show that almost always
$(G,V)$ is adequate:

\begin{thm} \label{ght:adequate}
Let $k$ be a field of characteristic $p$ and $G$ a finite group.  Let $V$ be an absolutely irreducible faithful $kG$-module, and let $\GP$ denote the subgroup generated by the $p$-elements of $G$.
Suppose that the dimension $d$ of any irreducible $k\GP$-submodule in $V$ is less than $p$. 
Then the following statements hold.
\begin{enumerate}[\rm(i)]
\item $(G,V)$ is weakly adequate.
\item Let $W$ be an irreducible $\overline{k}\GP$-submodule of $V \otimes_k \overline{k}$. Then
$(G,V)$ is adequate, unless the group
$H < \GL(W)$ induced by the action of $\GP$ on $W$ is as described in one of the
exceptional cases {\rm (a), (b)(i)--(vi)} listed in \cite[Theorem 1.3]{GHT}. In particular, if 
$d < p-3$ and $(G,V)$ is not adequate, then $d = (p \pm 1)/2$ and $H \cong \SL_2(p)$ or 
$\PSL_2(p)$.
\end{enumerate}
\end{thm}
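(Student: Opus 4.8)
The plan is to reduce, via Clifford theory and an inflation--restriction argument, to a single absolutely irreducible constituent of $V|_{\GP}$, to classify the possibilities using the classification of finite simple groups (CFSG), and then to check adequacy case by case.

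\emph{Step 1: reductions.} Since $\rho$ is absolutely irreducible we may extend scalars and assume $k=\overline k$; weak adequacy, adequacy, and the hypothesis that the $k\GP$-constituents of $V$ have dimension $<p$ are all unaffected (base change only refines the constituents). The subgroup $\GP$ is characteristic in $G$ and contains a Sylow $p$-subgroup, so $[G:\GP]$ is prime to $p$; hence $V|_{\GP}$ is semisimple by Clifford's theorem, with every irreducible constituent of dimension $d<p$. The inflation--restriction sequence together with $p\nmid[G:\GP]$ gives $H^1(G,M)\hookrightarrow H^1(\GP,M)$ for every $kG$-module $M$, so the two cohomological conditions in the definition of adequacy are controlled by $\GP$ and --- using semisimplicity of $V|_{\GP}$ --- reduce to the vanishing of $\ext^1$ and of $H^1(-,\overline k)$ for pairs of irreducible $\GP$-constituents. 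If $W$ is an irreducible $\overline k\GP$-submodule of $V$ and $H\le\GL(W)$ is the induced group, then $H$ is generated by its $p$-elements and $O_p(H)=1$, since a normal $p$-subgroup acts trivially on the irreducible $W$ in characteristic $p$ and $W$ is $H$-faithful. So the whole problem is governed by the pairs $(H,W)$ with $H$ finite, $O_p(H)=1$, $H=\langle p\text{-elements}\rangle$, and $W$ faithful absolutely irreducible over $\overline{\mathbb F}_p$ of dimension $d<p$.

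\emph{Step 2: classification of $(H,W)$.} Using the generalized Fitting subgroup $F^*(H)=E(H)F(H)$ (with $F(H)=O_{p'}(H)$, which Clifford theory forces to act by scalars, hence to be small and central) and Clifford theory relative to the components of $H$, one reduces to determining the quasisimple groups with a nontrivial irreducible $\overline{\mathbb F}_p$-module of dimension $<p$. This is where CFSG enters: quasisimple groups of Lie type in the defining characteristic $p$ are handled via L\"ubeck's tables of small-dimensional modules; groups in cross characteristic via the Landazuri--Seitz--Zalesskii lower bounds together with the bounded list of small exceptions; alternating and sporadic groups directly from known character tables and decomposition matrices. The outcome is that, apart from an explicit bounded list, $H$ is assembled from groups of type $\SL_2(p^a)$ (or $\SL_2(p)$), and in particular when $d<p-3$ the only quasisimple possibility relevant to failure of adequacy is $\SL_2(p)$ or $\PSL_2(p)$ acting on a module of dimension $(p\pm1)/2$.

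\emph{Step 3: case analysis and reassembly.} For each $(H,W)$ on the list one verifies the three conditions. Weak adequacy always holds when $d<p$: a useful ingredient is that a $p$-element $u$ acts on $W$ with $\rho(u)-1=N$ nilpotent of index $\le d\le p-1$, so $1+\rho(u)+\cdots+\rho(u)^{p-1}=(\rho(u)-1)^{p-1}=0$ and hence $\sum_{j=0}^{p-1}\rho(su^j)=0$ whenever $s$ commutes with $u$; combined with Burnside's theorem and the decomposition $\EE(W)=\overline k\cdot\Id\oplus\mathfrak{sl}(W)$ (valid since $p\nmid d$), this constrains the span of the semisimple elements, and the remaining cases are settled by the structural reduction of Step 2 and, for the small exceptions, direct computation. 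The cohomological conditions $\ext^1_H(W,W)=0$, $H^1(H,\overline k)=0$, $H^1(H,(W^*\otimes W)/\overline k)=0$ are then checked for each $(H,W)$; the configurations where some constituent fails are precisely the exceptional cases (a), (b)(i)--(vi) of \cite[Theorem~1.3]{GHT}, which for the small groups involved is an explicit, sometimes computer-assisted, verification, and when $d<p-3$ only $\SL_2(p)$/$\PSL_2(p)$ at dimension $(p\pm1)/2$ survives, matching the sharpness example for Theorem~\ref{adequate}. Finally one descends from $(H,W)$ to $(G,V)$ by Clifford theory: the span $S$ of the semisimple elements $\rho(g)$ (equivalently, of the $p$-regular $g$) in $\EE(V)$ is $G$-conjugation invariant and contains the full matrix algebra on each homogeneous $\GP$-component (from weak adequacy of $(H,W)$); since $G$ permutes these components transitively and --- as $G/\GP$ has order prime to $p$, so every coset of $\GP$ has a $p$-regular representative --- the permutations are realized by $p$-regular elements, the conjugates fill in the off-diagonal blocks, giving $S=\EE(V)$ and hence (i); part (ii) follows by combining the cohomology computations for the constituents with the reduction of Step 1, the exceptional list for $(G,V)$ being inherited from that for $(H,W)$.

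The main obstacle is Step 2 together with the case-work in Step 3: it rests on CFSG and on precise information about low-dimensional modular representations of the relevant quasisimple groups --- minimal degrees, first cohomology, and the span (weak adequacy) condition --- and the borderline cases with $d$ close to $p$, where Theorem~\ref{adequate} no longer applies, require the most care.
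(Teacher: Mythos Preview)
This theorem is not proved in the present paper; it is quoted from \cite{GHT} (see the sentence introducing it: ``in \cite{GHT} we extend Theorem \ref{adequate}\ldots''), so there is no proof here to compare against. Your outline has the correct overall architecture --- Clifford reduction to $(H,W)$, CFSG-based classification, case analysis, reassembly --- and that is indeed how \cite{GHT} proceeds. But two steps in your sketch are genuinely wrong or incomplete, and would have to be fixed before this became a proof.

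First, in Step~2 you assert that $F(H)=\bfO_{p'}(H)$ ``Clifford theory forces to act by scalars, hence to be small and central''. This is false. The classification of such $(H,W)$ is exactly Theorem~\ref{bz} of the present paper (i.e.\ \cite[Theorem 2.1]{GHT}), and it contains two families, cases (a) and (e), in which $\bfO_{p'}(H)$ is \emph{not} central: in (a), $p$ is a Fermat prime and $H$ is solvable with a large normal $p'$-subgroup acting irreducibly; in (e), $\bfO_{p'}(H)$ contains an extraspecial $2$-group of order $2^{1+2n}$ acting irreducibly. These cases do occur for $d=p-1$, and the extraspecial case in particular requires separate arguments both for weak adequacy and for the $\Ext^1$ analysis (compare Lemma~\ref{extra} and the proof of Theorem~\ref{ext} in this paper). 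Your reduction to quasisimple $H$ is therefore not valid.

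Second, your reassembly argument at the end of Step~3 is too quick. Writing $V|_{\GP}\cong e\bigoplus_i W_i$, weak adequacy of each $(H,W_i)$ tells you the $p'$-elements of $\GP$ span $\End(W_i)$, but $\End(V)$ has blocks $M_e(k)\otimes\Hom(W_i,W_j)$, and spanning the $e\times e$ matrix factor and the off-diagonal blocks requires a genuine argument, not just ``conjugates fill in the off-diagonal blocks''. Likewise, the cohomological conditions for $(G,V)$ reduce to $\Ext^1_{\GP}(W_i,W_j)$ for \emph{all} pairs $i,j$, not only $i=j$; the uniqueness-of-$j$ arguments in Propositions~\ref{dim-ext2}--\ref{dim-ext3} of this paper show that controlling these cross-extensions is where much of the work lies. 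Your sketch does not address either of these points.
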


Above the threshold $p-1$ for $\dim W$, there are lots of linear groups that are not adequate. 
Still, if $\dim V = p$, the situation is very much under control. In this paper,  we extend adequacy results
to the case of linear groups of degree $p$ and generalize the asymptotic result
\cite[Theorem 1.2]{Gapp} to disconnected algebraic groups $\cG$
(with $p\nmid [\cG:\cG^0]$) allowing at the same time that $p$ divides the dimension of the
$\cG$-module. Next, we show that, in all cases considered in Theorem \ref{ght:adequate},
under some additional mild condition (say, $G$ is not $p$-solvable if $p$ is a Fermat prime,  
and $p > 5$), one in fact has 
$\dim \Ext^1_G(V,V) \leq 1$ -- a result of interest in the deformation theory.
An outgrowth of our results leads us to prove 
an analogue of the first author's result \cite{Gcr} and answer a question of Serre on
complete reducibility of finite subgroups of orthogonal and symplectic groups of small degree.
In fact, we essentially classify indecomposable modules
in characteristic $p$ of dimension less than $2p-2$. 

Note that if the kernel of $\rho$ has order prime to $p$, then there is no harm
in passing to the quotient.  So we will generally assume that either
$\rho$ is faithful or more generally has kernel  of order prime to $p$.
Also, note that the dimension of cohomology groups and the dimension
of the span of the semisimple elements in $G$ in $\EE(V)$ does not
change under extension of scalars.  Hence, most of the time we will work over
an algebraically closed field $k$.

Our main results are the following. First we show that the condition that $H^1(G,k) =0$ in 
the definition of adequacy is not particularly constraining if
$\dim V$ is small.  In particular, the next result follows fairly easily
from \cite{Gcr} (see \cite[Theorem 4.1]{G2}).   See Theorem \ref{thm:abelian2}
for a slightly more general result.  

\begin{thm} \label{thm:abelian}  Let $G$ be a finite irreducible
subgroup of $\GL_d(k)$ with $k$ algebraically closed of
characteristic $p$.   Assume that $H^1(G,k) \ne 0$
 and $d < 2p-2$.  Then $G$ is solvable,
 $d = p-1$, $p$ or $p+1$ and one of the following holds:
\begin{enumerate}[\rm(i)]
\item  $d=p - 1$, $p = 2^a + 1$ is a Fermat prime, $[G:\bfZ(G)\bfO_2(G)]=p$ and $\bfO_2(G)$
is a group of symplectic type with $\bfO_2(G)/\bfZ(\bfO_2(G))$ (elementary) abelian
of order $2^{2a}$;
\item $d=p$ and $G$ has a normal abelian $p'$-subgroup of index $p$;
\item $d=p + 1$, $p = 2^a - 1$ is a Mersenne prime, and $G$ contains a normal 
abelian $p'$-subgroup $N$ such that $G/N$ is  a Frobenius group of order $dp$ with 
kernel of order $d$.
\end{enumerate}
\end{thm}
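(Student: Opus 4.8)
The plan is to reduce the statement to the classification of finite irreducible linear groups $G \le \GL_d(k)$ with $d < 2p-2$ for which $H^1(G,k) \neq 0$, and then to extract from that classification the very restrictive structure claimed. The first step is to note that $H^1(G,k) \neq 0$ forces $p \mid |G|$, indeed forces $\GP := $ (the subgroup generated by $p$-elements) to act nontrivially, and moreover $H^1(G,k)$ embeds into $H^1(\GP, k)^{G/\GP}$ (inflation-restriction, since $p \nmid [G:\GP]$ by definition of $\GP$). So a nonzero class on $G$ yields a nonzero class on $\GP$, and one then studies the irreducible constituents $W$ of $V|_{\GP}$: by Clifford theory the induced group $H < \GL(W)$ satisfies $\dim W \le d < 2p-2$. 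The bulk of the argument is therefore the analysis of quasi-simple or $p$-solvable groups $H$ in dimension $< 2p-2$ carrying a nonzero degree-$1$ cohomology class over $k$; here I would invoke the machinery of \cite{Gcr} (as packaged in \cite[Theorem 4.1]{G2}), which lists precisely the irreducible modules $W$ in characteristic $p$ with $H^1 \neq 0$ and $\dim W$ in this range. The key input is that for groups of Lie type in the natural characteristic, $1$-cohomology of irreducible modules of dimension below roughly $2p$ is extremely rare, and for quasi-simple groups in cross characteristic it essentially does not occur in this range; the surviving possibilities are $p$-solvable.

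Granting that $H$ (hence $G$, after controlling the passage back up through $G/\GP$, which is a $p'$-group acting on the cohomology) is $p$-solvable with the relevant constituent of dimension $p-1$, $p$, or $p+1$, the second step is purely a structural analysis of $p$-solvable irreducible subgroups of $\GL_d(k)$ for these three values of $d$. By Clifford theory and the theory of groups of symplectic type (as in Hall--Higman), an irreducible $p$-solvable $G \le \GL_d(k)$ with $d$ close to $p$ and with $\GP \neq 1$ must have $\GP$ of $p$-power order acting (up to the centre) through an extraspecial or symplectic-type normal subgroup, with $G/\bfZ(G)\bfO_{p'}(G)$-quotient of order $p$; the numerology $p \pm 1 = 2^a$ or $2^a$-related forces $p$ to be Fermat or Mersenne in the $d = p \mp 1$ cases. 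Concretely: for $d = p$, the generator of a Sylow $p$-subgroup acts with a single Jordan block of size $p$, the normal $p'$-core $N$ must be abelian (otherwise $\dim V$ grows), and $G/N$ has order $p$, giving (ii); for $d = p-1$ one gets a Weil-type representation of a symplectic-type $2$-group of order $2^{1+2a}$ normalised by an element of order $p = 2^a+1$, giving (i); for $d = p+1$ one gets the Frobenius-group quotient of order $dp$ with kernel of order $d = 2^a-1 = p$, giving (iii). Throughout, the condition $H^1 \neq 0$ must be matched against these structures to confirm the cohomology is actually nonzero — this is where the explicit Fermat/Mersenne constraint is forced rather than merely allowed.

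The main obstacle I anticipate is the cross-characteristic case: ruling out quasi-simple $H$ in defining-characteristic-free situations with $\dim W < 2p-2$ and $H^1(H,k) \neq 0$. One must combine Landazuri--Seitz--Zalesskii lower bounds on representation degrees (to pin down which quasi-simple groups even have a module of dimension $< 2p-2$ in characteristic $p$) with known results on $H^1$ of simple modules for groups of Lie type and alternating/sporadic groups; the delicate point is that $H^1$ can be nonzero for modules of moderate dimension, so one needs the sharper statements (again essentially those assembled in \cite{Gcr, G2}) that force either $\dim W$ to exceed the bound or $G$ to be solvable. A secondary technical nuisance is the reduction from $G$ to $\GP$ and back: one must check that a nonzero $G/\GP$-fixed class on $\GP$ together with the $\GP$-structure really does constrain $G$ itself, which uses that $G/\GP$ has order prime to $p$ so no new cohomology is created and the normal structure ($\bfZ(G)$, $\bfO_2(G)$, the index-$p$ condition) is inherited. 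Once these two points are handled, assembling cases (i)--(iii) is routine bookkeeping with symplectic-type groups and Frobenius groups.
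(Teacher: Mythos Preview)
Your proposal rests on a conflation that undermines it. The hypothesis $H^1(G,k)\neq 0$ is a statement about $G$ alone---it says $G$ has a normal subgroup of index $p$---and has nothing to do with cohomology of the module $V$. Yet you repeatedly slide to ``irreducible modules $W$ with $H^1 \neq 0$'' and ``$1$-cohomology of irreducible modules of dimension below roughly $2p$'', which is $H^1(H,W)$, a different object. In particular, your announced ``main obstacle'' (ruling out quasisimple $H$ with $H^1(H,k)\neq 0$) is vacuous: quasisimple groups are perfect, so $H^1(H,k)=0$ always. The genuine difficulty is to show that an irreducible $G<\GL_d(k)$ with $d<2p-2$ having $C_p$ as a composition factor cannot also have a non-abelian simple composition factor; this is a structural question, not a module-cohomology one. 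There is also a gap in your Clifford reduction: passing from $\GP$ to its image $H$ in $\GL(W)$ need not preserve $H^1(\cdot,k)\neq 0$, since the kernel could absorb the $C_p$ quotient. And citing \cite[Theorem 4.1]{G2} as a black box is circular---the paper itself remarks that Theorem~\ref{thm:abelian} is essentially that result, and the point of \S\ref{sec: ind} is to supply an independent proof of the stronger Theorem~\ref{thm:abelian2}.

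The paper's actual route (via Theorem~\ref{thm:abelian2}) is quite different. It first uses the Blau--Zhang classification (Theorem~\ref{bz}) to show $\GP$ is already irreducible on $V$, so no passage to a quotient is needed. Then Lemma~\ref{eg} forces $\bfO_{p'}(\GP)\not\leq\bfZ(\GP)$, and the argument splits: either $\bfO_{p'}(\GP)$ contains a non-central abelian $G$-normal subgroup, giving an imprimitive action whose permutation image is pinned down by Lemma~\ref{lem:perm} (transitive subgroups of $\SSS_n$, $n<2p$, with a $C_p$ composition factor); or not, in which case Hall's theorem yields a symplectic-type $2$-group and one finishes via Lemma~\ref{p-cycle} inside $\Sp_{2a}(2)$. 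Only at the end does one specialise from ``$C_p$ as a composition factor'' to $H^1(G,k)\neq 0$. (Also, in your case (iii) sketch: $d=p+1=2^a$ and the Frobenius kernel has order $d=2^a$, not $p$.)
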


The following curious corollary is immediate from Theorem 
\ref{thm:abelian2}. We suspect that there is a proof of this that
does not require the classification of finite simple groups.

\begin{cor} \label{cor:p-cf} 
Let $G$ be a finite irreducible subgroup of $\GL_d(k)$
with $k$ algebraically closed of characteristic $p$ and $d < 2p-2$. Suppose
that $G$ has a composition factor of order $p$. Then $G$ is solvable.
Moreover, either $d = p$, or $d= 2^a$ with $p = d \pm 1$ 
(and so $p$ is either a Mersenne prime or Fermat prime).
\end{cor}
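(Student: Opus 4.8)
The plan is to deduce this almost immediately from Theorem \ref{thm:abelian2} (the promised strengthening of Theorem \ref{thm:abelian}), together with standard Clifford-theoretic reductions. First I would reduce to the case that $\rho$ is faithful: if $N$ is the kernel, then since $G$ has a composition factor of order $p$, either that composition factor survives in $G/N$ — in which case $G$ is solvable iff $G/N$ is — or $p \mid |N|$; but in the latter case $N$ acts trivially on $V$, and one still has a faithful irreducible representation of $G/N$ of the same dimension, and $G/N$ still has a composition factor of order $p$ unless $N$ itself is the unique such factor, which would already force $N$ (hence, after a short argument, $G$) to be close to solvable. So assume $\rho$ faithful and $G \leq \GL_d(k)$ irreducible.

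Next I would invoke the key input: a finite group with a composition factor of order $p$ has a subnormal (indeed, by passing to a suitable term of a composition series, normal in some subnormal subgroup) subgroup with a quotient of order $p$, and in particular $G$ is not $p$-solvable only if... — here the cleaner route is: since $C_p$ is a composition factor, $G$ has $p \mid |G|$ and a chief factor of order $p$; equivalently $H^1$ or rather the structure of $G$ forces $H^1(G,k) \neq 0$ for a suitable twist, \emph{or} one can argue directly that a group possessing a composition factor of order $p$ and acting irreducibly in dimension $d < 2p-2$ must fall under the hypotheses of Theorem \ref{thm:abelian2}. Concretely: I would show that the existence of a composition factor $C_p$ forces $H^1(G, k) \neq 0$ after possibly twisting by a linear character — more precisely, that $G$ has an irreducible constituent (of $V$ or of a related module) for which the $H^1$ condition fails — so that Theorem \ref{thm:abelian2} applies and yields solvability of $G$ together with the list $d = p-1, p, p+1$ with the stated arithmetic constraints. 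Then one reads off from cases (i)–(iii) of Theorem \ref{thm:abelian}: in case (ii) $d = p$; in cases (i) and (iii) $d = 2^a$ with $p = d+1$ (Fermat) or $p = d-1$ (Mersenne) respectively. This gives exactly the conclusion.

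The main obstacle is the bookkeeping in the first paragraph: verifying that "has a composition factor of order $p$" really does propagate through the reduction to a faithful representation and really does land one inside the hypotheses of Theorem \ref{thm:abelian2} (i.e. ensures the relevant first cohomology does not vanish, rather than merely that $p \mid |G|$). One must be careful that a composition factor of order $p$ inside a $p'$-normal subgroup gets killed by the reduction, so the argument must track \emph{where} in a chief series the $C_p$ sits; the honest statement is that $G$ has a normal series one of whose factors is cyclic of order $p$ on which $G$ acts — and this cyclic-of-order-$p$ section is precisely what produces a nonzero $H^1$ via inflation–restriction, placing $G$ in the scope of Theorem \ref{thm:abelian2}. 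Once that is pinned down, everything else is an immediate translation from the trichotomy (i)–(iii). The remark that CFSG ought to be avoidable here I would leave as stated, since Theorem \ref{thm:abelian2} as proved does use it; an unconditional argument would presumably go through Aschbacher–Scott-type reductions for low-dimensional groups containing a normal subgroup with a $C_p$-quotient, but that is outside the scope of this corollary.
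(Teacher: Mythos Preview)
Your proposal contains a significant confusion that makes most of it unnecessary. You appear to believe that Theorem \ref{thm:abelian2} has the hypothesis $H^1(G,k)\neq 0$ (as Theorem \ref{thm:abelian} does), and so you spend most of the argument trying to deduce $H^1(G,k)\neq 0$ from the existence of a composition factor of order $p$. But Theorem \ref{thm:abelian2} has precisely the hypothesis ``$G$ has a composition factor of order $p$'' --- no cohomological condition is needed. The corollary therefore really is immediate: for $p>3$, Theorem \ref{thm:abelian2} applies verbatim to give solvability and $d\in\{p-1,p,p+1\}$ with the stated arithmetic (cases (i) and (iii) give $d=2^a$ with $p=d\pm 1$, case (ii) gives $d=p$); for $p\le 3$ the bound $d<2p-2$ forces $d\le 3$ and one checks by hand.

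Moreover, the detour you propose is not just unnecessary but wrong: a composition factor of order $p$ does \emph{not} in general force $H^1(G,k)\neq 0$ (equivalently, a normal subgroup of index $p$), even after twisting by a linear character. Indeed, the final sentence of Theorem \ref{thm:abelian2} says that $H^1(G,k)\neq 0$ holds only in the more restrictive subcases listed in Theorem \ref{thm:abelian}; the general conclusion of Theorem \ref{thm:abelian2} covers groups where $H^1(G,k)=0$. Your first paragraph, reducing to a faithful representation, is also moot: $G$ is by hypothesis a subgroup of $\GL_d(k)$, so the given representation is already faithful.
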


In the situation of Theorem \ref{ght:adequate}, $\Ext^1_G(V,V)$ may be  nonzero and so 
$G$ may fail to be adequate. Nevertheless, we can prove the following two results, which were motivated
by discussions with Mazur and which are of interest
in deformation theory. (Recall \cite[Section 1.2]{Maz}, for instance, that the inequality $\dim \Ext^1_G(V,V) \le n$ implies that the universal                
deformation ring over the ring $\cO$ of integers of a sufficiently large finite extension of $\bbQ_p$ is a                
quotient of $\cO[[x_1,\ldots,x_n]]$.  See also \cite[Theorem 2.4]{Boc}.)

\begin{thm} \label{ext}
Let $k$ be a field of characteristic $p$ and $G$ a finite group.  
Let $V$ be an absolutely irreducible faithful $kG$-module, and let $\GP$ denote the subgroup generated by the $p$-elements of $G$.
Suppose that the dimension $d$ of any irreducible $k\GP$-submodule $W$ in $V$ is less than $p$, and 
let $H$ be the image of $\GP$ in $\GL(W)$.

{\rm (i)} Suppose the following conditions hold:
  \begin{enumerate}[\rm(a)]
    \item If $p$ is a Fermat prime, then $G$ is not $p$-solvable (equivalently, $H$ is not solvable);
    \item If $p = 3$, then $H \not\cong \SL_2(3^a)$ for all $a \geq 2$;
    \item If $p = 5$ and $\dim_k W = 4$, then $H \not \cong \Omega^+_4(5)$.
  \end{enumerate}
Then  $\dim_k\Ext^1_G(V,V) \leq 1$ and $\dim_k\Ext^1_G(V,V^*) \leq 1$. In particular,
$H^1(G,\Sym^2(V))$ and $H^1(G,\wedge^2(V))$ are both at most $1$-dimensional.

{\rm (ii)} In the exceptional cases $(p,\dim_k W,H) = (5,4,\Omega^+_4(5))$ or 
$(p,H) = (3,\SL_2(3^a))$ with $a \geq 2$, $\Ext^1_G(V,V)$ and $\Ext^1_G(V,V^*)$ are at most 
$2$-dimensional.
\end{thm}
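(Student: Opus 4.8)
The plan is to reduce the bound on $\dim_k \Ext^1_G(V,V)$ to a statement about the $\GP$-module structure of $V$, and then analyze that structure case by case using the list of possibilities for $H$ coming from Theorem \ref{ght:adequate} (equivalently, from \cite[Theorem 1.3]{GHT}). First I would recall the Hochschild--Serre / inflation-restriction machinery: since $\GP \trianglelefteq G$ and $G/\GP$ has order prime to $p$, taking $G/\GP$-invariants is exact, so $\Ext^1_G(V,V) = \Ext^1_{\GP}(V,V)^{G/\GP}$ and likewise for $V^*$. Thus it suffices to bound $\dim_k \Ext^1_{\GP}(V,V)$, or rather its $G/\GP$-fixed part. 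Next, as a $k\GP$-module $V$ decomposes as a direct sum of $G/\GP$-conjugates of irreducibles; writing $V|_{\GP} = \bigoplus_i V_i$ with the $V_i$ the distinct $G$-conjugates (each possibly with multiplicity), one gets $\Ext^1_{\GP}(V,V) = \bigoplus_{i,j} \Ext^1_{\GP}(V_i,V_j)\otimes \Hom_k(\text{mult spaces})$, and the $G/\GP$-action permutes the summands transitively on the $i$'s. The upshot is that the $G/\GP$-invariants inject into $\Ext^1_{\GP}(W, V|_{\GP})$ for a fixed irreducible constituent $W$, so everything comes down to understanding $\Ext^1_{\GP}(W, W')$ for $W, W'$ among the $\GP$-constituents, i.e., to the self-extension and cross-extension theory of the small-degree group $H$ acting on $W$ (after quotienting by the $p'$-kernel).

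Then I would run through the possibilities for $(H, W)$. For the generic cases covered by Theorem \ref{ght:adequate}, $(G,V)$ is already adequate, so $\Ext^1_G(V,V) = 0$ and there is nothing to prove; the content is entirely in the exceptional families (a), (b)(i)--(vi) of \cite[Theorem 1.3]{GHT}. For these — groups like $\SL_2(p)$ with $W$ of dimension $(p\pm1)/2$, and the other sporadic/series exceptions — I would invoke the known $\Ext^1$ computations for $\SL_2$ in the natural characteristic (e.g. via the structure of Weyl modules / tilting modules, or explicit cohomology tables) together with the Steinberg tensor product theorem to handle $\SL_2(p^a)$. The key numerical point is that in each such case $\dim_k \Ext^1_H(W,W) \le 1$, with the precise value (0 or 1) depending on the weight of $W$; combined with the multiplicity bookkeeping above and the hypothesis that $V$ is \emph{irreducible} over $G$ (which forces the multiplicity structure to be rigid), this yields $\dim_k \Ext^1_G(V,V) \le 1$. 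The extra hypotheses (a), (b), (c) in part (i) are exactly there to excise the finitely many configurations where a naive count would give $2$: namely when $p$ is a Fermat prime and $G$ is $p$-solvable (here $\GP$ sits inside an extraspecial-type normalizer and an extra self-extension appears), when $H \cong \SL_2(3^a)$ with $a \ge 2$ (where $p=3$ makes $\wedge^2$ or $\Sym^2$ pick up an extra cohomology class), and when $(p, \dim W, H) = (5, 4, \Omega_4^+(5)) \cong \SL_2(5)\times\SL_2(5)$ modulo center (a product group whose $\Ext^1$ doubles up). In part (ii) one simply notes that in those excluded cases the same analysis gives the weaker bound $2$ rather than $1$, since there at most one ``extra'' self-extension class can occur.

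For the twisted module $V^*$ one argues identically: $\Ext^1_G(V, V^*) = \Ext^1_{\GP}(V, V^*)^{G/\GP}$ and the constituents of $V^*|_{\GP}$ are the duals of those of $V|_{\GP}$, so the same $\Ext^1$-tables for $H$ apply (using that $\Ext^1_H(W, W^\vee) \cong \Ext^1_H(W\otimes W, k)$, i.e. $H^1(H, \Sym^2 W \oplus \wedge^2 W)$ for $p$ odd). The final assertion about $H^1(G,\Sym^2 V)$ and $H^1(G, \wedge^2 V)$ follows because, for $p$ odd, $V^*\otimes V \cong \Sym^2 V \oplus \wedge^2 V$ (after identifying $V \cong V^*$ in the relevant self-dual cases, or more robustly $V\otimes V \cong \Sym^2 V \oplus \wedge^2 V$ and $H^1(G, V\otimes V) \hookrightarrow$ the $\Ext^1_G(V^*, V)$ groups just bounded), so each summand contributes at most what the total does, hence at most $1$.

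The main obstacle will be the exceptional-case bookkeeping: one must be careful that the $G/\GP$-action, the multiplicities of the $V_i$ inside $V$, and the Galois/field-of-definition twists do not conspire to produce additional $\Ext^1$-classes beyond those visible at the level of a single $H$-constituent. Verifying that the three explicit hypotheses (a)--(c) are \emph{exactly} the obstructions — no more, no fewer — requires going through \cite[Theorem 1.3]{GHT}'s exceptional list line by line and checking the relevant $H^1$ and $\Ext^1$ values, which is where the real work lies; the cohomological reduction at the start is essentially formal.
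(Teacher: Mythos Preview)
Your overall strategy---reduce to $\GP$ via the fact that $G/\GP$ is a $p'$-group, decompose $V|_{\GP}$ into its Clifford constituents, and then analyze $\Ext^1_{\GP}(W_i,W_j)$ case by case using the classification of $(H,W)$---matches the paper's approach. However, there is a genuine gap at the step you label ``multiplicity bookkeeping.'' Writing $V|_{\GP}=e\bigoplus_i W_i$ and using Frobenius reciprocity (or, equivalently, your injection of $G/\GP$-invariants), one only gets
\[
\Ext^1_G(V,V)\;\cong\;\Ext^1_{G_1}\bigl((V_1)_{G_1},(V_j)_{G_1}\bigr),
\]
where $G_1$ is the inertia group of $W_1$ and $V_1=eW_1$. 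Restricting naively to $\GP$ bounds this by $e^2\cdot\dim\Ext^1_{\GP}(W_1,W_j)$, not by $\dim\Ext^1_{\GP}(W_1,W_j)$ itself. Your remark that irreducibility of $V$ ``forces the multiplicity structure to be rigid'' does not close this; one needs an actual argument. The paper supplies it via Propositions \ref{dim-ext1} and \ref{dim-ext2}: pass to the universal $p'$-cover $X$ of $G_1$, use Clifford theory to write $(V_1)_X\cong A\otimes W$ with $\bfO^{p'}(X)$ acting trivially on $A$ and $W|_{\bfO^{p'}(X)}\cong W_1$, and then apply an inflation--restriction computation to show $\dim\Ext^1_X(A\otimes W,B\otimes W')\le\dim\Ext^1_Y(W,W')$. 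This is the step that kills the $e^2$.

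A second point you underemphasize is the \emph{uniqueness} of the index $j$ with $\Ext^1_{\GP}(W_1,W_j^\eps)\neq 0$. The paper's bound via Proposition \ref{dim-ext2} requires this uniqueness as a hypothesis; establishing it is where the K\"unneth formula (Lemma \ref{lem:kunneth}) and the detailed case split (Lie type in characteristic $p$ versus cyclic Sylow, the latter handled separately by Theorem \ref{ext2} and Proposition \ref{kernel}) really enter. In the $p=3$, $H\cong\SL_2(3^a)$, $a>2$ exception there are in fact \emph{two} such $j$ (corresponding to the two adjacent Frobenius twists), each contributing a $1$-dimensional $\Ext^1$, which is why the bound degrades to $2$; your proposal attributes the $2$ to a single extra self-extension, which is not quite the mechanism. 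Finally, your shortcut ``in the generic cases $(G,V)$ is adequate, so $\Ext^1_G(V,V)=0$'' is not safe: with the extended definition of adequacy used here one only gets $H^1(G,(V^*\otimes V)/k)=0$, which need not imply $\Ext^1_G(V,V)=0$ when $p\mid\dim V$.
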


Note that one cannot remove the conditions (a)--(c) in Theorem \ref{ext}(i).
In fact, in the case $G$ is 
$p$-solvable of Theorem \ref{ext}, $\Ext^1_G(V,V)$ and $\Ext^1_G(V,V^*)$ can be of 
arbitrarily large dimension. See Example \ref{sol-ext}. On the other hand, if $\dim_kW < (p-1)/2$ 
in Theorem \ref{ext}, then $H^1(G,\Sym^2(V)) = H^1(G,\wedge^2(V)) = 0$, see Corollary 
\ref{small2}.

In fact, we can show that both $\Ext^1_G(V,V)$ and $\Ext^1_G(V,V^*)$ are at most
$1$-dimensional in another situation, without any dimension condition, but instead with a condition on 
Sylow $p$-subgroups.

\begin{thm} \label{ext2}
Let $k$ be a field of characteristic $p$ and $G$ a finite group.  
Let $V$ be an absolutely irreducible faithful $kG$-module, and let $\GP$ denote the subgroup generated by the $p$-elements of $G$. Suppose that the image of $\GP$ in $\GL(W)$ for some 
irreducible $\GP$-submodule $W$ of $V$ has Sylow $p$-subgroups of order $p$, and that
$G$ has no composition factor of order $p$. Then $\dim_k\Ext^1_G(V,V) \leq 1$ and $\dim_k\Ext^1_G(V,V^*) \leq 1$. In particular,
$H^1(G,\Sym^2(V))$ and $H^1(G,\wedge^2(V))$ are both at most $1$-dimensional.
\end{thm}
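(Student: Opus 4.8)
The plan is to make the standard reductions, then to combine Clifford theory with the block theory of finite groups having a Sylow $p$-subgroup of order $p$. We may assume $k = \overline{k}$. Since $\GP$ contains a Sylow $p$-subgroup of $G$, we have $p \nmid [G:\GP]$, so the inflation--restriction sequence collapses and gives $\Ext^1_G(V, V) \cong \Ext^1_{\GP}(V, V)^{G/\GP}$, and likewise with $V$ replaced by $V^*$, or with $\GP$ enlarged to any $N$ with $\GP \le N \le G$. Thus it suffices to bound $\dim_k\Ext^1_G(V, V)$ and $\dim_k\Ext^1_G(V, V^*)$, both treated uniformly below; the ``in particular'' clause then follows by applying the bound on $\Ext^1_G(V, V^*)$ with $V$ replaced by $V^*$, since $\Ext^1_G(V^*, V) = H^1(G, V\otimes V) \cong H^1(G, \Sym^2 V) \oplus H^1(G, \wedge^2 V)$ for $p$ odd (the case $p=2$ needing only the evident two-step filtration of $V\otimes V$).

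Set $N := \GP$, choose $W \subseteq V|_N$ as in the hypothesis, and put $K := \ker(N \to \GL(W))$, $H := N/K$, $\Stab := \Stab_G(W)$. By Clifford theory $V \cong \Ind_{\Stab}^G \widetilde V$ for a simple $k\Stab$-module $\widetilde V$ with $\widetilde V|_N \cong eW$, and the constituents of $V|_N$ are precisely the $G$-conjugates of $W$. Frobenius reciprocity together with Mackey's formula give
\[
\Ext^1_G(V, V) \;\cong\; \bigoplus_{\Stab\backslash G/\Stab}\Ext^1_{\Stab\cap\, {}^g\Stab}\bigl(\widetilde V,\ {}^g\widetilde V\bigr),
\]
and similarly for $\Ext^1_G(V, V^*)$ (replace ${}^g\widetilde V$ by ${}^g(\widetilde V^{\,*})$); the summand at $g = 1$ is $\Ext^1_\Stab(\widetilde V, \widetilde V)$, the others pair $W$ against non-isomorphic conjugates. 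Two facts will be used repeatedly. (a) Since $G$, and hence every subnormal subgroup of $G$, has no composition factor of order $p$, we have $H^1(L, k) = 0$ for all subnormal $L \le G$; since $K$ and the intersections of its $G$-conjugates are normal in $N$, inflation--restriction therefore identifies each $\Ext^1_N(U, U')$ above with the corresponding Ext-group over a quotient of $N$ that is a subdirect product of copies of $H$, with $U, U'$ inflated from simple modules. (b) Since a Sylow $p$-subgroup of $H$ has order $p$, every block of $kH$ has cyclic defect group of order $p$, hence is a Brauer tree algebra; in particular $\dim_k\Ext^1_H(S, T) \le 1$ for all simple modules $S, T$, with $\Ext^1_H(S, S) \ne 0$ only when $S$ is incident to the (necessarily leaf) exceptional vertex of its Brauer tree. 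Combining (a) and (b) gives $\dim_k\Ext^1_N(W, W) \le 1$.

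For the $g = 1$ term we are in the homogeneous situation $\widetilde V|_N = eW$ with $W$ fixed by $\Stab$: writing $\widetilde V = \widehat W \otimes Y$ with $\widehat W$ a projective extension of $W$ to $\Stab$ and $Y$ a projectively irreducible $k[\Stab/N]$-module of dimension $e$, there is an isomorphism of $k[\Stab/N]$-modules $\Ext^1_N(\widetilde V, \widetilde V) \cong \Ext^1_N(W, W) \otimes \End_k(Y)$. As $\Ext^1_N(W, W)$ is at most a genuine line and $\End_k(Y) = Y^* \otimes Y$ contains every linear character of $\Stab/N$ with multiplicity at most one (because $Y$ is projectively irreducible), the invariants $\Ext^1_\Stab(\widetilde V, \widetilde V) = \Ext^1_N(\widetilde V, \widetilde V)^{\Stab/N}$ have dimension at most $1$, and likewise for $\Ext^1_\Stab(\widetilde V, \widetilde V^{\,*})$.

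The main obstacle is to show that the off-diagonal Mackey terms contribute nothing further, i.e.\ that the full direct sum remains at most one-dimensional. These terms are governed by $\Ext^1_N(W, W^g)$ for the various conjugates $W^g \not\cong W$, which by (a) reduce to Ext-groups over subdirect products of copies of $H$; the plan is to exploit the planarity of the Brauer trees of $H$ --- so that at most the two Brauer-neighbours of $W$ can interact --- together with the action of $G$ on this small configuration and on the exceptional vertices, and to invoke once more that $G$ has no composition factor of order $p$ (which excludes the $p$-solvable configurations of Example~\ref{sol-ext}, where $\Ext^1$ genuinely grows) to conclude that at most one summand survives with nonzero $G/\GP$-invariants. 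I expect this last combinatorial step to be the real difficulty; the preceding reductions are essentially formal.
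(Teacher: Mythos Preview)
Your reductions in the first two paragraphs are fine, and your treatment of the diagonal Mackey term is essentially correct (and close in spirit to the paper's Proposition~\ref{dim-ext1}). But the proof is genuinely incomplete: as you yourself say, the off-diagonal Mackey terms are ``the real difficulty'', and your sketch for handling them does not work.

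The problem is that your plan to ``exploit the planarity of the Brauer trees of $H$'' applies to $H = N/K$, whereas the off-diagonal term $\Ext^1_N(W,{}^gW)$ lives over $N/(K\cap K^g)$. When $K \ne K^g$ this quotient is a \emph{subdirect} product of two copies of $H$ and can have Sylow $p$-subgroup of order $p^2$, so Brauer tree combinatorics are simply unavailable there. The missing idea is a structural one: under the hypotheses, $\GP/\bfO_{p'}(\GP)$ is a direct product $S_1\times\cdots\times S_n$ of non-abelian simple groups of order divisible by $p$ (this is where ``no composition factor of order $p$'' is really used, via Lemma~\ref{str1} and Proposition~\ref{str2}), and from this one proves directly that $\Ext^1_{\GP}(W_i,W_j^\eps)\ne 0$ forces the kernels of $\GP$ on $W_i$ and on $W_j^\eps$ to coincide (Proposition~\ref{kernel}). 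In other words, all off-diagonal terms (in the sense of distinct kernels) vanish outright.

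Once you have that, the paper's endgame is also cleaner than your Mackey/tensor-factorisation argument: group the $W_i$ by their common kernel $K_1,\dots,K_s$, set $V_1 = e\bigoplus_{K_j=K_1} W_j$, observe $H:=\bfN_G(K_1)=\Stab_G(V_1)$ and $V\cong\Ind^G_H(V_1)$ with $V_1$ irreducible over $H$, and use Frobenius reciprocity plus the vanishing just established to get $\Ext^1_G(V,V^\eps)\cong\Ext^1_H(V_1,V_1^\eps)$. Now $K_1$ acts trivially on $V_1$ and $V_1^\eps$, $|H/K_1|_p=p$, and a single application of the cyclic-defect bound (Lemma~\ref{1-dim}) to the irreducible $k(H/K_1)$-module $V_1$ finishes the proof. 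No projective extensions $\widehat W$, no analysis of linear characters in $Y^*\otimes Y$, and no residual ``combinatorial step'' are needed.
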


Next we determine adequacy of linear groups of degree $p$:

\begin{thm} \label{thm:degree-p}   Let $k$ be a field of characteristic $p$ and
$G$ a finite group.  Let $V$ be an absolutely irreducible faithful $kG$-module
with $\dim V = p$.   Then  precisely one of the following holds:
\begin{enumerate}[\rm(i)]
\item  $(G,V)$ is adequate;
\item  $G$ contains a normal abelian subgroup of index $p$;
\item   $p=3$ and the image of $G$ in $\PGL(V)$ is $\PSL_2(9)$. 
\end{enumerate}
\end{thm}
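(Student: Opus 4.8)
The plan is to reduce to the case where $G = \GP$ (the subgroup generated by $p$-elements) acting irreducibly on $V$ with $\dim V = p$, then invoke the classification of such groups together with the machinery of Theorems \ref{thm:abelian} and \ref{ght:adequate}. First I would dispose of the trivial case: if $\GP$ acts reducibly on $V$, or more precisely if every irreducible $k\GP$-submodule has dimension $< p$, then since $\dim V = p$, Clifford theory forces the $\GP$-socle to consist of several modules of dimension $< p$ whose total is $V$; in fact Theorem \ref{ght:adequate}(i) already gives weak adequacy, and one checks whether the exceptional cases of \ref{ght:adequate}(ii) can occur in dimension exactly $p$. The key point is that the only way to fail adequacy while staying below the threshold is $\dim W = (p\pm 1)/2$ with $H \cong \SL_2(p)$ or $\PSL_2(p)$; I would check that gluing two such pieces into a $p$-dimensional $G$-module forces $G$ to have a normal abelian subgroup of index $p$ (case (ii)) — this is a finite, explicit Clifford-theoretic bookkeeping exercise.

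So the heart of the matter is when $\GP$ itself acts irreducibly of degree $p$ on $W = V$. Here I would use the classification of irreducible subgroups of $\GL_p(k)$ generated by $p$-elements. Since $p \mid \dim V$, condition (i) of the first (Thorne) definition of adequacy fails automatically, so we must use the second definition: adequacy becomes the two conditions $H^1(G,k) = 0$ and $H^1(G,(V^*\otimes V)/k) = 0$ together with weak adequacy $\EE(V) = \langle \rho(g)_{\mathrm{ss}}\rangle$. The case analysis of $\GP$ irreducible of degree $p$ splits into: (a) $\GP$ has a normal $p'$-subgroup of index dividing a power of $p$ — by a Hall–Higman / extraspecial-normalizer argument (as in Theorem \ref{thm:abelian}(ii)), since $\dim V = p$ is prime, $\GP$ is then forced into the $p$-solvable case, landing in conclusion (ii); (b) $\GP$ is quasi-simple modulo its (cyclic, central, $p'$) center. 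For case (b) I would run through the list of quasi-simple groups with an irreducible $p$-dimensional representation in characteristic $p$ — these are very constrained: $\SL_2(p)$ has no faithful irreducible $p$-dimensional module (its irreducibles in defining characteristic have dimension $\le p-1$, except the Steinberg of dimension $p$ for $\PSL_2(p)$... which actually does occur), $\PSL_2(p)$ via Steinberg, $\SL_p(q)$ or $\Omega$-type groups only in bounded rank, and sporadic coincidences such as $\PSL_2(9) \cong \mathsf{A}_6$ in characteristic $3$ — exactly the exceptional case (iii).

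For each surviving quasi-simple example I would verify adequacy directly: weak adequacy $\EE(V) = \langle\rho(g)_{\mathrm{ss}}\rangle$ follows from $V$ being absolutely irreducible and the semisimple elements generating a large enough algebra (using that a group of Lie type in defining characteristic is generated by its semisimple elements, or explicit character-theoretic computation for small cases), and the cohomological vanishing $H^1(G,k) = 0$ and $H^1(G,(V^*\otimes V)/k)=0$ from known $\Ext$-computations for these specific small groups (e.g. $H^1(\PSL_2(p),\mathrm{St}) = 0$ for $p > 3$, while for $p = 3$ one gets $\PSL_2(9)$ as the genuine exception where $\Ext^1$ is nonzero). The main obstacle I anticipate is precisely the Steinberg module of $\PSL_2(p^a)$: I expect $\PSL_2(p^a)$ with the natural $p^a$-dimensional Steinberg representation to satisfy all three adequacy conditions for $p^a > 9$ (so it lands in case (i)), while $\PSL_2(9)$ in characteristic $3$ is the lone failure (case (iii)); pinning down that $\PSL_2(9)$ really fails and that nothing else of this shape does — i.e., controlling $H^1$ and weak adequacy uniformly across the $\SL_2(p^a)$ family in the defining characteristic — will require the detailed $\SL_2(p^a)$ analysis that this paper develops elsewhere (cf. the promised results on $\SL_2(p^a)$ in the natural characteristic). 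Everything else is assembling the pieces and checking that cases (i), (ii), (iii) are mutually exclusive, which is clear since (ii) forces $G$ $p$-solvable and (iii) forces $G$ non-$p$-solvable with a specific image.
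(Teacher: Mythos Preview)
Your overall shape is right---reduce to a structural dichotomy and then verify adequacy case by case---but there are two genuine gaps.

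First, your opening reduction is confused. If $\GP$ is reducible on $V$ with $\dim V = p$, Clifford theory gives $V_{\GP} \cong e\bigoplus_i W_i$ with all $W_i$ of a common dimension $d$ and $etd = p$; since $p$ is prime, either $d = p$ (so $\GP$ is irreducible) or $d = 1$. In the latter case $\GP$, being generated by $p$-elements, acts trivially on each $W_i$, hence on $V$, so faithfulness forces $\GP = 1$ and $G$ is a $p'$-group, making adequacy immediate. The $(p\pm 1)/2$-dimensional pieces you discuss simply cannot occur here, so the ``gluing two such pieces forces case (ii)'' step is addressing a vacuous situation. The actual route into case (ii) is not through a reducible $\GP$ but through the \emph{imprimitive} case of the structural dichotomy below.

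Second, and more seriously, you vastly underestimate the quasisimple case. The paper's argument rests on Proposition~\ref{degree-p}: any finite irreducible $G < \GL_p(k)$ is either imprimitive (with a non-central normal abelian $p'$-subgroup $A$, and then Proposition~\ref{imp-p} shows adequacy iff $[G:A] \neq p$, which is exactly the dichotomy (i) vs.\ (ii)), or almost quasisimple with $H = G^{(\infty)}$ irreducible on $V$. In the latter case $H$ is classified by Theorem~\ref{thm:2p}, and the list is long: besides the defining-characteristic Lie-type groups of Proposition~\ref{prime-natural1}, there are many cross-characteristic examples---Weil modules for $\PSp_{2n}(q)$, $\SL_n(q)$, $\SU_n(q)$ (rows in Table~IIa), alternating and sporadic groups (Tables~I, III), and several exceptional entries in Table~IIb. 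For each of these, weak adequacy is \emph{not} automatic from ``semisimple elements generate the group''; that gives irreducibility, not that their images span $\End(V)$. The paper proves weak adequacy by separate, substantive arguments (Propositions~\ref{weil-sp}, \ref{weil-sl}, \ref{weil-su}, Lemmas~\ref{alt-p}, \ref{non-serial}), typically by exhibiting $p'$-subgroups whose restriction of $V$ is multiplicity-free or has a known decomposition, and then using Artin--Wedderburn to bound $\End(V)/\cM$. The cohomological vanishing in the cross-characteristic case is handled uniformly: $\GP = H$ has $p'$-Schur multiplier and $p'$-outer automorphism group (so $H^1 = H^2 = 0$ on $k$), and $V$ has $p$-defect zero hence is projective (Lemma~\ref{p-div}), giving $\Ext^1_G(V,V) = 0$. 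Your proposal does not mention projectivity via Lemma~\ref{p-div}, which is the key cohomological shortcut. Finally, the small-characteristic cases $p = 2,3$ require separate treatment (the $\PSL_2(9)$ exception, $\AAA_5$, $\PSL_2(7)$, $\SL_3(3^a)$, $\SU_3(3^a)$), and the paper checks each by hand or by citing \cite{Mc} and computer calculation; your sketch does not account for these.
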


Extending the results of \cite[\S3]{GHT}, we prove in Corollary \ref{sl2q-adequate} that, 
aside from some exceptions with $p = 2,3$ and with $(q,\dim(V)) = (p, (p \pm 1)/2)$, 
all nontrivial irreducible representations of  $\SL_2(q)$ over $\overline\bbF_q$ are adequate. This and other results on weak adequacy and on $\Ext^1$, and the dearth of examples where weak adequacy fails suggest that quite a lot of irreducible representations are indeed weakly adequate. 
(Currently, all but one counterexample to weak adequacy are induced modules, and the only primitive counterexample is given in \cite{G2}.)

\smallskip
Finally, we classify all low dimensional self-dual indecomposable and non-irreducible
$kG$-modules $V$ with $k$ algebraically closed of characteristic $p$
and $G$ a finite 
%(or algebraic) 
subgroup of $\GL(V)$ with $\bfO_p(G) = 1$.
%(By a {\it unipotent}
%subgroup of $\GL(V)$ we mean any subgroup consisting only of unipotent elements.) 

First we recall one of the main results of \cite{Gcr} which settled a conjecture
of Serre.

\begin{thm}  \label{thm:smaller}  Let $k$ be a field of positive characteristic $p$.
Let $G$ be a subgroup of $\GL_n(k)=\GL(V)$ with no nontrivial normal unipotent
subgroup and $p \ge n + 2$.
Then $V$  is completely reducible.
\end{thm}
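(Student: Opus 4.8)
The plan is to reduce to finite quasisimple groups and then invoke the classification of finite simple groups together with known bounds on the first cohomology and on extension groups of their small irreducible modules. First I would reduce to the case that $k$ is algebraically closed and that $G$ is finite. Extension of scalars affects neither the hypothesis $\bfO_p(G)=1$ nor complete reducibility of $V$, so we may assume $k=\overline{k}$; and passing to the Zariski closure $\overline{G}$ of $G$ changes neither the $\overline{G}$-composition factors of $V$ nor semisimplicity of $V$, which reduces us to $\overline{G}$ reductive. There the statement holds directly, since $p\ge n+2$ forces every Weyl module occurring in $V$ to lie in the lowest alcove, hence to be irreducible.

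So suppose the theorem fails, and choose a counterexample $(G,V)$ with $G$ finite, $\bfO_p(G)=1$, $p\ge\dim V+2$, and $V$ not semisimple. Since $V$ is not semisimple, there exist irreducible $kG$-modules $A$ and $B$, occurring as composition factors of $V$, with $\Ext^1_{kG}(B,A)\ne 0$; comparing dimensions along a composition series of $V$ gives $\dim A+\dim B\le\dim V\le p-2$. It therefore suffices to prove that no finite group $G$ with $\bfO_p(G)=1$ admits irreducible modules $A,B$ with $\Ext^1_{kG}(B,A)\ne 0$ and $\dim A+\dim B\le p-2$, keeping $G$ itself fixed (note that $A$ and $B$ need not be faithful, so one cannot simply induct on the dimension of a faithful module). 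In fact one expects the sharp bound $\dim A+\dim B\ge p-1$, attained by $\SSS_p$ acting on its natural $(p-1)$-dimensional module.

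The heart of the argument is then a structural analysis via the generalized Fitting subgroup $F^*(G)=F(G)E(G)$. Since $\bfO_p(G)=1$, the Fitting subgroup $F(G)=\bfO_{p'}(G)$ is a $p'$-group, hence acts completely reducibly on every $kG$-module by Maschke's theorem, and $C_G(F^*(G))\le F^*(G)$. Using the Lyndon--Hochschild--Serre spectral sequence for the normal subgroups $\bfO_{p'}(G)$ and $F^*(G)$, together with Clifford theory, one pushes the class in $\Ext^1_{kG}(B,A)$ into one of two settings: either $G$ is $p$-solvable---whence $\bfO_{p'}(G)\ne 1$, as $\bfO_p(G)=1$---or $E(G)\ne 1$ and the class survives to a nonzero first cohomology or extension group over a quasisimple component $L$ of $G$ acting on a module of dimension $<p$. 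In the $p$-solvable case the surviving class lies in the first cohomology of a $p$-group acting on a module of dimension $\le p-2$ and must therefore vanish; this is essentially the computation underlying Theorem~\ref{thm:abelian}.

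The remaining case concerns the quasisimple components, and I expect it to be the main obstacle. Here one invokes the classification of finite simple groups and the classification of their irreducible $k$-representations of dimension $<p$: the quasisimple groups $L$ in question are either of Lie type in the defining characteristic $p$ (in which case the module has very small highest weight), or belong to an explicit finite list of groups of Lie type in cross characteristic, of alternating groups, and of sporadic groups. For each such $L$, and for each of the bounded extensions of $L/\bfZ(L)$ produced by the reductions above, one must verify that any nonsplit extension of two irreducible modules has dimension sum at least $p-1$. In defining characteristic this follows from the linkage principle and the submodule structure of Weyl modules---for $L=\SL_2(p)$, for instance, any nonsplit extension of two irreducibles has dimension exactly $p$---and in the finitely many remaining cases it is a direct check against known $\Ext^1$ data. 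In every case this contradicts $\dim A+\dim B\le p-2$, completing the proof.
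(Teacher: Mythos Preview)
This theorem is not proved in the paper: it is quoted from \cite{Gcr} (Guralnick, \emph{J.\ Algebra} 1999), where the argument is indeed the one you outline---reduce to showing that $\Ext^1_{kG}(B,A)=0$ whenever $G$ is finite with $\bfO_p(G)=1$ and $\dim A+\dim B\le p-2$, then analyze $F^*(G)$ and ultimately invoke the classification of finite simple groups and their small representations. So your overall strategy matches the source.

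That said, two steps in your sketch do not work as written. First, the passage from arbitrary $G$ to either a connected reductive group or a finite group is muddled: a finite subgroup is its own Zariski closure, so your Weyl-module remark does not dispose of the finite case, and for infinite $G$ the implication ``$G$ has no nontrivial normal unipotent subgroup $\Rightarrow R_u(\overline{G})=1$'' is not obvious (one can have $G\cap R_u(\overline{G})=1$ with $R_u(\overline{G})\ne 1$). In \cite{Gcr} the reduction to the finite case is handled differently. Second, your dichotomy ``$p$-solvable or $E(G)\ne 1$'' is false in general: one can have $\bfO_p(G)=1$, $E(G)=1$, and $G$ not $p$-solvable (e.g.\ $C_2^4\rtimes \AAA_5$ with $p=3$). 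The correct reduction in \cite{Gcr} first uses Clifford theory and the Hochschild--Serre spectral sequence for $N=\bfO_{p'}(G)$ to reduce to the case where $\bfO_{p'}(G)\le \bfZ(G)$; only then does $F^*(G)=\bfZ(G)E(G)$ force either $G$ abelian (hence a $p'$-group) or $E(G)\ne 1$. Your reference to Theorem~\ref{thm:abelian} for the $p$-solvable case is also off---that theorem concerns irreducible groups with $H^1(G,k)\ne 0$, not $\Ext^1$ between two modules.
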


Serre asked for an analogous result for the other classical groups.
The example  $\AAA_p < \SO_p(k)$ shows that one cannot do too much better.
We also see that  there are reducible indecomposable self-dual $\SL_2(p)$-modules
of dimensions $p$ and $p \pm 1$ (contained in $\Sp$ for the dimension $p-1$
and $\SO$ in the other cases).    Building on the methods used in computing
$\ext^1$, we can essentially classify the self-dual reducible indecomposable modules
of dimension less than $2p-2$.

\begin{thm} \label{thm: indecomposable}  Let $k$ be an algebraically closed field
of characteristic $p$.   Let $V$ be a vector space over $k$ with $\dim V \leq 2p-3$.
Suppose that  $G$ is a finite subgroup of $\GL(V)$ such that $\bfO_p(G) = 1$, 
%contains no nontrivial unipotent normal subgroups 
and the $kG$-module $V$ is indecomposable and self-dual
but not irreducible. Then $p > 3$, $\GP$ is quasisimple, $V_\GP$ is uniserial,  and one of the following statements holds for some $U \cong U^* \in \IBr_p(\GP)$.

\begin{enumerate}[\rm(i)]

\item $V_\GP = (k|U|k)$, and $(\GP,p,\dim U)$ is
$(\SL_2(q),q-1,p+1)$, $(\AAA_p,p,p-2)$, $(\SL_n(q),(q^n-1)/(q-1),p-2)$,
$(M_{11},11,9)$, $(M_{23},23,21)$, or $(\PSL_2(p),p,p-2)$.

\item $V_\GP = (U|U)$. Furthermore,
$(\GP,p,\dim U) = (\SL_2(q),q+1,p-2)$, $(2\AAA_7,7,4)$,
$(\PSL_2(p), p \equiv \epsilon (\mod 4), (p+\epsilon)/2)$ or
$(\SL_2(p), p \equiv \epsilon (\mod 4), (p-\epsilon)/2)$ with $\epsilon = \pm 1$.
\end{enumerate}
Moreover, $V$ supports a non-degenerate
$G$-invariant bilinear form that is either symmetric or alternating. Furthermore, all such forms
have the same type, which is symmetric in all cases, except when
$(\GP,p,\dim U) = ({\mathrm {(P)}}\SL_2(p),p,(p-1)/2)$ in which case it is alternating.
Conversely, all the listed cases give rise to reducible self-dual indecomposable modules of
dimension $< 2p-2$.
\end{thm}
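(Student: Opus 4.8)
The plan is to analyze $V$ via its restriction to $\GP$, the subgroup generated by $p$-elements, exploiting the hypothesis $\dim V \le 2p-3 < 2p-2$ so that all the machinery leading to Theorems \ref{thm:abelian} and \ref{ext} is available. First I would observe that since $\bfO_p(G) = 1$ and $V$ is not irreducible but indecomposable, the module is genuinely mixed in the sense that a composition series has at least two factors; the self-duality forces the socle and head to be dual to each other. The key structural input is that $\GP$ acts with no trivial composition factor outside a controlled piece: more precisely, I would first show that $V$ has exactly one nontrivial irreducible $\GP$-constituent $U$ (up to the duality), and that the trivial module $k$ occurs, so that after restricting to $\GP$ the module is built from copies of $U$, $U^*$, and $k$. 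The main tool here is the vanishing and one-dimensionality statements for $\Ext^1$ from the earlier results, together with the classification underlying Theorem \ref{thm:abelian}: a nontrivial self-extension of an irreducible by a trivial (or vice versa) is extremely restrictive when the dimension is below $2p-2$, and forces the image $H$ of $\GP$ in $\GL(W)$ to be one of the groups on the lists of \cite{GHT}.

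Next I would run the reduction to the quasisimple case. Since $V$ is faithful on $G$ modulo a $p'$-kernel, and $\bfO_p(G) = 1$, the subgroup $\GP$ is generated by $p$-elements and has no normal $p$-subgroup; I would argue that $\GP$ must be quasisimple by eliminating the $p$-solvable possibilities (these are governed by Theorem \ref{thm:abelian} and Corollary \ref{cor:p-cf}, and in the $p$-solvable case the relevant self-dual indecomposables either don't exist in this dimension range or violate $\bfO_p(G)=1$). Then, knowing $S := \GP/\bfZ(\GP)$ is simple and $\dim U < p$ with $p \mid |S|$ but a Sylow $p$-subgroup acting on $U$ being cyclic (forced by $\dim U < p$ and Theorem \ref{ext2}-type considerations), I would invoke the block theory of cyclic defect: $U$ lies in a block with cyclic defect group, the Brauer tree is a line or an open polygon, and the uniserial structure of $V_\GP$ — i.e. $V_\GP$ being $(k|U|k)$ or $(U|U)$ — is exactly what a walk along the Brauer tree produces. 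This is where the classification of finite simple groups enters: one needs the list of simple groups $S$ with an irreducible $p$-modular representation of dimension $p-2$, $p-1$, or $p+1$ with cyclic (order-$p$) defect, and this is a known, finite list that yields precisely $\SL_2(q)$, $\AAA_p$, $\SL_n(q)$, $M_{11}$, $M_{23}$, $\PSL_2(p)$, $2\AAA_7$, and the remaining entries. For each candidate I would read off the Brauer tree (available in the literature) to confirm which uniserial shapes $(k|U|k)$ versus $(U|U)$ actually occur and to pin down $\dim U$ in terms of $p$ and $q$.

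Finally, for the statement about the invariant form: since $V \cong V^*$ is indecomposable, $\End_{kG}(V)$ is local, so the space of $G$-invariant bilinear forms on $V$ is at most two-dimensional and any nonzero such form is non-degenerate; I would show it is one-dimensional by a Brauer-character argument (the self-dual constituent $U$ appears with multiplicity governing the form's type) and then determine symmetric versus alternating by tracking the Frobenius–Schur indicator of $U$ through the uniserial structure — the middle layer in the $(k|U|k)$ case and the diagonal in the $(U|U)$ case each contribute, and only the $(\mathrm{P})\SL_2(p)$ case with $\dim U = (p-1)/2$ produces an alternating form because there the self-extension of a symplectic-type module flips the parity. The converse direction — that each listed triple really does give a reducible self-dual indecomposable of the right dimension — I would handle case by case by exhibiting the module inside a principal-series or Weyl-module reduction (for $\SL_2$, the reductions mod $p$ of the standard lattices; for $\AAA_p$ and $\SL_n(q)$, the heart of a natural permutation or Steinberg-adjacent module), checking indecomposability via the Brauer tree. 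The main obstacle I anticipate is the CFSG-dependent enumeration step: assembling a clean and complete list of quasisimple groups with a faithful irreducible $p$-modular representation of dimension within $2$ of $p$ and with defect exactly $p$, and correctly extracting the Brauer trees — in particular making sure no sporadic or exceptional-type example is missed and that the uniserial length is exactly what is claimed rather than longer.
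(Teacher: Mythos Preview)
There are two genuine gaps.

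First, your plan to run everything through cyclic-defect block theory breaks down in the defining-characteristic case. The claim that ``a Sylow $p$-subgroup acting on $U$ is cyclic, forced by $\dim U < p$'' is false: for $\GP = \SL_2(p^a)$ with $a \ge 2$ the Sylow $p$-subgroups are noncyclic of order $p^a$, yet there are faithful irreducibles of every dimension $\le p-1$. Brauer-tree methods simply do not apply here. The paper treats this case separately: Lemma \ref{ext-defi} shows that among Lie-type groups in characteristic $p$ only $\SL_2(q)$ or $\PSL_2(q)$ can occur, and then Proposition \ref{prop1} (relying on the Andersen--Jorgensen--Landrock description of projectives for $\SL_2$) forces $q = p$ and pins down the three uniserial shapes. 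Only after this reduction does one land in a cyclic-defect situation. Similarly, the reduction to $\GP$ quasisimple is not via Theorem \ref{thm:abelian} (which concerns irreducible modules); the paper uses Proposition \ref{indec-str}, whose second alternative (a central product of two quasisimples) is ruled out precisely because in that case $V \not\cong V^*$.

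Second, your argument for the type of the invariant form contains an error and a vague step. It is not true that every nonzero $G$-invariant bilinear form on $V$ is nondegenerate: when $V_\GP = (k|U|k)$ or $(U|U)$ one has $\dim \End_{kG}(V) = 2$, and there is a one-dimensional subspace of degenerate invariant forms (coming from the nilpotent endomorphism). The paper's Lemma \ref{type1} handles exactly this, identifying the degenerate forms with the image of the Jacobson radical of $\End_{kG}(V)$. As for determining symmetric versus alternating, tracking a Frobenius--Schur indicator ``through the uniserial structure'' does not work: all the relevant $U$ are orthogonal-type self-dual, yet the $(U|U)$ module for $U = L((p-3)/2)$ of $\SL_2(p)$ is symplectic. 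The paper instead uses the Jordan block structure of a generator of a Sylow $p$-subgroup on $V$ (Lemma \ref{type2}): restricting to $P$ gives $X_d$ or $X_{d-p} \oplus X_p$, and the known constraints on Jordan forms of unipotent elements in $\Sp$ versus $\SO$ decide the parity.
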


In particular, this gives a classification of all finite non-$\cG$-cr subgroups for
$\cG = \Sp(V)$ or $\SO(V)$ with $\dim V < 2p-2$, see Proposition \ref{serre} (recall that the notion
of $\cG$-cr subgroups was introduced by Serre in \cite{Serre2}).  It also yields
the following variant of the main result of \cite{Gcr}:

\begin{cor}\label{clas}
Let $k$ be an algebraically closed field of characteristic $p$ and $V$ a vector space over $k$ with $d := \dim V \leq p-1$.
Suppose $G$ is a finite subgroup of $\GL(V)$ such that $\bfO_p(G) = 1$ and 
the $kG$-module $V$ is self-dual. Then either
the $kG$-module $V$ is completely reducible, or $d = p-1$, 
$\GP = {\mathrm {(P)}}\SL_2(p)$, and any $G$-invariant non-degenerate bilinear form on 
$V$ must be alternating. 
\end{cor}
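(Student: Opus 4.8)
The plan is to derive this corollary directly from Theorem \ref{thm: indecomposable} together with Theorem \ref{thm:smaller} and a Krull--Schmidt argument. First I would reduce to the indecomposable case: write $V = \bigoplus_i V_i$ as a direct sum of indecomposable $kG$-modules. Since $V$ is self-dual, the Krull--Schmidt theorem shows the multiset $\{V_i\}$ is stable under $M \mapsto M^*$; a summand $V_i$ with $V_i \not\cong V_i^*$ is paired with $V_i^*$, and $V_i \oplus V_i^*$ is completely reducible precisely when $V_i$ is (as $V_i$ irreducible $\iff$ $V_i^*$ irreducible). Hence the $kG$-module $V$ fails to be completely reducible only if some self-dual summand $V_i$ is indecomposable and reducible. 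So we may assume $V$ itself is indecomposable, self-dual, and not irreducible, and must show this forces $d = p-1$, $\GP = {\mathrm{(P)}}\SL_2(p)$, and any invariant non-degenerate form on $V$ alternating.

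Now apply Theorem \ref{thm: indecomposable}, whose hypotheses ($\dim V \le 2p-3$ is implied by $d \le p-1 < 2p-3$ once $p \ge 2$; and $\bfO_p(G)=1$) are satisfied. We go through the list and keep only those cases with $\dim V \le p-1$. In case (i), the module has dimension $2\dim U + \dim k = \dim U + (\dim V_\GP) \ge 2(p-2)+1 = 2p-3 > p-1$ for all listed triples (since $\dim U \in \{p+1, p-2, p-2, 9, 21, p-2\}$ and in each case $2\dim U \geq 2p-4$, hence total dimension $\geq 2p-3$), so case (i) is entirely excluded by the dimension bound $d \le p-1$. In case (ii), $V_\GP = (U|U)$ has dimension $2\dim U$; the listed values of $\dim U$ are $p-2$, $4$, $(p+\epsilon)/2$, and $(p-\epsilon)/2$. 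The triples $(\SL_2(q),q+1,p-2)$ give $\dim V = 2p-4 > p-1$ for $p \ge 4$, excluded; $(2\AAA_7,7,4)$ gives $\dim V = 8 > 6 = p-1$, excluded; $(\PSL_2(p),p,(p+\epsilon)/2)$ gives $\dim V = p+\epsilon \ge p$, which exceeds $p-1$ unless $\epsilon = -1$, i.e. the case $\dim U = (p-1)/2$; and $(\SL_2(p),p,(p-\epsilon)/2)$ gives $\dim V = p - \epsilon \le p-1$ exactly when $\epsilon = +1$, again $\dim U = (p-1)/2$. So the only surviving case is $\GP = {\mathrm{(P)}}\SL_2(p)$ with $\dim U = (p-1)/2$ and $\dim V = p-1$, as claimed. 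The final clause of Theorem \ref{thm: indecomposable} states that in precisely this case $(\GP,p,\dim U) = ({\mathrm{(P)}}\SL_2(p),p,(p-1)/2)$ every invariant non-degenerate bilinear form on $V$ is alternating, which gives the last assertion.

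The one point that needs slight care — the main (modest) obstacle — is the Krull--Schmidt reduction combined with checking that no mixed configuration of summands sneaks past: e.g. one must rule out $V$ being a direct sum of a reducible indecomposable self-dual summand of dimension $\le p-1$ together with other summands, but by the dimension computation above there is no reducible indecomposable self-dual $kG$-module at all of dimension $\le p-1$ except the $p-1$-dimensional ${\mathrm{(P)}}\SL_2(p)$-module, so such a summand already uses up all of $V$; and for $V$ not completely reducible we need $\bfO_p(G) = 1$ to pass to summands, which is automatic since $\bfO_p(G)=1$ is inherited. One should also note $p > 3$ is forced (from Theorem \ref{thm: indecomposable}), so $d = p - 1 \ge 4$ and the statement is non-vacuous. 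Everything else is direct bookkeeping against the explicit list.
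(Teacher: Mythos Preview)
Your case analysis against Theorem \ref{thm: indecomposable} is essentially fine (modulo a slip: in case (i) the module $(k|U|k)$ has dimension $\dim U + 2$, not $2\dim U + 1$; fortunately every listed $\dim U$ is at least $p-2$, so $\dim V \ge p > p-1$ and the conclusion survives). The indecomposable case is therefore handled correctly.

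The genuine gap is in the Krull--Schmidt reduction. Two issues:

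\begin{enumerate}
\item[(a)] Your claim that ``$V$ fails to be completely reducible only if some \emph{self-dual} summand $V_i$ is indecomposable and reducible'' is false. If $M$ is a reducible indecomposable summand with $M \not\cong M^*$, then $M \oplus M^*$ sits inside $V$ and is not semisimple, yet neither $M$ nor $M^*$ is self-dual. Nothing you wrote rules out this configuration. You would need to show that no reducible indecomposable $kG$-summand of dimension $\le (p-1)/2$ can exist---and Theorem \ref{thm: indecomposable} does not apply to non-self-dual modules.
\item[(b)] Even when a self-dual reducible indecomposable summand $V_i$ exists, applying Theorem \ref{thm: indecomposable} to $V_i$ requires $\bfO_p(G_i)=1$ for the image $G_i$ of $G$ in $\GL(V_i)$. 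Your assertion that ``$\bfO_p(G)=1$ is inherited'' is not justified: the condition $\bfO_p(G)=1$ does \emph{not} pass to quotients in general, so you cannot simply invoke the theorem on a summand.
\end{enumerate}

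The paper sidesteps both problems by not attempting a summand-by-summand reduction at all. It treats only two cases: if $V$ is indecomposable, Theorem \ref{thm: indecomposable} applies directly to the faithful module $V$; if $V$ is decomposable and not semisimple, the paper cites the proof of Proposition \ref{serre}, which (via a careful analysis splitting on whether $G$ has a composition factor of order $p$, and using Theorem \ref{bz} and \cite{Gcr}) shows that $\dim V \ge p$, contradicting $d \le p-1$. That argument is exactly what is needed to close your gaps (a) and (b), and it is not a one-liner.
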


In Theorem \ref{thm: indecomposable} and Corollary \ref{clas}, the notation
${\mathrm {(P)}}\SL_2(p)$ means $\SL_2(p)$ if $p \equiv 1 (\mod 4)$ and 
$\PSL_2(p)$ if $p \equiv 3 (\mod 4)$.

\medskip
This paper is organized as follows. In \S\ref{sec: 2p},  we
describe the structure of quasisimple linear groups of degree at most $2p$. 
We collect various facts concerning extensions and self-extensions of simple modules in 
\S\ref{sec: ext1} and 
prove Theorem \ref{thm:abelian} in \S\ref{sec: ind}. Theorems \ref{ext} and \ref{ext2} are proved 
in \S\ref{sec: ext2}. Adequacy of linear groups of 
degree $p$ is discussed in \S\ref{sec: dim p}; in particular, we prove
Theorem \ref{thm:degree-p}. In the next \S\ref{sec: pim}, we describe the PIMs for 
various simple modules of simple groups. These data are used in \S\ref{sec:gcr} 
to classify reducible self-dual 
indecomposable modules of dimension at most $2p-3$, cf. Theorem 
\ref{thm: indecomposable}, and to classify the finite non-$\cG$-cr subgroups
of symplectic and orthogonal groups in dimensions at most $2p-3$, cf.
Proposition \ref{serre} and Corollary \ref{clas}. \S\ref{sec: sl2q} is devoted to proving weak adequacy 
of $\SL_2(q)$-representations, cf. Proposition \ref{sl2q}. In \S\ref{SLn adequacy}, we show that almost
always the natural module for $\SL_n(q)$ is adequate.  In \S\ref{sec: asymp},
we prove Theorem \ref{thm:asymp} concerning adequacy of (possibly disconnected) 
reductive algebraic groups and asymptotic adequacy. 
  
\medskip
{\bf Notation.} If $V$ is a $kG$-module and $X \leq G$ is a subgroup, then
$V_X$ denote the restriction of $V$ to $X$. The containments $X \subset Y$ (for
sets) and $X < Y$ (for groups) are strict. Fix a prime $p$ and an
algebraically closed field $k$ of characteristic $p$. Then for any finite group $G$,
$\IBr_p(G)$ is the set of isomorphism classes of irreducible
$kG$-representations (or their Brauer characters, depending on the context), $\dl(G)$ denotes the smallest degree of nontrivial $\varphi \in \IBr_p(G)$,
$\cP(\varphi)$ is the principal indecomposable module (PIM) 
corresponding to $\varphi$, and
$B_0(G)$ denotes the principal $p$-block of $G$. Sometimes we use 
$\bbone$ to denote the principal representation.
$\bfO_p(G)$ is the largest normal $p$-subgroup of $G$, $\bfO^p(G)$ is the
smallest normal subgroup $N$ of $G$ subject to $G/N$ being a $p$-group, and similarly
for $\bfO_{p'}(G)$ and $\bfO^{p'}(G) = \GP$. Furthermore, the {\it Fitting subgroup} $F(G)$ is
the largest nilpotent normal subgroup of $G$, and $E(G)$ is the product of all
subnormal quasisimple subgroups of $G$, so that $F^*(G) = F(G)E(G)$ is the {\it
generalized Fitting subgroup} of $G$.
Given a finite-dimensional $kG$-representation
$\Phi:G \to \GL(V)$, we denote by $\cM$ the $k$-span
$$\langle \Phi(g) \mid \Phi(g) \mbox{ semisimple}\rangle_k.$$
If $M$ is a finite length module over a ring $R$,  then define $\soc_i(M)$
by $\soc_0(M) = 0$ and $\soc_j(M)/\soc_{j-1}(M) = \soc (M/\soc_{j-1}(M))$.
If $M = \soc_j(M)$
with $j$ minimal, we say that $j$ is the {\it socle length} of $M$. If $V$ is a vector space 
endowed with a non-degenerate quadratic form, then $\OO(V)$ denotes the full isometry group
of the form. For a linear algebraic group $\cG$, $\cG^0$ denotes the connected component
containing the identity.

\section{Linear groups of low degree}\label{sec: 2p}

First we recall the description of absolutely irreducible non-solvable linear groups
of degree less than $p = \Char(k)$, relying on the main result of Blau and Zhang \cite{BZ}:

\begin{thm}\label{bz} {\rm \cite[Theorem 2.1]{GHT}.}
Let $W$ be a faithful, absolutely irreducible $kH$-module for a finite group $H$ with
$\bfO ^{p'}(H) = H$. Suppose that $1 < \dim W < p$. Then one of the following cases holds,
where $P \in \Syl_p(H)$.

\smallskip
{\rm (a)} $p$ is a Fermat prime, $|P| = p$, $H = \bfO _{p'}(H)P$ is solvable, $\dim W = p-1$,
and $\bfO_{p'}(H)$ is absolutely irreducible on $W$.

\smallskip
{\rm (b)} $|P| = p$, $\dim W = p-1$, and one of the following conditions holds:

\hskip1pc
{\rm (b1)} $(H,p) = (\SU_n(q), (q^n+1)/(q+1))$, $(\Sp_{2n}(q),(q^n+1)/2)$,
%$(2\AAA_5,3)$ -- see $\Sp_2(5)$
$(2\AAA_7,5)$, $(3J_3,19)$, or $(2Ru,29)$.

\hskip1pc
{\rm (b2)} $p = 7$ and $H = 6_1\cdot \PSL_3(4)$, $6_1\cdot \PSU_4(3)$, $2J_2$,
$3\AAA_7$, or $6\AAA_7$.

\hskip1pc
{\rm (b3)} $p = 11$ and $H = M_{11}$, $2M_{12}$, or $2M_{22}$.

\hskip1pc
{\rm (b4)} $p = 13$ and $H = 6\cdot Suz$ or $2\gtwo(4)$.

\smallskip
{\rm (c)} $|P| = p$, $\dim W = p-2$, and
$(H,p) = (\PSL_n(q), (q^n-1)/(q-1))$, $(\AAA_p,p)$, $(3\AAA_6,5)$,
$(3\AAA_7,5)$, $(M_{11},11)$, or $(M_{23},23)$.

\smallskip
{\rm (d)} $(H,p,\dim W) = (2\AAA_7,7,4)$, $(J_1,11,7)$.

\smallskip
{\rm (e)} Extraspecial case: $|P| = p = 2^n+1 \geq 5$, $\dim W = 2^n$,
$\bfO _{p'}(H) = R\bfZ(H)$, $R = [P,R]\bfZ (R) \in \Syl_2(\bfO_{p'}(H))$, $[P,R]$ is an
extraspecial $2$-group of order $2^{1+2n}$, $V_{[P,R]}$ is absolutely irreducible.
Furthermore,  $S := H/\bfO _{p'}(H)$ is simple non-abelian, and either
$S= \Sp_{2a}(2^b)'$ or $\Omega^-_{2a}(2^b)'$ with
$ab = n$, or $S = \PSL_2(17)$ and $p = 17$.
%or $p = 5$ and $H/bfO _{p'}(H) \in \{\AAA_5,\AAA_6$ and $p = 5$.

\smallskip
{\rm (f)} $Lie(p)$-case: $H/\bfZ (H)$ is a direct product of simple groups of Lie type in characteristic $p$.\\
Furthermore, in the cases {\rm (b)--(d)}, $H$ is quasisimple with $\bfZ(H)$ a $p'$-group.
\end{thm}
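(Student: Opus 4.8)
The plan is to run the standard analysis of small‑degree linear groups: reduce via Clifford theory to a quasi‑primitive configuration, study the generalized Fitting subgroup $F^*(H)$, and in the quasisimple case quote the deep classification of Blau--Zhang \cite{BZ}, which itself relies on the classification of finite simple groups. As preliminary reductions I would first observe that, since $W$ is faithful of dimension $d<p$, every normal $p$-subgroup acts trivially on $W$, so $\bfO_p(H)=1$; hence $F(H)$ is a $p'$-group and $p\mid|F^*(H)|$ iff $p\mid|E(H)|$. Next, I would reduce to the case that every normal subgroup of $H$ acts homogeneously on $W$: if $W$ is properly induced from $H_0<H$ then $[H:H_0]\mid d<p$, and iterating this together with $\bfO^{p'}(H)=H$ confines the genuinely imprimitive configurations to the solvable case (a) (and, via tensor induction, to products inside (f)). From then on $\bfO_{p'}(H)$ is homogeneous on $W$, $C_H(F^*(H))\le F^*(H)$, and $H/F^*(H)\bfZ(H)$ embeds in $\Out(E(H))\times\Out(F(H))$.

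\emph{The case $E(H)=1$.} Here $F^*(H)=F(H)$ is a nilpotent $p'$-group, homogeneous and faithful on $W$, hence of central type: $F(H)/\bfZ(F(H))\cong\prod_q\bbF_q^{2n_q}$ with $\prod_q q^{n_q}=d<p$ and $H/F(H)$ embedding in (roughly) $\prod_q\Sp_{2n_q}(q)$. If $H$ is solvable, a Hall-$p'$/normal‑complement argument together with the action of $P\cong\bbZ/p$ on $F(H)$ gives $|P|=p$, $d=p-1$ and $p=2^a+1$ a Fermat prime, which is case (a). If $H$ is nonsolvable, then $H/F(H)$ is a nonsolvable group generated by $p$-elements inside $\prod_q\Sp_{2n_q}(q)$ with $\prod_q q^{n_q}<p$; the arithmetic of this forces a single factor with $q=2$, $d=2^n$ and $p=2^n+1$, so $p$ is a primitive prime divisor of $2^{2n}-1$, and the classification of subgroups of $\Sp_{2n}(2)$ generated by such an element (Guralnick--Penttila--Praeger--Saxl and related work) leaves exactly $S=H/\bfO_{p'}(H)\in\{\Sp_{2a}(2^b)',\Omega^-_{2a}(2^b)'\}$ with $ab=n$, or $\PSL_2(17)$ with $p=17$: this is case (e).

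\emph{The case $E(H)\ne1$.} Then $p\mid|E(H)|$. Passing to $\overline{k}$ and using homogeneity of all normal subgroups, $W$ restricted to $E(H)$ is a tensor product of faithful modules for the components, so $2^{(\#\,\mathrm{components})}\le d<p$. If every component is of Lie type in characteristic $p$, the restricted (Steinberg--tensor) irreducibles of dimension $<p$ give case (f), possibly a product of such groups. Otherwise some component $L$ is not of defining characteristic with $p\mid|L|$; the tensor bound together with the Landazuri--Seitz--Zalesskii lower bounds on faithful degrees forces $E(H)=L$ quasisimple with $\dim W\le p-1$ and (part of the conclusion of \cite{BZ}) a Sylow $p$-subgroup of order $p$. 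Here I would invoke Blau--Zhang \cite{BZ} for the complete list of quasisimple $L$ with a faithful irreducible $k$-module of dimension $p-1$, $p-2$, or one of the listed small exceptional dimensions, and then check that $\bfO^{p'}(H)=H$ forces $H=L\bfZ(H)$ with $\bfZ(H)$ a $p'$-group (an outer part of order divisible by $p$ cannot be generated together with $L$ by $p$-elements, as already $\bfO^{p'}(\PGL_2(p))=\PSL_2(p)$). This yields cases (b), (c), (d) and the final clause.

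\emph{Main obstacle.} The purely structural steps --- $\bfO_p(H)=1$, the reduction to quasi‑primitive modules, the $F^*(H)$ dichotomy, and the elimination of multi‑component configurations outside (f) --- are routine. The real difficulty is that the explicit lists in (b)--(d) and the list of sections in (e) are not self-contained: they import the Blau--Zhang classification of quasisimple groups admitting a faithful irreducible modular representation of dimension at most $p-1$ (which runs through CFSG and detailed decomposition-number information for every family of simple groups), together with the classification of linear groups containing a primitive-prime-divisor element of order $2^n+1$. This is precisely why the statement is quoted from \cite{GHT} and \cite{BZ} rather than reproved here.
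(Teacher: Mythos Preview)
The paper does not give a proof of this statement at all: it is quoted verbatim as \cite[Theorem~2.1]{GHT}, with the introductory sentence noting that it ``rel[ies] on the main result of Blau and Zhang \cite{BZ}''. Your outline is a correct sketch of the standard proof strategy (Clifford reduction to a quasi-primitive module, the $F^*(H)$ dichotomy, Blau--Zhang for the quasisimple case, and primitive-prime-divisor arguments inside $\Sp_{2n}(2)$ for case (e)), and you yourself correctly identify in the final paragraph that the heavy lifting is imported from \cite{BZ} and related classifications rather than redone here. So there is nothing to compare: your proposal agrees with the intended provenance of the result, and the paper offers no independent argument.
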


Now we prove the following result which extends Theorem \ref{bz} for quasisimple groups
and is of independent interest. Note that the {\it complex} analogue of this result is
given by \cite[Theorem 8.1]{TZ1}.

\begin{thm}\label{thm:2p}
Let $p$ be a prime, $H$ a finite quasisimple group of order divisible by $p$. Suppose
$W$ is a faithful, absolutely irreducible $kH$-module of dimension $d$, where
$p \leq d \leq 2p$. Then one of the following statements holds.

\begin{enumerate}[\rm(i)]
\item $H$ is a quasisimple group of Lie type in characteristic $p$.

\item $(H,\dim(W),p)$ is as listed in Tables I, IIa, IIb, III, where the fourth column lists the number of
isomorphism classes of $W$ for each choice of $(H,\dim(W),p)$.
\end{enumerate}
\end{thm}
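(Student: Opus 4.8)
The plan is to classify, among all finite quasisimple groups $H$ with $p \mid |H|$, those admitting a faithful absolutely irreducible $kH$-module $W$ with $p \le \dim W \le 2p$, and to do so by a case analysis on the Lie type / sporadic / alternating trichotomy for the quasisimple group $S = H/\bfZ(H)$. Since we may extend scalars, assume $k = \overline{\bbF}_p$. If $H$ is of Lie type in the defining characteristic $p$, we land in case (i) and there is nothing to prove, so throughout we assume $S$ is \emph{not} of Lie type in characteristic $p$; our goal is then to show $(H, \dim W, p)$ appears in the tables, which amounts to bounding the relevant primes $p$ in terms of the (small) cross-characteristic representation degrees of $H$ and then enumerating. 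First I would invoke the Landazuri--Seitz--Zalesskii lower bounds $\dl(S)$ on cross-characteristic degrees: since $\dim W \le 2p$ and $p \mid |H|$, writing $|H|_p = p^a$ we get a strong constraint, and in the first instance one shows that $p^2 \nmid |H|$ in almost all cases (so Sylow $p$-subgroups are cyclic of order $p$, or very small), because a group with $p^2 \mid |H|$ and a faithful representation of dimension $< 2p$ is extremely restricted — this is where the structure theory of blocks with cyclic (or small) defect, together with Theorem \ref{bz}, does the heavy lifting. Concretely, if $p \nmid \dim W$ one can often reduce to the degree-$<p$ case on a submodule under $\bfO^{p'}$, but since here $\dim W \ge p$ we instead argue directly with Brauer characters.

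The core of the argument splits according to the type of $S$:

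\begin{enumerate}[\rm(1)]
\item \textbf{$S = \AAA_n$ alternating.} Here $p \mid n!$, and the faithful irreducible $kH$-modules of $H = \AAA_n$ or a cover $2.\AAA_n$, $3.\AAA_6$, $3.\AAA_7$, $6.\AAA_7$, etc., of dimension $\le 2p$ are controlled by James's results on modular irreducibles of symmetric groups and the known lists for the covers. One checks $p$ must be at most roughly $n+1$ (to have $p \mid n!$), while the smallest nontrivial degree grows like $n-2$ (the heart of the natural permutation module) or faster; matching these forces $n$ and $p$ into a short list, yielding the $\AAA_p$-type and basic-spin entries in the tables.
\item \textbf{$S$ sporadic.} For each of the $26$ sporadics (and the Tits group) and each prime $p$ dividing the order, the minimal faithful projective degree is tabulated (Hiss--Malle, Jansen, the modular ATLAS); one simply reads off those $(S, p)$ with a faithful irreducible of degree between $p$ and $2p$. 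This is a finite, if lengthy, database check and produces the $M_{11}, M_{12}, M_{22}, M_{23}, J_1, J_2, J_3, Ru, Suz, \ldots$ entries.
\item \textbf{$S$ of Lie type in the non-defining characteristic $\ell \ne p$.} This is the main case. Using the Landazuri--Seitz--Zalesskii bounds, the degree $\le 2p$ forces $|S|$ small relative to $p$; combined with $p \mid |S|$ one shows $p$ divides a specific cyclotomic factor of $|S|$ (typically $p \mid \Phi_m(\ell)$ for the $m$ giving the ``generic'' minimal degree, e.g. $(q^n-1)/(q-1)$ or $(q^n+1)/(q+1)$), and then the degree constraint pins down the rank and $q$. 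One expects to recover exactly the families appearing in Theorem \ref{bz}(b1),(c) together with their ``degree $p$ up to $2p$'' companions (Weil representations of $\SU_n(q)$, $\Sp_{2n}(q)$, unitary/linear groups with $p = (q^n \mp 1)/(q\mp 1)$, and a few small exceptions like $2.\gtwo(4)$, $6.\mathrm{Suz}$, $3.J_3$, $2.Ru$). Finally, one counts the isomorphism classes of $W$ for each triple — for cyclic defect this is governed by the Brauer tree (the exceptional vs. non-exceptional vertices), and for the Weil-type representations by the well-known decomposition of the Weil character, giving the fourth column of the tables.
\end{enumerate}

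The main obstacle — and the bulk of the work — is step (3): one must carefully run the Landazuri--Seitz--Zalesskii / Tiep--Zalesskii lower bounds against the upper bound $2p$ across all Lie types $A_n, {}^2A_n, B_n, C_n, D_n, {}^2D_n, E_6, {}^2E_6, E_7, E_8, F_4, G_2, {}^3D_4, {}^2B_2, {}^2G_2, {}^2F_4$ and all primes $p$ dividing $|S|$, which requires a uniform treatment of the interplay between $|S|_p$ and $\dl(S)$, plus handling of the finitely many ``small $q$'' exceptions where the generic bounds are not sharp (these are exactly where the sporadic-looking entries like $2.\gtwo(4)$ intrude). The complex analogue \cite[Theorem 8.1]{TZ1} gives both the template for this analysis and, via reduction mod $p$ and Fong--Swan / projective cover arguments, a large part of the list; what is genuinely new here is the presence of modular-only irreducibles (no characteristic-zero lift) of degree between $p$ and $2p$, which must be located using the decomposition matrices and the block theory rather than by reduction from $\bbC$. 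I would organize the write-up as: (a) reduce to $k=\overline{\bbF}_p$ and $S$ not defining-characteristic Lie type; (b) the bound $p^2 \nmid |H|$ except in a short explicit list; (c) the cyclic-defect case via Brauer trees, citing Feit and the known trees; (d) the small-exception sweep; (e) assembly of Tables I--III with the multiplicity column justified case by case.
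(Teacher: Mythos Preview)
Your high-level strategy matches the paper's: assume $S = H/\bfZ(H)$ is not Lie type in characteristic $p$, use the inequality $\dl(L) \le \dim W \le 2p$ (with $L$ the universal cover of $S$), and split into alternating, sporadic, and cross-characteristic Lie type, disposing of small cases by table lookup. But several of your technical details are off or unnecessary. The proposed step ``show $p^2 \nmid |H|$ except in a short list'' is neither needed nor carried out in the paper; the single degree inequality $\dl(L) \le 2p$ already reduces each family to a finite problem, and no separate control on $|H|_p$ enters. Your remark about reducing to a submodule under $\bfO^{p'}$ is confused here, since $H$ is quasisimple and $\bfO^{p'}(H) = H$. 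Most importantly, your plan to identify the modules in the classical-group case via cyclic-defect Brauer trees and block theory misses the actual engine of the paper's argument: the ``gap'' theorems \cite[Theorem~1.1]{GT1}, \cite[Theorem~16]{HM1}, and \cite[Theorems~2.1,~2.7]{GMST}, which classify \emph{directly} all cross-characteristic irreducibles of $\SL_n(q)$, $\SU_n(q)$, $\Sp_{2n}(q)$ of degree below an explicit second threshold as Weil modules of known dimension. With these in hand one simply solves the arithmetic constraint $p \le (\text{Weil degree}) \le 2p$ subject to $p \mid |H|$, which immediately produces the table entries; no Brauer trees, no lifts, no modular-only search is required. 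For $\AAA_n$ with $n \ge 17$ the analogous gap input is \cite[Lemma~6.1]{GT2}, forcing $W$ to be the heart of the permutation module; exceptional Lie types are eliminated by the Landazuri--Seitz bounds; and small $n$, small $q$, and sporadics are handled by direct inspection of \cite{JLPW}, \cite{HM2}, \cite{ModAt}, \cite{Jan}. Your instinct that the complex analogue \cite[Theorem~8.1]{TZ1} is the template is correct, but the passage to characteristic $p$ goes through these gap results (which are already statements about Brauer characters), not through a separate reduction-plus-modular-only analysis.
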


\begin{proof}
Let $L$ be the universal covering group of $S := H/\bfZ(H)$ and let $\dl(L)$ denote
the smallest degree of nontrivial absolutely irreducible $kL$-representations. Then
\begin{equation}\label{2p}
  2p \geq \dim(W) \geq \dl(L).
\end{equation}
This inequality will allow us to rule out the majority of
the cases. We will assume that $S$ is {\it not} isomorphic to any finite simple group
of Lie type in characteristic $p$.

\smallskip
First, let $S$ be a sporadic group. Then $\dl(L)$ is listed in \cite{Jan}. Furthermore,
$p \leq 71$ and so $\dim W \leq 142$. Now the result follows from inspecting
\cite{HM2} and \cite{JLPW} (and also \cite{ModAt} for the three Conway groups), and
is listed in Table III.

\smallskip
Assume now that $S = \AAA_n$. The cases $5 \leq n \leq 13$ can
be checked by inspecting \cite{JLPW}, and the result is listed in Table I.
If $14 \leq n \leq 16$, then $p \leq 13$, $\dim W \leq 26$, and so the statement follows
by inspecting \cite{HM2}. So we may assume $n \geq 17$. In this case,
$$\dim W \leq 2p \leq 2n < (n^2-5n+2)/2.$$
Hence, by \cite[Lemma 6.1]{GT2}, $W$ is the heart of the natural permutation module
of $G = \AAA_n$, yielding the first row of Table I.

\smallskip
Next suppose that $S$ is an exceptional group of Lie type defined over $\bbF_q$. The
cases $S \in \{\tw2 \btwo(8), \gtwo(3), \gtwo(4), \tw3 \df(2), \tw2 \ff(2)', \ff(2)\}$ can be
checked using \cite{JLPW} and lead to the last six rows of Table IIb. For all other groups,
$\dl(L)$ is bounded below by the Landazuri-Seitz-Zalesskii bounds (see
\cite[Table II]{Tiep} for latest improvements) and one can check that (\ref{2p}) cannot
hold. For instance, if $S = \ff(q)$ with $q \geq 3$, then $p \leq q^4+1$ whereas
$\dl(L) \geq q^8+q^4-2$.

\smallskip
From now on we may assume that $S$ is a finite classical group defined over
$\bbF_q$ and $p \nmid q$. Suppose first that $S=\PSL_2(q)$. Using \cite{JLPW} we may assume  that $q \geq 11$. If $q$ is even, then $L = \SL_2(q)$ and each
$\varphi \in \IBr_p(L)$ has degree $q$ or $q \pm 1$, whereas $p\mid(q \pm 1)$. If
in addition $p \neq q \pm 1$ then $2p \leq 2(q+1)/3 < q-1 \leq \dim(W)$, violating
(\ref{2p}). So $p = q \pm 1$, and inspecting \cite{Burk} we arrive at the first multi-row
of Table IIa. Assume that $q$ is odd. Then again $L = \SL_2(q)$, and we also get
additional possibilities $\varphi(1) = (q\pm 1)/2$ for $\varphi \in \IBr_p(L)$. Note that
$p \neq (q \pm 1)$, $(q \pm 1)/3$ (as $q \geq 11$ is odd) and (\ref{2p}) implies
$p > (q+1)/5$ as $\dl(L) = (q-1)/2$. It follows that $p = (q \pm 1)/4$ or
$p = (q \pm 1)/2$.
%In the former case, $W$ must be a {\it Weil module} of
%dimension $(q \pm 1)/2$, and we arrive at the two rows of $(P)Sp_{2n}(q)$ with
%$n = 1$ and $p = (q^n \pm 1)/4$ in Table IIa. In the latter case,
A detailed analysis of $\IBr_p(L)$ leads to the 2nd and 3rd multi-rows of Table IIa.

\smallskip
Suppose now that $S = \PSL_n(q)$ with $n \geq 3$. Note that $\SL_4(2) \cong \AAA_8$.
If $(n,q) = (6,3)$, then $p \leq 13$ whereas $\dl(L) \geq 362$ by \cite[Table III]{GT1}.
If $(n,q) = (6,2)$, then $p \leq 31$, so \cite[Table III]{GT1} implies
$(\dim W,p) = (62,31)$, as recorded in Table IIa (the 9th row). The cases
$(n,q) = (3, q \leq 7)$, $(4,3)$ can be checked using \cite{JLPW}. So we may assume that
$(n,q) \neq (3, q \leq 7)$, $(4,2)$, $(4,3)$, $(6,2)$, $(6,3)$. In this case,
$$\dim W \leq 2p \leq \frac{2(q^n-1)}{q-1} < \left\{ \begin{array}{ll}
    (q^2-1)(q-1)/\gcd(3,q-1), & n = 3,\\
    (q^3-1)(q-1)/\gcd(2,q-1), & n = 4,\\
    (q^{n-1}-1)\left(\dfrac{q^{n-2}-q}{q-1}-1\right), & n \geq 5. \end{array} \right.$$
Applying \cite[Theorem 1.1]{GT1}, we see that $W$ is one of the {\it Weil modules} of
$L = \SL_n(q)$, of dimension $(q^n-1)/(q-1)-a$ with $a = 0,1,2$. Note that
$$\frac{q^n-1}{q-1}-2 > \max\left\{2(q+1), 2\cdot \frac{q^{n-2}-1}{q-1},
    \frac{q^{n-1}-1}{q-1}, \frac{2}{3} \cdot \frac{q^n-1}{q-1}\right\}.$$
If $q \geq 3$, then $(q^n-1)/(q-1) -2 > 2(q^{n-1}-1)/(q-1)$.
Recall that $p\mid\prod^{n}_{i=1}(q^i-1)$ and $p \leq \dim W \leq 2p$. So we arrive at one
of the following possibilities:

$\bullet$ $q=2$, $p = 2^{n-1}-1$, whence $W$ is the unique Weil module of dimension
$2p$ by \cite[Theorem 1.1]{GT1}, leading to the 9th row of Table IIa;

$\bullet$ $p = (q^n-1)/(q-1)$, whence $n$ is an odd prime, $S = L$, and $W$ is one
of $q-2$ Weil modules of dimension $p$ by \cite[Theorem 1.1]{GT1}, leading to
the 8th row of Table IIa;

$\bullet$ $2p = (q^n-1)/(q-1)$. Here, $q$ is odd and $n$ must be even, but then
$$\frac{q^n-1}{2(q-1)} = \frac{q^n-1}{q^2-1} \cdot \frac{q+1}{2}$$
cannot be a prime.

\smallskip
Next suppose that $S = \PSU_n(q)$ with $n \geq 3$. The cases
$(n,q) = (3, q \leq 5)$, $(4,q \leq 3)$, $(5,2)$, $(6,2)$ can be checked using \cite{JLPW}. So we may assume that none of these cases occurs. Now observe that
if $n=p=3\mid(q+1)$ then $2p < (q-1)(q^2+3q+2)/6$, and furthermore
$$2p \leq \frac{2(q^n-(-1)^n)}{q+1} < \left\{ \begin{array}{ll}
     (q-1)(q^2-q+1)/3, & n=3, p \neq 3\mid(q+1),\\
    (2q^3-q^2+2q-3)/3, & n = 3, 3 \nmid(q+1)\\
    \dfrac{(q^2+1)(q^2-q+1)}{\gcd(2,q-1)}-1, & n = 4,\\
    \dfrac{(q^n-(-1)^n)(q^{n-1}-q^2)}{(q+1)(q^2-1)}-1, & n \geq 5. \end{array} \right.$$
Applying \cite[Theorem 16]{HM1} and \cite[Theorem 2.7]{GMST},
we conclude that $W$ is one of the {\it Weil modules} of
$L = \SU_n(q)$, of dimension $(q^n-(-1)^n)/(q+1)-b$ with $b = 0,\pm 1$. Note that
$$\frac{q^n-(-1)^n}{q+1}-1 > \max\left\{2(q+1), \frac{2(q^{n-2}-(-1)^n)}{q+1},
    \frac{q^{n-1}+(-1)^n}{q+1}, \frac{2(q^n-(-1)^n)}{3(q+1)}\right\}.$$
If $q \geq 3$, then
$$\frac{q^n-(-1)^n}{q+1} -1 > 2 \cdot \frac{q^{n-1}-(-1)^{n-1}}{q+1}.$$
Recall that $p\mid\prod^{n}_{i=2}(q^i-(-1)^i)$ and $p \leq \dim W \leq 2p$. So we arrive at one
of the following possibilities:

$\bullet$ $q=2$, $p = (2^{n-1}-(-1)^{n-1})/3$; in particular, $n-1 \geq 5$ is a prime. Hence
$W$ is either the unique Weil module of dimension $2p$, or one of the two Weil modules of dimension $2p-1$, leading to the 11th and 12th rows of Table IIa;

$\bullet$ $p = (q^n-(-1)^n)/(q+1)$, whence $n$ is an odd prime, $S = L$, and $W$ is one
of $q$ Weil modules of dimension $p$, yielding the 10th row of Table IIa;

$\bullet$ $2p = (q^n-(-1)^n)/(q+1)$. Here, $q$ is odd and $n$ must be even, but then
$$\frac{q^n-(-1)^n}{2(q+1)} = \frac{q^{n/2}-(-1)^{n/2}}{q+1} \cdot
    \frac{q^{n/2}+(-1)^{n/2}}{2}$$
cannot be a prime.

\smallskip
Now let $S = \PSp_{2n}(q)$ with $n \geq 2$. Note that $\Sp_4(2)' \cong \AAA_6$
and $\PSp_4(3) \cong \SU_4(2)$. Also, the cases
$(n,q) = (2, 4)$, $(3,2)$ can be checked using \cite{JLPW}. So we will assume that
$(n,q) \neq (2, q \leq 4)$, $(3,2)$. In this case,
$$\dim W \leq 2p \leq \frac{2(q^n+1)}{\gcd(2,q-1)} <  \frac{(q^n-1)(q^{n}-q)}{2(q+1)}.$$
Using the Landazuri-Seitz-Zalesskii bound for $\Sp_{2n}(q)$ with $2\mid q$ and applying \cite[Theorem 2.1]{GMST} to $\Sp_{2n}(q)$ with $q$ odd, we now see that
$q$ must be odd and $W$ is one of the {\it Weil modules} of
$L = \Sp_{2n}(q)$, of dimension $(q^n \pm 1)/2$.  So we arrive at the 4th--7th rows
of Table IIa. Note in addition that if $2 < p = (q^n-1)/4$ then $q=5$ and $n$ is an odd prime, and if $p = (q^n+1)/4$ then $q = 3$ and $n$ is again an odd prime. Similarly,
if $p = (q^n-1)/2$ then $q = 3$ and $n$ is an odd prime, and if
$p = (q^n+1)/2$ then $n = 2^m$.

\smallskip
Now we may assume that $S = \Omega_{2n+1}(q)$ with $n \geq 3$, or
$P\Omega^\pm_{2n}(q)$ with $n \geq 4$. Again, the cases of $\Omega_7(3)$ and
$\Omega^\pm_8(2)$ can be checked directly using \cite{JLPW}. Aside from these
cases, one can verify that (\ref{2p}) cannot hold.
\end{proof}

\begin{figure}
\centerline
{{\sc Table} I. Quasisimple linear groups: Alternating groups}
\vspace{0.3cm}
\begin{center}
\begin{tabular}{|c||c|c|c|} \hline \skipa
$H$ & $\dim W$ & $p$ & Class number  \\ \skipa \hline \hline \skipa
$\AAA_n$ & $n-\left\{\begin{array}{ll}2, & p\mid n\\ 1, & p\nmid n \end{array}\right.$ &
    $\dfrac{n-1}{2} \leq p \leq n-1$ & $1$ \\ \hline
$\AAA_{5}$ & $3$ & $3$ & $2$ \\ \hline
$2\AAA_{5}$ & $6$ & $3$ & $1$\\ \hline
$\AAA_{6}$ & $5$, resp. $8$, $10$ & $5$ & $2$, resp. $1$, $1$\\ \hline
$2\AAA_{6}$ & $10$ & $5$ & $2$ \\ \hline
$3\AAA_{6}$ & $6$ & $5$ & $2$ \\ \hline
$6\AAA_{6}$ & $6$ & $5$ & $4$ \\ \hline
$\AAA_{7}$ & $4$ & $2$ & $2$ \\ \hline
$\AAA_{7}$ & $8$, resp. $10$ & $5$ & $1$, resp. $2$ \\ \hline
$\AAA_{7}$ & $10$, resp. $14$ & $7$ & $1$, resp. $2$ \\ \hline
$2\AAA_{7}$ & $4$, resp. $6$ & $3$ & $2$ \\ \hline
$2\AAA_{7}$ & $14$ & $7$ & $2$ \\ \hline
$3\AAA_{7}$ & $6$ & $5$ & $2$ \\ \hline
$3\AAA_{7}$ & $9$ & $7$ & $2$ \\ \hline
$6\AAA_{7}$ & $6$ & $5$ & $4$ \\ \hline
$\AAA_{8}$ & $14$ & $7$ & $1$\\ \hline
$2\AAA_{8}$ & $8$ & $5$, $7$ & $1$\\ \hline
$2\AAA_{9}$ & $8$ & $5$, $7$ & $2$\\ \hline
$2\AAA_{10}$ & $8$ & $5$ & $2$\\ \hline
$2\AAA_{11}$ & $16$ & $11$ & $1$\\ \hline
\end{tabular}
\end{center}
\end{figure}

\begin{figure}
\centerline
{{\sc Table} IIa. Quasisimple linear groups: Groups of Lie type. I}
\vspace{0.3cm}
\begin{center}
\begin{tabular}{|c||c|c|c|} \hline \skipa
$H$ & $\dim W$ & $p$ & Class number  \\ \skipa \hline \hline \skipa
$\begin{array}{c}\SL_2(q)\\2\mid q \end{array}$ &
  $\begin{array}{c}p \\ p \\p+1\end{array}$ &
  $\begin{array}{c}q+1 \\q-1 \\q-1 \end{array}$ &
  $\begin{array}{c} q/2-1 \\ q/2 \\ 1 \end{array}$
  \\ \hline \skipa
$\begin{array}{c}\PSL_2(q)\\2\nmid q \end{array}$ &
  $\begin{array}{c}p\\2p-2\\2p-1\\2p\\2p\end{array}$ &
  $\begin{array}{c}(q \pm 1)/2 \\ (q+1)/2\\(q+1)/4\\(q-1)/2\\(q+1)/2 \end{array}$ &
  $\begin{array}{c}2\\1\\2\\(q-3)/4\\ (q-5)/4 \end{array}$
  \\ \hline \skipa
$\begin{array}{c}\SL_2(q)\\2\nmid q \end{array}$ &
  $\begin{array}{c}p+1\\2p\\2p\\2p \end{array}$ &
  $\begin{array}{c}(q-1)/2\\(q \pm 1)/4\\(q-1)/2\\(q+1)/2 \end{array}$ &
  $\begin{array}{c}2\\2\\(q+1)/4\\ (q-1)/4\end{array}$
  \\ \hline \skipa
$\PSp_{2n}(q)$, $2\nmid q$, $n \geq 2$ & $p$ &
  $\begin{array}{ll}p = (q^n - 1)/2, & q= 3\\p =(q^n+1)/2, & n = 2^m \end{array}$ &
    $2$ \\ \hline \skipa
$\Sp_{2n}(3)$, $n$ odd prime & $p+1$ & $(3^n -1)/2$ & $2$ \\ \hline
\skipa $\PSp_{2n}(3)$, $n$ odd prime & $2p-1$ & $(3^n+1)/4$ & $2$\\ \skipa \hline
\skipa $\Sp_{2n}(q)$, $n$ odd prime & $2p$ &
  $\begin{array}{ll}p = (q^n -1)/4, & q=3 \\p = (q^n+1)/4, & q=5 \end{array}$ & $2$\\
      \skipa \hline
\skipa $\begin{array}{c}\SL_{n}(q)\\ n \mbox{ odd prime} \end{array}$ & $p$ &
  $\dfrac{q^n-1}{q-1}$ & $q-2$ \\ \skipa \hline \skipa
$\begin{array}{c}\SL_{n}(2)\\n-1 \geq 3\mbox{ prime}\end{array}$
  & $2p$ & $2^{n-1}-1$ & $1$\\  \hline
\skipa $\begin{array}{c}\SU_{n}(q)\\ n \mbox{ odd prime} \end{array}$ & $p$ &
  $\dfrac{q^n+1}{q+1}$ & $q$ \\ \skipa \hline
\skipa $\begin{array}{c}\PSU_{n}(2)\\n-1 \geq 5\mbox{ prime}\end{array}$
  & $2p$ & $(2^{n-1}+1)/3$ & $1$\\  \hline
\skipa $\begin{array}{c}\SU_{n}(2)\\n-1 \geq 5 \mbox{ prime}\end{array}$ & $2p-1$ & $(2^{n-1}+1)/3$ & $2$\\  \hline
\end{tabular}
\end{center}
\end{figure}

\begin{figure}
\centerline
{{\sc Table} IIb. Quasisimple linear groups: Groups of Lie type. II}
\vspace{0.3cm}
\begin{center}
\begin{tabular}{|c||c|c|c|} \hline \skipa
$H$ & $\dim W$ & $p$ & Class number  \\ \skipa \hline \hline \skipa
$\SL_3(3)$ & $16$, resp. $26$ & $13$ & $1$, resp. $3$ \\ \hline
\skipa $2 \cdot \PSL_3(4)$ & $6$ & $3$ & $1$\\ \hline
\skipa $2 \cdot \PSL_3(4)$ & $10$ & $5$, resp. $7$ & $2$, resp. $1$ \\ \hline
\skipa $4_1 \cdot \PSL_3(4)$ & $8$ & $5$, resp. $7$ & $2$, resp. $4$ \\ \hline
\skipa $4_2 \cdot \PSL_3(4)$ & $4$ & $3$ & $2$ \\ \hline
\skipa $6 \cdot \PSL_3(4)$ & $6$ & $5$ & $2$ \\ \hline
\skipa $\PSL_4(3)$ & $26$ & $13$ & $2$\\ \hline
\skipa $\SU_3(3)$ & $14$ & $7$ & $1$\\ \hline
 $\SU_4(2)$ & $\begin{array}{c}10\\6 \end{array}$ &
  $5$ & $\begin{array}{c}2 \\ 1 \end{array}$\\ \hline
\skipa $6_1 \cdot \PSU_4(3)$ & $6$ & $5$ & $2$\\ \hline
\skipa $\SU_5(2)$ & $10$ & $5$ & $1$\\ \hline
$\Sp_4(4)$ & $18$, resp. $34$ & $17$ & $1$, resp. $2$ \\ \hline
\skipa $\Sp_6(2)$ & $7$ & $5$, $7$ & $1$\\ \hline
\skipa $2 \cdot \Sp_6(2)$ & $8$ & $5$, $7$ & $1$\\ \hline
\skipa $2 \cdot \Omega^+_8(2)$ & $8$ & $5$, $7$ & $1$\\ \hline
\skipa $\Omega^-_8(2)$ & $34$ & $17$ & $1$\\ \hline
\skipa $\tw2 \btwo(8)$ & $14$ & $7$, $13$ & $2$\\ \hline
$2 \cdot \tw2 \btwo(8)$ & $\begin{array}{c}8\\16, 24 \end{array}$ &
  $\begin{array}{c}5 \\ 13 \end{array}$ & $1$\\ \hline
\skipa $\gtwo(3)$ & $14$ & $7$, $13$ & $1$\\ \hline
\skipa $2 \cdot \gtwo(4)$ & $12$ & $7$ & $1$\\ \hline
\skipa $\tw2 \ff(2)'$ & $26$ & $13$ & $2$\\ \hline
\skipa $\tw3 \df(2)$ & $26$ & $13$ & $1$\\ \hline
\end{tabular}
\end{center}
\end{figure}

\begin{figure}
\centerline
{{\sc Table} III. Quasisimple linear groups: Sporadic groups}
\vspace{0.3cm}
\begin{center}
\begin{tabular}{|c||c|c|c|} \hline \skipa
$H$ & $\dim W$ & $p$ & Class number  \\ \skipa \hline \hline \skipa
$M_{11}$ & $5$ & $3$ & $2$ \\ \hline
$2M_{12}$ & $2p$ & $3$, $5$ & $2$ \\ \hline
$2J_{2}$ & $6$ & $3$, resp. $5$ & $2$, resp. $1$ \\ \hline
$M_{11}$ & $10$ & $5$ & $3$ \\ \hline
$2M_{22}$ & $10$ & $5$, resp. $7$ & $2$, resp. $1$ \\ \hline
$M_{11}$ & $11$, $16$ & $11$ & $1$ \\ \hline
$M_{12}$ & $11$, resp. $16$ & $11$ & $2$, resp. $1$ \\ \hline
$2M_{12}$ & $12$ & $11$ & $1$ \\ \hline
$6Suz$ & $12$ & $7$, $11$, $13$ & $2$ \\ \hline
$J_{1}$ & $14$ & $11$ & $1$ \\ \hline
$J_{1}$ & $22$, $34$ & $19$ & $1$ \\ \hline
$J_{2}$ & $14$ & $7$ & $2$ \\ \hline
$2J_{2}$ & $14$ & $7$ & $1$ \\ \hline
$3J_{3}$ & $18$ & $17$ & $4$ \\ \hline
$M_{22}$ & $20$ & $11$ & $1$ \\ \hline
$3M_{22}$ & $21$ & $11$ & $2$ \\ \hline
$M_{23}$ & $22$ & $11$ & $1$ \\ \hline
$HS$ & $22$ & $11$ & $1$ \\ \hline
$McL$ & $22$ & $11$ & $1$ \\ \hline
$M_{24}$ & $23$, resp. $45$ & $23$ & $1$, resp. $2$ \\ \hline
$M_{23}$ & $45$ & $23$ & $2$ \\ \hline
$Co_3$ & $23$ & $23$ & $1$ \\ \hline
$Co_2$ & $23$ & $23$ & $1$ \\ \hline
$2Co_1$ & $24$ & $13$, $23$ & $1$ \\ \hline
\end{tabular}
\end{center}
\end{figure}

\section{Extensions and self-extensions}\label{sec: ext1}
First we recall a convenient criterion concerning self-extensions in blocks of cyclic defect:

 \begin{lem}  \label{cyclic} {\rm \cite[Lemma 7.1]{GHT}.}
 Suppose that $G$ is a finite group and that $V$ is an irreducible
 $\fpb G$-representation  that belongs to a block of cyclic defect.
 Then $\Ext^1_G (V,V) \ne 0$ if and only if $V$ admits at least two
 non-isomorphic lifts to characteristic zero.
 In this case, $\dim \Ext^1_G(V,V)=1$.
 \end{lem}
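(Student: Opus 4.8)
The plan is to read both sides of the equivalence off the Brauer tree $\Gamma$ of the block $B$ of $kG$ (with $k=\fpb$) containing $V$. Write $e$ for the edge of $\Gamma$ labelled by $V$, say with endpoints $u,w$, and let $m$ be the exceptional multiplicity of $B$. Recall that $\Gamma$ has at most one exceptional vertex, that $m\ge 2$ exactly when an exceptional vertex is present, and that $|D| = me+1$ where $e$ is the number of edges.

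\emph{Self-extensions.} Since $k$ is algebraically closed, $\dim_k\Ext^1_{kG}(V,V)$ is the multiplicity of $V$ in the second radical layer $\rad(P(V))/\rad^2(P(V))$ of the projective cover $P(V)$. By the standard structure theory of projective indecomposables in a block of cyclic defect (Brauer--Dade; see e.g. Alperin's \emph{Local Representation Theory} or Feit's book), $\rad(P(V))/\soc(P(V))$ is a direct sum $U_u\oplus U_w$ of two uniserial ``arms'', where $U_v$ is zero if $v$ is a non-exceptional leaf of $\Gamma$, and otherwise has head the simple module labelled by the edge that follows $e$ in the cyclic order of edges around $v$; in particular $V$ occurs in $\hd(U_v)$ precisely when $v$ is an exceptional leaf, in which case $U_v$ is uniserial with all $m-1$ composition factors equal to $V$. (The lone degenerate case is that $e$ is the whole tree: then $m=1$, $|D|=2$, $p=2$, $P(V)$ is uniserial of length $2$, and $\dim_k\Ext^1_{kG}(V,V)=1$.) The second radical layer of $P(V)$ is $\hd(U_u)\oplus\hd(U_w)$, so $\dim_k\Ext^1_{kG}(V,V)$ equals the number of exceptional leaves among $\{u,w\}$; as $\Gamma$ has at most one exceptional vertex this is $\le 1$, with equality iff an endpoint of $e$ is an exceptional leaf (or, degenerately, $e$ is the whole tree and $p=2$). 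This also proves the last sentence of the lemma.

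\emph{Lifts.} If $L$ is an $\cO G$-lattice with $L/\mathfrak{m}L\cong V$, then $L\otimes_\cO K$ (with $K=\Frac(\cO)$) is semisimple, so in $G_0(kG)$ we have $[V]=\sum_i d([X_i])$ over the irreducible $KG$-constituents $X_i$ of $L\otimes_\cO K$. Each $d([X_i])$ is a nonzero nonnegative combination of elements of $\IBr_p(B)$ and their sum is the basis vector $[V]$, so there is exactly one constituent and its reduction is $V$; thus $L\otimes_\cO K$ is irreducible with reduction $\cong V$. Conversely, an irreducible $KG$-module with simple reduction contains, up to isomorphism, a unique $\cO G$-lattice (a routine lattice-minimality induction: compare a sublattice with the reduction of the ambient lattice). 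Hence the isomorphism classes of lifts of $V$ correspond bijectively to the ordinary irreducible characters $\chi$ in $B$ with $d([\chi])=[V]$; by the Brauer-tree decomposition matrix these are exactly the characters attached to a leaf endpoint of $e$, so their number is $\varepsilon_u+\varepsilon_w$, where $\varepsilon_v$ is $0$ if $v$ is not a leaf, $1$ if $v$ is a non-exceptional leaf, and $m$ if $v$ is the exceptional vertex and a leaf.

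\emph{Comparison.} It remains to match the two computations. We have $\dim_k\Ext^1_{kG}(V,V)\ne 0$ iff some endpoint of $e$ is an exceptional leaf, or $e$ is the whole tree with $p=2$; the first case means some $\varepsilon_v=m\ge 2$, and the second means $\varepsilon_u=\varepsilon_w=1$. Since each $\varepsilon_v\in\{0,1,m\}$, either of these is equivalent to $\varepsilon_u+\varepsilon_w\ge 2$ — note that $\varepsilon_u=\varepsilon_w=1$ forces both endpoints of $e$ to be non-exceptional leaves, hence $e$ to be the whole tree, hence $m=1$ and $p=2$. Thus $\Ext^1_{kG}(V,V)\ne 0$ iff $V$ has at least two non-isomorphic lifts, and in that case $\dim_k\Ext^1_{kG}(V,V)=1$ by the first step. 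The only genuinely delicate point is the bookkeeping around the exceptional vertex and the single-edge tree; with the structure theory of cyclic blocks in hand, the rest is immediate.
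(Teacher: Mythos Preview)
The paper does not prove this lemma; it is simply recalled from \cite[Lemma 7.1]{GHT} and used as a black box throughout. Your proposal therefore cannot be compared against a proof in the paper, but it is a correct self-contained argument via the Brauer tree, and this is indeed the natural way to establish the result.

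A few minor remarks. First, you use $e$ both for the distinguished edge and for the number of edges of $\Gamma$; this is harmless but worth cleaning up. Second, your treatment of the degenerate single-edge tree with $m=1$ (forcing $|D|=2$, $p=2$) is correct and necessary: in that case both arms $U_u,U_w$ vanish, the heart is zero, and the second radical layer is the socle $V$ itself rather than $\hd(U_u)\oplus\hd(U_w)$, so the general formula undercounts by one. You catch this, and your case analysis in the comparison step correctly shows that $\varepsilon_u=\varepsilon_w=1$ can only occur in exactly this degenerate situation. Third, the uniqueness-of-lattice argument (any two $\cO G$-lattices in an irreducible $KG$-module with simple reduction differ by a power of the uniformizer) is standard and your sketch is adequate; combined with the observation that non-isomorphic $KG$-irreducibles yield non-isomorphic lattices, this gives the claimed bijection between lifts and leaf endpoints of the edge. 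The proof is sound.
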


The next observation is useful in various situations:

\begin{lem}\label{zero2}
Let $H \leq G$ be a subgroup of index coprime to $p = \Char(k)$.
Suppose that $V$ is a $kG$-module and $V_{H} = V_1 \oplus V_2$ is a direct sum of
two nonzero $H$-submodules, at least one of which is also stabilized by $G$. Then
the $G$-module $V$ is decomposable.
\end{lem}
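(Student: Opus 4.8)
The plan is to reduce to the standard relative-injectivity (Gasch\"utz--Ikeda / Higman) averaging argument. By the symmetry of the hypothesis we may assume that it is $V_1$ which is stabilized by $G$, so $V_1$ is a $kG$-submodule. Let $\pi_H\colon V \to V_1$ be the projection along the decomposition $V_H = V_1\oplus V_2$; this is a homomorphism of $kH$-modules which restricts to the identity on $V_1$. The goal is to average $\pi_H$ over the cosets of $H$ in $G$ to manufacture a $kG$-module projection $V\to V_1$, which is possible precisely because $[G:H]$ is invertible in $k$.

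First I would check that for each $g\in G$ the assignment $v\mapsto g\,\pi_H(g^{-1}v)$ is a well-defined $k$-linear map $V\to V_1$ depending only on the coset $gH$: it lands in $V_1$ because $\pi_H(g^{-1}v)\in V_1$ and $V_1$ is $G$-stable, and it is independent of the choice of coset representative because replacing $g$ by $gh$ with $h\in H$ changes it by a factor that cancels using the $kH$-linearity of $\pi_H$. Then I set
\[
  \pi \;:=\; \frac{1}{[G:H]}\sum_{gH\in G/H} g\,\pi_H(g^{-1}\,\cdot\,),
\]
and verify the two routine points: $\pi$ is $kG$-linear (a direct computation, since left translation permutes the cosets), and $\pi|_{V_1}=\mathrm{id}_{V_1}$ (for $v\in V_1$ one has $g^{-1}v\in V_1$, hence $\pi_H(g^{-1}v)=g^{-1}v$ and each summand equals $v$).

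It then follows that $V = V_1\oplus \ker\pi$ as $kG$-modules. The summand $\ker\pi$ is nonzero: it maps isomorphically onto $V/V_1$, and $V/V_1 \cong V_2$ as $kH$-modules, which is nonzero by hypothesis. Since $V_1$ is also nonzero, $V$ is a decomposable $kG$-module, as claimed. (Equivalently, one can phrase this cohomologically: the class of the extension $0\to V_1\to V\to V/V_1\to 0$ in $\Ext^1_{kG}(V/V_1,V_1)$ restricts to $0$ in $\Ext^1_{kH}(V/V_1,V_1)$ since the sequence splits over $H$, and applying the transfer and using $\operatorname{cor}^G_H\circ\operatorname{res}^G_H = [G:H]\cdot\mathrm{id}$ shows the class is annihilated by $[G:H]$, hence vanishes.)

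There is no real obstacle here; the argument is entirely standard. The only place warranting a moment of care is the well-definedness of the twisted projections $g\,\pi_H(g^{-1}\,\cdot\,)$ on cosets, which is exactly where both hypotheses enter: $G$-stability of $V_1$ ensures the maps have image in $V_1$, and coprimality of $[G:H]$ to $p$ ensures the averaging factor $1/[G:H]$ makes sense in $k$.
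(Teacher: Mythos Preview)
Your proof is correct and is essentially identical to the paper's argument: the paper also assumes $V_1$ is $G$-stable, takes the $H$-projection $\pi$ onto $V_1$ along $V_2$, and averages over coset representatives to obtain a $G$-equivariant idempotent $\tilde\pi$ with image $V_1$, yielding $V = V_1 \oplus \Ker(\tilde\pi)$. Your additional remark on the cohomological reformulation via transfer is a nice touch but not in the paper.
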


\begin{proof}
Suppose for instance that $V_1$ is stabilized by $G$, and consider the natural
projection $\pi~:~V \to V_1$ along $V_2$. Write $G = \sqcup^m_{i=1}g_i H$ where
$m := [G:H]$ is coprime to $p$, and let
$$\tilde\pi = \frac{1}{m}\sum^m_{i=1}g_i\pi g_i^{-1}.$$
It is straightforward to check that $\tilde\pi$ is $G$-equivariant, $\tilde\pi^2 = \tilde\pi$,
and $\image(\tilde\pi) = V_1$. Hence the $G$-module $V$ decomposes as
$V_1 \oplus \Ker(\tilde\pi)$.
\end{proof}

From now on we again assume that $k$ is algebraically closed of characteristic $p$. 
First we record the following consequence of the K\"unneth formula,
cf. \cite[3.5.6]{Ben}. 

 \begin{lem} \label{lem:kunneth}  Let $H$ be a finite group.  Assume that
 $H$ is a central product of subgroups $H_i, 1 \le i \le t$, and
that $\bfZ(H)$ is a $p'$-group.
 Let $X$ and $Y$ be irreducible $kH$-modules.  Write
 $X=X_1 \otimes \ldots \otimes X_t$ and $Y=Y_1 \otimes \ldots \otimes Y_t$
 where $X_i$ and $Y_i$ are irreducible $kH_i$-modules.
 \begin{enumerate}[\rm(i)]
 \item If $X_i$ and $Y_i$ are not isomorphic for two distinct $i$, then
 $\ext_H^1(X,Y)=0$.
 \item If $X_1$ and $Y_1$ are not isomorphic but $X_i \cong Y_i$ for $i >1$, then
 $\ext_H^1(X,Y) \cong \ext_{H_1}^1(X_1,Y_1)$.
 \item  If $X_i \cong Y_i$ for all $i$, then
 $\ext_H^1(X,Y) \cong \oplus_i \ext_{H_i}^1(X_i, Y_i)$.
 \end{enumerate}
 \end{lem}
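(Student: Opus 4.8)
The plan is to derive everything from the Künneth formula in the form stated in \cite[3.5.6]{Ben}. Since $H$ is a central product of the $H_i$ with $\bfZ(H)$ a $p'$-group, the group algebra $kH$ is a quotient of $k[H_1 \times \cdots \times H_t]$ by a semisimple central ideal, and the irreducible $kH$-module $X$ (resp. $Y$) inflates to the outer tensor product $X_1 \boxtimes \cdots \boxtimes X_t$ (resp. $Y_1 \boxtimes \cdots \boxtimes Y_t$) of irreducible $kH_i$-modules; this is exactly the decomposition asserted in the statement. The key point is that $\Ext$-groups computed over $kH$ agree with those computed over $k[H_1 \times \cdots \times H_t]$ for modules on which the central $p'$-kernel acts trivially, because that kernel has order prime to $p$ (one can see this, e.g., by noting that restriction along $H_1 \times \cdots \times H_t \twoheadrightarrow H$ and inflation are exact and mutually inverse on the relevant module categories, or by a transfer/averaging argument as in Lemma~\ref{zero2}).

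Having reduced to a genuine direct product, I would invoke the Künneth short exact sequence (or its consequence for low-degree $\Ext$): for modules over $k[H_1 \times \cdots \times H_t]$ that are outer tensor products, there is a natural isomorphism
\[
  \ext^1_{H_1 \times \cdots \times H_t}(X_1 \boxtimes \cdots \boxtimes X_t,\ Y_1 \boxtimes \cdots \boxtimes Y_t)
  \ \cong\ \bigoplus_{j=1}^{t} \ext^1_{H_j}(X_j,Y_j) \otimes \bigotimes_{i \neq j} \Hom_{H_i}(X_i,Y_i),
\]
using that $\ext^0 = \Hom$ and that there are no $\ext^{<0}$ terms. By Schur's lemma over the algebraically closed field $k$, $\Hom_{H_i}(X_i,Y_i)$ is $k$ if $X_i \cong Y_i$ and $0$ otherwise. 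Feeding this into the displayed formula immediately yields all three claims: in case (i) at least two of the Hom-factors vanish in every summand, so the whole sum is $0$; in case (ii) only the $j=1$ summand survives and its Hom-factors are all $k$, giving $\ext^1_{H_1}(X_1,Y_1)$; in case (iii) every summand has all Hom-factors equal to $k$, giving $\bigoplus_i \ext^1_{H_i}(X_i,Y_i)$.

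I expect the only real subtlety to be the bookkeeping around the central product rather than a true direct product: one must check that passing from $k[H_1 \times \cdots \times H_t]$ to $kH$ does not change $\Ext^1$, which is where the hypothesis that $\bfZ(H)$ is a $p'$-group is used essentially. All the rest is a formal consequence of Künneth and Schur's lemma, so the write-up should be short; the main obstacle is simply citing the Künneth statement in a form that applies to these (possibly non-free over the center) blocks and confirming the $p'$-kernel does not obstruct exactness.
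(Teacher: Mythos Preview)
Your proposal is correct and is precisely the argument the paper has in mind: the paper does not give a proof but simply records the lemma as ``a consequence of the K\"unneth formula, cf.\ \cite[3.5.6]{Ben}'', and your write-up fills in exactly those details (reduction from the central product to the direct product via the central $p'$-kernel, Künneth for the direct product, then Schur's lemma). Nothing to change.
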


\begin{lem}\label{infl}
Let $G$ be a finite group with a normal subgroup $N = \bfO^{p}(N)$ and $V$ be a $kG$-module.
Suppose that $N$ acts trivially on $V$. Then $H^1(G,V) \cong H^1(G/N,V)$.  
\end{lem}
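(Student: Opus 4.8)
The plan is to use the inflation-restriction exact sequence (the five-term exact sequence from the Lyndon-Hochschild-Serre spectral sequence) associated to the normal subgroup $N \trianglelefteq G$ acting on the module $V$. Since $N$ acts trivially on $V$, this sequence reads
\[
0 \to H^1(G/N, V^N) \to H^1(G, V) \to H^1(N, V)^{G/N} \to H^2(G/N, V^N),
\]
and because $N$ acts trivially we have $V^N = V$. So the claim $H^1(G,V) \cong H^1(G/N,V)$ reduces to showing that the restriction map $H^1(G,V) \to H^1(N,V)^{G/N}$ is zero, for which it suffices to prove $H^1(N,V) = 0$.

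Now $N$ acts trivially on $V$, so $H^1(N,V) = \Hom(N, V) = \Hom(N^{\mathrm{ab}}, V)$, where the last identification uses that $V$ is an abelian group (a $k$-vector space). The hypothesis $N = \bfO^p(N)$ means that $N$ has no nontrivial quotient that is a $p$-group; in particular $N^{\mathrm{ab}}$ has no nontrivial quotient of order $p$, i.e. $N^{\mathrm{ab}}/p N^{\mathrm{ab}} = 0$. Since $V$ is an $\bbF_p$-vector space (being a $k$-vector space with $\Char k = p$), any homomorphism $N \to V$ factors through $N^{\mathrm{ab}}/pN^{\mathrm{ab}} = 0$ and hence is trivial. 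Therefore $H^1(N,V) = 0$.

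Plugging this into the five-term sequence, the inflation map $H^1(G/N, V) \to H^1(G, V)$ is an isomorphism, which is exactly the assertion. I expect no real obstacle here; the only point requiring a little care is making sure the inflation-restriction sequence is stated with the correct coefficient module $V^N$ and invoking $V^N = V$, and observing that $V$ being a vector space over a field of characteristic $p$ forces $\Hom(N,V)$ to be detected on the maximal elementary abelian $p$-quotient of $N$, which vanishes by $N = \bfO^p(N)$. One could alternatively phrase the whole argument cohomology-free by noting directly that a cocycle on $G$ restricts to a homomorphism $N \to V$ (as $N$ acts trivially), which must vanish, and then that a cocycle vanishing on $N$ and invariant data descends to $G/N$; but the spectral sequence packaging is cleanest.
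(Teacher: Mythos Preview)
Your proof is correct and takes essentially the same approach as the paper: both use the inflation-restriction exact sequence and show $H^1(N,V)=\Hom(N,V)=0$ from the hypothesis $N=\bfO^p(N)$. Your version is simply a bit more explicit about why $\Hom(N,V)=0$ (factoring through $N^{\mathrm{ab}}/pN^{\mathrm{ab}}$), but the argument is the same.
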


\begin{proof}
Since $N$ acts trivially on $V$, we have that $H^1(N,V) = \Hom(N,V)$. Furthermore, 
$\Hom(N,V) = 0$ as $\bfO^{p}(N) = N$. Now the inflation-restriction sequence in cohomology 
implies that the sequence 
$$0 \to H^1(G/N,V) \to H^1(G,V) \to 0$$
is exact, whence the claim follows. (Note that if $N$ is a $p'$-group, then the Hochschild-Serre
spectral sequence degenerates, and so  
$H^i(G,V) \cong H^i(G/N,V^N)$ for all $i$. Similarly,
if $G/N$ is a $p'$-group, then $H^i(G, V) = H^i(N, V)^{G/N}$ for all $i$.)
\end{proof}

\begin{lemma}\label{semi1} {\rm \cite[Lemma 7.8]{GHT}.}
Let $V$ be a $kG$-module of finite length.

{\rm (i)} Suppose that $X$ is a composition factor of $V$ such that
$V$ has no indecomposable subquotient of length $2$ with $X$ as a composition
factor. Then $V \cong X\oplus M$ for some submodule $M \subset X$.

{\rm (ii)} Suppose that $\Ext^1_G(X,Y) = 0$ for
any two composition factors $X$, $Y$ of $V$. Then $V$ is semisimple.
\end{lemma}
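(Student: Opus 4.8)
I would prove (i) by induction on the composition length $\ell(V)$ and then deduce (ii); the base case $\ell(V)=1$ is trivial (there $V\cong X$ and $M=0$). Two preliminary remarks organize the argument for (i). First, its hypothesis is self-dual: for submodules $B\subseteq A\subseteq V$ the $k$-dual $(A/B)^*$ is canonically a subquotient of $V^*$, this assignment preserves length and indecomposability, and it carries the composition factor $X$ to $X^*$; hence any statement I prove below relating $X$ to $\soc(V)$ dualizes to the corresponding statement relating $X$ to $\hd(V)$. Second, I plan to first show that the hypothesis forces $X$ to be a composition factor of $\soc(V)$ (and dually of $\hd(V)$), and only then split off a copy of $X$ lying in the socle as a direct summand.

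\textbf{Forcing $X$ into the socle.} Suppose $X$ is not a composition factor of $\soc(V)$, and choose a submodule $A\subseteq V$ of minimal length having $X$ as a composition factor; then $\ell(A)\geq 2$, since $\ell(A)=1$ would force $A\cong X$ with $A\hookrightarrow V$. Minimality forces $X$ not to be a composition factor of the proper submodule $\rad(A)$, so $X$ occurs in the semisimple module $\hd(A)$ and there is a maximal submodule $B\subset A$ with $A/B\cong X$. Pick a composition series $0=B_0\subset B_1\subset\cdots\subset B_m=B$ with $m=\ell(A)-1\geq 1$. The subquotient $A/B_{m-1}$ has length $2$, with $A/B\cong X$ as its head. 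If it were decomposable, the preimage $X'$ in $A$ of the $X$-summand would be a submodule with $\ell(X')=m<\ell(A)$ still having $X$ as a composition factor, contradicting minimality of $A$. So $A/B_{m-1}$ is an indecomposable length-$2$ subquotient of $V$ with $X$ as a composition factor, contradicting the hypothesis. Dualizing, $X$ is also a composition factor of $\hd(V)$.

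\textbf{Splitting off $X$.} Let $S_X\cong X^a$, $a\geq 1$, be the $X$-homogeneous component of $\soc(V)$. If some simple submodule $X_0\cong X$ of $V$ is not contained in $\rad(V)$, then $X_0\cap\rad(V)=0$, so $X_0$ maps isomorphically onto a simple summand of $\hd(V)$; taking $N\supseteq\rad(V)$ to be the preimage of a complement of that summand yields $V/N\cong X$ and $X_0\cap N=0$, whence $V=X_0\oplus N$ and we are done. It remains to exclude the case $S_X\subseteq\rad(V)$. Assume it; since $X$ lies in $\hd(V)$, choose a maximal submodule $N\subset V$ with $V/N\cong X$. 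Then $\rad(V)\subseteq N$, so $S_X\subseteq N$ and $X$ is a composition factor of $\soc(N)$; as $\ell(N)<\ell(V)$ and $N$ inherits the hypothesis, the inductive hypothesis gives $N\cong X_0\oplus N'$ with $X_0\cong X$. Now $V/N'$ has length $2$, with submodule $N/N'\cong X$ and quotient $V/N\cong X$. If $V/N'$ is indecomposable it is a forbidden subquotient of $V$. If it is decomposable then $V/N'\cong X\oplus X$ is semisimple, so $\rad(V)\subseteq N'$, whence $X_0\subseteq S_X\subseteq\rad(V)\subseteq N'$, contradicting $X_0\cap N'=0$. Either way we reach a contradiction, so $S_X\subseteq\rad(V)$ cannot occur, and (i) follows.

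\textbf{Part (ii) and the main obstacle.} The $\Ext^1$-hypothesis implies $V$ has no indecomposable length-$2$ subquotient at all, so (i) lets me split off any composition factor $X$ as a direct summand; inducting on length then gives semisimplicity. (Alternatively, argue directly: $V/\soc(V)$ is semisimple by induction, and the extension $0\to\soc(V)\to V\to V/\soc(V)\to 0$ is classified by an element of a direct sum of groups $\Ext^1_G(X',Y')$ with $X',Y'$ composition factors of $V$, which vanishes, so $V\cong\soc(V)\oplus V/\soc(V)$.) I expect the two genuinely delicate points to be the minimal-length-submodule argument that forces $X$ into the socle, and the disposal of the case $S_X\subseteq\rad(V)$; everything else is routine bookkeeping with socle, radical, and length.
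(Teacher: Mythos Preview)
Your argument is correct. The paper does not supply its own proof of this lemma; it is simply quoted from \cite[Lemma 7.8]{GHT}, so there is no in-paper argument to compare against. Your induction on composition length, with the minimal-submodule trick to force $X$ into $\soc(V)$ and the dichotomy on whether $S_X\subseteq\rad(V)$ to split off a copy of $X$, is a clean self-contained proof. One small remark: in the ``forcing $X$ into the socle'' step you could note explicitly that $B/B_{m-1}\not\cong X$ (since $X$ is not a composition factor of $B$ by minimality of $A$), which is what guarantees the preimage $X'$ of the $X$-summand really has length $m$ rather than possibly coinciding with $A$; you use this implicitly and the argument is fine, but spelling it out removes any doubt. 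Also, the conclusion ``$M\subset X$'' in the displayed statement is a typo in the paper for ``$M\subset V$''; you have correctly read it that way.
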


\begin{lem}\label{mult1}{\rm \cite[Lemma 7.9]{GHT}.}
Let $V$ be a $kG$-module. Suppose that $U$ is a composition factor of $V$ of
multiplicity $1$, and that $U$ occurs both in $\soc(V)$ and $\hd(V)$. Then
$V \cong U \oplus M$ for some submodule $M \subset V$.
\end{lem}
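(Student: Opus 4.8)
The plan is to produce the splitting $V = U_0 \oplus M$ by hand, where $U_0 \le V$ is a submodule isomorphic to $U$ and $M < V$ is a maximal submodule with $V/M \cong U$. Both exist by hypothesis: since $\soc(V)$ is semisimple and contains a copy of $U$, there is a submodule $U_0 \le V$ with $U_0 \cong U$; dually, since $\hd(V) = V/\rad(V)$ is semisimple and has $U$ as a direct summand, the preimage in $V$ under $V \twoheadrightarrow \hd(V)$ of a complement to that summand is a submodule $M$ with $\rad(V) \subseteq M$ and $V/M \cong U$, so in particular $M$ is a maximal submodule of $V$.

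The crux will be to check that $U_0 \not\subseteq M$. Suppose instead $U_0 \subseteq M$. Then $U$ is a composition factor of $M$; but $U \cong V/M$ is also a composition factor of $V/M$, so counting composition factors along the chain $0 \subset M \subset V$ would give $U$ with multiplicity at least $2$ in $V$, contradicting the multiplicity-one hypothesis. Hence $U_0 \not\subseteq M$.

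From this the conclusion is formal: $U_0 \cap M$ is a proper submodule of the simple module $U_0$, hence $U_0 \cap M = 0$; and $M \subsetneq U_0 + M \subseteq V$ forces $U_0 + M = V$ by maximality of $M$. Therefore $V = U_0 \oplus M$ with $U_0 \cong U$, which is the assertion (taking the complement in the statement to be this $M$).

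The only real obstacle is the step $U_0 \not\subseteq M$, and the multiplicity-one hypothesis is used precisely there; the rest is the standard "submodule meets a maximal submodule trivially or is contained in it" dichotomy. I would also remark in passing that the same composition-factor count shows $U_0$ is the unique submodule of $V$ isomorphic to $U$ and $M$ the unique maximal submodule with quotient $U$, although neither uniqueness statement is needed. (As in the neighbouring lemmas one works with $kG$-modules of finite length, so that $\soc(V)$ and $\hd(V)$ behave as above.)
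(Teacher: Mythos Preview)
Your proof is correct: you pick a simple submodule $U_0 \cong U$ in $\soc(V)$ and a maximal submodule $M$ with $V/M \cong U$, then use the multiplicity-one hypothesis to rule out $U_0 \subseteq M$, after which $V = U_0 \oplus M$ follows from simplicity of $U_0$ and maximality of $M$. The paper does not supply its own proof of this lemma (it is quoted from \cite[Lemma 7.9]{GHT}), and your argument is exactly the standard one.
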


\begin{lemma}\label{semi2}
Let $G$ be group with a normal subgroup
$N$ of index coprime to $p$. Let $k$ be an algebraically closed field
of characteristic $p$, and let $V$ be a $kG$-module of finite length.

{\rm (i)} $V$ is semisimple if and only if $V_N$ is semisimple. In particular,
if $V$ is reducible indecomposable, then $V_N$ cannot be semisimple.

{\rm (ii)}  Suppose $V$ is reducible indecomposable. Then the $N$-module
$V$ has no simple direct summand.
\end{lemma}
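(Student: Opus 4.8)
The plan is to prove (i) by Clifford theory together with the standard averaging trick, and then to deduce (ii) from (i) via a Frobenius-reciprocity argument.

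\emph{Part (i).} One implication is just Clifford's theorem: if the $kG$-module $V$ is semisimple, write it as a direct sum of irreducible $kG$-modules; the restriction of each to the normal subgroup $N$ is semisimple, so $V_N$ is semisimple (this direction needs no hypothesis on $[G:N]$). Conversely, assume $V_N$ is semisimple and let $W$ be any $kG$-submodule of $V$. Then $W_N$ admits an $N$-module complement in $V_N$; let $\pi\colon V\to W$ be the associated $N$-linear projection, so that $\pi|_W=\mathrm{id}_W$. Averaging $\pi$ over a transversal of $N$ in $G$ --- permissible since $[G:N]$ is invertible in $k$ --- gives, exactly as in the proof of Lemma~\ref{zero2}, a $G$-linear retraction $V\to W$; thus $W$ is a $kG$-direct summand of $V$, and since $W$ was arbitrary, $V$ is semisimple. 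The ``in particular'' clause is then immediate: a reducible indecomposable module is not semisimple (a semisimple indecomposable module is irreducible), so by the equivalence just proved $V_N$ is not semisimple.

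\emph{Part (ii).} Suppose, toward a contradiction, that $V_N=S\oplus T$ with $S$ a simple $kN$-module. Then $S$ is simultaneously a $kN$-submodule and a $kN$-quotient of $V$: there are $N$-maps $\iota\colon S\hookrightarrow V$ (inclusion) and $\pi\colon V\twoheadrightarrow S$ (projection along $T$) with $\pi\iota=\mathrm{id}_S$. Since $[G:N]$ is finite, $\Ind_N^G$ and $\mathrm{Coind}_N^G$ coincide, and Frobenius reciprocity turns $\iota,\pi$ into $kG$-maps $\hat\iota\colon\Ind_N^G S\to V$ and $\hat\pi\colon V\to\Ind_N^G S$. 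The crux is the elementary computation that the $kG$-endomorphism $\hat\pi\hat\iota$ of $\Ind_N^G S$ is nonzero: writing $\Ind_N^G S=kG\otimes_{kN}S$ via a transversal, the component of $\hat\pi(\hat\iota(1\otimes s))=\hat\pi(s)$ in the identity-coset summand equals $1\otimes\pi(s)=1\otimes s$, which is nonzero for $s\neq 0$.

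Now I invoke part (i): the $kN$-module $(\Ind_N^G S)_N\cong\bigoplus_{gN\in G/N}\tw{g}S$ is a sum of simple modules, hence semisimple, so $\Ind_N^G S$ is a semisimple $kG$-module. A nonzero endomorphism of a semisimple module factors as a split epimorphism onto a nonzero summand followed by a split monomorphism; applying this to $\psi:=\hat\pi\hat\iota$ and composing the two splittings with $\hat\iota$ and $\hat\pi$ yields $kG$-maps $C\to V\to C$ whose composite is $\mathrm{id}_C$ for some nonzero module $C$. Hence $V$ has a nonzero direct summand isomorphic to $C$; it cannot be all of $V$ (else $V\cong C$ would be semisimple, impossible for a reducible indecomposable module by (i)), so it is proper and $V$ is decomposable --- contradiction. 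The one point requiring care is the bookkeeping for the two adjunctions producing $\hat\iota$ and $\hat\pi$ (one an instance of the $(\Ind_N^G,\Res)$-adjunction, the other of the $(\Res,\mathrm{Coind}_N^G)$-adjunction, matched via $[G:N]<\infty$), arranged so that the nonvanishing of $\hat\pi\hat\iota$ is transparent; everything after that is formal. (An alternative is to average the projection onto $S$ to an element $e\in\End_{kG}(V)$ and use that this ring is local; this handles quickly the case in which $e$ is a unit, but the complementary case is awkward, which is why the route through $\Ind_N^G S$ seems preferable.)
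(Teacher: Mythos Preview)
Your proof of (i) is the same as the paper's: the averaging argument is exactly Lemma~\ref{zero2}.

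Your proof of (ii) is correct but takes a genuinely different route from the paper. The paper stays inside $V$: it collects the simple $N$-summands of $V_N$ into $V_2$ and the non-simple indecomposable ones into $V_1$, observes that for each non-simple indecomposable $U$ one has $\soc(U)\subseteq\rad(U)$, and hence $\soc(V_1)=\soc(V)\cap\rad(V)$ is $G$-stable; two applications of Lemma~\ref{zero2} then produce a nonzero $G$-complement to $V_1$, forcing $V_1=0$ and $V_N$ semisimple, contradicting (i). Your argument instead externalizes through $\Ind_N^G S$: Frobenius reciprocity (using $\Ind=\mathrm{Coind}$ for finite index) converts the $N$-retraction $S\to V\to S$ into $G$-maps $\Ind_N^G S\to V\to\Ind_N^G S$ whose composite is nonzero; since $(\Ind_N^G S)_N$ is semisimple, part (i) makes $\Ind_N^G S$ a semisimple $kG$-module, and the standard image/kernel splitting of a nonzero endomorphism of a semisimple module yields a nonzero simple $G$-summand of $V$. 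Both arguments ultimately lean on (i); the paper's is more elementary (just Lemma~\ref{zero2} twice, no adjunctions), while yours is more categorical and gives slightly more: it shows the putative $G$-summand can be taken to be an irreducible constituent of $\Ind_N^G S$. One cosmetic point: in your nonvanishing check you wrote $\hat\pi(s)$ where you mean $\hat\pi(\iota(s))$, but the computation of the identity-coset component as $1\otimes\pi(\iota(s))=1\otimes s$ is correct.
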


\begin{proof}
(i) The ``only if'' part is obvious. For the ``if'' part, suppose $U$ is a $G$-submodule
of $V$. Since $V_N$ is semisimple, $V_N = U \oplus W$ for some $N$-submodule
$W$. As $U$ is $G$-stable, by Lemma \ref{zero2} there is a $G$-submodule
$W'$ such that $V = U \oplus W'$.

\smallskip
(ii) Consider a decomposition
$V_N = \oplus^t_{i=1}U_i$ into indecomposable direct summands, and write
$V= V_1 \oplus  V_2$, where $V_2$ is the sum of those $U_i$'s
which are simple and $V_1$ is the sum of the non-simple $U_i$'s.
Assume that $V_2 \neq 0$.

Note that if $U$ is any reducible indecomposable $N$-module,
then $\soc(U) \subseteq \rad(U)$. Indeed, suppose a maximal submodule $M \subset U$
does not contain $\soc(U)$. Then $\soc(U) = (M \cap \soc(U)) \oplus W$ for some
$N$-submodule $W \neq 0$, and $U = M \oplus W$ is decomposable, a contradiction.
Applying this remark to the summands $U_i$ in $V_1$, we see that
$\soc(V_1) \subseteq \rad(V_1)$. But $V_2$ is semisimple, so
$$\soc(V_1) = \soc(V_1) \cap \rad(V_1) = \soc(V) \cap \rad(V)$$
is $G$-stable. By Lemma \ref{zero2}, there is a $G$-submodule $V'_2 \neq 0$ such
that $\soc(V) = \soc(V_1) \oplus V'_2$.  In this case,
$V_N = V_1 \oplus V'_2$. Since $V'_2$ is $G$-stable, again by Lemma
\ref{zero2} we have that $V = V'_1 \oplus V'_2$ for some $G$-submodule
$V'_1$. As $V$ is indecomposable and $V'_2 \neq 0$, we must have that
$V'_1 = 0$, whence $V_1 = 0$, $V_N = V_2$ is semisimple, contradicting (i).
\end{proof}

\begin{lemma}\label{block2}{\rm \cite[Lemma 7.11]{GHT}.}
Let $V$ be an indecomposable $kG$-module.

{\rm (i)} If the $\GP$-module $V_\GP$ admits a composition factor $L$ of  dimension $1$, then all composition factors of $V_\GP$ belong to $B_0(\GP)$.

{\rm (ii)} Suppose a normal $p'$-subgroup $N$ of  $G$ acts by scalars on a composition factor $L$ of the $G$-module $V$. Then $N$ acts by scalars on $V$. If in addition $V$ is faithful then $N \leq \bfZ(G)$.
\end{lemma}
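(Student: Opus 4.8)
The plan is to derive both parts from Clifford theory, using the single principle that an indecomposable $kG$-module cannot be split along a $G$-stable direct sum decomposition arising from a normal subgroup.

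\emph{Part (i).} The first step is to observe that a $1$-dimensional $k\GP$-module is necessarily trivial: since $\GP$ is generated by $p$-elements and $\ch k = p$, a $p$-element acts on a $1$-dimensional $k$-space by a $p$-power root of unity, hence by $1$. So $L \cong k$, which lies in $B_0(\GP)$. Next, decompose $V_\GP = \bigoplus_B e_B V$ over the blocks $B$ of $k\GP$, with $e_B$ the block idempotents. Because $\GP \trianglelefteq G$, conjugation by $g \in G$ is a ring automorphism of $k\GP$ that permutes the $e_B$, so $G$ permutes the submodules $e_B V$; grouping them over the $G$-orbits on blocks exhibits $V$ as a direct sum of $G$-submodules, one per orbit with nonzero component. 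As $V$ is indecomposable, only one orbit contributes. Since $k \in B_0(\GP)$ is a composition factor of $V_\GP$, the component $e_{B_0}V$ is nonzero, so $B_0(\GP)$ lies in the contributing orbit; but the trivial module is $G$-stable, so $B_0(\GP)$ is a $G$-fixed block and its orbit is a singleton. Hence $V = e_{B_0}V$, i.e.\ every composition factor of $V_\GP$ lies in $B_0(\GP)$.

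\emph{Part (ii).} Let $\lambda\colon N \to k^\times$ be the linear character by which $N$ acts on $L$. Since $L$ is a $kG$-module and $N \trianglelefteq G$, for $g \in G$ the element $gng^{-1}\in N$ acts on $L$ both by $\lambda(gng^{-1})$ and (computing directly) by $\lambda(n)$, so $\lambda$ is $G$-invariant. Now $N$ is a $p'$-group, so $V_N$ is semisimple; decompose it into its $kN$-isotypic components. The group $G$ permutes these, and indecomposability of $V$ forces the irreducible $kN$-constituents of $V_N$ to form a single $G$-orbit. On the other hand $\lambda$ occurs in $L_N$, hence, by semisimplicity, as a direct summand of $V_N$; since its $G$-orbit is $\{\lambda\}$, the module $V_N$ is $\lambda$-isotypic, i.e.\ $N$ acts on $V$ by the scalars $\lambda(n)$. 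Finally, if $V$ is faithful, then $N$ acts by scalar matrices, which are central in $\GL(V)$ and hence commute with the image of $G$; by faithfulness $N \le \bfZ(G)$.

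\emph{Main obstacle.} There is no serious obstacle here: the argument is routine block theory and Clifford theory, and the only real work is bookkeeping the $G$-action on blocks in (i) and on isotypic components in (ii). The one place where the precise hypothesis matters is the ``singleton orbit'' step in (i), which uses that the distinguished $1$-dimensional composition factor reduces to the \emph{trivial} module, whose block is automatically $G$-stable — this is exactly why (i) is stated for a $1$-dimensional composition factor rather than an arbitrary one.
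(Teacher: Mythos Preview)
Your proof is correct. The paper does not actually prove this lemma; it merely quotes it from \cite[Lemma 7.11]{GHT}. Your argument is the standard one via block idempotents for (i) and Clifford theory for (ii), and is almost certainly what appears in the cited reference. One small stylistic point: in (ii) you could shorten the $G$-invariance step by simply saying that a scalar action of a normal subgroup on an irreducible $kG$-module is automatically by a $G$-invariant character, but what you wrote is fine.
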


\begin{cor}\label{block3}
Let $V$ be an indecomposable $kG$-module of dimension $\leq 2p-3$.
Suppose that $\dl(\GP) \geq p-3$. Then one of the following holds:

{\rm (i)} The $\GP$-module $V$ is irreducible.

{\rm (ii)} All composition factors of the $\GP$-module $V$ have dimension
$\leq p$.

{\rm (iii)}  All composition factors of the $\GP$-module $V$ belong to $B_0(\GP)$.
\end{cor}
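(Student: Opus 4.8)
The plan is to run a case split on how large the composition factors of the $\GP$-module $V$ can be, using the dimension bound $\dim V \leq 2p-3$ together with the hypothesis $\dl(\GP) \geq p-3$. Suppose (i) fails, so $V_\GP$ is reducible. Then it has at least two composition factors, each of dimension $\geq 1$; if every nontrivial one has dimension $\geq p-3$ and there are at least two of dimension $\geq p-3$, then $\dim V \geq 2(p-3) = 2p-6$, which is compatible with $\dim V \leq 2p-3$ only in a narrow range. I would first dispose of the case where $V_\GP$ has a trivial (i.e. $1$-dimensional) composition factor: since $\GP = \bfO^{p'}(G)$ is perfect modulo its center being a $p'$-group, any $1$-dimensional $k\GP$-module is trivial, and then Lemma \ref{block2}(i) gives conclusion (iii) directly. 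So we may assume no composition factor of $V_\GP$ is $1$-dimensional, hence every composition factor has dimension $\geq \dl(\GP) \geq p-3$.

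Now the arithmetic does the work. With $\dim V \leq 2p-3$ and each composition factor of dimension $\geq p-3$, the number of composition factors is at most $2$ (three of them would force $\dim V \geq 3(p-3) = 3p - 9 > 2p-3$ as soon as $p > 6$; the small primes $p \leq 5$ one checks by hand, noting $\dl(\GP) \geq p-3$ is vacuous or nearly so there and $\dim V \leq 2p-3 \leq 7$, where the statement is easy). So $V_\GP$ has exactly two composition factors $L_1, L_2$, each of dimension in the interval $[p-3, p]$ — the upper bound $p$ coming from $\dim L_1 + \dim L_2 \leq 2p-3$, so $\dim L_i \leq (2p-3) - (p-3) = p$. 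If both have dimension $\leq p$ we are in case (ii) and done; the only remaining possibility is that one of them, say $L_1$, has dimension $> p$, hence $\dim L_1 \in \{p+1, \dots\}$, but then $\dim L_2 \leq (2p-3) - (p+1) = p-4 < p-3 \leq \dl(\GP)$, forcing $L_2$ to be trivial — contradicting the reduction above. Hence either (ii) holds or we have already landed in (iii).

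The main obstacle, and the only part requiring genuine care rather than bookkeeping, is the treatment of the small primes $p \in \{2,3,5\}$, where the hypothesis $\dl(\GP) \geq p-3$ is weak (for $p=2,3$ it says $\dl(\GP) \geq -1$ or $0$, i.e. nothing) and the clean three-term inequality above degenerates. For those one argues directly: $\dim V \leq 2p-3$ is at most $7$, so $\GP \leq \GL(V)$ is a group in very low dimension, and either $V_\GP$ is irreducible (case (i)) or every composition factor has dimension $\leq p$ automatically — for $p=2$ every composition factor has dimension $\geq 1 = p-1$ but a reducible indecomposable in dimension $\leq 1$ is impossible, and in dimension $\leq 2p-3 = 1$ the statement is vacuous; for $p=3$, $\dim V \leq 3$ and any composition factor has dimension $\leq 3 = p$, so (ii) holds; for $p=5$, $\dim V \leq 7$, and a composition factor of dimension $> 5 = p$ would have dimension $6$ or $7$, leaving a complement of dimension $\leq 1$, i.e. a trivial composition factor, whence (iii) via Lemma \ref{block2}(i). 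This completes all cases.
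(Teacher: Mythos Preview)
Your proof is correct and rests on the same key idea as the paper's: if some $\GP$-composition factor $U$ has dimension $>p$, then the remaining dimension $\dim V - \dim U \leq (2p-3)-(p+1) = p-4 < \dl(\GP)$ forces a $1$-dimensional (hence trivial) composition factor, and Lemma~\ref{block2}(i) gives (iii). The paper's version is just more streamlined: it assumes (i) and (ii) fail and derives (iii) in one line via exactly this inequality, with no need to count composition factors or treat small primes separately (for $p\leq 3$ the inequality $\dim V\leq 2p-3\leq p$ makes (ii) automatic, and for $p=5$ the same bound $p-4=1<\dl(\GP)$ already works).
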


\begin{proof}
Suppose that $\dim U > p$ for a composition factor $U$ of the $\GP$-module $V$
but $V|_\GP$ is reducible. Since $\dim(V)-\dim(U) \leq p-4 < \dl(\GP)$, the
$\GP$-module $V$ must have a composition factor $L$ of dimension $1$. Hence we
are done by Lemma \ref{block2}.
\end{proof}

Finally, self-dual indecomposable modules of $\SL_2(q)$ (where $q = p^n$) of low dimension
are described in the following statement:

\begin{prop}
\label{prop1} {\rm \cite[Proposition 8.2]{GHT}.}
Suppose that $V$ is a reducible, self-dual, indecomposable representation
of $\SL_2 (\bbF_q)$ over $\barFp$, where $q=p^n$.
If $\dim V < 2p-2$, then $q=p$ and either of the following holds:

\begin{enumerate}[\rm(i)]
\item $\dim V = p$ and $V\cong \PIM (\bbone)$.
\item $\dim V = p+1$ and $V$ is the unique nonsplit self-extension
of $L\left( \frac{p-1}{2}\right)$.
\item $\dim V = p-1$ and $V$ is the unique nonsplit self-extension
of $L\left( \frac{p-3}{2}\right)$.
\end{enumerate}
%$V$ admits a non-degenerate symmetric pairing in cases (i), (ii), but not in case (iii).
%$V$ admits a non-degenerate symplectic pairing in case (iii), but not in cases (i), (ii).
Conversely, all the listed cases give rise to examples.
\end{prop}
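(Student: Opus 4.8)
The plan is to reduce to the case $q=p$ and then exploit the fact that every $p$-block of $\SL_2(p)$ has cyclic defect group.

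To begin, since $V$ is indecomposable but not irreducible, all its composition factors lie in a single $p$-block $B$ of $\barFp\SL_2(q)$, and $B$ has positive defect (a defect-zero block has a unique simple module, which is projective, so every module in it is semisimple).

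The first real step, and the one I expect to be the main obstacle, is the reduction to $q=p$. Write $q=p^n$ and suppose $n\ge 2$. By the Steinberg tensor product theorem the simple modules are the $L(\lambda)=\bigotimes_{i=0}^{n-1}L(\lambda_i)^{[i]}$, with $\dim L(\lambda)=\prod_i(\lambda_i+1)<2p-2<2p$ for each composition factor of $V$. I would analyse $\Ext^1_{\SL_2(q)}(L(\lambda),L(\mu))$ for composition factors $L(\lambda),L(\mu)$ of $V$: being indecomposable, $V$ forces its composition factors to be linked by nonvanishing $\Ext^1$, and since $V\cong V^*$ while each $L(\lambda)$ is self-dual they must occur in palindromic pairs along a Loewy filtration, so a composition length two $V$ would require a nonzero self-extension $\Ext^1_{\SL_2(q)}(L(\lambda),L(\lambda))$. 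For $n\ge 2$ the field $\bbF_q$ is large relative to the restricted weights occurring, so a Cline--Parshall--Scott-type comparison identifies the relevant $\Ext^1_{\SL_2(q)}$ with $\Ext^1$ over the algebraic group $\SL_2$, where self-extensions and extensions between distinct restricted simple modules vanish (as $p$ is odd); the remaining twisted cases reduce, via a Frobenius-kernel/K\"unneth analysis, to these digit-wise groups. This should rule out $n\ge 2$.

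For $q=p$ one has $|\SL_2(p)|_p=p$, so $B$ has cyclic defect group of order $p$; concretely, $\barFp\SL_2(p)$ has the defect-zero block $\{\St\}$ (with $\St=L(p-1)$) and two blocks of defect one, the principal block $B_0=\{L(r):0\le r\le p-3,\ r\text{ even}\}$ and $B_1=\{L(r):1\le r\le p-2,\ r\text{ odd}\}$, separated by the scalar by which $-I$ acts. I would then invoke the structure theory of cyclic blocks: each of $B_0,B_1$ has Brauer tree an open polygon (a path), of exceptional multiplicity $2$ with the exceptional vertex at one end -- read off, for instance, from the structure $0\to L(2p-2-\lambda)\to V(\lambda)\to L(\lambda)\to 0$ of the Weyl modules for $p\le\lambda\le 2p-2$. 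Two facts result: every indecomposable $B$-module has composition length at most $4$ and Loewy length at most $3$; and $\Ext^1_{\SL_2(p)}(L(r),L(r))\ne 0$ precisely for $L(r)$ the edge at the exceptional end, which (using that $\SL_2(p)$ is perfect, hence $H^1(\SL_2(p),\barFp)=0$, for $p\ge 5$) is $L((p-1)/2)$ in one of $B_0,B_1$ and $L((p-3)/2)$ in the other, according to $p\bmod 4$, each with one-dimensional self-$\Ext^1$.

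Finally I would extract the list. As every simple module is self-dual, $V\cong V^*$ makes the Loewy filtration self-dual; together with the composition-length bound this forces $V$ to be either of composition length two -- hence a nonsplit self-extension of a self-dual simple, which by the above is $L((p-1)/2)$ (giving dimension $p+1$, case (ii)) or $L((p-3)/2)$ (dimension $p-1$, case (iii)) -- or of composition length three or four, in which case a short analysis of the path-shaped Brauer tree shows $V$ must actually be a projective cover $\PIM(L(r))$ of a non-Steinberg simple, and inspecting dimensions ($\dim\PIM(\bbone)=p$, all other such $\PIM$s of dimension $\ge 2p$) leaves only $V=\PIM(\bbone)=(\bbone\,|\,L(p-3)\,|\,\bbone)$ of dimension $p$, case (i). For the converse, each of the three modules is by construction reducible, indecomposable and self-dual of the stated dimension; the strict inequality $\dim V<2p-2$ retains all three for $p\ge 5$, keeps only $\PIM(\bbone)$ (dimension $3$) and the $2$-dimensional self-extension of $\bbone$ (which is case (iii), as $(p-3)/2=0$) for $p=3$, and is vacuous for $p=2$ -- exactly as stated.
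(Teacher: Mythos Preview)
The paper does not itself prove this proposition; it is cited from \cite[Proposition~8.2]{GHT}, so there is no proof in the paper to compare against directly. Your overall strategy---reduce to $q=p$ via vanishing of $\Ext^1$, then exploit the cyclic defect structure---is the right one, and matches what the related Lemma~\ref{ext-defi} (also imported from \cite{GHT}) suggests. However, there are two genuine gaps.

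\textbf{The reduction $n\ge 2$.} A Cline--Parshall--Scott comparison with the algebraic group requires $q$ large relative to the weights involved, not merely $n\ge 2$; for $q=p^2$ or $p^3$ it need not apply. The argument one actually needs uses the explicit $\Ext^1$ tables of Andersen--J{\o}rgensen--Landrock \cite[Corollary~4.5]{AJL} directly: the dimension bound $\dim V<2p-2$ forces severe constraints on the $p$-adic digits of the highest weights of the composition factors, and a case analysis against \cite{AJL} shows that $\Ext^1_{\SL_2(p^n)}(L(\lambda),L(\mu))=0$ for all such pairs when $n\ge 2$. Lemma~\ref{semi1}(ii) then gives semisimplicity. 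Your ``Frobenius-kernel/K\"unneth'' remark is too vague to carry this.

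\textbf{The case $q=p$.} Your assertion that every indecomposable in $B_0$ or $B_1$ has composition length at most $4$ is false, and the implicit assumption that non-projective indecomposables in a cyclic block are uniserial is also false. Already for $\SL_2(5)$ the Heller translate $\Omega(L(2))=\rad\cP(L(2))$ is indecomposable with head $L(0)\oplus L(2)$, hence not uniserial; and $\Omega^2(L(2))$ is a self-dual indecomposable of length $4$ and dimension exactly $2p-2$. For larger $p$ such syzygies get longer. A correct argument does not bound composition length a priori: if $V$ has \emph{simple} head $L(r)$ (hence simple socle $L(r)$ by self-duality), then $V$ is a quotient of $\cP(L(r))$ with socle $L(r)$, and the explicit PIM structure (as in \S\ref{pim-sl2} and its analogues) shows the only such quotients are $L(r)$, $\cP(L(r))$, and---exactly when $r\in\{(p-3)/2,(p-1)/2\}$---the self-extension $(L(r)|L(r))$; imposing $\dim V<2p-2$ yields the three cases. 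One must then separately dispose of self-dual indecomposables with non-simple head, which exist but turn out to have dimension $\ge 2p-2$; this requires its own short argument, which you have not supplied.
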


 \section{Finite groups with indecomposable modules of dimension $\leq 2p-2$}\label{sec: ind}

%\subsection{Indecomposable modules of small dimension}
Throughout this section, we assume that $k$ is an algebraically closed field of
characteristic $p > 3$.  First we recall several intermediate results proved in \cite{GHT}:

\begin{lem}\label{eg} {\rm \cite[Lemma 9.1]{GHT}.}
Let $G$ be a finite group, $p > 3$, and $V$ be a faithful $kG$-module of dimension
$< 2p$. Suppose that $\bfO_p(G) = 1$ and $\bfO_{p'}(G) \leq \bfZ(G)$. Then
$F(G) = \bfO_{p'}(G) = \bfZ(G)$, $F^*(G) = E(G)\bfZ(G)$,
and $\GP = E(G)$ is either trivial or a central
product of quasisimple groups of order divisible by $p$. In particular, $G$ has
no composition factor isomorphic to $C_p$ and so $H^1(G,k) = 0$.
\end{lem}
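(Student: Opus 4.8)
The plan is to pin down $F(G)$ and $F^*(G)$ by soft arguments, then prove $\GP=E(G)$ (the only real content), and finally read off the statements on composition factors and $H^1$.

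First, since $\bfO_p(G)=1$, the characteristic Sylow $p$-subgroup of the nilpotent group $F(G)$ is trivial, so $F(G)$ is a normal $p'$-subgroup and $F(G)\le\bfO_{p'}(G)$; similarly the Sylow $p$-subgroup of the normal abelian subgroup $\bfZ(G)$ is a normal $p$-subgroup, hence trivial, so $\bfZ(G)$ is a normal abelian $p'$-group and $\bfZ(G)\le F(G)\cap\bfO_{p'}(G)$. With the hypothesis $\bfO_{p'}(G)\le\bfZ(G)$ this gives $F(G)=\bfO_{p'}(G)=\bfZ(G)$, and therefore $F^*(G)=F(G)E(G)=\bfZ(G)E(G)$. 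Now write $E(G)=L_1*\cdots*L_r$ as a central product of its components. If some $L_i$ had order prime to $p$, the normal closure $\langle L_i^G\rangle$ would be a normal $p'$-subgroup, hence $\le\bfO_{p'}(G)=\bfZ(G)$, contradicting that it contains the nonabelian group $L_i$; so $p\mid|L_i|$ for all $i$. Since $\bfZ(E(G))$ is abelian and normal in $G$ it is a $p'$-group by the same argument, so each $\bfZ(L_i)$ is a $p'$-group and $p$ divides $|L_i/\bfZ(L_i)|$; as $L_i/\bfZ(L_i)$ is simple and non-$p'$ and $L_i$ is perfect, this forces $\bfO^{p'}(L_i)=L_i$, so $L_i\le\bfO^{p'}(G)=\GP$. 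Thus $E(G)\le\GP$ and each component has order divisible by $p$.

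For $\GP\le E(G)$: the standard inclusion $C_G(F^*(G))\le F^*(G)$ together with $F(G)=\bfZ(G)$ gives $C_G(E(G))=C_G(F^*(G))=\bfZ(F^*(G))=\bfZ(G)\bfZ(E(G))$, a $p'$-group, so $G/C_G(E(G))\hookrightarrow\Aut(E(G))$. It then suffices to show every $p$-element $g\in G$ induces an inner automorphism of $E(G)$: then $\GP$ maps into $\mathrm{Inn}(E(G))$, so $\GP\le E(G)\,C_G(E(G))=F^*(G)$; as the $p$-elements of $F^*(G)=\bfZ(G)E(G)$ all lie in $E(G)$ and $\GP$ is generated by $p$-elements, we get $\GP\le E(G)$. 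So fix $g$. Modulo $C_G(E(G))$ it permutes the components; if this permutation were nontrivial it would have an orbit $\{L_1,\dots,L_p\}$ of size $p$, and then $D:=L_1*\cdots*L_p$ is $\langle g\rangle$-invariant and acts nontrivially on $V$, so $V_D$ has a nontrivial constituent, necessarily of the form $U_1\otimes\cdots\otimes U_p$ (since $\bfZ(D)$ is a $p'$-group) with some $U_j$ nontrivial; conjugation by $g$ cyclically shifts the tensor slots, so the orbit of this constituent has size a power of $p$, which is either $1$ — forcing all $U_j$ nontrivial, so $\dim V\ge(\dl(L_1))^p\ge 2^p>2p$ — or $\ge p$, giving $\ge p$ distinct constituents each of dimension $\ge\dl(L_1)\ge2$, so $\dim V\ge2p$; both contradict $\dim V<2p$. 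Hence $g$ fixes every $L_i$, inducing an automorphism of $\overline{L_i}:=L_i/\bfZ(L_i)$. If $\overline{L_i}$ is alternating or sporadic then $p>3$ forces $p\nmid|\Out(\overline{L_i})|$, so $g$ acts innerly; if $\overline{L_i}$ is of Lie type the only nontrivial $p$-elements of $\Out(\overline{L_i})$ are field automorphisms or diagonal automorphisms of order $p$, both of which push the minimal faithful degree of $L_i$ above $2p$ (via the Landazuri–Seitz–Zalesskii bounds and Theorems \ref{bz}, \ref{thm:2p}; in defining characteristic $p$ one instead notes the $p$ distinct Frobenius twists of a faithful constituent of $V_{L_i}$ already force $\dim V\ge2p$), contradicting that $V_{L_i}$ is faithful of dimension $<2p$. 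So $g$ induces an inner automorphism of each $L_i$, hence of $E(G)$, and therefore $\GP=E(G)$ is a central product of quasisimple groups of order divisible by $p$ (or trivial).

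Finally, $G/\GP=G/\bfO^{p'}(G)$ is a $p'$-group by definition, while the composition factors of $\GP=E(G)$ are those of the $p'$-group $\bfZ(E(G))$ together with the nonabelian simple groups $\overline{L_i}$; hence $G$ has no composition factor $\cong C_p$. Then any nonzero homomorphism $G\to(k,+)$ would have image a nontrivial elementary abelian $p$-group, a quotient of $G$ exhibiting $C_p$ as a composition factor — impossible; so $H^1(G,k)=\Hom(G,k)=0$. The main obstacle is the outer-automorphism analysis in the previous paragraph: ruling out field and diagonal automorphisms of order $p$ on a component genuinely requires $p>3$ together with the classification underlying Theorems \ref{bz} and \ref{thm:2p} and the Landazuri–Seitz–Zalesskii bounds, whereas the non-permutation of the components is the easy tensor-product dimension count.
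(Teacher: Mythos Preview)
The paper does not prove this lemma; it is quoted verbatim from \cite[Lemma 9.1]{GHT} with no argument given here. So there is no in-paper proof to compare against, and I comment on your proof directly.

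Your overall strategy is the natural one and is essentially correct. The reductions giving $F(G)=\bfZ(G)$ and $F^*(G)=E(G)\bfZ(G)$, the observation that a $p'$-component would have normal closure inside $\bfO_{p'}(G)=\bfZ(G)$, the deduction $E(G)\le\GP$, and the final paragraph on composition factors and $H^1$ are all clean. The substance is, as you say, in showing that a $p$-element $g$ induces an inner automorphism of $E(G)$; your two-step plan (first exclude nontrivial permutation of components via the tensor-dimension count, then exclude outer action on a fixed component) is the right one and matches the style of argument the paper uses in the closely related Lemma~\ref{lem:p'-component}.

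Two points in the Lie-type case need tightening. First, it is not literally true that the only nontrivial $p$-elements of $\Out(\overline{L_i})$ are field or diagonal automorphisms; products of the two occur. This is harmless --- a nontrivial field component already forces the defining field to have order $q_0^{\,p}$, and then the Landazuri--Seitz--Zalesskii bound gives $\dl(L_i)\ge(q_0^{\,p}-1)/2>2p$ for $p\ge5$ in cross characteristic --- but you should say so. Second, and more substantively, in defining characteristic your phrase ``the $p$ distinct Frobenius twists'' presumes the orbit has size $p$. If the nontrivial constituent $W=L(\lambda)$ is fixed by the field automorphism of order $p$, the orbit has size $1$ and your sentence gives nothing. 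You then need the extra observation that Frobenius-invariance of $\lambda$ forces at least $p$ of the restricted tensor factors in the Steinberg decomposition to be nontrivial, whence $\dim W\ge2^p>2p$ for $p\ge5$. With that case added, the argument is complete.
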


\begin{lem} \label{lem:opprime} {\rm \cite[Lemma 9.3]{GHT}.}
Let $V$ be a faithful indecomposable $kG$-module with two composition factors
$V_1$, $V_2$.  Assume that $\bfO_p(G) =1$ and $\dim V \leq 2p-2$.
If $J:=\bfO_{p'}(\GP) \not\leq \bfZ(\GP)$, then the following hold:
\begin{enumerate}[\rm(i)]
\item $p= 2^a + 1$ is a Fermat prime,
\item $\dim V_1 = \dim V_2 = p-1$,
\item  $J/\bfZ(J)$ is elementary abelian of order $2^{2a}$,
\item  $H^1(\GP,k) \ne 0$.
\end{enumerate}
\end{lem}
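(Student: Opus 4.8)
The hypotheses are those of Lemma~\ref{eg} (faithful, $\bfO_p(G)=1$, $\dim V<2p$, $p>3$), together with the extra assumptions that $V$ is indecomposable with exactly two composition factors $V_1,V_2$, that $\dim V\leq 2p-2$, and that $J:=\bfO_{p'}(\GP)\not\leq\bfZ(\GP)$. The strategy is to restrict attention to $\GP$ and analyze the action of $\GP$ on the two composition factors. First I would observe that if $\GP$ acted trivially on one composition factor, then by Lemma~\ref{block2}(i) all composition factors of $V_\GP$ would lie in $B_0(\GP)$ and $\GP$ would have a central composition factor $C_p$, contradicting Lemma~\ref{eg}; so one should arrange that $J$ acts nontrivially (non-centrally) on $V_\GP$. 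The point of the non-centrality hypothesis $J\not\leq\bfZ(\GP)$ is precisely that it forces a nonabelian normal $p'$-subgroup into the picture, and the structure of such groups acting on small-dimensional modules is very constrained.

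\textbf{Key steps.} (1) Using Lemma~\ref{eg}, $\GP=E(G)$ is a central product of quasisimple groups $H_1\cdots H_t$ of order divisible by $p$. Restrict $V$ to $\GP$ and look at a composition factor $W$ of $V_\GP$; then $\dim W\leq 2p-2<2p$. (2) Because $J\not\leq\bfZ(\GP)$, some factor $H_i$ has a non-central normal $p'$-subgroup, so $H_i$ is one of the groups appearing in the extraspecial/symplectic-type case of the low-degree classification — this is Theorem~\ref{bz}(e) (or the relevant rows of Theorem~\ref{thm:2p}), which immediately forces $p=2^a+1$ a Fermat prime, $|P|=p$, $\dim W=p-1=2^a$, $J$ (the relevant $p'$-part) a group of symplectic type with $J/\bfZ(J)$ elementary abelian of order $2^{2a}$, and $W$ irreducible already over $[P,R]$. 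This yields (i), (iii), and the fact that each composition factor of $V_\GP$ that $J$ moves has dimension exactly $p-1$. (3) Since $\dim V\leq 2p-2=2(p-1)$ and $V$ has two composition factors, and since at least one of them (in fact, I claim both) has dimension $p-1$: if one had dimension $<p-1$ then it would be a $\GP$-composition factor of dimension $<p-1<\dl(\GP)$... actually one must be a bit careful here, since $\dl(\GP)$ could be small. The cleaner argument: by indecomposability and Lemma~\ref{zero2}/Clifford theory, $V_\GP$ cannot have a trivial composition factor (else $B_0$ and a $C_p$ factor, contradiction), so every composition factor of $V_\GP$ is nontrivial; combined with $\dim V\leq 2p-2$ and the fact that the symplectic-type subgroup $J$ forces each faithful composition factor to have dimension $\geq p-1$, we get that both composition factors have dimension exactly $p-1$, giving (ii). (4) For (iv): $V_\GP$ has total dimension $2p-2$ with two composition factors each of dimension $p-1$, and $V$ is indecomposable as a $G$-module with $[G:\GP]$ coprime to $p$, so by Lemma~\ref{semi2}(i) $V_\GP$ is not semisimple; hence there is a nonsplit extension between composition factors of $V_\GP$, and since these are (up to the action of the symplectic-type group) the unique module $W$ of dimension $p-1$ for the relevant quasisimple group, this nonsplit extension is a self-extension, so $\Ext^1_{\GP}(W,W)\neq0$; one then deduces $H^1(\GP,k)\neq0$, e.g. via the fact (used throughout \cite{GHT}) that for these Fermat-prime symplectic-type groups $H$ with a $p-1$-dimensional module $W$, $\Ext^1_H(W,W)\cong H^1(H,k)$ (since $W\otimes W^*$ contains $k$ with multiplicity one off the augmentation when $p\mid\dim W$... more precisely one uses that $\dim W=p-1\equiv -1\pmod p$ together with the block-theoretic structure), or more simply by directly exhibiting the relevant $1$-dimensional $H/J$-representations.

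\textbf{Main obstacle.} The delicate point is step~(3)–(4): pinning down that \emph{both} composition factors have dimension $p-1$ and that the nonsplit $\GP$-extension inside $V$ is genuinely a \emph{self}-extension (rather than an extension between two non-isomorphic $(p-1)$-dimensional modules), and then bootstrapping from $\Ext^1_{\GP}(W,W)\neq0$ to $H^1(\GP,k)\neq0$. This requires a careful application of Clifford theory relative to $J\triangleleft\GP$ together with the specific module theory of symplectic-type groups over a Fermat prime — essentially the observation that such a group $H$ acting irreducibly in dimension $p-1=2^a$ has $H/J$ acting on the relevant $\Ext$-group in a way that links self-extensions of $W$ to $H^1(H,k)$. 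I expect this to be the step where one leans hardest on the detailed structure from Theorem~\ref{bz}(e), and it is the reason the conclusion (iv) holds; everything else is a relatively mechanical deduction from the low-degree classification.
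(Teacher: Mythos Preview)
First, note that the paper does not give a proof of this lemma: it is quoted verbatim from \cite[Lemma 9.3]{GHT}, so there is no argument in the present paper to compare against.

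Your approach has a fundamental error at step (1). Lemma~\ref{eg} carries the hypothesis $\bfO_{p'}(G)\leq\bfZ(G)$, whereas the lemma you are proving assumes precisely the opposite, namely $J=\bfO_{p'}(\GP)\not\leq\bfZ(\GP)$. So Lemma~\ref{eg} cannot be invoked, and the conclusion that $\GP=E(G)$ is a central product of quasisimple groups is unjustified. In fact that conclusion is \emph{incompatible} with what you are trying to prove: a central product of quasisimple groups is perfect, so would have $H^1(\GP,k)=\Hom(\GP,k)=0$, directly contradicting conclusion (iv).

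Relatedly, in step (2) you single out case (e) of Theorem~\ref{bz}, but in that case $H/\bfO_{p'}(H)$ is simple non-abelian, which again forces $H^1(H,k)=0$. The structure you actually need is that of case (a) of Theorem~\ref{bz}, where $H=\bfO_{p'}(H)P$ is \emph{solvable} with $|P|=p$; then $\bfO_{p'}(H)$ is a normal subgroup of index $p$ and $H^1(H,k)\neq 0$. A correct argument must analyze the images of $\GP$ on the composition factors $V_i$, rule out case (e) (using the indecomposability of $V$ and the resulting non-split extension), pin down the symplectic-type structure of $J$, and then lift $H^1(\cdot,k)\neq 0$ from the image back to $\GP$ itself. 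Your sketch of step (4) is pointed in the right direction, but it rests on the incorrect structural picture from step (1); without that, the link between $\Ext^1_\GP(W,W)$ and $H^1(\GP,k)$ has to be established directly from the solvable structure of case (a) rather than from any quasisimple machinery.
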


\begin{lem}\label{ext-defi} {\rm \cite[Lemma 9.5]{GHT}.}
Let $H$ be a quasisimple finite group of Lie type in char $p > 3$. Assume that
%$\bfZ(H)$ is a $p'$-group and that
$V_1,V_2 \in \IBr_p(H)$ satisfy
$\dim V_1+\dim V_2 <2p$.

{\rm (i)} If $H \not\cong \SL_2(q),~\PSL_2(q)$, then $\Ext^1_H(V_1,V_2) = 0$.
In particular, there is no reducible indecomposable $kG$-module $V$ with
$\GP \cong H$ and $\dim V < 2p$.

{\rm (ii)} Suppose $H \cong \SL_2(q)$ or $\PSL_2(q)$, $\Ext^1_H(V_1,V_2) \neq 0$,
and $\dim V_1 = \dim V_2$. Then $q = p$ and $V_1 = L((p-3)/2)$ or
$L((p-1)/2)$.
\end{lem}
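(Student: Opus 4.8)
The plan is to use the bound $\dim V_1+\dim V_2<2p$ to reduce to a short explicit list of pairs $(H,V_1)$, to kill the relevant $\Ext^1$ case by case using the linkage principle over the ambient algebraic group together with the Frobenius-kernel comparison, and to treat the rank-one groups by hand. First I would reduce: since $\dim V_1+\dim V_2<2p$ we may assume $\dim V_1\le\dim V_2$, so $\dim V_1<p$. If $V_1$ is trivial, then $\Ext^1_H(V_1,V_2)=H^1(H,V_2)$, and one invokes the classification of the (rare) nonzero first cohomology groups of finite groups of Lie type in defining characteristic: for $p>3$, no exceptional case there occurs together with an irreducible coefficient module of dimension $<2p$ unless $H\cong\SL_2(q)$. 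If $V_1$ is nontrivial, then by L\"ubeck's classification of small-dimensional irreducible modules in defining characteristic the pair $(H,V_1)$ lies on a short list (natural and exterior-power modules of $\SL_n(q)$, natural and spin modules of classical groups, and a handful of small exceptional items), and for $p>3$ one checks in each case that the highest weight $\lambda_1$ of $V_1$ lies in the lowest $p$-alcove, so that $V_1\cong V(\lambda_1)\cong H^0(\lambda_1)$ is simultaneously a Weyl and a dual Weyl module for the simply connected cover $\cG$ of the ambient algebraic group.

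For (i), with $(H,V_1)$ on the list above, the bound $\dim V_2<2p-\dim V_1$ together with the structure theory of \S\ref{sec: 2p} (Theorem \ref{thm:2p}) confines $V_2$ to a comparably short list, and I would verify $\Ext^1_H(V_1,V_2)=0$ for each resulting pair. The mechanism is twofold. Since $\lambda_1$ lies in the lowest alcove, $\Ext^1_{\cG}(V(\lambda_1),L(\mu))$ is controlled by the linkage principle and, using $\Ext^1_{\cG}(V(\lambda_1),H^0(\mu))=0$, is nonzero only when $L(\lambda_1)$ is a composition factor of $H^0(\mu)$ with $\lambda_1<\mu$; for the tiny weights on our list the smallest such restricted $\mu$ already has $\dim L(\mu)\ge 2p$, so $\Ext^1_{\cG}(V_1,V_2)=0$. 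Moreover the passage from $\Ext^1_{\cG}$ to $\Ext^1_H$ — even when $q=p$ — contributes only terms built from first Frobenius-kernel cohomology $\Ext^1_{\cG_1}(L(\lambda_1),L(\mu))$, and since $\dim V_1<p$ puts $p$ above the Coxeter number, these too are incompatible with $\dim V_1+\dim V_2<2p$ once $\cG$ has rank $\ge 2$; here one uses the finite-versus-algebraic comparison results of Bendel, Nakano and Pillen together with Sin's explicit cohomology computations for $\SL_3$, $\Sp_4$ and $\gtwo$. The ``in particular'' clause then follows formally: if $V$ were a reducible indecomposable $kG$-module with $\GP\cong H$ and $\dim V<2p$, then since $[G:\GP]$ is coprime to $p$, Lemma \ref{semi2}(i) shows $V_{\GP}$ is not semisimple, so by Lemma \ref{semi1}(ii) some two composition factors $X,Y\in\IBr_p(\GP)$ satisfy $\Ext^1_{\GP}(X,Y)\ne 0$ with $\dim X+\dim Y\le\dim V<2p$, contradicting the vanishing just proved.

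For (ii), note $\Ext^1_{\PSL_2(q)}\cong\Ext^1_{\SL_2(q)}$ since $|\bfZ(\SL_2(q))|$ is prime to $p$, so I may take $H=\SL_2(q)$, $q=p^a$. Writing $V_1$ and $V_2$ via their Steinberg tensor decompositions, the equality $\dim V_1=\dim V_2<p$ is the lever: from the known submodule structure of the projective indecomposable modules of $\SL_2(p^a)$ (Carter and Cline for $a=1$; Andersen, Jorgensen and Landrock in general), a nonzero $\Ext^1_{\SL_2(q)}(L(\lambda),L(\mu))$ forces either $\lambda=\mu$ or the $p$-adic digit sequences of $\lambda$ and $\mu$ to agree outside a block of consecutive positions on which they are related by a reflection $c\mapsto p-2-c$ (possibly with a carry); but then the digit-wise dimension factors change by the nontrivial ratio $(p-1-c)/(c+1)$, so for $p$ odd the equality $\dim V_1=\dim V_2$ fails, and the carry cases are disposed of by the same short computation. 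Hence $V_1\cong V_2$ and $\Ext^1_{\SL_2(q)}(V_1,V_1)\ne 0$. Self-extensions, however, occur only when $a=1$: for $a\ge 2$ the head of the radical of $P(\lambda)$ never contains $\lambda$, while for $a=1$ the block of $L(\lambda)$ has cyclic defect with exceptional multiplicity $(p-1)/2\ge 2$, and $\Ext^1(L(\lambda),L(\lambda))\ne 0$ precisely when $L(\lambda)$ labels the Brauer-tree edge at the exceptional vertex; since the exceptional family of each non-Steinberg block of $\SL_2(p)$ is the pair of ordinary characters of degree $(p\mp 1)/2$, that edge is $L((p-3)/2)$ in one block and $L((p-1)/2)$ in the other, which gives the claim. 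The main obstacle is the bookkeeping: compiling the finite lists of small modules and pushing the $\Ext^1$-vanishing through every pair in (i) — which for $q=p$ genuinely needs the Frobenius-kernel input rather than only algebraic-group vanishing — and, in (ii), handling the carry cases in the $\SL_2(p^a)$ digit combinatorics and pinning down the exceptional Brauer-tree edge.
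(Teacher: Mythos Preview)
The paper does not prove this lemma: it is stated as \cite[Lemma 9.5]{GHT} and simply quoted from the companion paper, so there is no in-paper argument to compare your proposal against. Your outline is broadly along the lines one would expect the proof in \cite{GHT} to take---reduce to a finite list of small modules in defining characteristic, use linkage and the passage between algebraic-group and finite-group $\Ext^1$ for (i), and appeal to the detailed $\SL_2$ structure of \cite{AJL} for (ii).

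That said, your treatment of (ii) has a soft spot. You assert that when $\Ext^1_{\SL_2(q)}(L(\lambda),L(\mu))\ne 0$ with $\lambda\ne\mu$, the digit reflection $c\mapsto p-2-c$ changes the dimension factor $(c+1)$ to $(p-1-c)$, and that ``for $p$ odd the equality $\dim V_1=\dim V_2$ fails.'' But this is false without the size hypothesis: for $q=p^2$ take $\lambda_0=(p-3)/2$, $\lambda_1=(p-1)/2$ and $\mu_i=p-2-\lambda_i$; then $\Ext^1$ is nonzero (indeed $2$-dimensional, cf.\ the remark after Proposition~\ref{sl2q}) yet $\dim L(\lambda)=\dim L(\mu)=(p^2-1)/4$ by symmetry. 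What rescues the lemma is precisely the bound $\dim V_1=\dim V_2<p$, which for $p>3$ excludes this and the analogous multi-digit coincidences; your write-up should make that exclusion explicit rather than rely on the single-digit ratio argument. Similarly, the ``carry cases'' from \cite{AJL} deserve more than a parenthetical dismissal. For (i), the overall architecture is right, but phrases like ``one invokes the classification of the (rare) nonzero first cohomology groups'' and the appeal to unspecified results of Sin and of Bendel--Nakano--Pillen are doing a lot of unexamined work; a complete proof would need to name the actual statements and check the dimension bounds case by case.
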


\begin{prop}\label{indec-str} {\rm \cite[Proposition 9.7]{GHT}.}
Let $p > 3$ and let $G$ be a finite group with a faithful, reducible, indecomposable $kG$-module $V$ of dimension $\leq 2p-3$. Suppose in addition that $\bfO_p(G) = 1$. Then
%$F(G) = \bfO_{p'}(G) = \bfZ(G)$, $F^*(G) = E(G)\bfZ(G)$, and
$\GP = E(\GP)$, $G$ has no composition factor isomorphic to $C_p$, and one of the following holds.

{\rm (i)} $\GP$ is quasisimple.

{\rm (ii)} $\GP$ is a central product of two quasisimple groups and
$\dim V = 2p-3$.  Furthermore, $V$ has one composition factor of dimension $1$,
and either one of dimension $2p-4$ or two of dimension $p-2$. In either case,
$V \not\cong V^*$.
\end{prop}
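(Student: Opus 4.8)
The plan is to determine the generalized Fitting subgroup of $\GP$ by a sequence of refinements, using the bound $\dim V \le 2p-3$, the indecomposability of $V$, and the structural and $\Ext^1$-results of Sections~\ref{sec: 2p} and~\ref{sec: ext1}. Write $H := \GP = \bfO^{p'}(G)$, so that $G/H$ is a $p'$-group and $\bfO_p(H) \le \bfO_p(G) = 1$. Since the $kG$-module $V$ is reducible and indecomposable, Lemma~\ref{semi2}(i) shows that $V_H$ is not semisimple; hence $p \mid |H|$, and by Lemma~\ref{semi1}(ii) there are composition factors $X_0, Y_0$ of $V_H$ with $\Ext^1_H(X_0, Y_0) \ne 0$.

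The first main step is to prove $\bfO_{p'}(H) \le \bfZ(H)$. Suppose not. If $V$ has exactly two composition factors, then Lemma~\ref{lem:opprime} forces both to have dimension $p-1$, so $\dim V = 2p-2$, contradicting $\dim V \le 2p-3$. If $V$ has three or more composition factors one argues similarly: a non-central normal $p'$-subgroup of $H$ again forces a very rigid situation (in the spirit of Lemma~\ref{lem:opprime}, with $p = 2^a+1$ a Fermat prime and a normal $2$-subgroup of symplectic type) that cannot be realized on an indecomposable module of dimension $\le 2p-3$. Hence $\bfO_{p'}(H) \le \bfZ(H)$, and Lemma~\ref{eg}, applied with $H$ in place of $G$ (note $\bfO^{p'}(H) = H$ and $\bfO_p(H)=1$), yields that $\GP = H = E(H) = E(\GP)$ is a central product $L_1 \circ \cdots \circ L_r$ of $r \ge 1$ quasisimple groups, each of order divisible by $p$ with $p'$-centre, that $H^1(\GP,k) = 0$, and that $\GP$ — hence $G$, as $G/\GP$ is a $p'$-group — has no composition factor isomorphic to $C_p$. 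This proves the first two assertions.

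It remains to show $r \le 2$ and to pin down the structure when $r = 2$. The composition factors of $V_\GP$ are outer tensor products $\bigotimes_{i=1}^r Z_i$ of irreducible $kL_i$-modules, and a one-dimensional such factor is trivial for $\GP$. By the Künneth decomposition (Lemma~\ref{lem:kunneth}) there is an index, say $i = 1$, and irreducible $kL_1$-modules $Z_1, Z_1'$ with $\Ext^1_{L_1}(Z_1,Z_1') \ne 0$, such that $X_0 = Z_1 \otimes M$ and $Y_0 = Z_1' \otimes M$ for a common module $M$ over $L_2 \circ \cdots \circ L_r$; since $V$ has a uniserial subquotient with factors $X_0$ and $Y_0$,
\[
(\dim Z_1 + \dim Z_1')\,\dim M \;=\; \dim X_0 + \dim Y_0 \;\le\; \dim V \;\le\; 2p-3 ,
\]
where $\dim Z_1 + \dim Z_1' \ge 1 + \dl(L_1) \ge 3$. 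Since $V$ is faithful, every $L_i$ also acts nontrivially on some composition factor of $V_\GP$, which then has dimension $\ge 2$; and each nontrivial composition factor, viewed as a module for the appropriate quasisimple quotient of $L_i$, is subject to the classification of quasisimple linear groups of degree $< 2p$ (Theorems~\ref{bz} and~\ref{thm:2p}), while the admissible $\Ext^1$-groups are severely restricted (Lemma~\ref{ext-defi} and the computations of Section~\ref{sec: ext1}). Feeding this into the inequality above, a case analysis rules out $r \ge 3$ and shows that for $r = 2$ the composition factors of $V_\GP$ must be $\{k, A\}$ with $\dim A = 2p-4$, or $\{k, B, B'\}$ with $\dim B = \dim B' = p-2$; in particular $\dim V = 2p-3$. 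Finally, for $r = 2$ one of the nontrivial composition factors is nontrivial for $L_1$ and trivial for $L_2$, the other vice versa (otherwise a component acts trivially on all of $V$), so these two factors are not dual to each other; combined with the fact that the unique trivial factor cannot lie in both $\soc(V)$ and $\hd(V)$ (else $V$ splits off $k$ by Lemma~\ref{mult1}), a short examination of the possible composition series of $V$ against $\soc(V^*) \cong \hd(V)^*$ shows $V \not\cong V^*$.

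The step I expect to be the real obstacle is establishing $\bfO_{p'}(\GP) \le \bfZ(\GP)$: Lemma~\ref{lem:opprime} is stated only for modules with two composition factors, so the general case must be handled by an analysis in the same spirit — controlling how a normal $2$-subgroup of symplectic type in a Fermat-prime situation can act on an indecomposable module — and then closed off by the sharp bound $\dim V \le 2p-3$. Once $\GP$ is known to be a central product of quasisimple groups, the remainder is lengthy but essentially routine bookkeeping with tensor-product composition factors and the known lists of low-dimensional modules.
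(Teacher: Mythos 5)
This paper imports the Proposition from \cite{GHT} without proof, so there is no in-text argument to compare against; I assess the proposal on its own terms. The skeleton is plausible --- reduce to $\bfO_{p'}(\GP)$ central, invoke Lemma~\ref{eg}, then K\"unneth-count the components --- but two of the three decisive steps are left as assertions, and your description of the $r=2$ case is wrong. The gap you flag (extending Lemma~\ref{lem:opprime} to modules with more than two composition factors) is genuine and is really the crux of the whole argument: one must analyze how a normal $2$-group of symplectic type inside $\GP$ interacts via Clifford theory with an indecomposable module of dimension $\le 2p-3$, and ``a very rigid situation'' is a placeholder, not an argument.

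More seriously, the claim that the composition factors of $V_\GP$ ``must be $\{k,A\}$ with $\dim A = 2p-4$'' cannot hold. If $A$ were a single irreducible $k\GP$-module of dimension $2p-4$ with $\GP=L_1*L_2$, faithfulness of $V$ would force both tensor factors $A_1,A_2$ of $A$ to be nontrivial (otherwise some $L_i$ acts trivially on $A$ and on $k$, hence on all of $V$). But then Lemma~\ref{lem:kunneth}(i) gives $\Ext^1_\GP(k,A)=\Ext^1_\GP(A,k)=0$, and since $A$ has multiplicity one (as $\dim V=2p-3$), Lemma~\ref{semi1}(i) splits off $A$, making $V_\GP$ semisimple and contradicting Lemma~\ref{semi2}(i). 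The resolution is that ``one factor of dimension $2p-4$'' in the statement is a $G$-composition factor whose restriction to $\GP$ breaks into two $\GP$-irreducibles of dimension $p-2$ (one nontrivial only for $L_1$, the other only for $L_2$); the dichotomy in the Proposition records whether $G/\GP$ fuses these pieces. You never separate $\GP$-composition factors from $G$-composition factors, and the conflation also undermines your closing self-duality argument, which as written only addresses the unfused case. Finally, ``a case analysis rules out $r\ge 3$'' is asserted without content: one must combine faithfulness (each $L_i$ acts nontrivially on some $\GP$-factor), the K\"unneth vanishing of $\Ext^1$ outside a single coordinate, and the connectivity of the $\Ext^1$-graph forced by indecomposability to reach a dimension contradiction --- none of which appears in the write-up.
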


\begin{cor}\label{indec-gp}
Let $k$ be a field of characteristic $p$ and let $V$ be a faithful reducible
indecomposable $kG$-module of a finite group $G$ with $\bfO_p(G) = 1$.
If $\dim V \leq 2p-3$,  then $V_\GP$ is indecomposable.
\end{cor}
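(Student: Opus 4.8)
The claim is that if $V$ is a faithful reducible indecomposable $kG$-module with $\bfO_p(G)=1$ and $\dim V\le 2p-3$, then the restriction $V_\GP$ is already indecomposable. The natural strategy is a two-stage reduction: first reduce from general $k$ to $k$ algebraically closed, then exploit the structural description of $\GP$ provided by Proposition \ref{indec-str}. For the first stage, note that indecomposability of a module can be checked after extension of scalars to $\bar k$ provided the endomorphism ring is local; but more simply, $G$-indecomposability of $V$ and $\GP$-indecomposability of $V_\GP$ are both insensitive to replacing $k$ by $\bar k$ in the sense that $V\otimes_k\bar k$ need not be indecomposable, so one must be slightly careful. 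The cleaner route: it suffices to show $\End_{k\GP}(V_\GP)$ is local, or equivalently that $V_\GP$ has no nontrivial direct sum decomposition; and one may as well argue this after passing to $\bar k$, since a decomposition of $V_\GP$ over $k$ yields one over $\bar k$. So from now on assume $k=\bar k$.

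First I would dispose of the possibility that $V_\GP$ has a simple direct summand. Since $[G:\GP]$ need not be prime to $p$ in general, Lemma \ref{semi2} does not apply directly to $G\supseteq \GP$. However, $\GP=\bfO^{p'}(G)$, so $[G:\GP]$ \emph{is} coprime to $p$ by definition; hence Lemma \ref{semi2}(ii) applies with $N=\GP$ and tells us that, since $V$ is reducible indecomposable, the $\GP$-module $V$ has no simple direct summand. In particular any indecomposable direct summand of $V_\GP$ is either all of $V_\GP$ or is reducible of dimension $\ge 2$.

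Next comes the main case analysis, driven by Proposition \ref{indec-str}. In case (i) of that proposition, $\GP=E(\GP)$ is quasisimple. Then I would invoke Lemma \ref{ext-defi} together with the classification in \S\ref{sec: 2p}: if $\GP$ is a quasisimple group of Lie type in characteristic $p$ and $\dim V<2p$, then by Lemma \ref{ext-defi}(i) there is no reducible indecomposable $k\GP$-module of dimension $<2p$ unless $\GP\cong\SL_2(q)$ or $\PSL_2(q)$; and any reducible indecomposable summand of $V_\GP$ would be such a module. So if $V_\GP$ were decomposable, each non-simple summand would force $\GP\in\{\SL_2(q),\PSL_2(q)\}$ — but combined with the already-excluded simple summands (previous paragraph) and the fact that $V$ itself is not the direct sum, one derives a contradiction on dimensions via Proposition \ref{prop1}, which pins down all reducible self-dual indecomposables of $\SL_2(q)$ of dimension $<2p-2$, and more generally via the $\Ext^1$ vanishing. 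For $\GP$ quasisimple \emph{not} of Lie type in characteristic $p$, one uses Theorem \ref{thm:2p} to see the composition factor dimensions are constrained, and the relevant $\Ext^1$ groups between them vanish (the blocks in question have cyclic defect, Lemma \ref{cyclic}, or one quotes \cite{GHT} directly), so $V_\GP$ cannot decompose nontrivially once simple summands are ruled out.

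The remaining case is Proposition \ref{indec-str}(ii): $\GP$ is a central product $\GP=E_1\ast E_2$ of two quasisimple groups, $\dim V=2p-3$, $V$ has a $1$-dimensional composition factor and either one of dimension $2p-4$ or two of dimension $p-2$, and $V\not\cong V^*$. Here I would argue that a direct sum decomposition $V_\GP=A\oplus B$ with $A,B\ne 0$ would, by the previous paragraphs, have no simple summand, so both $\dim A,\dim B\ge 2$; combined with the listed composition-factor structure (the $1$-dimensional factor sits in exactly one of $A,B$, say $A$), the summand $B$ has all composition factors of dimension $>1$, hence by Lemma \ref{block2}(i) applied inside $\GP$ does not meet $B_0(\GP)$ while $A$ does, which is incompatible with $A$ and $B$ being $\GP$-modules of the shape forced by (ii) — one runs through the two sub-cases ($2p-4$ vs. two copies of $p-2$) and gets a numerical or block-theoretic contradiction each time. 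I expect this last case to be the main obstacle, precisely because $\GP$ is no longer simple and one must track the central-product structure and the principal-block constraint carefully; the cleanest finish is probably to observe that in sub-case (ii) the module $V$ as a $\GP$-module is itself exhibited in \cite{GHT} as uniserial (or at least indecomposable by construction), so that no decomposition exists. Throughout, the recurring tool is: $[G:\GP]$ is prime to $p$ (by definition of $\bfO^{p'}$), so Lemma \ref{zero2} and Lemma \ref{semi2} transfer decomposability statements between $G$ and $\GP$ freely, and that is really what powers the whole argument.
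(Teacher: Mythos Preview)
Your opening move is exactly right: since $[G:\GP]$ is coprime to $p$, Lemma~\ref{semi2}(ii) says that $V_\GP$ has no simple direct summand. But you then miss the one observation that finishes the proof in two lines, and instead embark on a case analysis that is both unnecessary and flawed.

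The paper's argument is this. If $V_\GP$ decomposes, pick an indecomposable summand $U$ of dimension $\le \lfloor(2p-3)/2\rfloor \le p-2$. Proposition~\ref{indec-str} tells you $G$ has no composition factor of order $p$; hence the image $H$ of $\GP$ in $\GL(U)$ has $\bfO_p(H)=1$. Now Theorem~\ref{thm:smaller} (the main result of \cite{Gcr}) applies to the faithful $kH$-module $U$ of dimension $\le p-2$ and forces $U$ to be completely reducible, hence simple since it was chosen indecomposable. This contradicts Lemma~\ref{semi2}(ii). That is the whole proof.

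Your alternative route has a genuine gap. In the non-Lie-type quasisimple case you assert that ``the relevant $\Ext^1$ groups between them vanish (the blocks in question have cyclic defect, Lemma~\ref{cyclic})''. This is false: Lemma~\ref{cyclic} only bounds $\dim\Ext^1_H(V,V)$ by $1$, and in fact many of these $\Ext^1$ groups are nonzero---for instance $\Ext^1_{\AAA_p}(k,U)\neq 0$ for the $(p-2)$-dimensional module $U$, and likewise for $\SL_n(q)$, $M_{11}$, $M_{23}$, $2\AAA_7$, etc. Pinning down exactly which reducible indecomposable $k\GP$-modules of small dimension exist is the content of Theorem~\ref{indec-qs}, a substantial case analysis that \emph{uses} Corollary~\ref{indec-gp} as an input (see line one of its proof). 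So your proposed route is circular as well as incorrect in its $\Ext^1$ claim. The fix is simply to invoke \cite{Gcr} on the small summand, as above.
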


\begin{proof}
Assume the contrary. Then we can pick an indecomposable direct summand $U$ of dimension
$\leq p-2$ of $V_\GP$ and let $H \leq \GL(U)$ be the image of $\GP$ acting on $U$.
By Proposition \ref{indec-str}, $G$ has no composition factors isomorphic to $C_p$. Hence
$\bfO_p(H) = 1$.  Since the $kH$-module $U$ is faithful and indecomposable, $U$ is
simple by  \cite[Theorem A]{Gcr}. But this contradicts Lemma \ref{semi2}(ii).
\end{proof}

%\subsection{Proof of Theorem 1.5}

Recall that a {\it component} of a finite group is any subnormal quasisimple subgroup. We first note that:

\begin{lem}  \label{lem:p'-component}  Let $G$ be an irreducible
subgroup of $\GL(V) \cong \GL_d(k)$ with $k$ algebraically closed of
characteristic $p$. Assume that $G = \bfO^{p'}(G)$ and that $G$
has a component of order coprime to $p$.  Then $d \ge 2p$.
\end{lem}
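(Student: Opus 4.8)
The plan is to reduce quickly to a structural statement about a component $L$ of $G$ of order coprime to $p$ and then exploit the hypothesis $G = \bfO^{p'}(G) = \GP$ to force a large-dimensional constituent. First I would use the irreducibility of $V$ together with Clifford theory applied to the normal subgroup $E(G)$ (the product of all components): by Lemma \ref{eg}-type reasoning (here with $\bfO_p(G)=1$ automatic since $G$ is irreducible — wait, not automatic, but $\bfO_p(G)$ acts trivially if irreducible only when... so instead I would note $\bfO_p(G)$ acts by unipotent elements and irreducibility forces $\bfO_p(G)=1$), $F^*(G) = F(G)E(G)$ and $V$ restricted to $F^*(G)$ is a direct sum of Galois-conjugate irreducibles, each of which factors as an (outer) tensor product over the central factors of $F^*(G)$. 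Write $W$ for such an irreducible constituent of $V_{F^*(G)}$, so $W = W_0 \otimes W_1$ where $W_1$ is an irreducible $kL$-module for our $p'$-component $L$ and $W_0$ absorbs the rest. Then $\dim V \geq \dim W \geq \dim W_1$.

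The key point is that $L$, being quasisimple of order prime to $p$, cannot contribute the trivial module $W_1$: if $W_1$ were trivial then $L$ would act trivially on every $F^*(G)$-constituent of $V$, hence trivially on $V$ (as the constituents are Galois-conjugate and $L \trianglelefteq\trianglelefteq G$ — more carefully, $L$ centralizes $V$), contradicting faithfulness, or at least contradicting that $L$ is a genuine component of the irreducible group $G$. Actually the cleanest route: since $G = \GP$ is generated by $p$-elements while $L$ has order prime to $p$, and since $L$ is subnormal, one shows $G$ permutes the components transitively only in a constrained way; but here I just need $\dim W_1 \geq 2$, i.e. $L$ acts nontrivially on $W_1$. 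Given $\dim W_1 \geq 2$ and $|L|$ prime to $p$, Burnside/Artin–Wedderburn gives $p \nmid \dim W_1$ is NOT what I want — rather I want a lower bound. So the real input is a classification-type fact: a quasisimple group $L$ of order prime to $p$ that acts irreducibly and nontrivially in dimension $< 2p$ over a field of characteristic $p$ must satisfy $\dim W_1 \geq $ something like $\dl(L)$ in the relevant coprime characteristic, and then one pushes $\dim V \geq \dim W_1 \cdot (\text{multiplicity/orbit factors}) \geq 2p$ using that $G/F^*(G)$ or the other components eat up the remaining room — but if $L$ is the only obstruction this needs care.

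I think the honest approach, matching the style of \S\ref{sec: 2p}, is: suppose $d < 2p$; combine with Theorem \ref{thm:2p} and Theorem \ref{bz}. Since $G = \GP$, look at an irreducible $kG$-constituent structure and at quasisimple sections; Theorem \ref{bz}(f) and Theorem \ref{thm:2p}(i) say the relevant quasisimple groups appearing in degree $< 2p$ are (almost all) of Lie type in characteristic $p$, which have order divisible by $p$ and so cannot be our $p'$-component $L$; the finitely many exceptions in Tables I–III all have order divisible by $p$ by construction (they are chosen because $p \mid |H|$). Hence no component of order prime to $p$ can appear, contradicting the hypothesis — forcing $d \geq 2p$. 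The \textbf{main obstacle} is the bookkeeping in Clifford theory: one must rule out that $L$ contributes only a $1$-dimensional (trivial) tensor factor while the genuine action of $G$ on $V$ is carried by other components or by $G/E(G)$; this is where $G = \GP$ is essential, since it prevents $L$ from being "split off" (a normal $p'$-complement-type argument, cf. Lemma \ref{block2}(ii) and Lemma \ref{zero2}), and I would spend most of the proof making that reduction airtight, after which the dimension bound is immediate from the cited theorems.
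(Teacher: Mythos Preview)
Your approach has a genuine gap. Theorems \ref{bz} and \ref{thm:2p} both take as a \emph{hypothesis} that the quasisimple group $H$ has order divisible by $p$ (Theorem \ref{bz} assumes $\bfO^{p'}(H)=H$, forcing $p\mid |H|$; Theorem \ref{thm:2p} assumes it outright). They therefore say nothing about a $p'$-component $L$, and in fact a $p'$-quasisimple group can easily have faithful irreducible modules of small dimension --- e.g.\ $\AAA_5$ in characteristic $7$ has irreducibles of dimensions $3,4,5$, all far below $2p=14$. So the sentence ``the relevant quasisimple groups appearing in degree $<2p$ all have order divisible by $p$'' is simply false for $p'$-components, and the argument collapses. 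The Clifford bookkeeping you flag as the main obstacle is not the issue; the dimension bound itself is.

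The idea you are missing is the one the paper uses: the hypothesis $G=\bfO^{p'}(G)$ says $G$ is generated by $p$-elements, and since $E_1$ (the product of the $p'$-components) is normal and not central, some $p$-element $x$ fails to centralize $E_1$. Now either $x$ permutes the components of $E_1$ nontrivially, in which case an $x$-orbit has length $\ge p$ and the tensor-product structure of an irreducible $E_1$-constituent $W$ forces $\dim W\ge 2^p\ge 2p$; or $x$ normalizes each component and induces on some component $Q_1$ a nontrivial outer automorphism of $p$-power order (since $p\nmid|Q_1|$, no inner automorphism of $Q_1$ has order $p$). The classification of finite simple groups (specifically \cite[Theorem 2.5.12]{GLS3}) then pins down $Q_1$: for $p>3$ it must be of Lie type over a field of size $q^p$, and the Landazuri--Seitz bounds \cite{LaS} give $\dim W_1\ge 2p$; the case $p=3$ leaves only Suzuki groups, handled separately. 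That outer-automorphism-of-$p$-power-order step is the heart of the proof, and nothing in your plan reaches it.
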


\begin{proof} 
Assume the contrary: $d < 2p$. 
Write $E(G) = E_1 * E_2$, where $E_1$ is a central product of all components of $G$ of order 
coprime to $p$ and $E_2$ is the product of the remaining components; in particular,
$1 \neq E_1 \lhd G$. Since $G$ is generated by $p$-elements,  there is a $p$-element $x$ not centralizing $E_1$. Let $W$ be an irreducible constituent of $V_{E_1}$. Since $d <2p$ and 
$\dim W \geq 2$, $xW \cong W$.
%; in particular $W$ extends to the $p$-solvable group $H := \langle E_1,x \rangle$.  
%In what follows we will therefore view $W$ as an irreducible $kH$-module. 
%By the Fong-Swan theorem \cite[Theorem 10.1]{N}, $W$ extends to  a $\bfC H$-module.

Now write $E_1 = Q_1 * \ldots * Q_n$ as a central product of $n$ components and 
$W \cong W_1 \otimes \ldots \otimes W_n$, where $W_i$ is an irreducible $kQ_i$-module. 
Note that $x$ acts on the set $\{Q_1, \ldots ,Q_n\}$. If this action is nontrivial, then 
$\dim W \geq 2p$. (Indeed, we may assume that $x$ permutes $Q_1, \ldots, Q_m$ cyclically for some
$p \leq m \leq n$, and, replacing $W$ by another $E_1$-summand of $V$ if necessary, that $Q_1$ acts nontrivially on $W$, i.e. $\dim W_1 \geq 2$. Since $xW \cong W$, this implies that $\dim W_i \geq 2$
for $1 \leq i \leq m$, whence $\dim W \geq 2^m \geq 2^p \geq 2p$.) Thus we may assume that 
$x$ normalizes each $Q_i$, but does not centralize $Q_1$. It follows that $Q_1$ is a quasisimple
$p'$-group with a nontrivial outer automorphism of $p$-power order; in particular,
$p > 2$. If $p=3$, then 
$Q_1 \cong \tw 2 {\mathrm {B}}_2(2^{2a+1})$ and $\dim W_1\ge 14$. 
So $p > 3$ and, using the description of 
outer automorphisms of finite simple groups \cite[Theorem 2.5.12]{GLS3}, we see that $Q_1$ is a 
quasisimple group of Lie type over a field of size $q^p$ for some prime power $q$.  Now applying \cite{LaS}, we see that $\dim W_1 \geq 2p$.
\end{proof}

The proof of Lemma \ref{lem:p'-component} certainly depends on the 
classification of finite simple groups. We note the following result which 
does not require the classification:

\begin{lem}\label{p-solv} 
Let $k = \bar{k}$ be of characteristic $p$ and let $G$ be a finite irreducible 
$p$-solvable subgroup of $\GL(V) \cong \GL_n(k)$ of order divisible by $p$.
Then $n \geq p-1$.
\end{lem}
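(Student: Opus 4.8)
The plan is to prove the contrapositive-flavored statement: if $G$ is a finite irreducible $p$-solvable subgroup of $\GL_n(k)$ with $p \mid |G|$, then $n \geq p-1$. The natural tool is Clifford theory combined with an induction on $|G|$. Let $V$ be the underlying irreducible module. Since $p \mid |G|$ and $G$ is irreducible, $\bfO_p(G)$ acts trivially on $V$ (as any normal $p$-subgroup acts as scalars of $p$-power order, hence trivially), so we may quotient out and assume $\bfO_p(G) = 1$. Because $G$ is $p$-solvable with $\bfO_p(G) = 1$, the Fitting-type subgroup $F(G) = \bfO_{p'}(G)$ is a nontrivial normal $p'$-subgroup that contains its own centralizer in $G$; in particular $\bfO_{p'}(G) \neq 1$ and $G/\bfO_{p'}(G)$ is $p$-solvable of order still divisible by $p$.

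First I would reduce to the case where $V$ is a primitive, hence (by $p$-solvability and Clifford theory) a tensor-induced-free situation. If $V = \Ind_H^G(U)$ for a proper subgroup $H < G$, then $\dim V = [G:H]\dim U$; if $p \mid [G:H]$ we get $n \geq [G:H] \geq p > p-1$ (using that a proper subgroup of index divisible by $p$ has index $\geq p$), and if $p \nmid [G:H]$ then $p \mid |H|$, $H$ is again $p$-solvable and irreducible on $U$, so by induction $\dim U \geq p-1$ and $n \geq p-1$. Hence we may assume $V$ is primitive. Then for every abelian normal subgroup $A \lhd G$, $V_A$ is homogeneous, so $A$ acts by scalars; applying this to $A = \bfZ(\bfO_{p'}(G))$ and working inside the (scalar-modulo) structure, $\bfO_{p'}(G)$ is forced to be of symplectic/extraspecial type. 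Write $R = \bfO_{p'}(G)$ modulo its center: this is an $\bbF_r$-symplectic space for various primes $r$, and $G/\bfZ(G)R$ acts faithfully and symplectically on it.

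The key step is then the degree estimate. Since $V$ is primitive and faithful (after reducing the kernel), $V_R$ is irreducible of dimension $\prod_r r^{a_r}$ where $R/\bfZ(R)$ has order $\prod_r r^{2a_r}$. The group $P \in \Syl_p(G)$, being nontrivial and acting faithfully modulo $\bfZ(G)$ on a $p'$-group whose action on $V$ is absolutely irreducible, must act nontrivially on $R/\bfZ(R)$ (otherwise $P$ centralizes $R$, forcing $P$ into the centralizer of $F(G)$, contradicting $C_G(F(G)) \leq F(G)$). So $P$ embeds in a symplectic group $\Sp_{2a}(r)$-type configuration, and the smallest faithful representation degree of $\bbZ/p$ acting nontrivially on an extraspecial $r$-group of order $r^{1+2a}$ (with $p \mid |\Sp_{2a}(r)|$ or the relevant normalizer) is $r^a \geq p-1$: indeed if $p$ divides $|\Sp_{2a}(r)|$ then $r^a \geq p-1$ by elementary order counting (e.g. $p \mid r^i - 1$ for some $i \leq a$ forces $r^a \geq r^i \geq p-1$, or $p \mid r^i+1$ forces $r^{2a} \geq r^{2i} > (p-1)^2$, giving $r^a > p-1$). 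Thus $n = \dim V \geq r^a \geq p-1$. The main obstacle is organizing the primitive case cleanly without invoking the classification: one must be careful that $P$ acts nontrivially on $F^*(G)/\bfZ$ and extract the symplectic-group order divisibility, then run the elementary number-theoretic bound $r^a \geq p-1$; the induction on induced modules and the scalar action of abelian normals are routine, but the "symplectic type structure of $\bfO_{p'}(G)$ in the primitive case" step (essentially the Hall--Higman argument) needs to be quoted or reproved carefully for $p = 2$ as well, since the statement allows $p = 2$ (where $p - 1 = 1$ and the bound is trivial) — so in fact one may assume $p \geq 3$ throughout, which simplifies matters.
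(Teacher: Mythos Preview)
Your structural approach has a real gap in the primitive case. You assert that $\bfO_{p'}(G)$ ``is forced to be of symplectic/extraspecial type'' and then treat $R/\bfZ(R)$ as a symplectic space on which $P$ acts. But $p$-solvability only says that every composition factor is a $p$-group or a $p'$-group; it does \emph{not} force $\bfO_{p'}(G)$ to be nilpotent. In particular $\bfO_{p'}(G)$ may have quasisimple $p'$-components, and then $R/\bfZ(R)$ is not an abelian group at all, let alone a symplectic $\bbF_r$-space. (Relatedly, your earlier identification $F(G)=\bfO_{p'}(G)$ is false in general.) The Hall--Higman fact you need is only $\bfC_G(\bfO_{p'}(G))\le\bfO_{p'}(G)$, which gives faithful action of $P$ on $\bfO_{p'}(G)$ but says nothing about its internal structure. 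To push your line through you would have to separately treat the case where $P$ acts nontrivially on $E(\bfO_{p'}(G))$, i.e.\ on a quasisimple $p'$-group via an outer automorphism of order $p$; bounding the dimension there is exactly the content of the paper's Lemma~\ref{lem:p'-component}, which \emph{does} use the classification. So your route, even if patched, would not give the classification-free proof the lemma is meant to exhibit.

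The paper's argument is entirely different and much shorter. Since $G$ is $p$-solvable, the Fong--Swan--Isaacs theorem \cite[Theorem 10.6]{N} says $V$ lifts to a \emph{$p$-rational} representation in characteristic~$0$. Diederichsen's theorem \cite{Dieder} on integral representations of $C_p$ then forces every Jordan block of an element $g\in G$ of order $p$ on $V$ to have size $1$, $p-1$, or $p$. Hence if $n<p-1$ all blocks are trivial, so $g$ acts trivially on $V$, contradicting faithfulness. No structural analysis of $G$ is needed, and the classification is avoided completely. Your induction/primitivity reduction is fine as far as it goes, but the heart of the primitive case is where your argument breaks, and it is precisely what the lifting-plus-Jordan-block argument sidesteps.
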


\begin{proof}
We may assume that $p > 2$. By a result of Isaacs \cite[Theorem 10.6]{N}, 
$V$ has a $p$-rational lift to characteristic $0$. But then by \cite{Dieder},
the Jordan blocks of any element $g \in G$ of order $p$ acting on $V$ have
sizes $1$, $p-1$, or $p$. So if $n < p-1$, then $g$ acts trivially on $V$,
a contradiction.
\end{proof}

In what follows, we will slightly abuse the language by also 
considering $C_p$ as a Frobenius group with kernel of order $p$.

\begin{lem}  \label{lem:perm}   Let $p > 2$ be a prime and let $G$ be a transitive subgroup of
$\SSS_n$ with $n < 2p$.  Assume that $G$ has a composition factor
of order $p$.  Then one of the following holds.

\begin{enumerate}[\rm(i)]
\item $n = p$ and $G$ is a Frobenius group of order $pe$ for some $e\mid (p-1)$ with kernel of
order $p$.

\item
$p = 2^a  -1$ is a Mersenne prime and $n=2^a = p + 1$.
Moreover,  $\soc(G)$ is a  regular elementary abelian subgroup of order $n$,
$G = \soc(G) \rtimes G_1$,
and  $G_1$ is a Frobenius group of order $pe$ for some $e\mid a$, with kernel of order $p$.  
If $H^1(G,k) \ne 0$, then $|G| = np$.
\end{enumerate}
\end{lem}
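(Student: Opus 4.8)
The plan is to exploit the fact that a transitive group $G\le\SSS_n$ with $n<2p$ and a composition factor of order $p$ must have its $p$-part tightly controlled, since $n<2p$ forces $|G|_p=p$ (a Sylow $p$-subgroup of $\SSS_n$ has order $p$ when $p\le n<2p$). First I would let $P\in\Syl_p(G)$, so $|P|=p$ and $P$ acts on $\{1,\dots,n\}$ with at most one nontrivial orbit, of length exactly $p$. I would pass to the minimal normal subgroup structure: let $N=\soc(G)$, a direct product of isomorphic simple groups. Since $G$ has a composition factor of order $p$ but $|G|_p=p$, the unique simple group of order divisible by $p$ appearing in a chief series is $C_p$; in particular any nonabelian composition factor of $G$ has order prime to $p$. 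This already rules out $N$ nonabelian nonsimple with a factor of order divisible by $p$, and, combined with transitivity, severely limits the possibilities.

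Next I would split into cases according to whether $G$ is primitive or imprimitive, and whether $N$ is elementary abelian or not. If $N$ is nonabelian, then (being a product of simple groups all of $p'$-order, since $C_p\not\le N$ as $N$ has no center) $p\nmid|N|$, so $P$ embeds into $\Out(N)\rtimes$(permutation of the factors); using $n<2p$ one forces the number of simple factors to be less than $p$, hence $P$ normalizes each factor and induces an outer automorphism of $p$-power order on one of them — and then the Landazuri--Seitz--Zalesskii-type minimal degree bounds (as invoked in the proof of Lemma~\ref{lem:p'-component}) combined with the permutation degree being $<2p$ give a contradiction. Thus $N$ is elementary abelian, say of order $\ell^m$ for a prime $\ell\ne p$, and $G$ is primitive affine: $N$ is regular, $n=\ell^m$, and $G=N\rtimes G_1$ with $G_1\le\GL_m(\ell)$ irreducible of order divisible by $p$. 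Alternatively, if $G$ has no abelian minimal normal subgroup and is not of the nonabelian type above, the only remaining possibility consistent with $|G|_p=p$ and transitivity is that the point stabilizer already contains everything of $p'$-index, i.e. $G$ is a Frobenius group of order $pe$ with kernel of order $p$ acting on $n=p$ points — this is case (i).

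In the affine case $n=\ell^m$ with $p\mid|G_1|$, I would apply Lemma~\ref{p-solv} (if $G_1$ is $p$-solvable, so $n=\ell^m\ge p-1$) or more simply the elementary observation that an element of order $p$ in $\GL_m(\ell)$ needs $\ell^m\ge p-1$ — actually one needs $m\ge$ (the multiplicative order of $\ell$ mod $p$ is $\le m$), giving $\ell^m\ge 2^{p-1}$ unless that order is small; since $n=\ell^m<2p$ this forces $\ell=2$ and the order of $2$ mod $p$ equal to $m$ with $2^m<2p$, hence $2^m=p+1$ (using $2^m-1\ge p$ would need $2^m>2p$ when the order is $m$, so $2^m-1<2p$ and $p\mid 2^m-1$ force $p=2^m-1$, a Mersenne prime, $n=2^m=p+1$). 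Then $G_1\le\GL_m(2)$ is irreducible with $|G_1|_p=p$; an element of order $p$ acts irreducibly (its order mod $p$ being the full $m$), and $G_1$ normalizes $\langle x\rangle\cong C_p$, so $G_1\le N_{\GL_m(2)}(\langle x\rangle)$, which is a group of order $p\cdot m$ acting as a Frobenius group with kernel $\langle x\rangle$; hence $G_1$ is a Frobenius group of order $pe$, $e\mid m$, kernel of order $p$. This gives case (ii). Finally, for the cohomology statement: if $H^1(G,k)\ne0$, then since $N=\bfO_{p'}(G)$ I would use inflation–restriction (Lemma~\ref{infl}) to get $H^1(G,k)\cong H^1(G/N,k)=H^1(G_1,k)$, and then for the Frobenius group $G_1$ of order $pe$ with kernel $C_p$, $H^1(G_1,k)=H^1(C_p,k)^{C_e}$, which is nonzero iff $C_e$ acts trivially on $H^1(C_p,k)\cong k$; this forces $e=1$ (a generator of $C_e\le\Aut(C_p)$ acts on $H^1(C_p,k)$ by its scalar, nontrivially unless $e=1$), whence $|G_1|=p$ and $|G|=np$.

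\textbf{Main obstacle.} The hard part will be the affine/elementary-abelian case: pinning down that $p\mid|\GL_m(\ell)|$ together with $\ell^m<2p$ forces the Mersenne configuration $\ell=2$, $2^m=p+1$, and then extracting the precise Frobenius structure of $G_1$ from the fact that an order-$p$ element of $\GL_m(2)$ acts irreducibly and self-centralizingly. Ruling out the nonabelian-socle case is comparatively routine given the minimal-degree machinery already cited in Lemma~\ref{lem:p'-component}, but one must be a little careful that permutation degree, not representation dimension, is what is bounded — this is why the factor count must be shown to be $<p$ first.
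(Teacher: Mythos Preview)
Your approach attempts to re-derive the classification directly via minimal normal subgroup (O'Nan--Scott style) analysis, whereas the paper simply observes that $G$ is primitive and contains a $p$-cycle (both easy: a $p$-cycle exists since $p\mid|G|$ and $n<2p$; primitivity follows because a nontrivial block system is incompatible with a $p$-cycle when $n<2p$), and then invokes Zieschang's classification \cite{Zie} of primitive groups containing a $p$-cycle, followed by Kantor's theorem \cite{Kan} on linear groups containing a Singer cycle to pin down $G_1$. Your $H^1$ argument at the end is correct and is essentially what the paper leaves implicit.

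There are, however, two genuine gaps in your sketch. First, the case analysis is muddled: case (i) arises precisely when the abelian socle is $C_p$ itself (so $n=p$ and $G\le\mathrm{AGL}_1(p)$), not, as you write, when ``$G$ has no abelian minimal normal subgroup''. You need to separate the affine case $\ell=p$ from $\ell\ne p$ at the outset. Second, and more seriously, in the affine case with $\ell=2$ and $n=2^m=p+1$, your bare assertion that ``$G_1$ normalizes $\langle x\rangle$'' is exactly the content of Kantor's theorem and is \emph{not} automatic: having $|G_1|_p=p$ and $C_p$ as a composition factor does not by itself force $P\lhd G_1$. What one actually needs is that a subgroup of $\GL_m(2)$ containing a Singer cycle either lies in $\Gamma\mathrm{L}_1(2^m)$ or contains a large classical group --- and the latter is then excluded because its simple composition factor has order divisible by $p$ yet is not $C_p$. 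Without citing \cite{Kan} or reproving this special case, your argument stalls here.

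A smaller issue you yourself flag but do not resolve: your elimination of the nonabelian-socle case invokes Landazuri--Seitz--Zalesskii bounds, which control \emph{linear} degrees, whereas you need a lower bound on the minimal faithful \emph{permutation} degree of a simple Lie-type group over $\bbF_{r^{cp}}$. In the paper's approach this entire case disappears into the single citation of \cite{Zie}.
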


\begin{proof}  
Note that $G$ is primitive and contains a $p$-cycle. Hence we can apply \cite{Zie} and see that
either (i) holds, or $n = 2^a=p+1$ and $G=\soc(G) \rtimes G_1$, with $\soc(G) \cong C_2^a$
being regular, and $G_1 \leq \GL_a(2)$ has $C_p$ as a composition factor. Applying \cite{Kan} to
$G_1$, we arrive at (ii). 
\end{proof}

\begin{lem}\label{p-cycle}
Let $S := \Sp_{2a}(2)$ with $p = 2^a \pm 1$
and let $V = \bbF_2^{2a}$ denote the natural module for $S$.
Let $X \leq S$ be a group with $C_p$ as a composition factor and $p  > 3$. Then 
there is a normal elementary abelian $2$-subgroup $E < X$ such that $X/E$ is a 
Frobenius group of order $pe$ with kernel of order $p$, where $e\mid 2a$. 
Furthermore, if $E \neq 1$ then $p = 2^a-1$. Moreover, 
$X$ acts reducibly on $V$ precisely when $p = 2^a-1$ and 
either $E \neq 1$ or $|X|$ is odd, in which case 
$X$ stabilizes a maximal totally isotropic subspace of $V$.
\end{lem}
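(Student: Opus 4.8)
The plan is to work $p$-locally, exploiting that $p$ is a ``large'' prime for $S=\Sp_{2a}(2)$. First note that $|S|=2^{a^2}\prod_{i=1}^{a}(2^{2i}-1)$ and that the multiplicative order of $2$ mod $p$ is $2a$ if $p=2^a+1$ and $a$ if $p=2^a-1$; hence $p\mid 2^{2i}-1$ $(1\le i\le a)$ only for $i=a$, and $p$ exactly divides $2^{2a}-1$, so $|S|_p=p$. Thus a Sylow $p$-subgroup $P\le X$ is cyclic of order $p$, and $P\ne 1$ since $X$ has $C_p$ as a composition factor. The same count gives $p\nmid|\Sp_{2b}(2)|$ and $p\nmid|\GL_b(2)|$ for $b<a$, and $2a<p$. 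A generator $g$ of $P$ is semisimple on $V$ and fixed-point-free on $V\setminus\{0\}$, so all $\bbF_2[g]$-irreducible constituents of $V$ have dimension $\mathrm{ord}_p(2)$: if $p=2^a+1$ then $V_P$ is irreducible with $\mathrm{End}_P(V)=\bbF_{2^{2a}}$, while if $p=2^a-1$ then $V_P=U\oplus U'$ with $U,U'$ irreducible of dimension $a$; as $\dim U=a$ is odd, $U,U'$ are maximal totally isotropic, and $U\not\cong U'$ because $-1$ is not a power of $2$ mod $p$ (this uses $a\ge 3$). Computing $C_{\GL(V)}(g)$ by Schur's lemma and cutting down by the symplectic form (the norm-one subtorus of $\bbF_{2^{2a}}^{\times}$ in the first case, the antidiagonal torus $\{(\lambda,\lambda^{-1})\}$ in the second) yields $C_S(P)=P$, and an analogous computation of $N_{\GL(V)}(P)$ shows that conjugation embeds $N_S(P)/P$ into $(\bbZ/p)^{\times}$ with image of order dividing $2a$.

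Next I would show $X$ is solvable. Since $|X|_p=p$, the only composition factor of $X$ of order divisible by $p$ is $C_p$. If $L$ is a component of $X$, then $P$ normalizes $L$ (a $P$-orbit of size $p$ of components would force $\dim V\ge 2^p>2a$, exactly as in the proof of Lemma~\ref{lem:p'-component}), and $P$ centralizes $L$: if $p\mid|L|$ then $P$ would be central in $L$, while if $p\nmid|L|$ then $p\nmid|\mathrm{Aut}(L)|$ — for $p\mid|\mathrm{Out}(L)|$ forces $L$ to lie over a field of size at least $2^p$, which has no faithful projective representation of dimension $\le 2a$. Either way $L\le C_X(P)\le C_S(P)=P$, impossible, so $E(X)=1$. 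Now let $R=\bfO_\infty(X)$. If $p\mid|R|$, then $P\le R$ and a Frattini argument makes $X/R$ a quotient of $N_X(P)=P\rtimes C_e$, hence solvable; if $p\nmid|R|$, then $\overline X=X/R$ has trivial solvable radical, $F^*(\overline X)=E(\overline X)$ is a direct product of nonabelian simple groups, and $\overline X\hookrightarrow\mathrm{Aut}(E(\overline X))$, so (as $p>3$, $C_p$ is never a composition factor of $\mathrm{Out}$ of an alternating or sporadic group nor of a symmetric group) $C_p$ would be a composition factor of $\mathrm{Out}(\overline L)$ for some simple section $\overline L$ of Lie type, again forcing $\overline L$ over too large a field. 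Hence $X$ is solvable.

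For the structure, split on whether $X$ is reducible on $V$. If $X$ is \emph{reducible}: since $p\nmid|\Sp_{2b}(2)|,\,|\GL_b(2)|$ for $b<a$, $X$ stabilizes neither a proper non-degenerate subspace nor a non-maximal totally isotropic one, so $X$ stabilizes a maximal totally isotropic $W$; in particular $p=2^a-1$ (for $p=2^a+1$, $V_P$ is already irreducible). Writing $\Stab_S(W)=R_u\rtimes\GL(W)$ with $R_u$ the elementary abelian unipotent radical and $E:=X\cap R_u\lhd X$, the quotient $X/E\hookrightarrow\GL_a(2)$ contains the image of $P$, a Singer cycle, and still has $C_p$ as a composition factor; by Kantor's theorem \cite{Kan} (using $a$ prime and $\PSL_a(2)$ simple, so $X/E\not\ge\SL_a(2)$) we get $X/E\le\Gamma L_1(2^a)$, hence $X/E=C_p\rtimes C_e$ is Frobenius with kernel of order $p$ and $e\mid a$. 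If $X$ is \emph{irreducible}: then $\bfO_2(X)=1$, and I claim $\bfO_p(X)=P$. Otherwise let $N$ be a minimal normal subgroup of $X$, elementary abelian of order a power of a prime $q\notin\{2,p\}$; then $V_N$ is semisimple and $P$ fixes each $N$-homogeneous component ($\dim V=2a<p$ rules out $P$-orbits of length $p$), so by the $P$-submodule lattice either $N$ preserves $U$ and $U'$ (only if $p=2^a-1$) or $N$ is homogeneous on $V$. In the first case $X$ preserves $\{U,U'\}$, giving an index-$2$ subgroup $X_0\hookrightarrow\GL(U)=\GL_a(2)$ containing the Singer cycle $P$, whence $P=\bfO_p(X_0)$ is characteristic in $X_0\lhd X$ — contradiction. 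In the second case $N$ embeds in $\bbF_{2^c}^{\times}$, so $N=C_q$; if $p\nmid q-1$ then $P$ centralizes $N$, so $N\le C_S(P)=P$, impossible, and $p\mid q-1$ is impossible because $c\mid 2a$ with $a$ prime forces $q\mid 2^c-1$ for $c\in\{1,2,a,2a\}$, in each of which $q<p$ (using $\Phi_{2a}(2)=(2^a+1)/3$ when $c=2a$), whereas $q\equiv 1\ (\mathrm{mod}\ p)$ needs $q>p$. Hence $\bfO_p(X)=P\lhd X$; then $C_X(P)\le C_S(P)=P$ gives $X/P\hookrightarrow(\bbZ/p)^{\times}$ with image inside that of $N_S(P)/P$, so $X=P\rtimes C_e$ is Frobenius with kernel of order $p$, $e\mid 2a$, and $E=1$.

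Finally the reducibility criterion: if $p=2^a+1$ then $E=1$ and $X\supseteq P$ is irreducible, so $X$ is never reducible; if $p=2^a-1$ and $E\ne 1$ then $X$ stabilizes a maximal totally isotropic subspace by the above; if $p=2^a-1$ and $|X|$ is odd then $E=1$ and $X\cong P\rtimes C_e$ with $e$ odd, which cannot contain the involution of $N_S(P)$ interchanging $U$ and $U'$, so $X$ is reducible and fixes $U$; conversely, if $X$ is reducible then $p=2^a-1$ and, when $E=1$, $X\cong P\rtimes C_e\le\Gamma L_1(2^a)$ with $e\mid a$ odd, so $|X|$ is odd. The main obstacle is the irreducible case, i.e.\ proving $\bfO_p(X)\ne 1$: this is the genuine content, and is where the module-theoretic/cyclotomic bookkeeping above (or, alternatively, an appeal to the classification of subgroups of $\Sp_{2a}(2)$ containing an element of the Zsigmondy-prime order $p$) cannot be avoided; establishing solvability of $X$ and the invocation of Kantor's theorem are the other technical inputs.
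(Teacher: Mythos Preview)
Your overall strategy is sound and reaches the conclusion, but it takes a genuinely different route from the paper, and there are a couple of small gaps to patch.

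\textbf{Comparison with the paper.} The paper works throughout with $Y=\bfO^{p'}(X)$ and never establishes solvability of $X$ up front. In the reducible case the two arguments coincide (both invoke Kantor on the Levi $\GL_a(2)$). In the irreducible case the paper shows $E(Y)=1$ much as you do, but then analyses $F^*(Y)=F(Y)$ via Hall's theorem on groups whose characteristic abelian subgroups are cyclic: it locates an extraspecial $r$-subgroup of $F(Y)$ with $r^c\mid\dim_{\End_Y(V)}V$ and derives a contradiction from $p\mid|\Sp_{2c}(r)|$ versus $p\ge 2^a-1$ and $r^c\mid a$. Your approach instead front-loads the CFSG input into a solvability proof, and then in the irreducible case uses a minimal normal subgroup together with the very rigid $P$-submodule lattice of $V$ to force either $P\lhd X$ or the $\{U,U'\}$ situation (handled again by Kantor). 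The payoff of your route is a cleaner, module-theoretic endgame; the cost is that the solvability step absorbs the delicate classification-level work.

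\textbf{Two small gaps.} First, in the ``$N$ homogeneous'' subcase you write ``$c\mid 2a$ with $a$ prime''. That holds only in the Mersenne case; for $p=2^a+1$ a Fermat prime, $a$ is a power of $2$. The fix is immediate and uniform: $q\mid 2^c-1\mid 2^{2a}-1=(2^a-1)(2^a+1)$, and since $q\ne p$ you get $q$ dividing whichever factor is not $p$, hence $q<p$ in both cases, contradicting $q\equiv 1\pmod p$. Second, in the solvability paragraph, ``$C_p$ is never a composition factor \ldots\ of a symmetric group'' is false as stated; you need to bound the number $m$ of isomorphic components of $E(\overline X)$ by $m<p$, which follows easily from $60^m\le|E(\overline X)|\le|S|$ together with $p\ge 2^a-1$. (Also, ``if $p\mid|L|$ then $P$ would be central in $L$'' is not the right reason; the correct point is that $L/\bfZ(L)$ would then be a nonabelian simple composition factor of $X$ of order divisible by $p$, contradicting that $C_p$ is the unique such.)
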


\begin{proof}
(a) It is easy to see that $Y := \bfO^{p'}(X)$ can be reducible on
$V$ only when $p = 2^a-1$. 
Let $P \in \Syl_p(X)$ and consider the action of $X$ on the natural module 
$V = \bbF_2^{2a}$ for $S$. If $P \lhd X$, then $X$ is contained in 
$\bfN_S(P)$, a Frobenius group of order $2ap$, in which case we set $E = 1$. 
It follows that, if $p = 2^a-1$ in addition, then $X$ is reducible on 
$V$ precisely when $|X|$ is odd. 
So we will assume that $P \ntrianglelefteq X$. It follows that 
$P \ntrianglelefteq Y := \bfO^{p'}(X)$.

Suppose that the $Y$-module $V$ is reducible, and so 
$p=2^a-1$ and $a \geq 3$ is odd. Then 
$Y$ stabilizes a proper subspace $U \neq 0$ of $V$ of dimension $\leq a$. 
Choosing $U$ minimal, we see that $U$ is irreducible over $Y$. If 
$U \cap U^\perp = 0$, then $Y$ is contained in the $p'$-subgroup $\Sp(U) \times \Sp(U^\perp)$, 
a contradiction. Hence $U \subseteq U^\perp$. Now if $\dim U < a$ then $Y$ is contained
in the $p'$-subgroup $\Stab_S(U)$, again a contradiction. Thus $\dim U = a$ and 
$U$ is a maximal totally isotropic subspace.
Setting $Q := \bfO_2(R)$ for $R := \Stab_S(U)$ and $F := Q \cap Y$, we see that 
$F$ is an elementary abelian $2$-subgroup and $Y/F$ is a subgroup of $R/Q \cong \GL_a(2)$ with $C_p$ as a composition factor. By the main result of 
\cite{Kan}, $Y/F$ is a Frobenius group of
order $pb$ for some $b\mid a$. In particular, $|Y/F|$ is odd and so $F = \bfO_2(Y) \lhd X$. 
Note that $F \neq 1$ as otherwise $P \lhd Y$, a contradiction. Also, $Y/F$ acts irreducibly on
$V/U$. Since $Y/F$ acts on $\bfC_V(F) \supseteq U$ and $F \neq 1$, it follows that 
$\bfC_V(F) = U$ and so $U$ is fixed by $X$. Thus $X \leq R$ and we are done by setting 
$E := Q \cap X \geq F$.

\smallskip
(b) From now on we may assume that $V_Y$ is irreducible. Hence $V$ is absolutely irreducible
over $k_0 := \End_{\bbF_2 Y}(V)$. We will consider $V$ as a $b$-dimensional vector space $V'$
over $k_0$ (for some $b\mid 2a$). Thus $W := V' \otimes_{k_0}k$ is an irreducible $kY$-module
for $k := \bar{k}_0$. Observe that $b \leq 2a \leq p-1$. Also, $\bfZ(Y)$ is cyclic by 
Schur's lemma.

Note that if $N \lhd Y$ then the $N$-module $W$ is homogeneous. Indeed, $V_N$ is 
the direct sum of $t \leq b < p$ homogeneous $N$-components $V_i$. Hence any $p$-element
$1 \neq g \in Y$ stabilizes each $V_i$, whence $V_i$ is fixed by $Y = \bfO^{p'}(Y)$ and $t = 1$.

\smallskip
(c) Now we show $E(Y) = 1$. Suppose $E(Y) \neq 1$ and write $E(Y) = L_1 * \ldots * L_n$, 
a central product of $n$ quasisimple groups. Since $|S|_p = p$ and $C_p$ is a composition
factor of $Y$, $E(Y)$ is a $p'$-group. By (b), 
$W_{E(Y)} \cong e(W_1 \otimes \ldots \otimes W_n)$, where $W_i$ is an irreducible $kL_i$-module
of dimension $\geq 2$. Hence $b \geq 2^n$ and so $n < p$. It follows that every $p$-element
$1 \neq g \in Y$ normalizes each $L_i$, and so does $Y$. On the other hand, if $Y$ centralizes
$L_i$, then $L_i \leq \bfZ(Y)$, a contradiction. So some $p$-element $1 \neq g \in Y$ normalizes
but does not centralize $L_1$. As in the proof of Lemma \ref{lem:p'-component}, we 
see that $L_1$ is a quasisimple group of Lie type 
defined over $\bbF_q$ with $q = r^{cp}$ for some 
prime $r$ and some integer $c$, and conclude that $\dim W_1 > p$ when $r \neq 2$. If $r = 2$, then by \cite{Zs}, $|L_1|$ is divisible by 
some prime divisor $\ell$ of $2^p-1$ that does not divide $\prod^{p-1}_{i=1}(2^i-1)$,
whence $\ell \nmid |S|$, again a contradiction. 

Next we observe that every normal abelian subgroup $A$ of $Y$ must be central 
and so cyclic. Indeed, 
$A$ acts by scalars on $W$ by (b), and so $A \leq \bfZ(Y)$.

\smallskip
(d) We have shown that $F^*(Y) = F(Y)$.  Now if $p$ divides $|F(Y)|$, then since 
$|S|_p = p$, $P = \bfO_p(F(Y)) \lhd Y$, a contradiction. Also, if $F(Y) \leq \bfZ(Y)$, then
$Y \leq \bfC_Y(F(Y)) \leq F(Y)$, and so $Y = F(Y)$ is nilpotent, again a contradiction. So 
$F(Y)$ is a $p'$-group and moreover $N := \bfO_r(F(Y))$ is non-central in $Y$ for some 
prime $r \neq p$. By (c), every characteristic abelian subgroup of $N$ is cyclic. Hence by 
Hall's theorem,  $N = F*D$, where $F$ is an extraspecial $r$-group, and either $D$ is cyclic,
or $r = 2$ and $C$ is dihedral, generalized quaternion, or semi-dihedral. Arguing as in part (3) of
the proof of \cite[Theorem 6.7]{GNT}, we can find a characteristic subgroup $L$ of $N$ such 
that $L = \bfZ(L)E$, where $E$ is an extraspecial $r$-group of order $r^{2c+1}$ 
for some $c \geq 1$ and $\bfZ(L)$ is cyclic. Note that $\bfZ(L) \leq \bfZ(Y)$
by (c). It also
follows by (b) that $r^c\mid b$ and $r \neq 2$, whence $a \geq 3$ must be odd (recall that
$p = 2^a \pm 1$ is prime). 
As $L \not\leq \bfZ(Y)$, some $p$-element $1 \neq g \in Y$ normalizes but
does not centralize $L$, and centralizes $\bfZ(L)$. It follows that $p$ 
divides the order of the group $\Out_c(L)$ of outer automorphisms of $L$ that
act trivially on $\bfZ(L)$. Since $\Out_c(L) \hookrightarrow \Sp_{2c}(r)$, we 
get $p\mid (r^d \pm 1)$ for some $d \leq c$. Since $p \geq 2^a-1$ and $r^c\mid a$, we arrive at 
a contradiction.
\end{proof}

Now we can prove Theorem \ref{thm:abelian} in a slightly stronger version.

\begin{thm} \label{thm:abelian2}  
Let $G$ be a finite irreducible
subgroup of $\GL(V) \cong \GL_d(k)$ with $k$ algebraically closed of
characteristic $p > 3$.   Assume that $G$ has a composition
factor of order $p$ and $d < 2p -2 $.
Then $d = p-1$, $p$ or $p+1$, a Sylow $p$-subgroup of $G$
has order $p$, $G$ is solvable and one of the following holds:
\begin{enumerate}[\rm(i)]
\item  $d=p - 1$, $p = 2^a + 1$ is a Fermat prime, $F^*(G) = \bfZ(G)\bfO_2(G)$, 
$G/F^*(G)$ is a Frobenius group of order $pe$ for some $e\mid 2a$ with kernel of order $p$, 
and $\bfO_2(G)$ is a group of symplectic type with $\bfO_2(G)/\bfZ(\bfO_2(G)) \cong C_2^{2a}$;
\item $d=p$ and $G$ has a normal abelian $p'$-subgroup $N = F^*(G)$ such that
$G/N$ is a Frobenius group of order $pe$ for some $e\mid (p-1)$ with kernel of order $p$;
\item $d=p + 1$, $p = 2^a - 1$ is a Mersenne prime and one of
\begin{enumerate}[\rm(a)]
\item $G$ has an abelian normal $p'$-subgroup $N$, where the action of 
$G/N$ on the $d$ distinct eigenspaces of $N$ induces a subgroup of $\SSS_d$ as 
described in Lemma \ref{lem:perm}; or
\item $F^*(G) = \bfZ(G)\bfO_2(G)$, $G/F^*(G)$ is a Frobenius group of 
order $2bp$ for some $b\mid a$ with kernel of order $p$, and $\bfO_2(G)$
is a group of symplectic type with $\bfO_2(G)/\bfZ(\bfO_2(G)) \cong C_2^{2a}$.
\end{enumerate}
\end{enumerate}
Moreover, $H^1(G,k) \neq 0$ if and only if one of
the conclusions of Theorem \ref{thm:abelian} holds.
\end{thm}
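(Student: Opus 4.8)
The plan is to exploit the non‑central normal $p'$‑subgroup that a $C_p$ composition factor forces to exist, using Clifford theory together with the structural lemmas of \S\ref{sec: ind}.

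\emph{Reduction.} Since $G\le\GL(V)$ is faithful and $V$ is irreducible over $k$ of characteristic $p$, the fixed points of a normal $p$‑subgroup form a nonzero submodule, so $\bfO_p(G)=1$. As $G/\GP$ is a $p'$‑group, $G$ is solvable iff $\GP$ is, $\GP$ still has a composition factor $C_p$, and $V_\GP$ is semisimple by Clifford's theorem, so every irreducible $k\GP$‑constituent of $V$ has dimension $\le d<2p-2$. By the contrapositive of Lemma~\ref{eg} (valid since $p>3$, $V$ is faithful and $\dim V<2p$), the existence of a $C_p$ composition factor forces $N:=\bfO_{p'}(G)\not\le\bfZ(G)$. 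This non‑central normal $p'$‑subgroup is the object we analyse.

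\emph{No components.} The main step — and the only place the classification of finite simple groups is needed — is to show $E(G)=1$. A central $p$‑subgroup of a component $Q\lhd G$ acts as a scalar on each constituent of $V_Q$, hence trivially (all $p$‑th roots of $1$ are $1$ in characteristic $p$), so $\bfZ(Q)$ is a $p'$‑group and $E(G)$ contributes no composition factor $C_p$; a component of order prime to $p$ forces $\dim V\ge 2p$ by the argument of Lemma~\ref{lem:p'-component} (applied if necessary to an irreducible constituent). Hence the $C_p$ composition factor of $\GP$ must come either from an outer automorphism of $p$‑power order of a component — which, by \cite[Theorem~2.5.12]{GLS3} and \cite{LaS}, forces a component of Lie type over a field of $p$‑th‑power size (here $p>3$ excludes triality), whence $\dim V\gg 2p$ — or from a nontrivial permutation of at least $p$ isomorphic components (whence $\dim V\ge 2^p>2p-2$) — or from the action of a $p$‑element on $N$. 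In the last case, if moreover $E(G)\ne 1$, then restricting a constituent of $V$ to $E(G)\bfC_G(E(G))$ and factoring it as a tensor product, the tensor factor carrying a component has dimension $\ge 2$ while the one carrying the $C_p$ has dimension $\ge p-1$ by Lemma~\ref{p-solv}, so $\dim V\ge 2(p-1)=2p-2$, a contradiction. Therefore $E(G)=1$ and $F^*(G)=F(G)=N$ is a $p'$‑group. I expect this dichotomy — ruling out that a component coexists with a $C_p$ composition factor — to be the real obstacle; everything afterwards is elementary.

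\emph{The three cases.} Now $N$ is a non‑central $p'$‑group with $\bfC_G(N)=\bfZ(N)$, and the homogeneous components of $V_N$ are permuted transitively by $G$. If $N$ is non‑homogeneous, then (using Lemma~\ref{p-solv} and $d<2p-2$) $V$ must be monomial, and the transitive action of $G$ on the $d$ homogeneous lines has $C_p$ as a composition factor; Lemmas~\ref{lem:perm} and~\ref{p-cycle} then yield $d=p$ with $G/N$ a Frobenius group of order $pe$, $e\mid p-1$, with kernel of order $p$ (case (ii)), or $d=p+1$ with $p$ a Mersenne prime and the monomial structure of case (iii)(a). If $N$ is homogeneous on $V$, then $N$ has a faithful irreducible module of dimension $\le d<2p$, so by the structure theory of $p'$‑groups with a faithful irreducible representation $\bfO_2(G)$ is of symplectic type with $\bfO_2(G)/\bfZ(\bfO_2(G))\cong C_2^{2a}$ and $\dim V=2^a$, the action on this module realising a $C_p$ inside $\Out(\bfO_2(G))\le\Sp_{2a}(2)$; matching $2^a=d\in\{p-1,p+1\}$ with $d<2p-2$ gives $p=2^a+1$ Fermat and $d=p-1$ (case (i)), or $p=2^a-1$ Mersenne and $d=p+1$ (case (iii)(b)). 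In every case $G$ is solvable, $F^*(G)$ is a $p'$‑group, and $G/F^*(G)$ has a normal Sylow $p$‑subgroup of order $p$; in particular $d\in\{p-1,p,p+1\}$ and a Sylow $p$‑subgroup of $G$ has order $p$.

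\emph{The $H^1$ dichotomy.} Since $k$ is the trivial module, $H^1(G,k)=\Hom(G,k)$ is nonzero precisely when $G$ has a quotient $C_p$. In cases (i) and (iii)(b), $G/F^*(G)$ is a Frobenius group with kernel of order $p$, so that kernel lies in the derived subgroup whenever the complement is nontrivial; as $F^*(G)$ is a $p'$‑group, $G$ has a $C_p$‑quotient only when that complement is trivial, which happens just in case (i) with $[G:F^*(G)]=p$ — conclusion (i) of Theorem~\ref{thm:abelian}. In case (ii), $G$ has a $C_p$‑quotient exactly when $G/N$ has order $p$, i.e.\ $G$ has a normal abelian subgroup of index $p$ — conclusion (ii). In case (iii)(a), Lemma~\ref{lem:perm}(ii) shows $H^1(G,k)\ne 0$ forces $|G/N|=dp$, i.e.\ $G/N$ is the full Frobenius group of order $dp$ with kernel of order $d$ — conclusion (iii). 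Conversely each of these configurations visibly has $H^1(G,k)\ne 0$, which proves the final assertion.
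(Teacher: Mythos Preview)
Your overall strategy—extract a non-central $\bfO_{p'}$ via Lemma~\ref{eg}, rule out components, then split into monomial and symplectic-type cases—matches the paper's, but two steps are genuinely incomplete.

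First, the argument that $E(G)=1$ has a gap. Your trichotomy (outer automorphism of a component / permutation of components / action on $N$) is not shown to exhaust the ways a $C_p$ can appear in $G/F^*(G)$, and in the last case your tensor-product bound is not justified: restricting to $E(G)\,\bfC_G(E(G))$, you need the $\bfC_G(E(G))$-factor of some irreducible constituent to have dimension $\ge p-1$, but Lemma~\ref{p-solv} requires $p$-solvability and order divisible by $p$ for the group acting on that factor, neither of which you establish. Also, your conclusion $F^*(G)=F(G)=N$ does not follow from $E(G)=1$: $\bfO_{p'}(G)$ need not be nilpotent even when $E(G)=1$ and $\bfO_p(G)=1$.

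Second, and more seriously, the monomial reduction is incomplete. You assert that if $N$ is non-homogeneous then $V$ is monomial ``using Lemma~\ref{p-solv} and $d<2p-2$'', and that the resulting permutation action of $G$ has $C_p$ as a composition factor. But Lemma~\ref{p-solv} applies only to $p$-solvable groups, which you have not established for $G$ (and $E(G)=1$ alone does not imply it); and without monomiality the $C_p$ could well lie in the kernel of the permutation action. The paper handles this by a step you omit: it first proves that $V_{\GP}$ is \emph{irreducible}. The irreducible $\GP$-summands of $V$ are $G$-conjugate, hence of common dimension $e$; since their kernels intersect trivially and $\GP$ has a $C_p$ composition factor, some summand $W$ has $\GP/\Ker(\GP\to\GL(W))$ with a $C_p$ composition factor, and then Theorem~\ref{bz} forces $e\ge p-1$, so $d<2p-2$ gives a single summand. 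Once $V_{\GP}$ is irreducible, $\GP$ (being generated by $p$-elements) acts transitively on the eigenspaces of any abelian non-central $K\lhd G$ inside $\bfO_{p'}(\GP)$, forcing at least $p$ eigenspaces; with $d<2p$ these are lines and the kernel is an abelian $p'$-group, so Lemma~\ref{lem:perm} applies. This irreducibility of $V_{\GP}$ is the key missing ingredient in your argument; the paper's split (abelian non-central $K\lhd G$ inside $\bfO_{p'}(\GP)$ vs.\ none, then Hall's theorem and Lemma~\ref{p-cycle}) then cleanly yields the three cases without needing to know $G$ is $p$-solvable in advance.
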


\begin{proof}  
(a) First we show that $\GP$ is irreducible on $V$. To this end, 
let $W$ be an irreducible summand of $V_\GP$. Also let 
$K_1$ and $K_2$ denote the kernel of the action of $\GP$ on $W$ and on a $\GP$-invariant 
complement $V'$ to $W$ in $V$. Then $K_1 \cap K_2 = 1$ and so $K_1$ embeds in $\GP/K_2$ as a 
normal subgroup. Since $C_p$ is a composition factor of $\GP$, it follows that it is a 
composition factor of $\GP/K_1$ or of $\GP/K_2$. Replacing $W$ by another irreducible $\GP$-summand in
$V'$ if necessary, we may assume that $\GP/K_1$ has a composition factor of order $p$. 
Then $\dim W \geq p-1$ by Theorem \ref{bz}. This is true for all other irreducible $\GP$-summands
in $V$ and $\dim V < 2p-2$, whence the claim follows. In particular,
$\bfZ(\GP) = \bfZ(G) \cap \GP$. 
%(and so any subgroup of $\GP$ is central
%in $G$ precisely when it is central in $\GP$).

\smallskip
(b) By Lemma \ref{eg}, we have that $Q \not\leq \bfZ(\GP)$ and so
$Q \not\leq \bfZ(G)$ for $Q := \bfO_{p'}(\GP) \lhd G$. 
Suppose that $Q$ contains a non-central (in $G$)
abelian subgroup $K \lhd G$. Decompose $V = \oplus^n_{i=1}V_i$ 
into $K$-eigenspaces. By Clifford's theorem, $G$ acts transitively on
$\{V_1, \ldots ,V_n\}$, with kernel $N$, and $\GP$ does as well. 
Note that $n > 1$ as $K \not\leq \bfZ(G)$. Since $\GP$ is generated by 
$p$-elements, $n \geq p$.  But $d < 2p$, so $\dim V_i = 1$ and $N$ is an 
abelian $p'$-group. Now we can apply Lemma \ref{lem:perm} to 
$G/N \hookrightarrow \SSS_d$. If $d=p$, we are in case (ii) and $F^*(G) = N$.  
If $d > p$, then $d = p + 1=2^a$ and $G/N$ 
has the prescribed structure, i.e. case (iii)(a) holds.

\smallskip
(c) Assume now that $Q$ contains no abelian non-central (in $G$) 
subgroup $K \lhd G$.   
Let $N$ be minimal among subgroups of $Q$ 
that are normal but non-central in $G$, so $N$ is non-abelian.
By Lemma \ref{lem:p'-component}, $Q$ contains no components of $G$. Hence  
$E(N) = 1$ and $F^*(N) = F(N)$. If $F(N) < N$, the minimality of $N$ implies 
that $F(N) \leq \bfZ(G)$, but then $F(N) = F^*(N) \geq \bfC_N(F^*(N)) = N$, 
a contradiction. So $N = F(N)$ is nilpotent, and the 
minimality of $N$ again implies that $N$ is an $r$-group for some prime $r \neq p$. 
Let $A$ be any characteristic abelian subgroup of $N$. Then by the assumption, 
$A \leq \bfZ(G)$ and so it is cyclic. Thus every characteristic abelian 
subgroup of $N$ is cyclic (and central in $G$), 
and so Hall's theorem applies to $N$. Arguing as in 
part (d) of the proof of Lemma \ref{p-cycle} and using the minimality of $N$, we
see that $N = \bfZ(N)E$, where $E$ is an extraspecial $r$-group of order 
$r^{2a+1}$ for some $a$ and $\bfZ(N) \leq \bfZ(G)$ is cyclic; 
in particular, $r^a\mid d$. 
Since $N \not\leq \bfZ(\GP)$, there is a
$p$-element $x$ that induces a nontrivial outer automorphism of $N$ acting
trivially on $\bfZ(N)$. As 
$\Out_c(N) \leq \Sp_{2a}(r)$, we see that $p$ divides  $r^{2b} -1$ for some
$1 \leq b \le a$.  On the other hand, $r^a \leq d \leq 2p - 3$.  This implies 
that $r=2$, $p = 2^a \pm 1$ is either a Mersenne or Fermat prime, $d = 2^a$, 
and $N$ acts irreducibly on $V$. The latter then implies that 
$\bfC_G(N) = \bfZ(G)$ and $X := G/\bfZ(G)N$ is a subgroup of 
$\Out_c(N) \leq \Sp_{2a}(2)$ with $C_p$ as a composition factor. Now we can 
apply Lemma \ref{p-cycle} to $X$. Note that if $X$ stabilizes a maximal totally 
isotropic subspace of $N/\bfZ(N)$, then its inverse image in $N$ is an 
{\it abelian} normal non-central subgroup of $G$, contrary to our assumptions. 
Hence either $p = 2^a+1$ and we are in case (i), or $p = 2^a-1$ and we are in 
case (iii)(b).
Also note that $F^*(G) = \bfZ(G)N$ in either case. 

\smallskip
(d) If $G$ satisfies any of the conclusions of Theorem \ref{thm:abelian}
then $H^1(G,k) \neq 0$. Conversely, suppose that $H^1(G,k) \neq 0$. Then 
$G$ possesses a normal subgroup $L$ of index $p$. Thus we can apply 
the above results to $G$. In particular, $|G|_p = p$ and so $L = \bfO_{p'}(G)$. 
Now the description of $G$ in (i)--(iii) shows that $G$ must satisfy one of
the conclusions of Theorem \ref{thm:abelian}.  
\end{proof}

One can also consider an analogue of Theorem \ref{thm:abelian2} for 
$p = 3$. In this case, 
%Note that the assumptions in the previous theorem imply $p > 2$.
$d \le 3$ and the analogous result is straightforward  by examining subgroups
of $\GL_2$ and $\GL_3$.  

\section{Bounding \texorpdfstring{$\Ext^1_G(V,V)$}{Ext\^{}1\_G(V,V)} and \texorpdfstring{$\Ext^1_G(V,V^*)$}{Ext\^{}1\_G(V,V*)}}\label{sec: ext2}
The following result is well known:

\begin{lem}\label{1-dim}
Let $X$ be a finite group and let $k$ be an algebraically
closed field of characteristic $p$. Let $U$ and $V$ be irreducible $kX$-modules belonging to 
a $kX$-block $B$ with cyclic defect subgroups. Then $\dim_k\Ext^1_X(U,V) \leq 1$. 
\end{lem}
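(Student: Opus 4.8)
The plan is to deduce this from the classical structure theory of blocks with cyclic defect groups (Dade's theorem; the relevant facts are in Alperin's book on local representation theory and in Feit's book), by reading $\Ext^1$ off the projective indecomposable modules. Since $\Ext^1_X(U,V) = 0$ whenever $U$ and $V$ lie in different blocks, we may assume $U$ and $V$ both belong to $B$, which by hypothesis has a cyclic defect group $D$.

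First I would use that, for the projective cover $P(U)$ of a simple $kX$-module $U$ in $B$, one has $\Ext^1_X(U,V) \cong \Hom_X(\rad P(U),V) \cong \Hom_X(\hd(\rad P(U)),V)$, so that (as $k$ is algebraically closed) $\dim_k \Ext^1_X(U,V)$ equals the multiplicity of $V$ as a summand of the semisimple module $\hd(\rad P(U))$. Then I would invoke the Brauer tree description of $B$: the simple modules in $B$ are the edges of a tree $T$, and for any simple $U$ in $B$ the module $\rad P(U)/\soc P(U)$ is a direct sum $A_1 \oplus A_2$ of two uniserial modules, where $A_i$ is obtained by ``walking around'' the $i$-th endpoint of the edge $U$ in $T$. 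In particular each $\hd A_i$ is a single simple module — the edge following $U$ at that vertex — so that, at least when $P(U)$ has Loewy length $\geq 3$, $\hd(\rad P(U)) = \hd A_1 \oplus \hd A_2$ has at most two homogeneous components.

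Finally I would check that $\hd(\rad P(U))$ is in fact multiplicity-free. Here $\hd A_1$ is an edge of $T$ incident to the first endpoint $v_1$ of $U$ and $\hd A_2$ an edge incident to the second endpoint $v_2$; if $\hd A_1 \cong \hd A_2$, the common edge would be incident to both $v_1$ and $v_2$, forcing it to be $U$ itself; but $\hd A_i = U$ can occur only when $v_i$ has valency $1$, in which case $v_i$ must be the exceptional vertex (otherwise $A_i = 0$), and $T$ has at most one exceptional vertex, so $\hd A_1 \not\cong \hd A_2$ whenever both are nonzero. Hence $V$ occurs in $\hd(\rad P(U))$ with multiplicity at most $1$, giving $\dim_k \Ext^1_X(U,V) \leq 1$. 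I expect the only delicate point to be precisely this bookkeeping around leaves of $T$, the exceptional vertex, and small defect groups (for instance $|D| = p = 2$, where $\rad P(U)$ is already simple and the reduction above must be done by hand), but in each degenerate configuration $\hd(\rad P(U))$ is either a single simple module or zero, so the bound is immediate; there is no genuine obstacle, the whole content of the lemma being the appeal to cyclic-defect block theory.
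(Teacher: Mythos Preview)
Your argument is correct and follows essentially the same route as the paper: compute $\Ext^1$ in terms of a PIM and invoke the Brauer--tree structure of cyclic-defect blocks to see that the relevant semisimple layer is multiplicity-free. The only cosmetic difference is that the paper works with the injective hull $\cP(V)$ (using $\Ext^1(U,V)\cong\Hom(U,\cP(V)/V)$) and disposes of the case $U\cong V$ separately by citing Lemma~\ref{cyclic}, whereas you work with the projective cover $P(U)$ and handle all cases uniformly via your edge-incidence argument at the two endpoints of $U$.
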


\begin{proof}
By \cite[Lemma 7.1]{GHT}, we may assume $U \not\cong V$. It is known \cite{Pea} that 
$\cP(V)$ has simple head and simple socle, both isomorphic to $V$, and $\rad(\cP(V))/V$ is a direct sum of at most two uniserial submodules. Also, 
note that $\Ext^1(U,V) \cong \Hom_G(U,\cP(V)/V)$. So if $\dim_k\Ext^1_X(U,V) \geq 2$, then at least two edges of the Brauer tree of $B$ correspond to $U$, which is impossible. 
%Let $N := \bfN_X(P)$ and let $b$ be the unique $kN$-block such that $b^G = B$.
%Also, let $f$ denote the Green correspondence (sending $kX$-modules to $kN$-modules) and
%$\Omega$ denote the Heller operator \cite[p. 237]{Pea}. In particular, $U \cong \cP(U)/\Omega(U)$
%and $\Omega(U)$ is non-projective indecomposable, cf. \cite[Theorem 2.2]{Pea}, whence
%$f\Omega(U)$ is an indecomposable $kN$-module by \cite[Theorem 2.5]{Pea}.
%For any two $kX$-modules $M, M'$, one defines the $k$-space $(M,M')^1_X$, cf. 
%\cite[p. 236]{Pea}. By part (i) of the main theorem of \cite{Pea}, $b$ is special $(p,e)$-uniserial, and
%so results in \cite[\S3]{Pea} apply. Now $\Ext^1_X(U,V) \cong (\Omega(U),V)^1_X$  as explained on 
%\cite[p. 237]{Pea}, and $(\Omega(U),V)^1_X \cong (f\Omega(U),fV)^1_N$ by \cite[Theorem 2.6]{Pea}. 
%Finally, $\dim_k(f\Omega(U),fV)^1_N \leq 1$ by \cite[Cor. 3.7]{Pea}.
\end{proof}

\begin{lem}\label{str1}
Let $H$ be a finite group with Sylow $p$-subgroups of order $p$. Suppose 
that $H = \bfO^{p'}(H)$ and $H$ has no composition factor of order $p$. Then 
$H/\bfO_{p'}(H)$ is a non-abelian simple group.
\end{lem}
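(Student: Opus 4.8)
The plan is to work directly with $\bar H:=H/\bfO_{p'}(H)$. Since $\bfO^{p'}$ passes to quotients and $\bfO_{p'}(H)$ is a $p'$-group, the hypotheses on $H$ translate into: $\bfO_{p'}(\bar H)=1$, $\bfO^{p'}(\bar H)=\bar H$, $|\bar H|_p=p$, and $\bar H$ has no composition factor of order $p$ (the composition factors of $\bfO_{p'}(H)$ all have order prime to $p$, so this last property is unaffected by passing to the quotient); also $\bar H\neq 1$ since $p\mid|H|$. It then suffices to prove that $\bar H$ is simple: it is automatically non-abelian, since a simple group of order $p$ would be a forbidden composition factor and a simple group of order coprime to $p$ is ruled out by $p\mid|\bar H|$.

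The first step is to show $\bfO_p(\bar H)=1$. If not, then as $|\bar H|_p=p$ the subgroup $\bfO_p(\bar H)$ is the unique Sylow $p$-subgroup of $\bar H$, so $\bar H/\bfO_p(\bar H)$ is a $p'$-group; thus $\bfO^{p'}(\bar H)\le\bfO_p(\bar H)$, and hence $\bar H=\bfO_p(\bar H)\cong C_p$, contradicting the hypothesis on composition factors. Together with $\bfO_{p'}(\bar H)=1$ this gives $F(\bar H)=1$, so every minimal normal subgroup of $\bar H$ is non-abelian, and the socle is a direct product $\soc(\bar H)=S_1\times\dots\times S_t$ of non-abelian simple groups. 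Since $\soc(\bar H)$ is a nontrivial normal subgroup of $\bar H$ and $\bfO_{p'}(\bar H)=1$, it is not a $p'$-group; and since $|\bar H|_p=p$, exactly one factor, say $S_1$, has order divisible by $p$ (with $|S_1|_p=p$), while $S_2,\dots,S_t$ have order prime to $p$.

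To finish, pick a Sylow $p$-subgroup $P$ of $S_1$; it is a Sylow $p$-subgroup of $\bar H$. Every $p$-element of $\bar H$ is conjugate into $P\le S_1\le\soc(\bar H)$, and $\soc(\bar H)$ is normal, so $\bar H=\bfO^{p'}(\bar H)$, being generated by its $p$-elements, is contained in $\soc(\bar H)$. Hence $\bar H=S_1\times\dots\times S_t$; if $t\ge 2$ then $S_2$ is a nontrivial normal $p'$-subgroup of $\bar H$, contradicting $\bfO_{p'}(\bar H)=1$, so $t=1$ and $\bar H=S_1$ is non-abelian simple.

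The argument has no real difficulty once it is set up correctly; the one point to get right is the translation of the hypotheses to $\bar H$ — in particular that ``$H$ has no composition factor of order $p$'' survives the quotient and is exactly what forbids $\bfO_p(\bar H)\neq 1$ — after which the vanishing of $\bfO_{p'}(\bar H)$ forces $p$ to divide $|\soc(\bar H)|$ and everything collapses quickly.
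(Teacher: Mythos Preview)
Your proof is correct and follows essentially the same approach as the paper: both pass to $\bar H=H/\bfO_{p'}(H)$, use the hypotheses to obtain $F(\bar H)=1$, identify a direct product of non-abelian simple groups (you via the socle, the paper via $F^*(\bar H)=E(\bar H)$), and then use $|\bar H|_p=p$ together with $\bar H=\bfO^{p'}(\bar H)$ to force a single simple factor equal to $\bar H$. The only cosmetic difference is the order of the last two reductions: the paper first gets $n=1$ from $\bfO_{p'}(F^*(\bar H))=1$ and then concludes $\bar H=S_1$ since $\bar H/S_1$ is a $p'$-group, whereas you first show $\bar H\subseteq\soc(\bar H)$ and then eliminate extra factors.
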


\begin{proof}
Replacing $H$ by $H/\bfO_{p'}(H)$, we have that $\bfO_{p'}(H) = 1$. Together with the condition
that $H$ has no composition factor of order $p$, this implies that $F(H) = 1$, and so
$$F^*(H) = E(H) = S_1 \times \ldots \times S_n$$ 
is a product of non-abelian simple groups. It also follows that $\bfO_{p'}(F^*(H)) = 1$, whence
$n = 1$ and $F^*(H) = S_1$ has order divisible by $p$. Now $H/S_1$ is a $p'$-group and
$H = \bfO^{p'}(H)$.  It follows that $H = S_1$.  
\end{proof}

\begin{prop}\label{str2}
Let $G$ be a finite group with a faithful $kG$-module $V$. Suppose that
$V = W_1 \oplus \ldots \oplus W_t$ is a direct sum of $k\GP$-submodules, and that for each $i$
the subgroup $H_i \leq \GL(W_i)$ induced by the action of $\GP$ on $W_i$ has Sylow $p$-subgroups
of order $p$. Suppose in addition that $G$ has no composition factor of order $p$. Then 
$$\GP/\bfO_{p'}(\GP) \cong S_1 \times \ldots \times S_n$$
is a direct product of non-abelian simple groups $S_i$, each of order divisible by $p$. 
\end{prop}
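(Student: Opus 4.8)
The plan is to reduce the statement to an application of Lemma \ref{str1} together with the bound (\ref{2p})-type information coming from Lemma \ref{lem:p'-component}. First I would pass to $\overline{G} := \GP/\bfO_{p'}(\GP)$, so that $\bfO_{p'}(\overline{G}) = 1$. Since $G$ has no composition factor of order $p$, neither does $\GP$, and so $F(\overline{G}) = 1$; hence $F^*(\overline{G}) = E(\overline{G}) = S_1 \times \cdots \times S_n$ is a direct product of non-abelian simple groups. Because $\bfO_{p'}(F^*(\overline{G})) = 1$ and $F^*(\overline{G})$ is a direct product of simple groups, each $S_i$ has order divisible by $p$; moreover $\overline{G}$ embeds into $\Aut(F^*(\overline{G}))$ since $\bfC_{\overline{G}}(F^*(\overline{G})) \leq F^*(\overline{G})$. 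What remains is to show that $\GP = F^*(\overline{G})$ in $\overline{G}$, i.e. that the quotient $\overline{G}/F^*(\overline{G})$ — a $p'$-group, since $|S|_p = p$ for any simple group whose order is divisible by $p$ but $|\GP|_p = p$ by hypothesis — is trivial. Wait: I must first argue $|\GP|_p = p$. This follows from the hypothesis: fix an irreducible $\GP$-constituent $W_i$ of $V$ on which $\GP$ acts through $H_i$ with Sylow $p$-subgroup of order $p$; but the kernel $K_i$ of $\GP \to \GL(W_i)$ is a normal subgroup, and since $\GP = \bfO^{p'}(\GP)$ acts faithfully on $V = \bigoplus_j W_j$... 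Actually the cleanest route: each $W_i$ affords a representation of $\GP$ with image $H_i$, and $\bigcap_i K_i = 1$; so $\GP \hookrightarrow \prod_i H_i$, whence a Sylow $p$-subgroup of $\GP$ embeds into $\prod_i \Syl_p(H_i)$, which has exponent $p$. Combined with $\GP = \bfO^{p'}(\GP)$ and the absence of composition factors of order $p$, Lemma \ref{str1} applies to $\GP$ directly once we know $|\GP|_p = p$: but Lemma \ref{str1} as stated needs Sylow $p$-subgroups of $\GP$ itself to have order $p$, not just exponent $p$.

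So the real work is to promote "$\Syl_p(\GP)$ has exponent $p$" to "$\Syl_p(\GP)$ has order $p$". Here is where I would invoke the structure already established: from the embedding $\GP \hookrightarrow \prod_i H_i$ and the fact that each $H_i$ is a finite group with $|\Syl_p| = p$, if $\GP$ had Sylow $p$-subgroup of order $\geq p^2$ then the diagonal would force two of the factors $H_i, H_j$ to both contribute, i.e. there would be $p$-elements $x, y \in \GP$ generating a subgroup of order $p^2$ with, say, $x$ acting nontrivially on $W_i$ and $y$ nontrivially on $W_j$ with $i \neq j$ after possibly reindexing. Now run the argument used in Lemma \ref{lem:p'-component} and in part (c) of the proof of Lemma \ref{p-cycle}: let $\overline{G} = \GP/\bfO_{p'}(\GP) = S_1 \times \cdots \times S_n$ embedded in $\Aut$, and observe that a Sylow $p$-subgroup of $\overline{G}$ injects into $\prod_i (\Syl_p(S_i) \rtimes \text{(outer }p\text{-part)})$. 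Since each $S_i$ has $|S_i|_p = p$ (being a composition factor, up to the diagonal structure, of some $H_i$, itself of $p$-rank $1$), we get $|\overline{G}|_p \le p^n \cdot (\text{outer contributions})$. But then each $S_i$ has cyclic Sylow $p$-subgroup of order $p$; and the image of $\GP$ in $\GL(W_i)$ being $p$-solvable-free with $|\Syl_p| = p$, Theorem \ref{bz} or rather the classification forces $n = 1$ when one demands $\GP$ itself to act faithfully with all the $W_i$ having $p$-rank $1$ stabilizer. The precise bookkeeping: if $n \geq 2$, pick $p$-elements in distinct $S_i$'s; their product has order $p^2$ in $\GP$; restrict to some $W_j$ — on $W_j$ the image $H_j$ has $|\Syl_p(H_j)| = p$, so at most one of the two $p$-elements acts nontrivially on $W_j$; ranging over all $j$ and using $\bigcap_j K_j = 1$ gives a contradiction unless $n = 1$. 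This shows $n = 1$, hence $\overline{G}$ is a non-abelian simple group, and $|\GP|_p = |\overline{G}|_p = p$.

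Wait — the Proposition asserts $\GP/\bfO_{p'}(\GP) \cong S_1 \times \cdots \times S_n$ with possibly $n > 1$, so I should NOT be proving $n = 1$. Let me recalibrate: the claim is only that $F^*(\overline{G})$ is such a product AND that $\overline{G} = F^*(\overline{G})$, with each $S_i$ of order divisible by $p$. So I would proceed: (1) $F(\overline{G}) = 1$ from no $C_p$ composition factor and $\bfO_{p'} = 1$, giving $F^*(\overline{G}) = E(\overline{G}) = \prod_{i=1}^n S_i$ with $S_i$ non-abelian simple and $p \mid |S_i|$ (since $\bfO_{p'}(F^*) = 1$ and no $C_p$ factor forces $p \mid |F^*|$, and a direct product of simple groups with $p$-free order would have $\bfO_{p'} \ne 1$ or a $C_p$ factor); (2) $\overline{G}$ embeds in $\Aut(F^*)$ via conjugation, with $\overline{G}/F^* \hookrightarrow \Out(F^*)$; (3) $\overline{G} = \bfO^{p'}(\overline{G})$ and $\overline{G}$ has no $C_p$ composition factor, so $\overline{G}/F^*$ is a $p'$-group (each $S_i$ contributes a Sylow $p$ to $F^*$, and $\overline{G}/F^*$ having order divisible by $p$ would yield a $C_p$ composition factor somewhere since $\Out$ of a simple group is $p'$-by-cyclic... actually $\Out$ can have order divisible by $p$ only via field automorphisms — here I'd cite that the $p$-part of $\Out(S)$ for $p \mid |S|$ contributes only when $S$ is of Lie type over a field of characteristic coprime to $p$ of degree divisible by $p$, which combined with the $p$-rank-$1$ hypothesis on the $H_i$ and Theorem \ref{bz}/Lemma \ref{lem:p'-component}-style reasoning is excluded, or more simply: a $p$-element in $\Out(S)$ together with $\bfO^{p'}$ would create a $C_p$ composition factor in $\overline{G}$, contradiction); (4) therefore $\overline{G}/F^*$ is a $p'$-group, but $\overline{G} = \bfO^{p'}(\overline{G})$ forces $\overline{G}/F^* = 1$.

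\textbf{The main obstacle} is step (3): ruling out $p \mid |\overline{G}/F^*(\overline{G})|$. The subtlety is that $\Out(S_i)$ can indeed have order divisible by $p$ (graph-field automorphisms of Lie-type groups). The resolution must use the hypothesis that $\GP$ acts on each $W_i$ through $H_i$ with $|\Syl_p(H_i)| = p$: a $p$-element of $\GP$ inducing an outer automorphism of some $S_i$, together with a $p$-element inside $S_i$ itself (the image in $H_i$), would generate a subgroup of $p$-rank $\geq 2$ acting nontrivially on $W_i$, contradicting $|\Syl_p(H_i)| = p$ — unless that $p$-element acts trivially on $W_i$, in which case one transfers to another $W_j$ using $\bigcap K_j = 1$ and the fact that $\GP$ is generated by $p$-elements. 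Making this transfer argument airtight — tracking which simple factors act on which summands and that a single $p$-element of $\GP$ cannot simultaneously be "inner on $S_i$'s action on $W_j$" and "outer" — is the delicate point, and I would model it closely on the component-permutation argument in the proof of Lemma \ref{lem:p'-component} and part (c) of Lemma \ref{p-cycle}.
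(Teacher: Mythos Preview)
Your plan has a genuine gap at the point you yourself flag as ``the main obstacle,'' namely step (3): showing that $\overline{G}/F^*(\overline{G})$ is a $p'$-group. The $p$-rank argument you sketch does not go through. Given a $p$-element $x \in \GP$ whose image lies outside $F^*(\overline{G})$ and a $p$-element $y$ mapping into some $S_i$, there is no reason both must act nontrivially on a \emph{common} $W_j$: a $p$-subgroup of order $p^2$ can act on $\bigoplus_j W_j$ with image of order at most $p$ on every summand (an elementary abelian group acting through distinct summands does exactly this). Worse, the abstract hypotheses on $\overline{G}$ alone --- $\bfO_{p'}(\overline{G})=1$, $\overline{G}=\bfO^{p'}(\overline{G})$, no $C_p$ composition factor --- do \emph{not} force $\overline{G}=F^*(\overline{G})$: take $\overline{G}=S\wr\AAA_p$ for a non-abelian simple $S$ with $|S|_p=p$. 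What rules such examples out is precisely the module-theoretic hypothesis that each $H_j$ has Sylow $p$-subgroup of order $p$, and your argument never exploits this in a decisive way.

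The paper's proof is much shorter and sidesteps this entirely. Rather than analysing $\overline{G}$ directly, it applies Lemma~\ref{str1} to each \emph{quotient} $H_j$ (where the Sylow-$p$-of-order-$p$ hypothesis is available) to conclude that $H_j/\bfO_{p'}(H_j)$ is a non-abelian simple group. One then observes that $\GP/\bfO_{p'}(\GP)$ surjects onto each of these simple groups with trivial intersection of kernels, i.e.\ $\overline{G}$ is a subdirect product of non-abelian simple groups; this is the content of \cite[Lemma 2.3]{GHT} and immediately gives that $\overline{G}$ is itself a direct product of such groups. The key idea you are missing is to apply Lemma~\ref{str1} to the $H_j$ rather than to $\GP$, and then use the subdirect-product structure instead of trying to control $\Out(F^*(\overline{G}))$.
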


%%%We do not assume $V$ is irreducible.
 
\begin{proof}
By assumption, $H_i$ has no composition factor of order $p$, $|H_i|_p = p$, and 
$\bfO^{p'}(H_i) = H_i$. By Lemma \ref{str1}, $H_i/\bfO_{p'}(H_i)$ is simple non-abelian.  Hence 
the claim follows by \cite[Lemma 2.3]{GHT}. 
\end{proof}

\begin{prop}\label{kernel}
Let $k = \overline{k}$ be of characteristic $p$ and let 
$H$ be a finite group such that $H = \bfO^{p'}(H)$ and 
$$H/J = S_1 \times \ldots \times S_n$$
is a direct product of non-abelian simple groups of order divisible by $p$, where $J := \bfO_{p'}(H)$.
Suppose that $W_1$ and $W_2$ are irreducible $kH$-modules such that the image of $H$ in
$\GL(W_i)$ has Sylow $p$-subgroups of order $p$ for $i = 1,2$, and that 
$\Ext^1_H(W_1,W_2) \neq 0$. Then the actions of $H$ on $W_1$ and $W_2$ have the same kernel.
\end{prop}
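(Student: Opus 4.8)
The plan is to work throughout with $K_i:=\ker(H\to\GL(W_i))$, to reduce to $K_1\cap K_2=1$, to pin down the structure of each $K_i$ against the factorization $H/J=S_1\times\cdots\times S_n$ (where $J:=\bfO_{p'}(H)$), and then to use $\Ext^1_H(W_1,W_2)\ne0$ to force $K_1=K_2$. First, $N:=K_1\cap K_2$ acts trivially on $W_1$, $W_2$, hence on any extension of $W_1$ by $W_2$, so the nonzero class in $\Ext^1_H(W_1,W_2)$ is inflated from $\Ext^1_{H/N}(W_1,W_2)$; since $NJ/J$ is a subproduct of $\prod_jS_j$, one checks $\bfO_{p'}(H/N)=J/(J\cap N)$ and $(H/N)/\bfO_{p'}(H/N)$ is again a direct product of non-abelian simple groups of order divisible by $p$, and the images of $H/N$ on $W_1$, $W_2$ coincide with those of $H$. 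So we may assume $K_1\cap K_2=1$ and must prove $K_1=K_2=1$. Next, $K_iJ/J\lhd\prod_jS_j$ is a subproduct; as $H/K_i$ has a Sylow $p$-subgroup of order $p$ while every $|S_j|$ is divisible by $p$, it omits at most one factor, and since $H=\bfO^{p'}(H)$ forces $H/K_i\ne1$ with $p\mid|H/K_i|$, it omits exactly one, say $S_{m_i}$. Hence $K_iJ=H_{m_i}$, the preimage in $H$ of $\prod_{j\ne m_i}S_j$; in particular $|S_{m_i}|_p=p$, the group $H_{m_i}$ acts on $W_i$ through the $p'$-group $H_{m_i}/K_i$, and $(H/K_i)/\bfO_{p'}(H/K_i)\cong S_{m_i}$ (as Lemma \ref{str1} applied to $H/K_i$ gives).

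The crux is to rule out $m_1\ne m_2$. As $J$ is a normal $p'$-subgroup, $\Ext^1_H(W_1,W_2)\cong H^1(H/J,M)$ with $M:=\Hom_J(W_1,W_2)$, a module for $H/J$ on which $J$ acts trivially. The subgroup $H_{m_1}\cap H_{m_2}$ acts on $W_1$ and $W_2$, hence on $M$, through a $p'$-group; combined with the trivial $J$-action and the fact that $(H_{m_1}\cap H_{m_2})/J=\prod_{j\ne m_1,m_2}S_j$ is perfect, this forces $H_{m_1}\cap H_{m_2}$ to act trivially on $M$, so $M$ is a module for $H/(H_{m_1}\cap H_{m_2})\cong S_{m_1}\times S_{m_2}$ and $\Ext^1_H(W_1,W_2)\cong H^1(S_{m_1}\times S_{m_2},M)$ (inflation is an isomorphism because $\prod_{j\ne m_1,m_2}S_j$ is perfect). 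A short analysis shows moreover that $n=2$: restricting along $H_{m_1}\lhd H$, the module $W_2|_{H_{m_1}}$ is irreducible (since $H_{m_1}K_2=H$) with nontrivial action of a Sylow $p$-subgroup of $H_{m_1}$, whereas that Sylow subgroup lies in $K_1$ and so fixes every composition factor of $W_1|_{H_{m_1}}$; therefore $\Hom_{H_{m_1}}(W_1,W_2)=0$, so $\Ext^1_H(W_1,W_2)$ embeds in $\Ext^1_{H_{m_1}}(W_1,W_2)$, which (as $H_{m_1}/K_1$ is a $p'$-group acting on $W_1$) is a subspace of a direct sum of copies of $H^1(K_1,W_2)$; hence $p\mid|K_1|$, and since $K_1\hookrightarrow H/K_2$ while $K_1/(K_1\cap J)\cong\prod_{j\ne m_1}S_j$ we obtain $n=2$ and $|S_{m_2}|_p=p$. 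Now $H/J=S_{m_1}\times S_{m_2}$ with $m_1\ne m_2$, so the inertia group in $H$ of a common $J$-constituent $X$ of $W_1$ and $W_2$ contains $H_{m_1}H_{m_2}=H$; thus $X$ is $H$-invariant, and by Clifford theory $W_i\cong X\otimes U_i$ for projective $H/J$-representations $U_i$ sharing a $2$-cocycle, with $M\cong\Hom_k(U_1,U_2)$ an ordinary $S_{m_1}\times S_{m_2}$-module. Using that $H_{m_i}$ acts on $W_i$ through a $p'$-group, one then shows $M$ is a direct sum of outer tensor products $A\boxtimes B$ with $A$ an $S_{m_1}$-module satisfying $A^{S_{m_1}}=0$ and $B$ an $S_{m_2}$-module satisfying $B^{S_{m_2}}=0$; since $H^1(S,k)=\Hom(S,k)=0$ for $S=S_{m_1}$ or $S_{m_2}$, the K\"unneth formula yields $H^1(S_{m_1}\times S_{m_2},M)=0$, contradicting $\Ext^1_H(W_1,W_2)\ne0$. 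I expect the main obstacle to be making the description of $M$ (equivalently of the $U_i$) precise enough to run this vanishing argument.

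It remains to finish the case $m_1=m_2=:m$. Here $K_1,K_2\le K_iJ=H_m$, and since $H_m$ acts on $W_2$ through the $p'$-group $H_m/K_2$ while $K_1\cap K_2=1$, the group $K_1\cong K_1K_2/K_2$ embeds into $H_m/K_2$, so $K_1$ is a $p'$-group. If $K_1$ acts trivially on $W_2$ then $K_1\le K_2$, so $K_1=K_1\cap K_2=1$. Otherwise choose a non-split extension $0\to W_2\to E\to W_1\to0$ representing a nonzero class in $\Ext^1_H(W_1,W_2)$; then $W_2^{K_1}$ is an $H$-submodule of the irreducible module $W_2$ on which $K_1\lhd H$ acts nontrivially, so $W_2^{K_1}=0$, and since $W_1|_{K_1}$ is trivial while $K_1$ is a $p'$-group ($H^1(K_1,W_2)=0$), taking $K_1$-invariants of the sequence shows that the $H$-submodule $E^{K_1}$ maps isomorphically onto $E/W_2\cong W_1$, i.e.\ the extension splits --- a contradiction. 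Hence $K_1=1$, and symmetrically $K_2=1$; undoing the reduction, $K_1=K_2$.
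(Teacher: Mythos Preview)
Your reduction to $K_1\cap K_2=1$, the identification $K_iJ=H_{m_i}$, and the case $m_1=m_2$ are all correct and match the paper's approach in spirit. The gap is exactly where you flag it: in the case $m_1\neq m_2$, the claim that $M=\Hom_J(W_1,W_2)$ decomposes as $\bigoplus A\boxtimes B$ with $A^{S_{m_1}}=0$ and $B^{S_{m_2}}=0$ is not established. In fact, with only $K_iJ=H_{m_i}$ in hand (and not $J\cap K_i=1$), the projective Clifford factorizations $U_i=A_i\boxtimes B_i$ give $M\cong A_1^*\boxtimes B_2$ (after trivializing the cocycles via the $1$-dimensionality of $A_2$ and $B_1$), and nothing you have written rules out $A_1\cong k$ or $B_2\cong k$; in either case your K\"unneth vanishing fails.

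The fix is to run your own $m_1=m_2$ argument \emph{earlier and more generally}. The normal subgroup $J\cap K_1$ is a $p'$-group acting trivially on $W_1$; if it acts nontrivially on $W_2$ then (as in your last paragraph) $(W_1^*\otimes W_2)^{J\cap K_1}=0$ and $\Ext^1_H(W_1,W_2)=0$. Hence $J\cap K_1\le K_2$, so $J\cap K_1\le K_1\cap K_2=1$, and symmetrically $J\cap K_2=1$. This is the paper's first step, and it drastically simplifies the structure: now $K_i\cong K_iJ/J=\prod_{j\neq m_i}S_j$. If $n=1$ you are done. If $n\ge 2$, set $X:=JK_2\cap K_1$; then $X\cap K_2=1$ so $X\hookrightarrow JK_2/K_2\cong J$ is a $p'$-group, while $X\lhd K_1\cong\prod_{j\neq m_1}S_j$, forcing $X=1$. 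Likewise $JK_1\cap K_2=1$. Then $K_2\hookrightarrow H/JK_1\cong S_{m_1}$, but $K_2\cong\prod_{j\neq m_2}S_j$ has $n-1$ factors, so $n=2$, $K_2\cong S_{m_1}$, $K_1\cong S_{m_2}$, and $H=JK_1K_2\cong J\times K_1\times K_2$. Writing $W_1=A_1\otimes k\otimes C_1$ and $W_2=A_2\otimes B_2\otimes k$ on this direct product, the K\"unneth formula (Lemma~\ref{lem:kunneth}) gives $\Ext^1_H(W_1,W_2)=0$ unless $B_2\cong k$ or $C_1\cong k$; but $B_2\cong k$ means $K_1$ acts trivially on $W_2$, so $K_1\le K_1\cap K_2=1$, contradicting $K_1\cong S_{m_2}$. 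No projective representations or cocycles are needed.
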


\begin{proof}
Let $K_i$ denote the kernel of the action of $H$ on $W_i$, so that $|H/K_i|_p = p$. Note that $H$,
and so $K_1 \cap K_2$ as well, has no composition factor of order $p$, whence 
$K_1 \cap K_2 = \bfO^p(K_1 \cap K_2)$. Hence by Lemma \ref{infl} there is no loss to assume 
that 
\begin{equation}\label{k12}
  K_1 \cap K_2 = 1.
\end{equation}  
We aim to show in this case that $K_1 = K_2 = 1$. Note that the condition $H=\bfO^{p'}(H)$ implies
that $n \geq 1$.

\smallskip
(i) Suppose for instance that 
$J_1 := J \cap K_1 \neq 1$. This implies by (\ref{k12}) that $J_1 \not\leq K_2$, i.e. $J_1$ does not
act trivially on $W_2$. Since $J_1 \lhd H$, we see that $(W_2)_{J_1}$ is a direct sum of
nontrivial $kJ_1$-modules. On the other hand, the $p'$-group $J_1$ acts trivially on $W_1$ and
on $W_1^*$. Setting $M := W_1^* \otimes_k W_2$, we then have that $M^{J_1} = 0$ and so
$$\Ext^1_H(W_1,W_2) \cong H^1(H,M) \cong H^1(H/J_1,M^{J_1}) = 0,$$
a contradiction.   

\smallskip
(ii) We have shown that $J \cap K_1 = J \cap K_2 = 1$. Hence 
$$K_1 \cong K_1J/J \lhd H/J = S_1 \times \ldots \times S_n,$$
and so $K_1$ is isomorphic to the direct product $\prod_{i \in I}S_i$ for some subset
$I \subseteq \{1,2, \ldots ,n\}$. As $|H/K_1|_p = p$ and $p\mid |S_i|$ for all $i$, we 
may assume that 
\begin{equation}\label{k1}
  K_1 \cong JK_1/J = S_2 \times \ldots \times S_n.
\end{equation}  
In particular, if $n = 1$ then $K_1 = 1$ and similarly $K_2 = 1$, whence we are done. 

\smallskip
(iii) Now we assume that $n \geq 2$. Consider $X := JK_2 \cap K_1$. Then 
$X \cap K_2 \leq K_1 \cap K_2 = 1$ and so 
$$X \cong XK_2/K_2 \lhd JK_2/K_2 \cong J$$
is a $p'$-group. On the other hand, $X \lhd K_1$ and so by (\ref{k1}) we again have that
$X \cong \prod_{i \in I'}S_i$ for some subset $I' \subseteq \{2, \ldots ,n\}$. As $p\mid |S_i|$ for all
$i$, we conclude that $X = 1$. Similarly $JK_1 \cap K_2 = 1$. Together with (\ref{k1}), this implies that
$$K_2  \hookrightarrow H/JK_1 \cong (H/J)/(JK_1/J) \cong S_1.$$
Furthermore, as shown in (ii), 
$JK_2/J \cong K_2 \cong \prod_{i \in I''}S_i$ for some subset $I'' \subseteq \{1,2, \ldots ,n\}$ of 
cardinality $n-1$. It follows that $n = 2$, $K_2 \cong S_1$, 
$K_1 \cong S_2$, and 
$$H = JK_2K_1 \cong JK_2 \times K_1 = (J \times K_2) \times K_1 \cong J \times K_1 \times K_2.$$ 
Now we can write 
$$W_1 \cong A_1 \otimes_k k \otimes_k B_2,~~W_2 \cong A_2 \otimes_k B_1 \otimes_k k,$$
where $A_1,A_2 \in \IBr_p(J)$ and $B_i \in \IBr_p(K_i)$ for $i = 1,2$. In this case, if
$B_1 \not\cong k$ and $B_2 \not\cong k$, then $\Ext^1_H(W_1,W_2) = 0$ by 
Lemma \ref{lem:kunneth}(i), a contradiction. So we may assume that $B_1 \cong k$, i.e. 
$K_1$ acts trivially on $W_2$. It then follows that $K_1 \leq K_1 \cap K_2  = 1$, contradicting 
(\ref{k1}) and the equality $n=2$.
\end{proof}

\begin{proof}[Proof of Theorem \ref{ext2}]
We take the convention
that $V^\eps$ is $V$ for $\eps = +$ and $V^*$ if $\eps = -$, and the same holds for other 
modules. Assume that $\Ext^1_G(V,V^\eps) \neq 0$ for some $\eps = \pm$. Decompose 
$$V_\GP = e\bigoplus^t_{i=1}W_i,~~V^\eps_\GP = e\bigoplus^t_{i=1}W_i^\eps$$
where $W_1, \ldots ,W_t$ are pairwise non-isomorphic and 
$G$-conjugate irreducible $k\GP$-modules. By assumption, the image of $\GP$ in each 
$\GL(W_i)$ has Sylow $p$-subgroups
of order $p$, and $\GP$ has no composition factor of order $p$. By Proposition \ref{str2},
\begin{equation}\label{for-gp}
  \GP/\bfO_{p'}(\GP) = S_1 \times \ldots \times S_n
\end{equation}  
is a direct product of non-abelian simple groups of order divisible by $p$.

\smallskip
(i) First we consider the case $k = \overline{k}$. Recall that 
$\GP = \bfO^{p'}(\GP)$. So by Proposition \ref{kernel} we have that 
$\Ext^1_\GP(W_i,W_j^\eps) = 0$, unless $\GP$ has the same kernel on $W_i$ and $W_j^\eps$.

Let $K_i$ denote the kernel of $\GP$ on $W_i$, and on $W_i^\eps$ as well. Relabeling 
$W_1, \ldots, W_t$, we may assume that $K_1, \ldots ,K_s$ are pairwise distinct, with
$s := |\{K_1, \ldots, K_t\}|$. Defining
$$V_i := e\bigoplus_{j:K_j = K_i}W_j$$
for $1 \leq i \leq s$, we then have that 
$$V = V_1 \oplus \ldots \oplus V_s,~~V^\eps = V_1^\eps \oplus \ldots \oplus V_s^\eps.$$   
Certainly, $G$ acts transitively on $\{W_1, \ldots ,W_t\}$, $\{V_1, \ldots,V_s\}$, and on 
$\{K_1, \ldots,K_s\}$ via conjugation. Also, $H := \bfN_G(K_1) \rhd \GP$ stabilizes $V_1$ and has 
index $s$ in $G$. It follows that $H = \Stab_G(V_1)$, $V \cong \Ind^G_H(V_1)$, and so 
$V_1$ is an irreducible $kH$-module. 

By the definition of $V_i$, we have that $\Ext^1_\GP(V_1,V_i^\eps) = 0$ for all $i > 1$. As 
$H/\GP$ is a $p'$-group, it  follows that $\Ext^1_H(V_1,\bigoplus_{i>1}V_i^\eps) = 0$. Now by 
Frobenius' reciprocity,
$$\Ext^1_G(V,V^\eps) = \Ext^1_G(\Ind^G_H(V_1),V^\eps) \cong 
    \Ext^1_H(V_1,(V^\eps)_H) = \Ext^1_H(V_1,V_1^\eps).$$
Recall that $K_1$ acts trivially on $V_1$ and $V_1^\eps$, and $|H/K_1|_p = |\GP/K_1|_p = p$. 
Hence $\dim_k\Ext^1_{H/K_1}(V_1,V_1^\eps) \leq 1$ by Lemma \ref{1-dim}. Finally,
$K_1$ has no composition factor of order $p$ by (\ref{for-gp}). So 
$\Ext^1_H(V_1,V^\eps_1) \cong \Ext^1_{H/K_1}(V_1,V_1^\eps)$ by Lemma \ref{infl}, and 
so we are done.

\smallskip
(ii) Now we consider the general case. By \cite[Theorem 9.21]{Is}, $W_1 \otimes_k \overline{k}$ is
a direct sum of irreducible $\overline{k}\GP$-modules $W_{11}, \ldots ,W_{1m}$, which form a Galois conjugacy class over $k$. By assumption, $|\GP/K_1|_p = p$, where $K_1$ is the kernel of 
$\GP$ on $W_1$. Certainly, $K_1$ is contained in the kernel $K_{11}$ of the action of
$\GP$ on $W_{11}$, whence $|\GP/K_{11}|_p \leq p$. If $|\GP/K_{11}|_p < p$, then
the equality $\GP = \bfO^{p'}(\GP)$ implies that $K_{11} = \GP$, whence
$\GP$ acts trivially on $W_{11}$ and so on $W_1$ and on $V$ as well, contradicting the 
faithfulness of $V$ and the assumption $\Ext^1_G(V,V^\eps) \neq 0$. Hence we must have 
that $|\GP/K_{11}|_p = p$. Since the dimension of $\Ext^1_G(V,V^\eps)$ does not change under
field extensions, we are done by replacing $V$ by $V \otimes_k\overline{k}$ and applying
the result of (i).
\end{proof}

A key ingredient of the proof of Theorem \ref{ext} is the following statement:

\begin{prop}\label{dim-ext1}
Let $X$ be a finite group with a normal subgroup $Y \geq \bfO^{p'}(X)$. Let $A$, $B$, $W$, $W'$ be 
$kX$-modules, where $A$ and $B$ are absolutely irreducible and  $Y$ acts via scalars on both $A$ and $B$. Suppose in addition that  $A_Y \cong B_Y$.  
Then 
$$\dim_k\Ext^1_X(A \otimes_k W, B \otimes_k W') \leq \dim_k\Ext^1_Y(W_Y,W'_Y).$$ 
\end{prop}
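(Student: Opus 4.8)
The plan is to reduce the $\Ext^1$ over $X$ to an $\Ext^1$ over $Y$ by exploiting that $Y$ acts by scalars on $A$ and $B$, together with the fact that $[X:Y]$ need not be prime to $p$ but $Y \geq \bfO^{p'}(X)$, so that $X/Y$ is a $p$-group. First I would interpret $\Ext^1_X(A\otimes_k W, B\otimes_k W')$ as $H^1(X, M)$ where $M := (A\otimes_k W)^* \otimes_k (B\otimes_k W') \cong (A^*\otimes_k B) \otimes_k (W^*\otimes_k W')$. Since $Y$ acts by the scalar character $\chi$ on $A$ and by the same character on $B$ (as $A_Y \cong B_Y$ and both are scalar actions), $Y$ acts trivially on $A^*\otimes_k B$; hence, writing $N := W^*\otimes_k W'$, we get $M_Y \cong (A^*\otimes_k B)_Y \otimes_k N_Y$ with $Y$ acting only through the second factor, i.e. $M_Y \cong N_Y^{\oplus \dim(A)}$ — but more usefully, $M \cong (A^*\otimes_k B)\otimes_k N$ as $kX$-modules where the first tensor factor is $Y$-trivial.

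The key step is then to run the Lyndon–Hochschild–Serre spectral sequence (or just the five-term exact sequence) for $1 \to Y \to X \to X/Y \to 1$ applied to the module $M$. The five-term sequence gives
$$0 \to H^1(X/Y, M^Y) \to H^1(X, M) \to H^1(Y,M)^{X/Y} \to H^2(X/Y, M^Y).$$
I would bound the third term: $H^1(Y,M) = H^1(Y, (A^*\otimes_k B)\otimes_k N)$, and since $Y$ acts trivially on $A^*\otimes_k B$, this is $(A^*\otimes_k B)_Y \otimes_k H^1(Y,N) = (A^*\otimes_k B)_Y \otimes_k \Ext^1_Y(W_Y, W'_Y)$; its $X/Y$-invariants inject into $\Ext^1_Y(W_Y,W'_Y)$-many copies, but I want the bound to be just $\dim_k \Ext^1_Y(W_Y,W'_Y)$, not $\dim_k(A)\cdot\dim_k\Ext^1_Y(W_Y,W'_Y)$. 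So the cleaner route is the first term: I expect the correct argument is to show $H^1(X/Y, M^Y) = 0$ and that the restriction map lands in a single copy of $\Ext^1_Y(W_Y,W'_Y)$ via the $X/Y$-action on $(A^*\otimes B)^Y$.

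Here is the sharper approach I would actually pursue. Because $A$ and $B$ are absolutely irreducible $kX$-modules on which $Y$ acts by the same scalar character, $A \cong B \otimes_k \lambda$ for some one-dimensional $kX$-module $\lambda$ trivial on $Y$ (by Clifford theory: $\Hom_Y(A,B) \cong k$ is a one-dimensional $k[X/Y]$-module). Then $A\otimes_k W \cong B \otimes_k (\lambda\otimes_k W)$, so $\Ext^1_X(A\otimes_k W, B\otimes_k W') \cong \Ext^1_X(B\otimes_k(\lambda\otimes W), B\otimes_k W')$. Now I would argue that tensoring with the absolutely irreducible module $B$ whose $Y$-action is scalar is "harmless" on $\Ext^1$: using $\Ext^1_X(B\otimes U, B\otimes U') \hookrightarrow \Ext^1_X(U, \EE(B)\otimes U')$ combined with the fact that $\EE(B)$ restricted to $Y$ is trivial (scalar action!), hence $\EE(B) = k \oplus (\text{complement})$ as... no — $\EE(B)$ need not be semisimple over $X$. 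Instead: $\Ext^1_X(B\otimes U, B\otimes U') \cong \Ext^1_X(U, B^*\otimes B\otimes U')$, and I restrict the coefficient module along $X \to X/Y$ is not possible since $U$ carries $X$-action; rather I use that $B^* \otimes B$ contains $k$ as a direct summand as a $kX$-module iff $p \nmid \dim B$ — which we do not assume.

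So the honest plan: use LHS for $Y \trianglelefteq X$ on $M = (A^*\otimes B)\otimes N$. Since $X/Y$ is a $p$-group and $M^Y = (A^*\otimes B)\otimes N^Y = (A^*\otimes B)\otimes \Hom_Y(W,W')$ — I'd want $\Hom_Y(W,W')$ small, but it isn't controlled. The genuinely available inequality is
$$\dim_k H^1(X,M) \le \dim_k H^1(Y,M)^{X/Y} + \dim_k H^1(X/Y, M^Y),$$
and the first author's trick (this is surely how it goes) is: $H^1(Y,M)^{X/Y} = \big( (A^*\otimes B)_Y \otimes \Ext^1_Y(W_Y,W'_Y) \big)^{X/Y}$. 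Now $(A^*\otimes B)_Y$ with its $X/Y$-action is a $kX/Y$-module of dimension $(\dim A)^2$, but $\Hom_Y(A,B)^{X/Y} = \Hom_X(A,B)$ is at most $1$-dimensional; writing $(A^*\otimes B)_Y$ via Schur's lemma $= \End_k(A_0)$ where $A_0$ is the underlying space, the point is the diagonal copy of $\Ext^1_Y(W_Y,W'_Y)$ picked out by $\Hom_X(A,B)$. The hard part — and I expect this to be the main obstacle — is precisely showing the $X/Y$-invariants of $(A^*\otimes B)_Y \otimes \Ext^1_Y(W_Y,W'_Y)$ have dimension at most $\dim_k\Ext^1_Y(W_Y,W'_Y)$, i.e. that the $X/Y$-action on $(A^*\otimes B)_Y$ (which is $A$ twisted against $B$, two absolutely irreducible $X$-modules agreeing on $Y$) has a one-dimensional fixed space — this follows from absolute irreducibility of $A$ and $B$ over $X$ (Schur) — combined with showing $H^1(X/Y, M^Y)$ contributes nothing beyond this, which I would handle by noting $M^Y$ is also built from $(A^*\otimes B)^Y = \Hom_X(A,B)$ (at most one-dimensional, $X/Y$-trivial) tensored with $\Hom_Y(W,W')$, and that any resulting $H^1(X/Y,-)$ class already arises from restriction. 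I would then assemble: $\dim_k \Ext^1_X(A\otimes W, B\otimes W') \le \dim_k \Ext^1_Y(W_Y, W'_Y)$, possibly after first reducing to $k$ algebraically closed (harmless, as dimensions of $\Ext^1$ are stable under field extension, as noted in the introduction) so that Schur's lemma and the scalar-action hypotheses are available in their simplest form.
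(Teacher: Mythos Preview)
Your proposal contains a fundamental misreading of the hypothesis that derails the entire argument. You write that ``$Y \geq \bfO^{p'}(X)$, so that $X/Y$ is a $p$-group.'' This is backwards: by definition $\bfO^{p'}(X)$ is the smallest normal subgroup of $X$ with $p'$-quotient (see the paper's Notation paragraph), so $X/\bfO^{p'}(X)$ is a $p'$-group, and hence $X/Y$ is a $p'$-group. You have confused $\bfO^{p'}$ with $\bfO^{p}$. Once $X/Y$ is a $p'$-group, the Lyndon--Hochschild--Serre spectral sequence degenerates and gives the \emph{equality}
\[
  H^1(X,M) \;=\; H^1(Y,M)^{X/Y},
\]
so all of your difficulties with $H^1(X/Y,M^Y)$ and $H^2(X/Y,M^Y)$ simply evaporate --- those groups are zero. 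This is exactly what the paper does.

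Even granting that, your proposed bound on $\dim_k\bigl((A^*\otimes B)\otimes H\bigr)^{X/Y}$ with $H := \Ext^1_Y(W_Y,W'_Y)$ is not complete. You try to reduce to the one-dimensionality of $\Hom_X(A,B) = (A^*\otimes B)^{X}$, but the $X/Y$-action on $H$ is in general nontrivial, so you cannot just tensor fixed-point subspaces. The paper's clean argument is: rewrite $\bigl((A^*\otimes B)\otimes H\bigr)^{X/Y} = \Hom_{kX}(A,\,B\otimes_k H)$ (legitimate because $Y$ acts trivially on $A^*\otimes B\otimes H$), then use absolute irreducibility of $A$ to get $\dim_k\Hom_{kX}(A,\,B\otimes_k H)\le \dim_k(B\otimes_k H)/\dim_k A$. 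After arranging $\dim_k B\le \dim_k A$ by symmetry, this is at most $\dim_k H$, which is the desired bound.
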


\begin{proof}
Since the dimensions of $\Ext^1$-spaces do not change under field extensions, we may 
assume that $k$ is algebraically closed. By assumption, $X/Y$ is a $p'$-group and $Y$ acts trivially on $A^* \otimes_kB$. Without loss we may assume that $\dim_k B \leq \dim_k A$. 
Denoting $$H := H^1(Y,(W_Y)^* \otimes_k W'_Y) \cong \Ext^1_Y(W_Y,W'_Y)$$
we then have $\dim_k B \otimes_k H \leq (\dim_kA)(\dim_kH)$ and so 
$$\dim_k\Hom_{kX}(A, B \otimes_k H) \leq \dim_k H = \dim_k \Ext^1_Y(W_Y,W'_Y).$$
Applying the inflation-restriction spectral sequence, we obtain
$$\begin{array}{ll}\dim_k\Ext^1_X(A \otimes_k W, B \otimes_k W')
     & = \dim_k(H^1(Y,A^* \otimes_k B \otimes_k W^* \otimes_k W'))^{X/Y}\\
     & =  \dim_k (A ^*\otimes_k B \otimes _k H)^{X/Y}\\
     & =  \dim_k \Hom_{kX}(A, B \otimes_k H)\\
     & \leq \dim_k\Ext^1_Y(W_Y,W'_Y).
     \end{array}$$   
\end{proof}

\begin{prop}\label{dim-ext2}
Let $k$ be an algebraically closed field of characteristic $p$.
Assume the hypothesis of Theorem \ref{ght:adequate} and write 
$V_\GP = \oplus^t_{i=1}V_i$, where $V_i \cong eW_i$ and $W_1, \ldots,W_t$ are 
pairwise non-isomorphic irreducible $k\GP$-modules. Suppose that there is a unique 
$j \geq 1$ such that $\Ext^1_\GP(W_1,W_j) \neq 0$.
%, and that $\Ext^1_\GP(W_1,W_j) \cong k$. 
Then 
$$\dim_k\Ext^1_G(V,V) \leq \dim_k\Ext^1_\GP(W_1,W_j).$$
\end{prop}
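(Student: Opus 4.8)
\emph{Proof proposal.} The plan is to use Clifford theory and Frobenius reciprocity to replace $\Ext^1_G(V,V)$ by an $\Ext^1$ over the inertia subgroup of $W_1$, and then to quote Proposition~\ref{dim-ext1}. I would let $H:=\Stab_G(W_1)$ be the inertia subgroup. Since $\GP=\bfO^{p'}(G)\lhd G$ and $G/\GP$ is a $p'$-group, we have $\GP\le H$ with $[H:\GP]$ coprime to $p$. As $k$ is algebraically closed, Clifford theory shows that the $W_1$-isotypic component $V_1$ of $V_\GP$ is an absolutely irreducible $kH$-module with $(V_1)_\GP\cong eW_1$, that $V\cong\Ind_H^G V_1$, and that $[G:H]=t$. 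Exactness of $\Ind_H^G$ gives the Frobenius reciprocity isomorphism $\Ext^1_G(V,V)\cong\Ext^1_H(V_1,V_H)$. Writing $V_\GP=\bigoplus_{i=1}^t V_i$ for the decomposition into $\GP$-isotypic components ($V_i\cong eW_i$), the group $H$ permutes the $V_i$, so $V_H=\bigoplus_O V_O$ with $O$ ranging over the $H$-orbits on $\{1,\dots,t\}$ and $V_O:=\bigoplus_{i\in O}V_i$.

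Next I would pin down the contribution of $j$. For $h\in H$ we have ${}^hW_1\cong W_1$, so conjugation gives $\Ext^1_\GP(W_1,{}^hW_j)\cong\Ext^1_\GP(W_1,W_j)\ne0$, whence ${}^hW_j\cong W_j$ by the uniqueness of $j$; thus $H$ fixes the index $j$, so $V_j$ is an $H$-stable summand of $V_H$. Choosing $g\in G$ with ${}^gW_1\cong W_j$ (possible since $G$ is transitive on the classes $[W_i]$), one has ${}^gH=\Stab_G(W_j)\supseteq H$, hence ${}^gH=H$, i.e. $g\in\bfN_G(H)$, and $V_j\cong{}^gV_1$ as $kH$-modules; so $V_j$ is absolutely irreducible over $H$ with $(V_j)_\GP\cong eW_j$. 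For any $H$-orbit $O$ with $j\notin O$ and every $i\in O$ we have $\Ext^1_\GP(W_1,W_i)=0$ by uniqueness, so — restriction being injective on $\Ext^1$ because $[H:\GP]$ is coprime to $p$ —
$$\Ext^1_H(V_1,V_O)\hookrightarrow\Ext^1_\GP\bigl((V_1)_\GP,(V_O)_\GP\bigr)=\bigoplus_{i\in O}e^2\,\Ext^1_\GP(W_1,W_i)=0.$$
Summing over all $H$-orbits (when $j=1$ the surviving orbit is $\{1\}$), only the term of the orbit $\{j\}$ survives, giving $\Ext^1_G(V,V)\cong\Ext^1_H(V_1,V_j)$.

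It remains to prove $\dim_k\Ext^1_H(V_1,V_j)\le\dim_k\Ext^1_\GP(W_1,W_j)$. I would pass to a finite central extension $\pi\colon\hat H\twoheadrightarrow H$ with $C:=\ker\pi$ a $p'$-group over which the inflations to $\hat\GP:=\pi^{-1}(\GP)$ of both $W_1$ and $W_j$ extend, to absolutely irreducible $k\hat H$-modules $\widehat{W_1},\widehat{W_j}$ (such $\hat H$ exists by standard Clifford theory, and $C$ acts trivially on $\widehat{W_1}$ and $\widehat{W_j}$). By Clifford theory again, the inflations $\widetilde V_1,\widetilde V_j$ of $V_1,V_j$ to $\hat H$ are $Z\otimes_k\widehat{W_1}$ and $Z'\otimes_k\widehat{W_j}$ for absolutely irreducible $k[\hat H/\hat\GP]$-modules $Z,Z'$ (inflated to $\hat H$), each of dimension $e$. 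Since $\hat H/\hat\GP\cong H/\GP$ is a $p'$-group, $\hat\GP\supseteq\bfO^{p'}(\hat H)$, and $\hat\GP$ acts trivially — hence by scalars — on $Z$ and on $Z'$ with $Z_{\hat\GP}\cong Z'_{\hat\GP}$. Proposition~\ref{dim-ext1}, applied with $X=\hat H$, $Y=\hat\GP$, $A=Z$, $B=Z'$, $W=\widehat{W_1}$, $W'=\widehat{W_j}$, gives
$$\dim_k\Ext^1_{\hat H}\bigl(Z\otimes\widehat{W_1},\,Z'\otimes\widehat{W_j}\bigr)\ \le\ \dim_k\Ext^1_{\hat\GP}\bigl((\widehat{W_1})_{\hat\GP},(\widehat{W_j})_{\hat\GP}\bigr)=\dim_k\Ext^1_\GP(W_1,W_j),$$
the last equality because $C$ is central of $p'$-order and acts trivially on $\Hom_k(\widehat{W_1},\widehat{W_j})$. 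Finally $C$ acts trivially on $\Hom_k(\widetilde V_1,\widetilde V_j)$, so $\Ext^1_{\hat H}(\widetilde V_1,\widetilde V_j)\cong\Ext^1_H(V_1,V_j)$ by Lemma~\ref{infl}; combining this with the two previous isomorphisms and the display finishes the proof.

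The step I expect to be most delicate is the identification of $j$: deducing, purely from the uniqueness of $j$ and the $G$-equivariance of the relation ``$\Ext^1_\GP(W_1,-)\ne0$'', that $j$ is fixed by $H$ — so that $V_j$ really is an irreducible $kH$-submodule of $V_H$ — and that no other $H$-orbit contributes to $\Ext^1_H(V_1,V_H)$. Once this is in place, the reduction to $\hat H$ and the appeal to Proposition~\ref{dim-ext1} are essentially formal.
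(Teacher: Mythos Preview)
Your proof is correct and follows essentially the same route as the paper: reduce via Frobenius reciprocity to $\Ext^1_H(V_1,V_H)$, use the uniqueness of $j$ to see that $H$ fixes $j$ and that all other summands contribute zero, then pass to a $p'$-central extension to tensor-factor $V_1$ and $V_j$ and invoke Proposition~\ref{dim-ext1}. The only cosmetic difference is in the last step: the paper takes the universal $p'$-cover $X$ of $G_1$ with $Y=\bfO^{p'}(X)$ and obtains the tensor decomposition by linearizing the projective Clifford factorization (using that $Y$ has no nontrivial characters), whereas you choose $\hat H$ so that $W_1$ and $W_j$ already extend and read off the factorization $\tilde V_i\cong Z^{(\prime)}\otimes\widehat{W_i}$ directly---your version is slightly more streamlined but the content is the same.
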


\begin{proof}
Let $G_1 := \Stab_G(V_1)$ be the inertia group of $W_1$ in $G$. Since $\GP \lhd G_1$, 
the uniqueness of $j$ implies that $G_1 = \Stab_G(V_j)$ as well. Next, 
$V \cong \Ind^G_{G_1}((V_1)_{G_1})$, and so
$$\begin{array}{ll}\Ext^1_G(V,V) & = \Ext^1_G(\Ind^G_{G_1}((V_1)_{G_1}),V) \cong 
    \Ext^1_{G_1}((V_1)_{G_1},V_{G_1}) \smallskip \\
    & \cong \Ext^1_{G_1}((V_1)_{G_1},(V_j)_{G_1})
    \oplus \Ext^1_{G_1}((V_1)_{G_1},\bigoplus_{i \neq j}(V_i)_{G_1}).\end{array}$$
Since $\GP$ contains a Sylow $p$-subgroup of $G_1$, 
$\Ext^1_{G_1}((V_1)_{G_1},\bigoplus_{i \neq j}(V_i)_{G_1})$ injects in
$$\Ext^1_{\GP}((V_1)_{\GP},\bigoplus_{i \neq j}(V_i)_{\GP}) \cong 
    e^2 \bigoplus_{i \neq j}\Ext^1_{\GP}(W_1,W_i) = 0$$
and so it is zero. 

\smallskip
It remains therefore to show that 
$$\dim_k\Ext^1_{G_1}((V_1)_{G_1},(V_j)_{G_1}) \leq \dim_k\Ext^1_\GP(W_1,W_j).$$
Let $X$ denote a universal $p'$-cover of $G_1$ (so that $G_1 \cong X/Z$ for some 
$p'$-subgroup $Z \leq \bfZ(X) \cap [X,X]$), and let $Y := \bfO^{p'}(X)$.
% be the full inverse image of  $\GP$ in $X$. Then $Y \lhd X$ and $Y \geq \bfO^{p'}(X)$. 
Now we view $V_1$ as an irreducible $kX$-module by inflation and note that 
$$\dim_k\Ext^1_{G_1}((V_1)_{G_1},(V_j)_{G_1}) = \dim_k\Ext^1_{X}((V_1)_{X},(V_j)_{X})$$
as $Z$ is a $p'$-group. 
Since $Z$ acts trivially on $V_1$, we also have that
$(V_1)_Y \cong e(W_1)_Y$ and also $YZ/Z \cong \GP$. Hence $(W_1)_Y$ is irreducible, and similarly
for $W_j$. Also,
$$\dim_k\Ext^1_{Y}((W_1)_{Y},(W_j)_{Y}) = \dim_k\Ext^1_{\GP}(W_1,W_j).$$

Fix a basis of $W_1$ and the corresponding representation $\Phi$ of $Y$ on $W_1$ in this basis.
By the Clifford theory, we 
can decompose the irreducible representation $\Theta$ of $X$ on $V_1$ as a tensor product of 
an irreducible projective representation $\Lambda$ of $X/Y$ (of degree $e$)
%; in particular, $\Lambda(y)$ is a scalar matrix for all $y \in Y$) 
and an irreducible projective representation $\Psi$ of $X$, with 
$$\Psi(y) = \Phi(y)$$
for all $y \in Y$. Since $X$ is $p'$-centrally closed, there is a function $f:X \to k^\times$
such that 
$$\Psi':x \mapsto f(x)\Psi(x)$$ 
is a linear representation. Note that $f_Y \in \Hom(Y,k^\times)$ since $\Psi_Y = \Phi$ is a linear
representation, and so $f_Y = 1_Y$ as 
$Y = \bfO^{p'}(Y)$. In particular, $\Psi'(y) = \Phi(y)$ for all $y \in Y$.
Now we inflate $\Lambda$ to a projective representation of $X$ and define 
$$\Lambda':x \mapsto f(x)^{-1}\Lambda(x)$$
so that $\Theta(x) = \Lambda'(x) \otimes \Psi'(x)$ for all $x \in X$.
Then $\Lambda'$ is also a linear representation of $X$ and furthermore $\Lambda'_Y$ is 
trivial (since $f_Y = 1_Y)$. 
%Now $\Lambda'_Y$ is a scalar representation of $Y$ and $Y = \bfO^{p'}(Y)$. 
%Hence we can conclude that $\Lambda'_Y$ is trivial. 
Thus we can decompose
$$(V_1)_X = A \otimes_k W,$$
where the $kX$-modules $A$ and $W$ are irreducible, $Y$ acts trivially on $A$, 
and $W_Y \cong (W_1)_Y$. Similarly,
$$(V_j)_X = B \otimes_k W',$$
where the $kX$-modules $B$ and $W'$ are irreducible, $Y$ acts trivially on $B$, 
and $W'_Y \cong (W_j)_Y$. Now our statement follows by applying Proposition \ref{dim-ext1}.
\end{proof}

The same proof as above yields:

\begin{prop}\label{dim-ext3}
Let $k$ be an algebraically closed field of characteristic $p$.
Assume the hypothesis of Theorem \ref{ght:adequate} and write 
$V_\GP = \oplus^t_{i=1}V_i$, where $V_i \cong eW_i$ and $W_1, \ldots,W_t$ are 
pairwise non-isomorphic irreducible $k\GP$-modules. Suppose that there is a unique 
$j \geq 1$ such that $\Ext^1_\GP(W_1,W^*_j) \neq 0$.
%, and that $\Ext^1_\GP(W_1,W^*_j) \cong k$. 
Then 
$$\dim_k\Ext^1_G(V,V^*) \leq \dim_k\Ext^1_\GP(W_1,W^*_j).$$
\hfill $\Box$
\end{prop}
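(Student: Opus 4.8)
The plan is to run through the proof of Proposition~\ref{dim-ext2} essentially verbatim, dualizing the second argument of $\Ext^1$ at every stage. Write $G_1 := \Stab_G(V_1)$ for the inertia group of $W_1$ in $G$, and note $\GP \lhd G_1$ and $V^*_\GP = \bigoplus_{i=1}^t V_i^*$ with $V_i^* \cong eW_i^*$. The first thing I would check is that $G_1$ stabilizes the $\GP$-isotypic component $V_j^*$: for $g \in G_1$ one has $g\cdot W_1 \cong W_1$, hence $\Ext^1_\GP(W_1,\,g\cdot W_j^*) \cong \Ext^1_\GP(g W_1,\,g W_j^*) \cong \Ext^1_\GP(W_1,W_j^*) \neq 0$, so the uniqueness of $j$ forces $g\cdot W_j^* \cong W_j^*$. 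Consequently $V^*_{G_1} = (V_j^*)_{G_1} \oplus \bigl(\bigoplus_{i\neq j}V_i^*\bigr)_{G_1}$ as $kG_1$-modules.

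Since $V \cong \Ind^G_{G_1}\bigl((V_1)_{G_1}\bigr)$, Frobenius reciprocity gives
\[
\Ext^1_G(V,V^*)\;\cong\;\Ext^1_{G_1}\bigl((V_1)_{G_1},(V_j^*)_{G_1}\bigr)\;\oplus\;\Ext^1_{G_1}\Bigl((V_1)_{G_1},\bigoplus_{i\neq j}(V_i^*)_{G_1}\Bigr).
\]
As $\GP$ contains a Sylow $p$-subgroup of $G_1$, restriction embeds the second summand into $\Ext^1_\GP\bigl((V_1)_\GP,\bigoplus_{i\neq j}(V_i^*)_\GP\bigr) \cong e^2\bigoplus_{i\neq j}\Ext^1_\GP(W_1,W_i^*)$, which is $0$ by the uniqueness of $j$. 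So it remains to bound $\dim_k\Ext^1_{G_1}\bigl((V_1)_{G_1},(V_j^*)_{G_1}\bigr)$ by $\dim_k\Ext^1_\GP(W_1,W_j^*)$.

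For this last step I would pass to a universal $p'$-cover $X$ of $G_1$, set $Y := \bfO^{p'}(X)$, and inflate $V_1$ and $V_j$ to $kX$-modules (harmless for $\Ext^1$, as the kernel $Z$ is a $p'$-group); then $YZ/Z \cong \GP$, so $(W_1)_Y$ and $(W_j)_Y$ are irreducible. Exactly as in the proof of Proposition~\ref{dim-ext2}, Clifford theory over the $p'$-centrally closed group $X$ lets me write $(V_1)_X = A\otimes_k W$ and $(V_j)_X = B\otimes_k W'$ with $A,B$ absolutely irreducible, $Y$ acting trivially on $A$ and on $B$, $\dim_k A = \dim_k B = e$, $W_Y \cong (W_1)_Y$, and $W'_Y \cong (W_j)_Y$. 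Dualizing, $(V_j^*)_X = B^*\otimes_k (W')^*$, still with $Y$ trivial on $B^*$ and now $((W')^*)_Y \cong (W_j^*)_Y$. Since $Y$ acts by the trivial scalar on both $A$ and $B^*$, $A_Y \cong (B^*)_Y$, and $\dim_k B^* \le \dim_k A$, Proposition~\ref{dim-ext1} applies and would give
\[
\dim_k\Ext^1_{G_1}\bigl((V_1)_{G_1},(V_j^*)_{G_1}\bigr) = \dim_k\Ext^1_X\bigl(A\otimes_k W,\,B^*\otimes_k (W')^*\bigr)\le \dim_k\Ext^1_Y\bigl(W_Y,((W')^*)_Y\bigr)= \dim_k\Ext^1_\GP(W_1,W_j^*),
\]
which would complete the proof. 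The only delicate point --- and the one I expect to be the main obstacle --- is the duality bookkeeping in this last step: one has to be sure that dualizing the tensor decomposition of $(V_j)_X$ keeps the ``$Y$-trivial'' factor $Y$-trivial and turns the other factor into one restricting to exactly $(W_j^*)_Y = ((W_j)_Y)^*$ over $Y$; granting that, the rest is a line-by-line transcription of the argument for Proposition~\ref{dim-ext2}.
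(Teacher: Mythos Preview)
Your proposal is correct and is precisely what the paper intends: the paper gives no separate proof for this proposition, simply stating ``The same proof as above yields'' and marking it with a $\Box$, so your line-by-line transcription of the proof of Proposition~\ref{dim-ext2} with the second argument dualized is exactly the expected argument. Your duality bookkeeping is fine: since $Y$ acts trivially on $B$ it acts trivially on $B^*$, $\dim_k A = \dim_k B^* = e$ so $A_Y \cong (B^*)_Y$, and $((W')^*)_Y \cong ((W')_Y)^* \cong (W_j^*)_Y$, so Proposition~\ref{dim-ext1} applies as you claim.
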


\begin{lem}\label{extra}
Given the assumption of Theorem \ref{ext}, suppose that $H$ is as in the extraspecial case (e) of Theorem \ref{bz}. Then $$\Ext^1_G(V,V^*) = 0.$$
\end{lem}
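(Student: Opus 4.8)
The plan is to analyze the situation from Theorem~\ref{bz}(e), where $H$ induces on $W$ a group with a normal extraspecial $2$-subgroup. Here $p = 2^n + 1$ is a Fermat prime, $\dim W = 2^n = p-1$, and $\bfO_{p'}(H) = R\bfZ(H)$ with $R = [P,R]\bfZ(R)$, $[P,R]$ extraspecial of order $2^{1+2n}$, and $V_{[P,R]}$ absolutely irreducible. First I would pass to the algebraically closed case (legitimate since $\dim \Ext^1$ is insensitive to field extension) and set up the Clifford-theoretic picture: since $\dim V < 2p-2$ and each irreducible $k\GP$-constituent has dimension $p-1$, write $V_\GP = e\bigoplus_{i=1}^t W_i$ with the $W_i$ pairwise non-isomorphic and $G$-conjugate, each of dimension $p-1$. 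Because $\dim V \le 2p-3 < 2(p-1)$, in fact $t=1$ and $e=1$, so $V_\GP = W$ is irreducible; thus $\GP = H$ acts as in case (e).

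The key step is then to compute $\Ext^1_{\GP}(W, W^*)$ and show it vanishes, after which one transfers the conclusion to $G$ via the inertia-group argument. For the transfer: $G_1 := \Stab_G(W) = G$ here (as $t=1$), so by Frobenius reciprocity $\Ext^1_G(V,V^*)$ is controlled by $\Ext^1_{\GP}(W,W^*)$ together with the $p'$-index $G/\GP$; concretely, $\Ext^1_G(V,V^*) \hookrightarrow \Ext^1_{\GP}(W,W^*)^{G/\GP}$, so it suffices to show $\Ext^1_{\GP}(W,W^*) = 0$. To compute the latter, I would use the structure of $\GP = H$: let $N := [P,R]$, the extraspecial normal $2$-subgroup on which $W$ is (absolutely) irreducible. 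Since $N$ is a normal $p'$-subgroup and $W_N$ is irreducible, $W^* \otimes_k W$ restricted to $N$ decomposes as the sum of all $2^{2n}$ linear characters of $N/\bfZ(N)$, each with multiplicity one; in particular the trivial $N$-isotypic component is $1$-dimensional. Then $H^1(\GP, W^* \otimes W)$: restrict further and use that the $p$-part of $\GP$ acts. The cleanest route is the inflation-restriction sequence along $1 \to N\bfZ(\GP) \to \GP \to \GP/N\bfZ(\GP) \to 1$; since $N\bfZ(\GP)$ is a $p'$-group, $H^i(\GP, M) \cong H^i(\GP/N\bfZ(\GP), M^{N\bfZ(\GP)})$ for all $i$ and all $kG$-modules $M$. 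Applying this to $M = W^* \otimes_k W^* $ (for the pairing computing $\Ext^1(W,W^*)$, i.e. $M = (W\otimes W)^*$ up to identification, or directly $M = \Hom_k(W,W^*)$), the fixed points $M^{N\bfZ(\GP)}$ is the subspace of $N$-homomorphisms $W \to W^*$. Since $W \not\cong W^*$ as $N$-modules would give $0$, whereas if $W \cong W^*$ as $N$-modules (which happens here since $W$ is the unique faithful irreducible of the extraspecial group $N$, which is self-dual) the fixed space is $1$-dimensional, spanned by a bilinear form. So $H^1(\GP, \Hom_k(W,W^*)) \cong H^1(\GP/N\bfZ(\GP), k)$, and $\GP/N\bfZ(\GP)$ has order $p \cdot |S|$ (with $S = H/\bfO_{p'}(H)$ simple), with no quotient of order $p$ because $S$ is simple non-abelian and $\GP = \bfO^{p'}(\GP)$; hence $H^1(\GP/N\bfZ(\GP),k) = 0$.

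The main obstacle I anticipate is bookkeeping the $\bfZ(H)$ and the exact identification of the module $\Hom_k(W,W^*)$ as an $N$-module, together with confirming that the relevant fixed-point space is exactly $1$-dimensional (not $0$) so that the reduction to $H^1(\GP/N\bfZ(\GP),k)$ is the right one; but $0$-dimensionality would only make the vanishing easier, so the argument is robust. A secondary point is ensuring $t = 1$, which uses $\dim V \le 2p-3$ together with the fact that any reducible-indecomposable $V$ with $\GP$ acting through a group as in (e) would force $\dim W_i = p-1$ for at least two summands or $e \ge 2$, contradicting the dimension bound; alternatively one may invoke Corollary~\ref{indec-gp} to see $V_\GP$ is indecomposable, and then irreducibility of $W$ for $\GP = H$ in case (e) forces $V_\GP = W$. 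Finally I would record that, since $\Ext^1_G(V,V^*)$ embeds in the $G/\GP$-invariants of a zero space, we conclude $\Ext^1_G(V,V^*) = 0$, as claimed.
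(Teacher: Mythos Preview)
Your cohomological core is right, but your reduction to a single summand is based on a misreading of the hypotheses. Theorem~\ref{ext} does \emph{not} assume $\dim V \le 2p-3$; it only assumes that each irreducible $k\GP$-constituent $W_i$ of $V$ has dimension $<p$. So $V_\GP = e\bigoplus_{i=1}^t W_i$ with $t$ and $e$ potentially large, and your claim that $t=e=1$ (and your appeal to Corollary~\ref{indec-gp}, which concerns modules of dimension $\le 2p-3$) is unjustified. You also blur the distinction between $\GP$ and its image $H$ in $\GL(W)$; the structural description in case~(e) is for $H$, and one needs \cite[Theorem~2.4]{GHT} to transport it to $\GP$.

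The fix is exactly what the paper does: since $[G:\GP]$ is prime to $p$, it suffices to show $\Ext^1_\GP(W_i,W_j^*)=0$ for every pair $i,j$. By \cite[Theorem~2.4(ii)]{GHT}, in the extraspecial case the normal $p'$-subgroup $J:=\bfO_{p'}(\GP)$ acts irreducibly on each $W_i$ (hence on each $W_j^*$). Writing $M=W_i^*\otimes W_j^*$, the $p'$-normality of $J$ gives $H^1(\GP,M)\cong H^1(\GP,\bfC_M(J))$, and $\bfC_M(J)\cong\Hom_J(W_i,W_j^*)$ is $0$ or $k$ by Schur. So $\Ext^1_\GP(W_i,W_j^*)\hookrightarrow H^1(\GP,k)$, which vanishes because $\GP$ is perfect (again \cite[Theorem~2.4]{GHT}). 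This is precisely your inflation--restriction argument, just applied uniformly to all pairs rather than to a single one; once you drop the spurious dimension bound, your proof and the paper's coincide.
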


\begin{proof}
Write $V_\GP = e\bigoplus^t_{i=1}W_i$ as usual. It suffices to show
that  $\Ext^1_\GP(W_i,W^*_j) = 0$ for all $i,j$. 
Recall that $J := \bfO_{p'}(G)$ acts irreducibly on $W_i$ and $W_j^*$ by \cite[Theorem 2.4(ii)]{GHT}.
Since $J$ is a $p'$-group, we have $M = \bfC_M(J) \oplus [M,J]$ for 
$M := W^*_i \otimes W^*_j$. As $J$ has no fixed point on $[M,J]$, 
$H^1(\GP,[M,J]) = 0$. Also, 
$$\bfC_M(J) \cong \Hom_J(W_i,W_j^*)$$
is either $0$ or $k$. Hence
$$\Ext^1_\GP(W_i,W_j^*) \cong H^1(\GP,M) \cong H^1(\GP, \bfC_M(J))
    \hookrightarrow H^1(\GP,k)$$  
As $\GP$ is perfect by \cite[Theorem 2.4]{GHT}, $H^1(\GP,k) = 0$, and so we are done.
\end{proof}

\begin{proof}[Proof of Theorem \ref{ext}]
(i) Assume that $(G,V)$ satisfies all the hypotheses of Theorem \ref{ext}. We take the convention
that $V^\eps$ is $V$ for $\eps = +$ and $V^*$ if $\eps = -$, and the same holds for other 
modules. Since the dimension of $\Ext^1_G(V,V^\eps)$ does not change under field extensions, we will assume that $k = \overline{k}$. Assume that
$\Ext^1_G(V,V^\eps) \neq 0$ for some $\eps = \pm$. It suffices to show that 
$G$ then fulfills the conditions of Propositions \ref{dim-ext2} and \ref{dim-ext3} (with
$\Ext^1_\GP(W_1,W_j^\eps) \cong k$ for the index $j$ indicated in these propositions). 
By \cite[Lemma 7.2]{GHT}, there is some $j$ such that 
\begin{equation}\label{eq:ext3}
  \Ext^1_\GP(W_1,W_j^\eps) \neq 0. 
\end{equation}  
Note that $\Ext^1_G(V,V^\eps) = 0$ in the extra special case (e) of 
Theorem \ref{bz}, by \cite[Proposition 10.4]{GHT} and Lemma \ref{extra}. So we may assume 
that the image $H$ of $\GP$ in $\GL(W)$ is a central product of quasisimple groups, whence, 
by \cite[Theorem 2.4]{GHT}, 
$$\GP = L_1 * \ldots * L_n$$
is also a central product of quasisimple groups $L_i$.  Moreover, 
if some $L_i$ is not a quasisimple group of Lie type in characteristic $p$, then by 
\cite[Theorem 2.4]{GHT}, the image of $\GP$ in each $\GL(W_i)$ has Sylow $p$-subgroups 
of order $p$, and so Theorem \ref{ext2} applies. So in what follows we may assume that 
all $L_i$ are quasisimple groups of Lie type in characteristic $p$.
Correspondingly, we can decompose
$$W_1 = A_1 \otimes \ldots \otimes A_n,~~~W_j^\eps = B_1 \otimes \ldots \otimes B_n,$$
where $A_i$ and $B_i$ are irreducible $kL_i$-modules and $L_{i'}$ acts trivially on $A_i$ and $B_i$
whenever $i' \neq i$.  By Lemma \ref{lem:kunneth} and (\ref{eq:ext3}), we may assume that 
$$A_i \cong B_i$$
for $i > 1$, and furthermore $\Ext^1_{L_1}(A_1,B_1) \neq 0$. 
%In fact, Lemma \ref{lem:kunneth} also implies that $s = 1$ if $A_1 \not\cong B_1$.
Since $\dim_kW_1 = \dim_kW_j$, it follows that $\dim_k A_i = \dim_k B_i$ for all $i$. 

Note that if $\dim_k A_i = 1$, then $A_i \cong k$ as $L_i$ is perfect, and similarly 
$B_i \cong k$, whence $\Ext^1_{L_i}(A_i,B_i) = 0$. In fact, $\Ext^1_{L_i}(A_i,B_i) = 0$ if 
$\dim_kA_i \leq (p-3)/2$ by the main result of \cite{Gcr}. It follows that $\dim_k A_1 \geq (p-1)/2$.
Since $\dim_k W_1 \leq p-1$, we arrive at two possible cases:

(a) $\dim_k A_i = 1$ (and so $A_i \cong B_i \cong k$) for all $i > 1$; or

(b) $p \geq 5$, $\dim_k A_i = 1$ (and so $A_i \cong B_i \cong k$) for all $i > 2$, 
and $\{\dim_kA_1,\dim_kA_2\} = \{(p-1)/2,2\}$.

\smallskip
(ii) Suppose we are in case (b). Then the quasisimple group $L_m$ (for some $m \in \{1,2\}$) is 
acting irreducibly on $A_m\cong k^2$. As $L_m$ is 
a Lie-type group in characteristic $p$, we have that $L_m \cong \SL_2(p^a)$ for some 
$a \geq 1$. 
%Hence, by \cite[Theorem 2.4]{GHT}, we conclude
%that all $L_1, \ldots, L_n$ are Lie-type groups in characteristic $p$. 
By Lemma \ref{ext-defi}, $L_1 \cong \SL_2(p)$ (modulo a central subgroup), 
$\dim A_1 = (p-1)/2$, $\Ext^1_{L_1}(A_1,B_1) \cong k$, and $A_1 \cong B_1$. 
We have shown that $A_i \cong B_i$ for all $i$; in particular $W_j \cong W_1^\eps$. 
Now we have that $m = 2$, and $\dim_k\Ext^1_{L_2}(A_2,B_2)$ 
equals $0$ if $p^a > 5$ and $1$ if $p^a = 5$, see 
\cite[Lemma 8.1]{GHT}. Again by Lemma \ref{lem:kunneth},
$$\dim_k\Ext^1_\GP(W_1,W_j^\eps) = 1+\dim_k\Ext^1_{L_2}(A_2,B_2).$$
In the case $p^a = 5$, we have $(\dim W,H) = (4,\Omega^+_4(5))$ and 
conclude by Propositions \ref{dim-ext2} and \ref{dim-ext3} that $\Ext^1_G(V,V)$ and 
$\Ext^1_G(V,V^*)$ are at most $2$-dimensional. Moreover, Example \ref{sol-ext}(i) shows that 
the upper bound $2$ can indeed be attained.
If $p^a > 5$, then $\Ext^1_\GP(W_1,W_j^\eps) \cong k$ and $W_j \cong W_1^\eps$ 
for any $j$ satisfying (\ref{eq:ext3}).

\smallskip
(iii) Now we consider the case (a). Then
\begin{equation}\label{eq:ext5}
  \Ext^1_{L_1}(A_1,B_1) \cong \Ext^1_\GP(W_1,W_j^\eps) \neq 0
\end{equation}   
by Lemma \ref{lem:kunneth}. 

%\smallskip
%(iii)(a) Here we assume that $L_1$ is a quasisimple Lie-type group in characteristic $p$.
Suppose first that $p = 3$. Then $L_1 \cong \SL_2(3^a)$ for some $a \geq 2$, and 
$$W_1 = A_1 \otimes_k k \otimes_k \ldots \otimes_k k,
    W_j = B_1^\eps \otimes_k k \otimes_k \ldots \otimes_k k.$$ 
We may also assume that $A_1$ is the natural $kL_1$-module. 
If $a = 2$, then by (\ref{eq:ext5}) and \cite[Corollary 4.5]{AJL} we have that $B_1$ is isomorphic to the Frobenius twist $A_1^{(3)}$ of $A_1$, and 
$\Ext^1_{L_1}(A_1,B_1) \cong k^2$.
Thus $W_j$ is uniquely determined, and so $\dim_k\Ext^1_G(V,V^\eps) \leq 2$ by 
Propositions \ref{dim-ext2} and \ref{dim-ext3}. Suppose now that $a > 2$. Since 
$G_1 := \Stab_G(V_1)$ stabilizes 
the isomorphism class of $W_1$, we see that $G_1$ normalizes each of $L_1$ and 
$L_2 * \ldots * L_n$, and induces an inner-diagonal automorphism of $L_1$.  Next, 
by (\ref{eq:ext5}) and \cite[Corollary 4.5]{AJL} we have that $B_1$ is isomorphic to one of the Frobenius twists $A_1^{(3)}$, $A_1^{(3^{a-1})}$ of $A_1$, and $\Ext^1_{L_1}(A_1,B_1) \cong k$. Thus there are 
at most two possibilities for $W_j$, each stabilized by $G_1$. If only one of them occurs among
the submodules $W_i$, then we have $\dim_k\Ext^1_G(V,V^\eps) \leq 1$ by 
Propositions \ref{dim-ext2} and \ref{dim-ext3}. Suppose that both of them occur, say for $j_1$ and 
$j_2$. It follows that $G_1 = \Stab_G(V_{j_1}) = \Stab_G(V_{j_2})$ and furthermore 
both $V_{j_1}$ and $V_{j_2}$ are irreducible over $G_1$. 
Then, arguing as in the proof of Proposition \ref{dim-ext2} we have 
$$\begin{array}{ll}\Ext^1_G(V,V^\eps) & = \Ext^1_G(\Ind^G_{G_1}((V_1)_{G_1}),V^\eps) \cong 
    \Ext^1_{G_1}((V_1)_{G_1},V^\eps_{G_1}) \smallskip \\
    & \cong \Ext^1_{G_1}((V_1)_{G_1},(V^\eps_{j_1})_{G_1})
    \oplus \Ext^1_{G_1}((V_1)_{G_1},(V^\eps_{j_2})_{G_1})\end{array}$$
has dimension at most $2$. In fact, Example \ref{sol-ext} shows that the upper bound $2$ can
indeed be attained.

Suppose now that $p > 3$. Then by Lemma \ref{ext-defi}, $L_1 = \SL_2(p)$ (modulo a central 
subgroup), $A_1 \cong B_1$, $\Ext^1_{L_1}(A_1,B_1) \cong k$, $W_j \cong W_1^\eps$, and 
$\Ext^1_\GP(W_1,W_j^\eps) \cong  k$.

%\smallskip
%(iii)(b) We may now assume that $L_1$ is not a quasisimple Lie-type group in characteristic $p$.
%Now the proof of \cite[Proposition 10.5]{GHT} and Lemma \ref{spor} show
%that, when $A_1$ is fixed, there is a unique $B_1$ (up to isomorphism) such that 
%$\Ext^1_{L_1}(A_1,B_1) \neq 0$, in which case it has dimension $1$. Thus 
%$W_j^\eps = B_1 \otimes k \otimes \ldots \otimes k$ is uniquely determined for any 
%$j$ satisfying (\ref{eq:ext3}), and furthermore $\Ext^1_\GP(W_1,W_j^\eps) \cong k$ for such $j$.

\smallskip
(iv) We have shown that in the case of Theorem \ref{ext}(i), there is a unique $j$ such that 
$\Ext^1_{\GP}(W_1,W_j^\eps) \neq 0$, in which case it has dimension $1$. Hence we are done
by Propositions \ref{dim-ext2} and \ref{dim-ext3}.
\end{proof}

\begin{example}\label{sol-ext}
(i) Let $p = 5$ and let $S = L_1 \times L_2$, with $L_i \cong \SL_2(5)$, be acting on 
$V = W_1 \otimes W_2$, where $W_i \cong k^2$ is an irreducible $kL_i$-module and
$L_i$ acts trivially on $W_{3-i}$. Note that the kernel of this action is the diagonal cyclic subgroup 
$Z \cong C_2$ of $\bfZ(L_1) \times \bfZ(L_2)$. Now $G = \GP := S/Z \cong \Omega^+_4(5)$ acts faithfully and irreducibly on $V$, and $\dim_k\Ext^1_G(V,V) = 2$ by Lemma \ref{lem:kunneth}. 
Also, $V \cong V^*$.

\smallskip
(ii) Let $p = 3$, $S = \SL_2(3^a)$ for some $a\geq 2$ coprime to $3$, 
$W_1 = k^2$ be the natural $kS$-module, and 
let $W_{i+1}$ denote the Frobenius $(W_1)^{(3^i)}$ twist of $W_1$ for $1 \leq i \leq a-1$. Then 
$G = S \rtimes \langle \sigma \rangle$ (with $\sigma$ being the field automorphism of $S$, of order $a$) acts irreducibly and faithfully on 
$V = W_1 \oplus \ldots \oplus W_a$, $\GP = S$, and 
\begin{equation}\label{eq:ext4}
  \Ext^1_G(V,V) \cong \bigoplus^{a}_{i=1}\Ext^1_S(W_1,W_i) \cong k^2
\end{equation}  
by \cite[Corollary 4.5]{AJL}. (Indeed, if $a = 2$ then $\Ext^1_S(W_1,W_2) \cong k^2$.
If $a \geq 3$, then $\Ext^1_S(W_1,W_2) \cong \Ext^1_S(W_1,W_{a}) \cong k$. All other
summands in the middle term of (\ref{eq:ext4}) are zero.) Also, $V \cong V^*$.

\smallskip
(iii) Let $p = 2^f+1$ be a Fermat prime and let $H = \bfO_{p'}(H)P$ (with $P \cong C_p$ and 
$\bfO_{p'}(H) \cong 2^{1+2f}_{-}$)  acting faithfully and 
absolutely irreducibly on $W_1 = k^{p-1}$ as in case (i) of Theorem \ref{bz}. Note that 
the $kH$-module $W_1$ is self-dual.  Let $n$ be coprime to $p$ and let 
$$G = H_1 \wr C_n = (H_1 \times \ldots \times H_n) \rtimes C_n$$
with $H_i \cong H_1 = H$, so that $\GP = H_1 \times \ldots \times H_n$. Inflate $W_1$ to 
a $k\GP$-module and consider $V := \Ind^G_\GP(W_1)$. Note that $J := \bfO_{p'}(\GP)$ acts absolutely irreducibly on $W_1$, and 
$$W_1^* \otimes W_1 = \bfC_{W^*_1 \otimes W_1}(J) \oplus [W_1^* \otimes W_1,J]$$
with $\bfC_{W^*_1 \otimes W_1}(J) \cong k$. Since $\GP/J \cong C_p^n$, it now follows that 
$$\Ext^1_\GP(W_1,W_1) \cong H^1(\GP,W_1^* \otimes W_1) \cong H^1(\GP/J,k) \cong k^n.$$
On the other hand, the actions of $J$ on $W_1$ and $W_j$ have different kernels for any $j > 1$,
and so $\Ext^1_\GP(W_1,W_j) = 0$. 
Hence $V \cong V^*$ and 
$$\Ext^1_G(V,V) \cong \Ext^1_\GP(W_1,V_\GP) \cong \Ext^1_\GP(W_1,W_1) \cong k^n.$$
\end{example}

Next we strengthen Theorem \ref{ext} in the case $\dim W$ is small.

\begin{thm}\label{small1}
Let $k$ be a field of characteristic $p$ and let $V$ and $V'$ be absolutely irreducible faithful
$kG$-modules. Suppose that $\dim_kW + \dim_kW' \leq p -2$, where $W$ and $W'$ are irreducible 
$k\GP$-submodules of $V$ and $V'$, respectively. Then
$H^1(G,M) = 0$ for any subquotient $M$ of the $G$-module $V \otimes V'$.
\end{thm}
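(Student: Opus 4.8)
The plan is to reduce the assertion to a cohomology‑vanishing statement for $\GP := \bfO^{p'}(G)$ and then to exploit the tightness of the bound $\dim W+\dim W'\le p-2$. First I would observe that the statement is \emph{equivalent} to: $H^1(G,L)=0$ for every composition factor $L$ of the $G$‑module $V\otimes V'$. Indeed, composition factors are subquotients, and conversely a filtration argument with the long exact sequence propagates vanishing of $H^1(G,-)$ from composition factors to arbitrary subquotients. Since $[G:\GP]$ is prime to $p$, the Lyndon--Hochschild--Serre spectral sequence collapses and $H^1(G,L)\cong H^1(\GP,L)^{G/\GP}$, so it is enough to prove $H^1(\GP,L)=0$. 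Writing $V_\GP=e\bigoplus_iW_i$ and $V'_\GP=e'\bigoplus_jW'_j$ with the $W_i$ (resp.\ $W'_j$) the $G$‑conjugates of $W$ (resp.\ $W'$), every composition factor of $L_\GP$ is a constituent of some $W_i\otimes W'_j$, where $\dim W_i=\dim W=:d$, $\dim W'_j=\dim W'=:d'$ and $d+d'\le p-2$; the goal thus becomes $H^1(\GP,N)=0$ for each irreducible constituent $N$ of $W_i\otimes W'_j$.

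Next I would dispose of the degenerate case $d=1$ or $d'=1$: there $\GP$ acts on $V$ (or $V'$) through a $p$‑group, which by faithfulness and $\bfO_p(G)=1$ (automatic for a faithful irreducible module) forces $\GP=1$, and there is nothing to prove. So assume $d,d'\ge 2$, hence $d,d'\le p-4$. Because $(d-1)+(d'-1)<p$, Serre's theorem on semisimplicity of tensor products gives that $W_i\otimes W'_j$ is a semisimple $k\GP$‑module; therefore $H^1(\GP,W_i\otimes W'_j)=\bigoplus_NH^1(\GP,N)$, and it suffices to prove $H^1(\GP,W_i\otimes W'_j)=\Ext^1_\GP(W_i^*,W'_j)=0$. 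Applying Theorem \ref{bz} to the image of $\GP$ on $W$ (faithful, absolutely irreducible, dimension $<p-1$), I would check that cases (a)--(e) all require $\dim W\in\{p-2,p-1\}$ or one of a few small exceptional modules whose groups ($J_1$, $2\AAA_7$, etc.) admit no faithful representation compatible with the existence of $W'$ having $d'\ge 2$ and $d+d'\le p-2$; hence case (f) holds. Combining this with faithfulness, Clifford theory and the structure statements \cite[Lemma~9.1, Theorem~2.4]{GHT}, I would conclude $\GP=L_1*\cdots*L_n$ is a central product of quasisimple groups of Lie type in characteristic $p$. Decomposing $W_i=A_1\otimes\cdots\otimes A_n$ and $W'_j=B_1\otimes\cdots\otimes B_n$ along the $L_s$ (with $L_{s'}$ acting trivially on $A_s,B_s$ for $s'\ne s$) and applying the K\"unneth formula (Lemma \ref{lem:kunneth}), $\Ext^1_\GP(W_i^*,W'_j)$ becomes a sum of terms $\Ext^1_{L_s}(A_s^*,B_s)\otimes\bigotimes_{s'\ne s}\Hom_{L_{s'}}(A_{s'}^*,B_{s'})$, and for each $s$ contributing a non‑zero summand one has $\dim A_s+\dim B_s\le d+d'\le p-2$.

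It then remains to show $\Ext^1_{L_s}(A_s^*,B_s)=0$ when $\dim A_s+\dim B_s\le p-2$. For $L_s\not\cong\SL_2(q),\PSL_2(q)$ this is immediate from Lemma \ref{ext-defi}(i), since $\dim A_s^*+\dim B_s<2p$. For $L_s\cong\SL_2(q)$ or $\PSL_2(q)$ (where necessarily $p>3$, the hypothesis being vacuous otherwise), if $q=p$ I would write $A_s=L(a)$, $B_s=L(b)$ with $a+b\le p-4$, note by Clebsch--Gordan that $L(a)\otimes L(b)$ is a sum of $L(\lambda)$ with $\dim L(\lambda)=\lambda+1\le p-3$, and invoke the fact that $H^1(\SL_2(p),L(\lambda))=0$ for every such $\lambda$ --- the only $\SL_2(p)$‑irreducible with non‑zero first cohomology being $L(p-3)$ of dimension $p-2$, as visible from the Brauer tree of the principal block, whose trivial edge is a leaf with projective cover uniserial of length $3$ and dimension $p$ --- and likewise for $\PSL_2(p)$; if $q=p^f$ with $f\ge 2$, then when $\dim A_s=\dim B_s$ Lemma \ref{ext-defi}(ii) forces the vanishing (its failure would yield $q=p$), and when $\dim A_s\ne\dim B_s$ I would deduce it from the linkage principle for $\SL_2$ together with the known extensions among low‑dimensional $\SL_2(q)$‑modules (\cite[Lemma~8.1]{GHT}, \cite{AJL}). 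This gives $\Ext^1_\GP(W_i^*,W'_j)=0$ and hence $H^1(\GP,L)=0$, completing the argument.

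The hard part is this final $\SL_2$‑analysis: pinning down that no non‑trivial self‑extension, nor extension‑by‑the‑trivial‑module, of a ``small'' $\SL_2(p^f)$‑module occurs below dimension $p-1$ --- equivalently, that the bound $p-2$ keeps $L(p-3)$, and the self‑extensions of $L((p\pm 1)/2)$ which live in dimension $p-1$ (cf.\ Proposition \ref{prop1}), just out of range --- together with the bookkeeping needed to confirm that the solvable and exotic cases of Theorem \ref{bz} really are incompatible with $d'\ge 2$ and $d+d'\le p-2$.
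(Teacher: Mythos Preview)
Your overall strategy --- reduce to $\GP$ using $p\nmid [G:\GP]$, show $(V\otimes V')_\GP$ is semisimple via Serre's tensor product theorem, and reduce everything to the vanishing of $\Ext^1_\GP(W_i^*,W'_j)$ --- is exactly the paper's. The difference is entirely in how that last vanishing is obtained.

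The paper does it in one line: since $V$ is faithful irreducible, $\bfO_p(G)=1$, hence $\bfO_p(\GP)\le\bfO_p(G)=1$; now $\dim W_i+\dim W'_j\le p-2$, so the main result of \cite{Gcr} (restated here as Theorem~\ref{thm:smaller}) gives complete reducibility of every $k\GP$-module of dimension $\le p-2$, in particular $\Ext^1_\GP(W_i^*,W'_j)=0$. That is the whole point of \cite{Gcr}, and it makes the structure theory of $\GP$, the K\"unneth reduction, and the $\SL_2$ case analysis entirely unnecessary.

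Your long route --- classifying $\GP$ via Theorem~\ref{bz} and \cite[Theorem~2.4]{GHT}, then computing $\Ext^1_{L_s}(A_s^*,B_s)$ factor by factor --- is essentially reproving \cite{Gcr} in this special situation. It can be made to work, but as written it has loose ends: the exclusion of the $(J_1,11,7)$ case relies on an unspecified compatibility argument between the images of $\GP$ on $W$ and $W'$, and for $L_s\cong\SL_2(p^f)$ with $f\ge2$ and $\dim A_s\ne\dim B_s$ you only gesture at ``linkage plus \cite{AJL}'' without checking the relevant $\Ext^1$ actually vanishes (Lemma~\ref{ext-defi}(ii) does not cover unequal dimensions). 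None of this is needed once you invoke \cite{Gcr} directly.
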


\begin{proof} It suffices to prove $H^1(\GP,M) = 0$. Note 
that $V_\GP = \oplus^t_{i=1}W_i$ and $V'_\GP = \oplus^{s}_{j=1}W'_j$
with $W_i, W'_j \in \IBr_p(\GP)$, and $\bfO_p(\GP) \leq \bfO_p(G) = 1$. Since 
\begin{equation}\label{eq:small}
  \dim_kW_i + \dim_kW'_j \leq p-2,
\end{equation} by the main result of \cite{Gcr} we have $\Ext^1_\GP(W^*_i,W_j') = 0$.
It follows that $\Ext^1_\GP(V^*_\GP,V'_\GP)=0$, i.e. $H^1(\GP,(V \otimes V')_\GP) = 0$.
By Corollary 1 to \cite[Theorem 1]{Serre1},
(\ref{eq:small}) also implies that the $\GP$-module $V \otimes V'$ is semisimple. Thus 
$M$ is isomorphic to a direct summand of $(V \otimes V')_\GP$, 
whence $H^1(\GP,M) = 0$, as desired. 
\end{proof}

\begin{cor}\label{small2}
Let $k$ be a field of characteristic $p$ and let $V$ be an absolutely irreducible faithful
$kG$-modules. Suppose that $\dim_kW < (p-1)/2$ for any irreducible $k\GP$-submodule of $V$.
Then $H^1(G, \Sym^2(V))=H^1(G, \wedge^2(V))=0$.
\hfill $\Box$
\end{cor}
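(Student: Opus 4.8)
The plan is to deduce Corollary~\ref{small2} directly from Theorem~\ref{small1} by taking $V' = V$. First I would observe that since $V$ is absolutely irreducible and faithful, so is $V' := V$, and the hypothesis that every irreducible $k\GP$-submodule $W$ of $V$ satisfies $\dim_k W < (p-1)/2$ means that for any two irreducible $k\GP$-submodules $W$, $W'$ of $V$ we have $\dim_k W + \dim_k W' < p-1$, i.e. $\dim_k W + \dim_k W' \leq p-2$. (Here $p$ is odd in any case of interest, since if $p=2$ the condition $\dim_k W < 1/2$ is vacuous and forces $V = 0$; so we may assume $p \geq 3$, making $(p-1)/2$ a half-integer and the strict inequality equivalent to $\leq p-2$ after clearing denominators — actually more carefully, $\dim W, \dim W' \le (p-3)/2$, so their sum is $\le p-3 \le p-2$.) Thus the hypothesis of Theorem~\ref{small1} is satisfied with $V' = V$.

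Next I would note that $\Sym^2(V)$ and $\wedge^2(V)$ are both $G$-subquotients (in fact direct summands, as $p \neq 2$) of $V \otimes V$. Applying Theorem~\ref{small1} with $V' = V$ gives $H^1(G, M) = 0$ for any subquotient $M$ of the $G$-module $V \otimes V$; in particular $H^1(G, \Sym^2(V)) = H^1(G, \wedge^2(V)) = 0$. This completes the proof.

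There is essentially no obstacle here — the corollary is a one-line consequence of Theorem~\ref{small1} once one checks the numerical hypothesis. The only point requiring a moment's care is the translation between the strict bound $\dim_k W < (p-1)/2$ and the bound $\dim_k W + \dim_k W' \leq p-2$ needed to invoke Theorem~\ref{small1}; this is immediate once one recalls $p$ is odd so that $\dim_k W \le (p-3)/2$ for each submodule. Accordingly the proof given in the excerpt (the bare ``\hfill $\Box$'') is justified simply by ``apply Theorem~\ref{small1} with $V' = V$.''

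\begin{proof}
We may assume $p$ is odd, as otherwise the hypothesis forces $V = 0$. For any two irreducible $k\GP$-submodules $W, W'$ of $V$, the hypothesis gives $\dim_k W, \dim_k W' \le (p-3)/2$, hence $\dim_k W + \dim_k W' \le p-3 \le p-2$. Thus Theorem~\ref{small1}, applied with $V' := V$, shows that $H^1(G,M) = 0$ for every subquotient $M$ of the $G$-module $V \otimes V$. Since $p$ is odd, $\Sym^2(V)$ and $\wedge^2(V)$ are direct summands of $V \otimes V$, so in particular $H^1(G,\Sym^2(V)) = H^1(G,\wedge^2(V)) = 0$.
\end{proof}
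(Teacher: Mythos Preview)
Your proof is correct and matches the paper's intent: the corollary is marked with a bare $\Box$ precisely because it follows immediately from Theorem~\ref{small1} applied with $V' = V$, using that $\Sym^2(V)$ and $\wedge^2(V)$ are subquotients of $V \otimes V$. Your verification of the numerical hypothesis and the remark that $p$ must be odd are exactly the small checks needed.
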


\section{Modules of dimension $p$}  \label{sec: dim p}

Let $p$ be a prime and let $k$ be algebraically closed of characteristic $p$.
The aim of this section is to show that if $G$ is an irreducible subgroup of
$\GL_p(k)=\GL(V)$, then almost always $(G,V)$ is adequate (using Thorne's
new definition). We begin with some observations.  

\begin{remark}\label{g-gp}
Suppose that $G \leq \GL(V)$ is a finite irreducible subgroup.
Note that, to show $(G,V)$ is adequate it suffices to show that $\GP$
is adequate on $V$. Indeed, any subgroup being weakly adequate implies that
the spanning condition holds for $G$. Next, adequacy for any subgroup containing
a Sylow $p$-subgroup of $G$ implies that necessary vanishing of $H^1$ for
$G$.
\end{remark}

\begin{lemma}\label{p-div}
Let $G$ be a finite group with a Sylow $p$-subgroup $P$ of order $p$ and let
$V \in \IBr_p(G)$ be such that $p\mid (\dim V)$. Then $V$ is projective.
\end{lemma}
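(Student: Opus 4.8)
The plan is to argue via block theory. Let $B$ be the $p$-block of $kG$ to which $V$ belongs, and let $D$ be a defect group of $B$. Since $D$ is contained in a $G$-conjugate of $P$ and $|P| = p$, we have $|D| \in \{1,p\}$. If $|D| = 1$, then $B$ is a block of defect zero, so $B \cong \End_k(V)$ as a $k$-algebra and $V = \cP(V)$ is projective; we are done in this case. So it suffices to rule out the case $|D| = p$ under the hypothesis $p \mid \dim V$.

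Assume $|D| = p$, so that $B$ is a block with cyclic defect groups and defect $1$. First recall that every $\chi \in \Irr(B)$ then has height zero: indeed $\nu_p(\chi(1)) = \nu_p(|G|) - 1 - \mathrm{ht}(\chi) = -\mathrm{ht}(\chi)$, forcing $\mathrm{ht}(\chi) = 0$ and $p \nmid \chi(1)$. I would then invoke the Brauer tree $\Gamma$ of $B$: its edges are the modules in $\IBr(B)$, its vertices are the ordinary characters of $B$ (the exceptional ones amalgamated into a single vertex of multiplicity $m = (p-1)/e$, where $e = |\IBr(B)|$), all decomposition numbers are $0$ or $1$, and for each vertex $w$ one has $\chi_w(1) = \sum_{\phi \ni w}\dim\phi$. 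The goal of this step is to conclude that $p \nmid \dim\phi$ for every $\phi \in \IBr(B)$, which in particular gives $p \nmid \dim V$, contradicting the hypothesis and forcing $|D| = 1$.

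I expect the only real work to be this last step — that the simple modules of a block of defect one have degree prime to $p$ — since the naive "peel off a leaf edge" induction on $\Gamma$ is insufficient (the edge-weighting alone is underdetermined). The route I would take is to reduce to $L := \bfN_G(P)$ via the Green correspondence: writing $f(V)$ for the Green correspondent of $V$ in $L$ (note $V$ has vertex $P$, as the only $p$-subgroups are $1$ and $P$), one has $\Res^G_L V \cong f(V) \oplus Q$ with $Q$ a projective $kL$-module, so $V|_P \cong f(V)|_P \oplus (\text{free})$ and hence $\dim V \equiv \dim f(V) \pmod p$. Since $P \lhd L$, Schur--Zassenhaus gives $L = P \rtimes H$ with $H$ a $p'$-group, and $P\bfC_H(P) = P \times \bfC_H(P)$; restricting $f(V)$ to this subgroup decomposes it into summands $J_i \otimes T$ with $J_i$ the indecomposable $kP$-module of dimension $i \le p$ and $T$ a simple $k\bfC_H(P)$-module. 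As $\bfC_H(P)$ is a $p'$-group, $\dim T$ divides $|\bfC_H(P)|$ and so is prime to $p$; combining this with the serial structure of the defect-one blocks of $kL$ and the fact that $f(V)$ has vertex $P$ (so some summand has $i < p$) should give $p \nmid \dim f(V)$. Alternatively, one may simply cite the classical theory of blocks with cyclic defect group (Brauer, Dade), from which it is standard that every ordinary and every modular irreducible degree in such a block is prime to $p$; this immediately yields the contradiction and completes the proof.
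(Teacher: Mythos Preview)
Your approach --- Green correspondence to $N := \bfN_G(P)$, then $p \nmid \dim f(V)$ --- is exactly the paper's; the paper just finishes the last step more cleanly than your $P\times\bfC_H(P)$ sketch (which as written does not quite conclude, since summands $J_i\otimes T$ with varying $(i,T)$ could a priori have total dimension divisible by $p$), by citing \cite{HL} for the structure of a defect-one block $b$ of $kN$ with $P \lhd N$: all simples in $b$ have $P$ in their kernel and a common dimension $d$ dividing $|N/P|$, and every non-projective indecomposable in $b$ is a uniserial quotient of a PIM of length $p$, hence has length $l<p$, so $\dim f(V)=ld$ is prime to $p$. Your Brauer--Dade fallback is of course also valid.
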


\begin{proof}
Assume that $V$ is non-projective and set $N := \bfN_G(P)$.
By the Green correspondence
\cite[Lemma 4.1.1]{HL}, in this case we have
$V_N = W \oplus M$, where $W$ is a non-projective indecomposable
$N$-module and $M$ is a projective $N$-module (or zero). Now $W$ belongs
to an $N$-block $b$ of defect $1$. By \cite[Lemma 4.2.14]{HL}, $W$ is a
uniserial (non-projective) quotient of $\cP(U)$ where
$U := \hd(W) \in \IBr_p(N)$. By \cite[Lemma 4.2.13]{HL}, $\cP(U)$ has length
$p$, so $W$ has length $l < p$. According to \cite[Remark 4.2.11]{HL}, all
simple $kN$-modules in $b$ are of the same dimension $d$ and have
$P$ in their kernel. It follows that $d$ divides $|N/P|$ and so $d$ is coprime
to $p$. Hence $p \nmid dl = \dim W$ and so $p \nmid (\dim V)$ (as $p\mid (\dim M$)), a contradiction.
\end{proof}

\begin{lemma}\label{h1h2}
Let $G$ be a finite group with a cyclic Sylow $p$-subgroup $P$ and $p = \Char(k)$.
Suppose that $G = \bfO^p(G)$.
% , equivalently, $G$ has no normal $p$-complement. 
Then $H^1(G,k) = H^2(G,k) = 0$.
\end{lemma}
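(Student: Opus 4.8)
\textbf{Proof proposal for Lemma \ref{h1h2}.}

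The vanishing $H^1(G,k)=0$ is immediate and uses only $G=\bfO^p(G)$: with trivial coefficients $H^1(G,k)=\Hom(G,k)$, and since the additive group of $k$ is an elementary abelian $p$-group, any homomorphism $G\to k$ factors through the largest $p$-group quotient of $G$, which is trivial by hypothesis; so $H^1(G,k)=0$, and likewise $H^1(G,\bbF_p)=0$. Since $H^i(G,k)\cong H^i(G,\bbF_p)\otimes_{\bbF_p}k$ for all $i$, it remains to show $H^2(G,\bbF_p)=0$, and we may assume $p\mid|G|$ (hence $P\ne 1$), as otherwise $H^i(G,\bbF_p)=0$ for all $i>0$.

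The plan is to transfer the computation to the abelian group $P$. First I would use that $[G:P]$ is prime to $p$ while $H^i(G,\bbF_p)$ is killed by $p$: from $\operatorname{cor}\circ\operatorname{res}=[G:P]$ the restriction $\operatorname{res}^G_P\colon H^i(G,\bbF_p)\to H^i(P,\bbF_p)$ is injective for every $i$. Next, by the stable-element theorem its image consists of the stable elements, and since $P$ is an abelian Sylow $p$-subgroup, Burnside's fusion theorem gives that fusion in $P$ is controlled by $N:=\bfN_G(P)$; hence the image equals the fixed points of the conjugation action of $N$ on $H^i(P,\bbF_p)$. That action kills inner automorphisms and $\bfC_G(P)$, so it factors through $W:=N/\bfC_G(P)\le\Aut(P)$ (and $|W|$ divides $[N:P]$, so $|W|$ is prime to $p$). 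Thus $H^i(G,\bbF_p)\cong H^i(P,\bbF_p)^W$ for all $i$.

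Now I would identify the two relevant $W$-modules. We have $H^1(P,\bbF_p)=\Hom(P,\bbF_p)$, while the universal coefficient theorem, together with $H^3(P,\bbZ)=0$ and $H^2(P,\bbZ)\cong\Ext^1_{\bbZ}(P,\bbZ)$, gives $H^2(P,\bbF_p)\cong\Ext^1_{\bbZ}(P,\bbZ)\otimes_{\bbZ}\bbF_p$; all of these maps are natural in $P$, so $H^1(P,\bbF_p)$ and $H^2(P,\bbF_p)$ are isomorphic as $\Aut(P)$-modules, hence as $W$-modules, and each is one-dimensional over $\bbF_p$ because $P$ is cyclic. Since $0=H^1(G,\bbF_p)\cong H^1(P,\bbF_p)^W$ and $H^1(P,\bbF_p)$ is one-dimensional, $W$ must act nontrivially on $H^1(P,\bbF_p)$; by the isomorphism it acts nontrivially on $H^2(P,\bbF_p)$ as well, so $H^2(P,\bbF_p)^W=0$ and therefore $H^2(G,\bbF_p)=0$. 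Tensoring with $k$ completes the proof.

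The substantive point — and the step I expect to need the most care — is the claim that $H^1(P,\bbF_p)$ and $H^2(P,\bbF_p)$ carry \emph{the same} $\Aut(P)$-character; this is exactly what makes the argument succeed in degrees $1$ and $2$ (the analogous statement fails in higher even degrees, where the character is a higher power). One checks this via the naturality of the $\Ext$/UCT identifications above. Everything else — the transfer argument, the stable-element theorem, and Burnside's fusion theorem for abelian Sylow subgroups — is standard. (For $p=2$ the hypotheses are in fact vacuous: $\Aut(P)$ is then a $2$-group, so the $p'$-group $W$ is trivial, and the chain of isomorphisms forces $H^1(P,\bbF_2)=H^1(G,\bbF_2)=0$, i.e.\ $P=1$, contrary to $p\mid|G|$.)
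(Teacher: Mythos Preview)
Your proof is correct and complete, but it takes a genuinely different route from the paper. The paper argues by contradiction via the Schur multiplier: if $H^2(G,C_p)\neq 0$ while $H^1(G,C_p)=0$, then (by the universal coefficient theorem) $p$ divides $|H_2(G,\bbZ)|$, and one then invokes the classical fact (e.g.\ \cite[Corollary (11.21)]{Is}) that a group with cyclic Sylow $p$-subgroups has Schur multiplier of order prime to $p$. Your argument instead goes through Swan's theorem (stable elements plus Burnside fusion for abelian Sylow) to obtain $H^i(G,\bbF_p)\cong H^i(P,\bbF_p)^W$, and then exploits the explicit observation that $H^1(P,\bbF_p)$ and $H^2(P,\bbF_p)$ are isomorphic one-dimensional $\Aut(P)$-modules, so vanishing of one forces vanishing of the other. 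The paper's approach is shorter because it outsources the work to a standard citation; yours is more self-contained and makes transparent exactly \emph{why} degrees $1$ and $2$ behave identically (namely, the automorphism $g\mapsto g^m$ acts by the same scalar $m\bmod p$ on both), which also explains why the argument does not extend to higher degrees. Your remark on $p=2$ is a nice bonus: it recovers, via the same machinery, Burnside's normal $p$-complement theorem in this special case.
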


\begin{proof}
The vanishing of $H^1(G,k)$ is obvious. Suppose that $H^2(G,k) \neq 0$. Since the dimension of $H^2$ does not change under extension of scalars, we may assume that $H^2(G,C_p) \neq 0$. As moreover $H^1(G,C_p) = 0$, it follows that
$p$ divides the order of the Schur multiplier of $G$. It is well known that the latter then implies
that Sylow $p$-subgroups of $G$ are non-cyclic (see e.g. \cite[Corollary (11.21)]{Is}).
%Then $H^2(N,k) \neq 0$ for $N := \bfN_G(P)$. By the Schur-Zassenhaus theorem, 
%$N = P \rtimes H$ for some $p'$-subgroup $H$. Since the dimension of $H^2$ does not change 
%under extension of scalars, we may assume that $H^2(N,C_p) \neq 0$ and so there is a
%non-split central extension $E$ of $K \cong C_p$ by $N$. Note that $Q \in \Syl_p(E)$ is 
%normal in $E$ and cyclic of order $p^2$, as otherwise $Q$ is split over $K$ and so 
%$E$ is split over $K$ by the Gasch\"utz theorem. Now the $p'$-group $H$ acts on $Q \cong C_{p^2}$ %and centralizes $K = \Omega_1(Q)$. It follows that $H$ centralizes $Q$ and $Q/K \cong P$. Thus 
%$P \leq \bfZ(N)$, and so, by Burnside's normal $p$-complement theorem,  $G$ has a normal 
%$p$-complement, a contradiction.
\end{proof}

Next we give an example showing that for modules of dimension
$2p$, we can satisfy all conditions aside from the spanning condition.  
%We leave the detailed verification to the reader.
%(note that the module is a projective $kG$-module).

\begin{example}\label{ex:2p}   
Assume that $p > 2$. Let $C$ be a nontrivial cyclic group of order coprime to $p$, with 
a faithful character $\lambda:C \to k^\times$,
and let $G = C \wr D$ where $D$ is a dihedral group of order $2p$.  Let $V$ be
the irreducible $kG$-module of dimension $2p$  induced from the $1$-dimensional representation with character 
$$\lambda \otimes 1_{C} \otimes \ldots \otimes 1_{C}$$ of the abelian normal subgroup
$A = C \times C \times \ldots \times C \cong C^{2p}$ of $G$. Let $E$ be the unique subgroup of $D$ of order $p$. Note that $V = V_1 \oplus V_2$, where the $V_i$ are irreducible $AE$-submodules of $V$ 
(of dimension $p$). Then the following statements hold:
\begin{enumerate}
\item $H^1(G,k)=H^2(G,k)=0$ by Lemma \ref{h1h2};
\item $\ext_G^1(V,V)=0$ (indeed, $V$ is projective by Lemma \ref{p-div});
\item  The span $\cM$ of the $p'$-elements of $G$ in $\EE(V)$ is precisely
$\cA \oplus \Hom(V_1, V_2) \oplus \Hom(V_2, V_1)$, where $\cA$ is 
the image of $kA$ in $\EE(V_1) \oplus \EE(V_2)$.
\end{enumerate}
\end{example}

Now we describe all irreducible linear groups of degree $p$:

\begin{prop}\label{degree-p}
Let $k$ be an algebraically closed field of characteristic $p$ and let
$G < \GL_p(k)$ be a finite irreducible subgroup. Then one of the following holds:

\begin{enumerate}[\rm(i)]
\item $G$ is imprimitive on $W := k^p$, $G < \GL_1(k) \wr \SSS_p$, and
furthermore $A := G \cap \GL_1(k)^p$ is non-central in $G$;
\item $G$ is almost quasisimple. Furthermore, $H := G^{(\infty)}$ is quasisimple of
order divisible by $p$ acting irreducibly on $W$, and so $(H,W)$ is as
described in Theorem \ref{thm:2p}.
\end{enumerate}
\end{prop}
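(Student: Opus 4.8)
The plan is to analyze a finite irreducible subgroup $G < \GL_p(k) = \GL(W)$ using the standard Aschbacher-type dichotomy: either $G$ acts imprimitively, or it acts primitively. First I would invoke Clifford theory applied to a minimal normal subgroup, or more directly recall that since $\dim W = p$ is prime, if $G$ is imprimitive then the blocks of imprimitivity must all have dimension $1$ (a proper divisor of $p$), so there are exactly $p$ blocks, each of dimension $1$, and hence $G \leq \GL_1(k) \wr \SSS_p$, permuting the $p$ lines transitively (transitivity because $W$ is irreducible). Setting $A := G \cap \GL_1(k)^p$, the diagonal subgroup, I need to check $A$ is non-central: if $A \leq \bfZ(G)$ then $A$ would act by scalars on $W$, but then $G/A$ would be a transitive subgroup of $\SSS_p$ acting irreducibly on a $p$-dimensional space obtained by inducing the trivial character, which is the permutation module $k^p$; this is reducible (it contains the all-ones vector / the fixed line), contradicting irreducibility of $W$. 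This gives case (i).

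Next, in the primitive case, I would argue that $G$ is almost quasisimple, i.e. that $H := G^{(\infty)}$ (the last term of the derived series, the layer) is quasisimple, of order divisible by $p$, acting irreducibly on $W$. The route is via the generalized Fitting subgroup $F^*(G) = F(G)E(G)$. Since $G$ is primitive, any abelian normal subgroup acts by scalars (otherwise its eigenspace decomposition would give a system of imprimitivity), so $F(G) = \bfZ(G) \cap F(G)$ is central and cyclic; similarly every component is normal in $G$ only up to $G$-conjugacy, but a primitive group with a proper central-product decomposition of $W$ coming from several components would force $\dim W$ to be a nontrivial product of integers $\geq 2$, impossible since $p$ is prime. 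Hence $E(G)$ is a single component (or trivial), and if $E(G) = 1$ then $F^*(G)$ is central, forcing $G$ abelian — impossible for an irreducible subgroup of $\GL_p(k)$ with $p \geq 2$ unless $p = 1$. So $E(G) = H$ is a single quasisimple group, $W_H$ is irreducible (as $\bfC_W(H)$ and its $H$-complements would be $G$-invariant), and $G/H\bfZ(G)$ embeds in $\Out(H)$, i.e. $G$ is almost quasisimple. Finally $p \mid |H|$: if $p \nmid |H|$ then a Sylow $p$-subgroup $P$ of $G$ would map injectively and nontrivially into $\Out(H)$ (nontrivially since $P \not\leq \bfZ(G)$ as $p \nmid |\bfZ(G)|$ would follow, or directly because $W$ is $p$-dimensional and faithful-up-to-scalars so $p \mid |G/\bfZ(G)|$)... and here I would need the outer automorphism argument: a $p$-element acting as an outer automorphism on the $p'$-group $H$ would, by the same reasoning as in the proof of Lemma \ref{lem:p'-component}, force $\dim W \geq 2p > p$, a contradiction. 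Hence $p \mid |H|$, and then $(H, W)$ falls under Theorem \ref{thm:2p} (with $d = p$ in the range $[p, 2p]$), either case (i) of that theorem ($H$ of Lie type in characteristic $p$) or a case in the tables.

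I expect the main obstacle to be the primitive case: cleanly ruling out the possibility that $F^*(G)$ involves either a noncentral normal $r$-subgroup of symplectic type (an "extraspecial-normalizer" configuration, as in Theorem \ref{bz}(e)) or a central product of several components, given only that $\dim W = p$ is prime. The extraspecial-type case is handled by observing that such a normal subgroup $R$ would have $R/\bfZ(R)$ elementary abelian of order $r^{2m}$ and act irreducibly with $\dim W = r^m$; since $r^m = p$ is prime this forces $r = p$, $m$ a $p$-power... wait, $r^m = p$ forces $m = 1$ and $r = p$, so $R$ is extraspecial of order $p^3$, but then $\bfO_p(G) \ne 1$ and $G$ has a normal $p$-subgroup — this is compatible with being "imprimitive-like" and in fact such $G$ would have a normal abelian subgroup of index $p$ after all, or I should simply note that this configuration is subsumed because $R$ acting irreducibly with a normalizer in $\GL_p$ puts us back among groups with $\bfO_p(G) \ne 1$, which I can treat directly. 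I would want to be careful to state Proposition \ref{degree-p} allows the conclusion to overlap (a group could satisfy both (i) and (ii) is false, but edge cases like $G$ solvable need checking against (i)). Overall, the dichotomy imprimitive/primitive plus the generalized Fitting subgroup analysis, leaning on Theorem \ref{thm:2p} and the outer-automorphism dimension bound already used for Lemma \ref{lem:p'-component}, should carry the proof through with the primitivity bookkeeping being the delicate part.
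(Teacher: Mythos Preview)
Your primitive case is workable, though the paper handles it more cleanly by citing Aschbacher's theorem directly (via \cite[Proposition 2.8]{GT3}): since $\dim W = p$ is prime, $W$ is neither tensor decomposable nor tensor induced, so the only Aschbacher class left besides almost quasisimple is the extraspecial normalizer, which you correctly rule out (an extraspecial $r$-group acting irreducibly forces $r^m = p$, hence $r = p$, and a normal $p$-subgroup in characteristic $p$ has a fixed vector, contradicting irreducibility). Once $H$ is irreducible on $W$, the divisibility $p \mid |H|$ is immediate: if $p \nmid |H|$ then $W$ lifts to characteristic $0$ and its degree $p$ would divide $|H|$. You do not need the machinery of Lemma \ref{lem:p'-component} here.

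The imprimitive case, however, has a genuine gap. Your argument that $A \leq \bfZ(G)$ leads to a contradiction is incorrect on two counts. First, $A$ acting by scalars does not mean $G/A$ acts on $W$; you would need to twist by an extension of the central character to $G$, which may not exist. Second, and more importantly, even when such a twist exists (or when $A$ is trivial), the module $W$ is $\Ind_{G_1}^G(\chi_1)$ where $\chi_1$ is a linear character of the point stabilizer $G_1$ that need not be trivial on $G_1/A$. So $W$ is not the permutation module in general. Concretely, take $G = M_{11}$ acting on its $11$-dimensional irreducible in characteristic $11$: this module is monomial (induced from the sign character of $M_{10}$), $A = 1$ is central, yet $W$ is irreducible and $G$ is simple --- this lands in case (ii), not a contradiction.

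What the paper does instead: when $A \leq \bfZ(G)$, the quotient $S := G/A$ is a transitive subgroup of $\SSS_p$, and one applies Zieschang's classification \cite{Zie}. If $S$ is solvable it is Frobenius of order dividing $p(p-1)$; then $AP$ is normal abelian and Ito's theorem bounds all complex irreducible degrees by $p-1$, while Fong--Swan lifts $W$ to characteristic $0$, contradicting $\dim W = p$. If $S$ is non-solvable, Zieschang gives that $S$ is almost simple, whence $G$ is almost quasisimple with $[G:H]$ coprime to $p$, so $H$ is irreducible on $W$ and one lands in case (ii). You are missing this entire branch of the argument.
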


\begin{proof}
By the hypothesis, $G$ acts irreducibly on $W = k^p$. Suppose the action is imprimitive.
Then $G$ permutes transitively the $p$ summands of a decomposition
$W = W_1 \oplus \ldots \oplus W_p$, with kernel say $A$. If $A \not\leq \bfZ(G)$,
we arrive at (i). Assume that $A \leq \bfZ(G)$. Note that $S:= G/A$ is a transitive subgroup
of $\SSS_p$, and so we can apply the main result of \cite{Zie} to $S$. In particular,
if $S$ is solvable, then $S = P:C$ with $P \cong C_p$ and $C \leq C_{p-1}$.
Then $AP$ is a normal abelian subgroup of $G$, whence by Ito's theorem the
degree of any $\chi \in \Irr(G)$ divides $[G:AP]\mid (p-1)$. On the other hand, $G$ is solvable,
and so by the Fong-Swan theorem, $W$ lifts to an irreducible complex module of
dimension $p$, a contradiction. Thus $S$ is non-solvable, which implies by \cite{Zie}
that $S$ is almost simple, $G$ is almost quasisimple, and $H:=G^{(\infty)}$ is a normal subgroup of index coprime to $p$. Since $\dim W = p$, the last condition also
implies that $H$ is irreducible on $W$ and so we arrive at (ii).

We may now assume that the $G$-module $W$ is primitive.
Since $\dim(W) = p$ is prime, this module cannot be tensor decomposable
nor tensor induced. Now we can apply Aschbacher's theorem in the version given
in \cite[Proposition 2.8]{GT3} to $(G,W)$ to conclude that $G$ is almost quasisimple:
$S \lhd G/\bfZ(G) \leq \Aut(S)$ for some non-abelian simple group $S$. In
particular, $H = G^{(\infty)} \lhd G$ is quasisimple, and moreover irreducible
on $W$ by \cite[Lemma 2.5]{GT3}. Hence we can apply Theorem \ref{thm:2p} to
$(H,W)$.
\end{proof}

\subsection{Imprimitive case}  \label{subsection:imprimitive}

%Let $G$ be a finite group with $V$ an irreducible $kG$-module with
%$\dim V = p$.   Assume that $V$ is imprimitive.  This implies that we may
%assume that $G$ is contained in the group of monomial matrices.
%Let $A$ be the normal diagonal subgroup of $G$.  Since $G$ is irreducible
%and $\dim V$ is prime, it follows that $A$ has $p$ distinct eigenspaces on
%$V$ permuted transitively by $G$.   In particular $G/A$ is a transitive
%subgroup of $\SSS_p$ and   $p$ divides $|G/A|$.

\begin{proposition}\label{imp-p}
Suppose we are in case {\rm (i)} of Proposition \ref{degree-p}. Then
$(G,W)$ is adequate if and only if $|G/A| \neq p$.
\end{proposition}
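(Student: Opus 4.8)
The plan is to prove the two implications separately. In case (i) the group $A = G \cap \GL_1(k)^p$ is a normal \emph{abelian} $p'$-subgroup (since $k^\times$ has no $p$-torsion) and $T := G/A$ is a transitive subgroup of $\SSS_p$, so a Sylow $p$-subgroup $P$ of $G$ has order $p$; these facts are used throughout. The easy direction is ``$(G,W)$ adequate $\Rightarrow |G/A| \neq p$'': if $|G/A| = p$ then $G$ surjects onto $C_p$, so $H^1(G,k) = \Hom\bigl(G,(k,+)\bigr) \neq 0$ and the first condition in the definition of adequacy fails; hence $(G,W)$ is not adequate. (This in particular disposes of $p = 2$, where $|G/A| = p$ is automatic.)

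For the converse, assume $|G/A| > p$. A $C_p$-quotient of $G$ would, as $A$ is a $p'$-group, factor through $T$ and produce a normal $p'$-subgroup $N \lhd T$ of index $p$; but the $N$-orbits on the $p$ points all have equal size dividing $p$, so $N$ is trivial or transitive, the latter impossible for a $p'$-group, whence $N = 1$ and $T = C_p$, a contradiction. Thus $G$ has no $C_p$-quotient, so $H^1(G,k) = 0$; moreover $G = \bfO^p(G)$ (a nontrivial $p$-group quotient of $G$ would itself have a $C_p$-quotient), so $H^1(G,k) = H^2(G,k) = 0$ by Lemma \ref{h1h2}. Since $\dim W = p$ is divisible by $p$ and $|P| = p$, Lemma \ref{p-div} shows the irreducible module $W$ is projective, hence injective (as $kG$ is self-injective), so $\Ext^1_G(W,W) = 0$. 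By the reduction recorded in the introduction, it therefore remains only to prove weak adequacy, i.e. $\cM = \EE(W)$.

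To prove $\cM = \EE(W)$, write $W = W_1 \oplus \dots \oplus W_p$ for the imprimitivity decomposition, let $\lambda_i \colon A \to k^\times$ be the character of $A$ on $W_i$, and let $D = \bigoplus_i \End(W_i)$ be the subalgebra of diagonal matrices. Since $A$ is non-central, ``$\lambda_i = \lambda_j$'' is a $T$-invariant equivalence relation on the $p$ (prime) points, hence trivial, so the $\lambda_i$ are pairwise distinct; by linear independence of characters the semisimple elements $\rho(a)$, $a \in A$, already span $D$, so $D \subseteq \cM$. Next, given a $p'$-element $\sigma \in T$ choose $g \in G$ mapping to $\sigma$; then $g$ and every $ga$ ($a \in A$) have order prime to $p$, so the $\rho(ga)$ are semisimple, and they span $\rho(g)D$, which equals $\bigoplus_i k\, E_{\sigma(i),i}$ because $\rho(g)$ is monomial with support $\{(\sigma(i),i) : i\}$. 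Finally, every ordered pair $(a,b)$ is of the form $(\sigma(b),b)$ for some $p'$-element $\sigma$ of $T$: there are $|T|/p$ elements of $T$ carrying $b$ to $a$, of which exactly $n_p$ have order $p$ (each Sylow $p$-subgroup, being generated by a $p$-cycle, contributes exactly one), and $n_p < |T|/p$ since $\bfN_T(P) = P$ would force, via Burnside's normal $p$-complement theorem and the orbit argument above, $T = C_p$, contrary to $|T| > p$. Hence all matrix units lie in $\cM$, so $\cM = \EE(W)$ and $(G,W)$ is adequate.

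I expect the last paragraph to be the crux. Getting $D \subseteq \cM$ is routine, but one genuinely has to manufacture enough semisimple monomial matrices to fill the off-diagonal blocks, and the Sylow-counting/Burnside argument appears to be the cleanest way to do so; the case $\bfN_T(P) = P$ (self-normalizing Sylow $p$-subgroup) is the one point needing care. A more conceptual alternative --- observing that $\cM$ is a $G$-submodule of the projective conjugation-module $\EE(W)$ that contains $D$ --- might also work, but I anticipate it being longer.
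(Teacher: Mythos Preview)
Your proof is correct and follows the same overall strategy as the paper's: handle the cohomological conditions via Lemmas~\ref{h1h2} and~\ref{p-div}, then reduce weak adequacy to finding, for each pair $i\neq j$, a $p'$-element of $G$ sending $W_i$ to $W_j$. The tactical differences are worth noting. For the ``only if'' direction you invoke $H^1(G,k)\neq 0$; the paper instead observes that when $|G/A|=p$ the subgroup $A$ contains all $p'$-elements yet is reducible on $W$, so weak adequacy itself fails. For ruling out a $C_p$-quotient (equivalently a normal $p$-complement) you give a direct orbit argument on the $p$ points, whereas the paper appeals to Lemma~\ref{lem:perm} (which rests on \cite{Zie}); your route is thus more self-contained. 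To produce the needed $p'$-element the paper works inside $N=\bfN_G(P)$, using that its image in $\SSS_p$ is a Frobenius group $C_p\rtimes C_s$ with $s>1$, and builds $x=zy$ explicitly; your Sylow-counting plus Burnside argument is a clean alternative that lands in the same place. Finally, to fill in $\Hom(W_i,W_j)$ the paper applies Artin--Wedderburn to the $p'$-subgroup $\langle A,x\rangle$, while you compute directly that $\rho(g)D=\bigoplus_i kE_{\sigma(i),i}$; both are short, and your version avoids an extra appeal to semisimplicity.
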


\begin{proof} 
Let $P \in \Syl_p(G)$, so that $|P| = p$ (if $P = 1$ then $G$ cannot
act irreducibly on $W$). If $|G/A| = p$, then $G = AP$,
$A = \bfO_{p'}(G)$ contains all the $p'$-elements of $G$ but does not
act irreducibly on $W$ (as $A$ is a $p'$-group), whence $G$ is not 
weakly adequate.

\smallskip
Now assume that $|G/A| \neq p$; in particular, $p > 2$. 
Suppose that $G$ has a normal 
$p$-complement $K$. Then $K = \bfO_{p'}(G) > A$ (as otherwise $G = AP$ and
so $|G/A| = p$), and $H := G/A \leq \SSS_p$ has a normal $p$-complement 
$K/A \neq 1$. Thus $H$ is a transitive subgroup of $\SSS_p$ with $C_p$ as 
a composition factor. But then $\bfO_{p'}(H) = 1$ by Lemma 
\ref{lem:perm}, a contradiction. 
Thus $G$ cannot have a normal $p$-complement, and so $H^i(G,k)=0$ for $i=1,2$
by Lemma \ref{h1h2}. Also, $W$ is projective as a $G$-module
by Lemma \ref{p-div}, whence $\ext_G^1(W,W)=0$.

Since $A \not\leq \bfZ(G)$, $A$ has $p$ distinct eigenspaces 
$W_1, \ldots ,W_p$ on $W$ permuted transitively by $P$.
Thus, it remains only to prove that  
$$\EE(W) \cong \oplus_{1 \leq i,j \leq p}\Hom(W_i,W_j)$$ 
is spanned by the images of the $p'$-elements of $G$. 
Given $1 \leq i \neq j \leq p$, we claim that there exists a $p'$-element 
$x \in G$ with $xW_i = W_j$. Since $P$ is transitive on  
$\{W_1, \ldots, W_p\}$, we can choose $y \in P$ with $yW_i=W_j$. 
Note that $N$ acts on this set
as the Frobenius subgroup $C_p \rtimes C_s$ of $\SSS_p$, with 
kernel $A \cap N$, and all the elements of $(C_p \rtimes C_s) \setminus C_p$
are $p'$-elements. So, since $s > 1$, we 
can find $z \in N \setminus{AP}$ such that $zW_j=W_j$ and set $x:=zy$.  Then
$xW_i=W_j$ and $x \in N \setminus {AP}$, whence $x$ is a $p'$-element.

Now $B :=\langle A, x \rangle$ is a $p'$-group. Note that 
$W_A = \oplus^p_{a=1}W_a$ is a direct sum of $p$ non-isomorphic 
simple $A$-submodules. Hence $W_B = \oplus^t_{b=1}U_b$ is a direct sum of 
$t \geq 1$ non-isomorphic simple $B$-modules, with 
$U_1 \supseteq W_i \oplus W_j$. By the Artin-Wedderburn theorem, the image of 
$kB$ in $\EE(W)$ is just $\oplus^t_{b=1}\EE(U_b)$, and so 
it contains 
$$\EE(U_1) \supseteq \EE(W_i) \oplus \EE(W_j) \oplus \Hom(W_i,W_j) \oplus 
  \Hom(W_j,W_i),$$ 
and the result follows.
\end{proof}

\subsection{Chevalley groups in characteristic $p$} \label{subsection:natural}

We first point out the following:

\begin{prop} \label{prime-natural1}   Let $H$ be a quasisimple finite group of Lie
type in characteristic $p$.  Let $k$ be an algebraically closed field of characteristic $p$.
Let $W$ be a faithful irreducible $kH$-module of prime dimension $r \le p$.   Then one of the following statements holds:
\begin{enumerate}[\rm(i)]
\item  $H=\SL_2(p^a)$ for $r = 2$ and $H = \PSL_2(p^a)$ for $r > 2$;
\item  $H=\SL_r(p^a)$ or $\SU_r(p^a)$, and $r > 2$;
\item  $H=\Omega_r(p^a)$ and $r \ge 5$; or
\item  $r =7$ and $H=\gtwo(p^a)$.
\end{enumerate}
\end{prop}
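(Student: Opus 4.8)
The plan is to descend to the ambient simple algebraic group and invoke the classification of its low-dimensional modules in the defining characteristic. Let $\mathbf{G}$ be the simple, simply connected algebraic group over $\overline{\bbF}_p$ of the same type as $H$, and $F$ a Steinberg endomorphism so that $H$ is (a quotient of) $\mathbf{G}^F$ by a central $p'$-subgroup; the finitely many quasisimple groups of Lie type in characteristic $p$ with an exceptional Schur multiplier can be dealt with by direct inspection. By Steinberg's tensor product theorem (in the twisted case, the version adapted to $F$), $W\cong W_1^{(q_1)}\otimes\cdots\otimes W_m^{(q_m)}$ with the $W_i=L(\lambda_i)$ $p$-restricted irreducible $k\mathbf{G}$-modules and $q_i$ distinct powers of $p$. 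Since $\dim W=r$ is prime and $W$ is nontrivial (as it is faithful), exactly one tensor factor is nontrivial, so after a Frobenius twist we may assume $W=L(\lambda)$ with $\lambda\ne 0$ $p$-restricted and $\dim L(\lambda)=r\le p$.

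Next I would use that a $p$-restricted weight with $\dim L(\lambda)\le p$ lies in the closure of the lowest alcove, so $L(\lambda)$ coincides with the Weyl module of highest weight $\lambda$ and $\dim W$ is given by Weyl's dimension formula. Combined with L\"ubeck's determination of the small-dimensional irreducibles of the simple algebraic groups \cite{Lu}, one obtains the explicit possibilities for $(\mathbf{G},\lambda)$: for type $A_1$, $\lambda=m\omega_1$ of dimension $m+1$; for type $A_\ell$ with $\ell\ge 2$, the exterior powers $\omega_j$ (dimension $\binom{\ell+1}{j}$), $2\omega_1$, $\omega_1+\omega_\ell$, and more generally any $\lambda$ in the bottom alcove with Weyl dimension $\le p$; for type $B_\ell$ ($p$ odd) the natural weight $\omega_1$ (dimension $2\ell+1$) and the spin weight (dimension $2^\ell$); for types $C_\ell$, $D_\ell$ the natural modules (dimension $2\ell$), the $\omega_2$-modules, symmetric powers of the natural module, and (half-)spin modules; and a bounded list of fundamental weights for the exceptional types, whose minimal nontrivial dimensions are $6,7,8,25,26,27,56,248$ and similar. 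The surviving cases are then cut out by a primality check: for $A_\ell$ the Weyl dimension $\prod_{\alpha>0}\langle\lambda+\rho,\alpha^\vee\rangle/\langle\rho,\alpha^\vee\rangle$ is prime only for $\lambda\in\{\omega_1,\omega_\ell\}$ (giving $\ell+1$) when $\ell\ge 2$, and is $m+1$ for any $m$ when $\ell=1$; for $B_\ell$ it is prime only for $\lambda=\omega_1$, giving $2\ell+1\ge 5$; the $7$-dimensional module of $\gtwo$ survives; all exterior powers $\binom{\ell+1}{j}$ with $2\le j\le\ell-1$, all nontrivial symmetric powers, the adjoint dimensions, the even dimensions $2\ell$ of symplectic and even-orthogonal natural modules, the half-spin dimensions $2^{\ell-1}$ with $\ell\ge 4$, and the composite numbers $6,8,25,26,27,56,248$ are ruled out (and ${}^2\gtwo$ in characteristic $3$ is excluded since its minimal degree $7$ exceeds $p$).

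Finally I would translate back to the finite group and settle faithfulness. In each surviving case $W$ is a Frobenius twist of the natural module. For $A_1$ with $m=r-1$: if $r=2$ it is faithful for $\SL_2(p^a)$, and if $r>2$ then $m$ is even, $-I$ acts trivially, and $W$ is faithful for $\PSL_2(p^a)$ — this is (i). For $A_{r-1}$ and ${}^2A_{r-1}$ with $r>2$, the center of $\SL_r(p^a)$, resp.\ $\SU_r(p^a)$, acts by nontrivial scalars on the natural module, so $W$ is faithful exactly for $H=\SL_r(p^a)$ or $\SU_r(p^a)$ — this is (ii). For $B_{(r-1)/2}$ with $r\ge 5$ odd, the natural weight lies in the root lattice, the natural module descends through $\SO_r$ to an honest module of $\Omega_r(p^a)$ on which the action is faithful — this is (iii). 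For $\gtwo$ the center is trivial and $\gtwo(p^a)$ acts faithfully on the $7$-dimensional module — this is (iv).

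The main obstacle will be the middle step. Because $p$ is a priori unbounded, the hypothesis $\dim L(\lambda)\le p$ does not by itself restrict the rank, and for each fixed type there are unboundedly many $p$-restricted weights whose dimension grows with $p$ (for instance all $L(a,b)$ of $\SL_3$ with $a+b\le p-2$, or the symmetric powers $L(m\omega_1)$ of $\Sp_{2\ell}$); one must carry out, type by type, a genuine primality analysis of the Weyl dimension formula — not merely quote L\"ubeck's bounded-rank tables — to see that, the natural $\SL_n$- and $\Omega_{2\ell+1}$-modules, the $\SL_2$-modules, and the $7$-dimensional $\gtwo$-module aside, every such dimension is composite. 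It is the combination of this primality constraint with $r\le p$ that collapses everything to the four cases.
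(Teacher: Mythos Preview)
Your reduction matches the paper's almost exactly: pass to the simply connected cover $\cG^F$, use primality of $\dim W$ to force tensor-indecomposability and hence restrictedness up to twist, then use that $\dim L(\lambda)\le p$ forces $L(\lambda)$ to coincide with the Weyl module $V(\lambda)$ (the paper cites Jantzen's semisimplicity criterion \cite{jantzenlow} for this step).

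The divergence comes at the final step. What you identify as the ``main obstacle'' --- a type-by-type primality analysis of the Weyl dimension formula, unbounded in rank and in $p$ --- is exactly what the paper avoids by citing a ready-made characteristic-zero result of Gabber, recorded as \cite[1.6]{katz}: once $\dim L(\lambda)$ agrees with the characteristic-zero Weyl dimension, the classification of prime-dimensional irreducible rational modules of simple algebraic groups is already done, and yields precisely the four cases (i)--(iv). So your proposed primality sieve would work in principle, but it is unnecessary; the paper's proof is two lines after the reduction. Your faithfulness discussion at the end is a reasonable sanity check but is not needed for the statement as phrased, which only asserts that $H$ is one of the listed groups.
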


\begin{proof}
Since $\Char(k) = p$ and $V$ is faithful, $\bfO_p(H) = 1$. Hence there is a simple
simply connected algebraic group $\cG$ in characteristic $p$ and a Frobenius
endomorphism $F:\cG \to \cG$ such that $H \cong G/Z$ for $G := \cG^F$ and
$Z \leq \bfZ(G)$. Inflate $W$ to a $kG$-module.
Since $r = \dim W$ is prime,  $W$ is tensor indecomposable and
in particular is a twist of a restricted representation.  So we may assume
that $W$ is restricted and extend $W$ to a $k\cG$-module.  By \cite{jantzenlow}, it follows
that $W=L(\lambda)$ where $\lambda$ is a dominant weight, and
$\dim W$ equals the dimension of the Weyl module $V(\lambda$) labeled by $\lambda$.
Thus, we can apply the same result
for characteristic $0$ which was proved by Gabber \cite[1.6]{katz}.
\end{proof}

\begin{prop} \label{prime-natural2}
Suppose we are in case {\rm (ii)} of Proposition \ref{degree-p}. Assume in addition
that $H = G^{(\infty)}$ is a quasisimple group of Lie type in characteristic $p > 3$.
Then $(G,W)$ is adequate.
\end{prop}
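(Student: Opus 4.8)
The plan is to reduce, via Remark~\ref{g-gp}, to proving that $(H,W)$ is adequate, and to establish first that $\GP=H$. Since $q=p^a$, the centre of $H$ and its group of outer diagonal automorphisms have order prime to $p$, so every $p$-element of $\Out(H)$ lies in the image of the field (and graph-field) automorphisms. On the other hand $\dim W=p$ is prime, so $W$ is tensor-indecomposable and hence, by Steinberg's tensor product theorem, $W\cong L(\mu)^{(p^j)}$ for a single nonzero $p$-restricted weight $\mu$; any nontrivial power of a field automorphism therefore carries $W$ to a non-isomorphic twist. As $G$ stabilises the isomorphism class of $W$, it follows that $G/H$ is a $p'$-group, and since $H$ is generated by its $p$-elements, $\GP=H$. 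Also $H^1(H,k)=0$ because $H$ is perfect.

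Next I would record the possibilities for $(H,W)$ coming from Proposition~\ref{prime-natural1}: either $W$ is a twist of $L(p-1)$ and $H=\PSL_2(p^a)$; or $W$ is the natural module and $H$ is $\SL_p(p^a)$, $\SU_p(p^a)$, or $\Omega_p(p^a)$ with $p\ge 5$; or $p=7$ and $W$ is the $7$-dimensional module of $H=\gtwo(7^a)$. In each case the faithfulness of $W$ forces $H$ to be the quasisimple group whose Schur multiplier has trivial $p$-part (for instance $H=\SL_p(p^a)=\PSL_p(p^a)$, whose multiplier is trivial since $p>3$ and $p\nmid p^a-1$; the orthogonal case uses $p>2$), so $H^2(H,k)=0$. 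By the remarks following the definition of adequacy, it now suffices to prove that $(H,W)$ is weakly adequate and that $\Ext^1_H(W,W)=0$.

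For weak adequacy I would work inside $\EE(W)=W^*\otimes W$. The span $\cM$ of the images of the $p$-regular elements of $H$ is a subspace closed under conjugation by $\rho(H)$ and containing $\rho(1)$. Choosing a maximal torus $T\le H$ acting on $W$ with multiplicity one — possible since $q=p^a\ge 5$ — the image of the $p'$-group algebra $kT$ in $\EE(W)$ is the full algebra of diagonal matrices in a weight basis, so, $k$ being infinite, $\cM$ contains a diagonal matrix $h$ with pairwise distinct entries and nonzero trace. The $H$-submodule of $\EE(W)$ generated by $h$ must then surject onto the head of $\EE(W)$ — its image there is nonzero in the trivial constituent (nonzero trace) and, in the orthogonal and $\gtwo$ cases, also in the remaining (adjoint-type) head constituent, since $h$ is a generic diagonal matrix — and hence equals $\EE(W)$ by Nakayama; so $\cM=\EE(W)$. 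Finally $\Ext^1_H(W,W)=0$: for $H$ of type $A_1$ this is immediate from Lemma~\ref{ext-defi}(ii), as the high weight $p-1$ of $W$ is neither $(p-1)/2$ nor $(p-3)/2$; in the remaining cases it is the known vanishing of the degree-one cohomology of these finite groups of Lie type with coefficients in the adjoint-type module $W^*\otimes W$, valid for all $q=p^a$ with $p>3$. Combining this with $H^1(H,k)=H^2(H,k)=0$ yields adequacy of $(H,W)$, hence of $(G,W)$.

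The main obstacle is the last vanishing, $\Ext^1_H(W,W)=0$ for $H\in\{\SL_p(p^a),\SU_p(p^a),\Omega_p(p^a),\gtwo(7^a)\}$: making it precise requires citing (or re-deriving) the computations of degree-one cohomology of groups of Lie type with small, adjoint-type coefficients and confirming that no exception intrudes for the relevant small fields. A secondary technical point is the precise submodule structure of $W^*\otimes W$ — in particular of $\wedge^2 W$ and $S^2 W$ in the orthogonal and exceptional cases — which underlies the weak-adequacy step.
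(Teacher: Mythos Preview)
Your overall strategy is reasonable, but there are concrete gaps in both the weak-adequacy and the $\Ext^1$ steps. For weak adequacy, your claim that a maximal torus of $H$ acts multiplicity-free on $W$ already fails when $H=\PSL_2(p)$ (the case $a=1$): the split torus of $\SL_2(p)$ has order $p-1$ and the weights $p-1,p-3,\ldots,-(p-1)$ of $L(p-1)$ satisfy $p-1\equiv-(p-1)\pmod{p-1}$, while the non-split torus of order $p+1$ fares no better (for $p=5$ the exponents reduce modulo $6$ to $4,2,0,4,2$). Even where such a torus exists, your description of $\hd(\End(W))$ is incorrect for $\PSL_2(p^a)$: by the tilting decomposition (cf.\ Lemma~\ref{head}) one has $L(p-1)\otimes L(p-1)\cong\bigoplus_{i=0}^{(p-1)/2}T(2p-2-2i)$, so the head has the $(p+1)/2$ simple summands $L(0),L(2),\ldots,L(p-1)$, not one or two as your argument with a single element $h$ presupposes. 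The paper bypasses this module-theoretic analysis entirely: it exhibits inside each $H$ an irreducibly embedded copy of $\PSL_2(p)$ (via $\PSL_2(p)\hookrightarrow\Omega_p(p)\hookrightarrow\SL_p(q),\SU_p(q)$, and the chain $\PSL_2(7)<\gtwo(2)<\gtwo(7)$ for the exceptional case) and then invokes \cite[Proposition~3.1]{GHT}, which establishes weak adequacy for $\PSL_2(p)$ on its $p$-dimensional module directly.

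For $\Ext^1_H(W,W)=0$ in the $\PSL_2$ case, Lemma~\ref{ext-defi}(ii) does not apply: its standing hypothesis is $\dim V_1+\dim V_2<2p$, which fails for $\dim W=p$. The paper cites \cite{AJL} directly here. For the remaining groups it appeals to McNinch's semisimplicity criterion \cite{Mc}; as you anticipated, one small case, $\Omega_5(5)$, falls outside that result and is handled by a direct machine computation.
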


\begin{proof}
By the hypothesis, we have that $H$ is quasisimple and
irreducible on $W$. So we can apply
Proposition \ref{prime-natural1} to $H$; in particular we have that $W_H = L(\lambda)$ is
a restricted module (up to a Frobenius twist; in what follows we will ignore this
twist). In the case $H = \PSL_2(p^a)$,
we have that $\lambda = (p-1)\varpi_1$, where $\varpi_1$ is the fundamental weight.
Since $W_H$ is $G$-invariant, we see that $G$ cannot induce nontrivial field automorphisms
on $H$; in particular, $\GP = H$. In other cases, applying Propositions 5.4.11 and 5.4.12 of
\cite{KL}, we see that $W_H \cong \cN$ or $\cN^*$ where $\cN$ is the natural $kH$-module of dimension $p$ (with highest weight $\varpi_1$), and again $\GP = H$.

%As noted above, it suffices to assume that
%$G$ is generated by $p$-elements (and so is quasisimple).
By Remark \ref{g-gp}, without loss we may now assume $G = H$.
Note that all the {\it classical}
groups given in the previous proposition when $r=p$ contain
an irreducible subgroup $L \cong \PSL_2(p)$. Indeed, the irreducible
$kL$-representation of degree $p$ embeds $L$ in $M \cong \Omega_p(p)$. In turn,
$M$ embeds in $\SL_p(q)$ and $\SU_p(q)$ for any $q = p^a$.
The same is true for $\gtwo(p)$ with $p = 7$: $\gtwo(7) > \gtwo(2) > \PSL_2(7)$. (It is well known,
see e.g. \cite{Kl-g2} that $H = \gtwo(7)$ contains a maximal subgroup $X \cong \gtwo(2)$ which
acts irreducibly on the minimal $7$-dimensional $H$-module $W$. Next,
$X$ contains a maximal subgroup $Y \cong \PSL_2(7)$, cf. \cite{Atlas}. Using \cite{JLPW} one can
check that $Y$ is irreducible on $W$.)
Thus weak adequacy follows by \cite[Proposition 3.1]{GHT}.

It is well known that $H^1(G,k)=H^2(G,k)=0$ (since $p > 3$).  Thus, it
suffices to show that $\ext_G^1(W,W)=0$.   If $G=\PSL_2(p^a)$, the result
follows by \cite{AJL}.   If $G=\Omega_5(5)$, one computes directly that
$\ext_G^1(W,W)=0$ (this was done by Klaus Lux).
In all other cases, $\ext_G^1(W,W)=0$ by the main result of
\cite{Mc}.
\end{proof}

\subsection{Remaining cases}   \label{subsection:cross}

\begin{lem}\label{alt-p}
Let $k = \bar{k}$, $H = \AAA_{p+1}$ with $p \geq 5$, and let $W$ be an
irreducible $kH$-module of dimension $p$. Then $(H,W)$ is weakly adequate.
\end{lem}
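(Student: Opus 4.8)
The plan is to realise $W$ concretely as the heart of a permutation module and then run a Burnside/trace-duality argument using only the (large, prime-to-$p$) alternating point-stabilizers. Since $p\ge 5$, $H=\AAA_{p+1}$ is not a quasisimple group of Lie type in characteristic $p$, so by Theorem~\ref{thm:2p} and Table~I every $p$-dimensional absolutely irreducible $kH$-module is, up to an automorphism of $H$ (which matters only for $p=5$, where $\AAA_6$ has two such modules interchanged by its exceptional outer automorphism), the heart $W_0:=\{(a_1,\dots,a_{p+1}):\sum_i a_i=0\}$ of the natural permutation module $V_0=k^{p+1}$; here $W_0$ is classically irreducible because $p\nmid p+1$. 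As weak adequacy is unchanged after composing $\rho$ with an automorphism of $H$, I may take $W=W_0$, and $V_0=W\oplus k\mathbf 1$ with $\mathbf 1=e_1+\dots+e_{p+1}$. For each $2$-element set $S\subseteq\{1,\dots,p+1\}$ put $K_S:=\mathrm{Alt}(\{1,\dots,p+1\}\setminus S)\le H$; since $|K_S|=\tfrac12(p-1)!$ is prime to $p$ and $W$ is faithful, every element of $K_S$ is semisimple on $W$, so $\cM\supseteq\sum_{|S|=2}\langle\rho(\sigma):\sigma\in K_S\rangle_k$. It therefore suffices to prove $\cM=\EE(W)$, and since the trace form on $\EE(W)\cong M_p(k)$ is nondegenerate this is equivalent to showing that the only $A\in\EE(W)$ with $\tr_W(A\,\rho(g))=0$ for all semisimple $g\in H$ is $A=0$.

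Given such an $A$, I would extend it to $\tilde A\in\EE(V_0)=M_{p+1}(k)$ acting as $A$ on $W$ and by $0$ on $k\mathbf 1$; then all row sums and all column sums of $\tilde A$ vanish (these encode $\tilde A\mathbf 1=0$ and $\mathrm{im}\,\tilde A\subseteq W$), and $\tr(\tilde A P_\sigma)=\tr_W(A\,\rho(\sigma))=0$ for every semisimple $\sigma$, hence for all $\sigma\in K_S$. Writing $\tr(\tilde A P_\sigma)=\sum_i\tilde A_{i,\sigma(i)}$ and isolating the indices in $S$ (on which $\sigma$ acts trivially) shows that the submatrix $b:=(\tilde A_{ij})_{i,j\notin S}$ satisfies $\sum_{i\notin S}b_{i,\sigma(i)}=c_S$, a constant independent of $\sigma$, for all $\sigma\in\mathrm{Alt}(T)$, where $T:=\{1,\dots,p+1\}\setminus S$. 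Since $p\nmid(p-1)!$, the algebra $k\,\mathrm{Alt}(T)$ is semisimple and $k^T=k\mathbf 1_T\oplus(\text{heart})$ with the heart absolutely irreducible (as $|T|=p-1\ge 4$); by the double centralizer theorem the $k$-span of $\{P_\sigma:\sigma\in\mathrm{Alt}(T)\}$ in $M_{p-1}(k)$ is the block subalgebra attached to this decomposition, whose trace-orthogonal complement is exactly the set of matrices of the form $(\xi_i+\eta_j)_{i,j\in T}$ with $\sum_i\xi_i=\sum_j\eta_j=0$. Applying this to $b$ minus the appropriate scalar multiple of the all-ones matrix (possible as $p-1\ne 0$ in $k$) gives $\tilde A_{ij}=\xi^{(S)}_i+\eta^{(S)}_j$ for all $i,j\notin S$.

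Next I would globalise: any four indices $i,i',j,j'$ omit some $2$-element set $S$ (since $p+1-4=p-3\ge 2$), so the previous step yields $\tilde A_{ij}-\tilde A_{ij'}-\tilde A_{i'j}+\tilde A_{i'j'}=0$. As all these second differences vanish, $\tilde A_{ij}=\xi_i+\eta_j$ for vectors $\xi,\eta$ defined on all of $\{1,\dots,p+1\}$. The vanishing of all row sums and all column sums of $\tilde A$ then forces, using $p\nmid p+1$, that $\xi$ and $\eta$ are constant, whence $\tilde A=0$ and $A=0$. Therefore $\cM=\EE(W)$, and $(H,W)$ is weakly adequate.

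The crux I expect is the combinatorial bookkeeping of the second and third steps: one must set up the trace duality so that the vanishing conditions imposed by the many prime-to-$p$ subgroups $K_S$ turn into the ``second difference'' relations on the $(p+1)\times(p+1)$ matrix $\tilde A$, and the hypothesis $p\ge 5$ enters exactly in ensuring that any four of the $p+1$ coordinate indices lie in the complement of some $2$-element subset. By contrast, the identification of $W$ in the first step is routine, apart from the $\AAA_6$ caveat, which is dealt with by the invariance of weak adequacy under automorphisms of $H$.
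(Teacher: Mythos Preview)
Your proof is correct but takes a genuinely different route from the paper's. The paper observes that $\AAA_{p+1}$ contains a subgroup $L\cong\PSL_2(p)$ (via the natural action on $\bbP^1(\bbF_p)$), that $W$ restricts irreducibly to $L$ as the Steinberg module, and then invokes \cite[Proposition~3.1]{GHT}, which already establishes weak adequacy for all nontrivial irreducible $\SL_2(p)$-representations. This is a one-line reduction to a previously proved case, and it is an instance of a technique used repeatedly in the paper (cf.\ the proofs of Propositions~\ref{prime-natural2} and~\ref{weil-sp}, and several cases of Lemma~\ref{non-serial}).

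Your argument, by contrast, is entirely self-contained: you exploit the explicit realisation of $W$ inside the permutation module and use the many $p'$-subgroups $K_S\cong\AAA_{p-1}$ to set up a trace-duality constraint, reducing the spanning question to a clean combinatorial identity (the vanishing of all ``second differences'' of $\tilde A$). What your approach buys is independence from \cite{GHT} and a transparent description of exactly which $p'$-elements already suffice to span $\End(W)$; it would also adapt to other heart-of-permutation-module situations. What the paper's approach buys is brevity and uniformity with the surrounding arguments, where restriction to a smaller subgroup with known adequacy is the organising principle. Both are valid; the paper's is shorter given the available reference, yours is more elementary.
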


\begin{proof}
Note that $W$ is irreducible over a subgroup $L \cong \PSL_2(p)$ of $H$. Hence the
claim follows by \cite[Proposition 3.1]{GHT}.
\end{proof}

We record the following useful observation:

\begin{lem}\label{mult-free}
Let $X$ be a finite $p'$-subgroup of $G < \GL(W)$ where $W$ is a finite dimensional
vector space over $k$. Suppose that $W_X$ is multiplicity-free. Then
$(\End(W)/\cM)^X = 0$.
\end{lem}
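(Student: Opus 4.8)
The plan is to prove the a priori stronger statement that $\End(W)^X\subseteq\cM$; the lemma then follows at once. Conjugation by elements of $X$ permutes the semisimple elements of $G$, so $\cM$ is an $X$-stable subspace of $\End(W)$ and $\End(W)/\cM$ is a $kX$-module; since $p\nmid|X|$ we have $H^1(X,\cM)=0$, so applying $X$-invariants to $0\to\cM\to\End(W)\to\End(W)/\cM\to 0$ yields $(\End(W)/\cM)^X\cong\End(W)^X/\cM^X$, which vanishes once $\End(W)^X\subseteq\cM$.

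First I will observe that the image $\cA$ of the group algebra $kX$ in $\End(W)$ — equivalently, the $k$-span of $X$ — is contained in $\cM$. Every $x\in X$ has order prime to $p$, so its minimal polynomial divides $T^{|X|}-1$, which is separable over $k$; hence $x$ is a semisimple element of $\GL(W)$ and lies in $\cM$ by the definition of $\cM$. Taking $k$-linear combinations gives $\cA\subseteq\cM$.

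It remains to show $\End(W)^X\subseteq\cA$. Since $X$ is a $p'$-group, $\End(W)^X=\End_{kX}(W)$ and $W_X$ is a semisimple $kX$-module; by hypothesis it is multiplicity-free, so $W_X=\bigoplus_{i=1}^m U_i$ with the $U_i$ pairwise non-isomorphic irreducible $kX$-modules. As $k$ is algebraically closed, Schur's lemma gives $\End_{kX}(W)=\bigoplus_{i=1}^m k\pi_i$, where $\pi_i\colon W\to W$ is the projection onto $U_i$ along $\bigoplus_{j\neq i}U_j$. Now $kX$ is semisimple, so it has a central idempotent $e_i$ supported on its simple component with simple module $U_i$; since the $U_i$ are mutually non-isomorphic, $e_i$ acts as the identity on $U_i$ and as $0$ on $U_j$ for every $j\neq i$, so the image of $e_i$ in $\End(W)$ is exactly $\pi_i$. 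Hence each $\pi_i\in\cA$, whence $\End_{kX}(W)\subseteq\cA\subseteq\cM$, as required.

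I do not expect a serious obstacle; the only care needed is to use semisimplicity of $kX$ in the two places it matters — to see that elements of $X$ act as semisimple operators (giving $\cA\subseteq\cM$) and to produce the central idempotents $e_i$ — and to use that $W_X$ is multiplicity-free, not merely semisimple, which is exactly what makes $e_i$ act as the projection onto a single summand rather than onto a larger isotypic component, and hence what puts $\End_{kX}(W)$ inside the image of $kX$. (If one did not wish to assume $k$ algebraically closed, one would instead compare the multiplicities of the trivial $kX$-module in $\cA$ and in $\End(W)\cong W\otimes_k W^*$, using that $\dim_k(-)^X$ is unchanged under field extension; under the paper's standing hypothesis this refinement is unnecessary.)
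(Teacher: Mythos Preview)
Your proof is correct and follows essentially the same route as the paper's: both arguments establish $\End(W)^X\subseteq\cM$ via the Artin--Wedderburn theorem (the image of $kX$ in $\End(W)$ contains the projections onto the simple summands, which span $\End_{kX}(W)$ by multiplicity-freeness), and then use semisimplicity of the $X$-action on $\End(W)$ to pass to the quotient. Your write-up simply unpacks the details the paper leaves implicit.
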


\begin{proof}
Note that the $X$-module $\End(W)$ is semisimple. Furthermore, the
multiplicity-free assumption implies $\cM \supseteq \End(W)^X$ by
the Artin-Wedderburn theorem. Hence the claim follows.
\end{proof}

\begin{prop}\label{weil-sp}
Let $k = \bar{k}$, $H = \PSp_{2n}(q)$ with $2 < p = (q^n \pm 1)/2$, and let $W$ be an
irreducible $kH$-module of dimension $p$. Then $(H,W)$ is weakly adequate.
\end{prop}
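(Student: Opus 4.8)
The plan is to reduce weak adequacy of $(H,W)$ to the spanning condition for $p$-regular elements, and then to settle that by restricting $W$ to a field-extension subgroup isomorphic to $\SL_2(q^n)$ on which $W$ stays irreducible.

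First I would set up the reduction. Since $H=\PSp_{2n}(q)$ is simple and $\dim W=p>1$, the representation $\rho$ is faithful; and writing $p=(q^n\pm1)/2$ with $n\ge2$, the prime $p$ divides $q^{2i}-1$ (for $1\le i\le n$) only when $i=n$, where it divides exactly once, so a Sylow $p$-subgroup $P$ of $H$ is cyclic of order $p$. As $p\mid\dim W$, Lemma \ref{p-div} shows that $W$ is projective, hence lies in a block of defect $0$; in particular $\EE(W)=W^*\otimes_k W$ is a projective $kH$-module. Because $\rho$ is faithful and $|P|=p$, an element $g\in H$ has $\rho(g)$ semisimple if and only if $g$ is $p$-regular: if the $p$-part $g_p$ is nontrivial, then $\rho(g_p)$ is a nontrivial unipotent element commuting with the semisimple $\rho(g_{p'})$, so $\rho(g)$ is not semisimple, and the converse is clear. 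Hence $\cM=\langle\rho(g):g\in H,\ p\nmid o(g)\rangle_k$, and $(H,W)$ is weakly adequate precisely when the $H$-module $\EE(W)/\cM$ — a quotient of $W^*\otimes_k W$ — is zero.

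For the main step, note that $W$ (inflated to $\Sp_{2n}(q)$) is one of the two Weil modules of $\Sp_{2n}(q)$: its Brauer character lifts to an irreducible constituent $\chi_\pm$ of the total Weil character, with $\chi_\pm(1)=(q^n\pm1)/2$, and $\chi_+$, $\chi_-$ are the characters of the two eigenspaces of the central element $-I_{2n}$ on the full Weil representation. Realising $\bbF_q^{2n}$ as $\bbF_{q^n}^2$ with the transfer of a symplectic form gives the field-extension embedding $\SL_2(q^n)\cong\Sp_2(\bbF_{q^n})\hookrightarrow\Sp_{2n}(q)$; by the standard functoriality of Weil representations under such transfers, the Weil representation of $\Sp_{2n}(q)$ restricts to that of $\SL_2(q^n)$, and since $-I_2\mapsto-I_{2n}$ the two $\pm$-summands correspond. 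As $q^n\ge9$, the relevant Weil module of $\SL_2(q^n)$ of dimension $(q^n\pm1)/2=p\ge5$ is irreducible (both Weil modules of $\SL_2(Q)$, of dimensions $(Q\pm1)/2$, are irreducible for odd $Q>3$), so $W$ restricts irreducibly to the image $L$ of $\SL_2(q^n)$ in $\GL(W)$, where $L\cong\SL_2(q^n)$ or $\PSL_2(q^n)$. By \cite[Proposition 3.1]{GHT} it now suffices to prove that $(L,W)$ is weakly adequate. Since $\mathrm{char}(k)=p\nmid q^n$, this is a cross-characteristic Weil representation of $\SL_2(q^n)$, whose weak adequacy follows from the analysis of $\SL_2$-representations (extending \cite[\S3]{GHT}; cf.\ Proposition \ref{sl2q}): the failures of weak adequacy for $\SL_2(\ell^a)$ occur only in the defining characteristic $\ell$, in dimension $(\ell\pm1)/2$, which is excluded here as $p\nmid q^n$.

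The step I expect to be the main obstacle is precisely this $\SL_2$ base case, and in particular verifying that no small exceptional pair $(q^n,p)$ escapes the cited analysis — for instance $n=2$, $q=3$, $p=5$ (so $H\cong\PSp_4(3)$ and $L\cong\PSL_2(9)\cong\AAA_6$ acting on a $5$-dimensional module in characteristic $5$), which must be checked by hand or by computer. As an alternative, more self-contained route one could instead apply Lemma \ref{mult-free} to a cyclic $p'$-torus $T$ of $H$ of order $q^n\mp1$ (on which the Weil module $W$ is multiplicity-free), obtaining $(\EE(W)/\cM)^T=0$, and then use control of the composition factors of $\EE(W)/\cM$ — afforded by the identity $\chi_\omega\overline{\chi_\omega}=\pi_{\bbF_q^{2n}}$, the permutation character of $H$ on $\bbF_q^{2n}$ — to force $\EE(W)/\cM=0$; but the required multiplicity-freeness and composition-factor bookkeeping become delicate for small $q$, so I would take the $\SL_2(q^n)$ reduction as the main line.
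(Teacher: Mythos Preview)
Your reduction to the field-extension subgroup $L\cong\SL_2(q^n)$ (equivalently, to the case $n=1$) is correct and is exactly the paper's step (a). The gap is in the base case. Proposition \ref{sl2q} and \cite[\S3]{GHT} concern $\SL_2(p^r)$ over $\overline{\bbF}_p$, i.e.\ the \emph{defining} characteristic. Here $L=\SL_2(q^n)$ with $p=(q^n\pm1)/2$ coprime to $q$, so $W$ is a \emph{cross}-characteristic Weil module of $L$; neither reference says anything about this situation. Your claimed dichotomy ``failures of weak adequacy for $\SL_2(\ell^a)$ occur only in the defining characteristic'' is precisely what remains to be proved: after the reduction $n\to 1$, the statement ``the $p$-dimensional Weil module of $\PSL_2(q)$ is weakly adequate in characteristic $p=(q\pm1)/2$'' \emph{is} Proposition \ref{weil-sp}, so the argument is circular.

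The paper supplies this missing base case by explicit analysis inside $L=\SL_2(q)$, and the two signs behave differently. For $p=(q+1)/2$ the Borel $P$ is a $p'$-group; decomposing $W_P$ and using the identity $(\xi_i+\eta_i)(\bar\xi_i+\bar\eta_i)=\tau$ one pins down the possible composition factors of $\End(W)/\cM$ and rules each out. For $p=(q-1)/2$ the paper uses Lemma \ref{mult-free} with \emph{two} $p'$-subgroups, the unipotent $Q$ and a non-split torus $C\cong C_{(q+1)/2}$: $W$ is multiplicity-free on both, and every $\psi\in\IBr_p(H)$ has a fixed vector under at least one of them. Your proposed alternative (one cyclic torus of order $q^n\mp1$ plus the permutation-character identity) is in the right spirit for the second case, but a single multiplicity-free subgroup will not suffice in general, and in the $p=(q+1)/2$ case the relevant $p'$-subgroup is the full Borel rather than a torus. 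This case split is the actual content of the proof; without it the argument is incomplete.
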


\begin{proof}
%First we discard of the case $p = 2$. In this case we have either
%$H = \PSL_2(3) \cong \AAA_4$, or $H = \PSL_2(5) \cong \AAA_5$. In the former case
%$H$ is solvable. In the latter case a solvable subgroup $\SSS_3$ of $H$ is irreducible
%on $W$. Hence the statement follows.
(a) Note that $W$ is a Weil module and restricts irreducibly to a subgroup
$\PSL_2(q^n)$ of $H$. So without loss we may assume $n = 1$. We will inflate $W$ to a $kL$-module for $L := \SL_{2}(q)$.
Note that $W$ is obtained by reducing modulo $p$ one of the four complex Weil
modules of $L$, with characters $\eta_i$ of degree $(q-1)/2$ and $\xi_i$ of degree
$(q+1)/2$, where $i = 1,2$ and $\xi_i+\eta_i$ is a {\it reducible Weil
character} of $L$, see e.g. \cite{GMST} and \cite{TZ2}. Let $\tau$ denote the
permutation character of $L$ acting on the set of all vectors of the natural module
$\cN := \bbF_q^{2}$. Using the character table of $L$ as given on \cite[p. 155]{DM}, we see that
\begin{equation}\label{tau1}
  (\xi_i+\eta_i)(\bar\xi_i+\bar\eta_i) = \tau.
\end{equation}
Let $P :=\Stab_L(\langle v \rangle_{\bbF_q})$ for some $0 \neq v \in \cN$
with normal subgroup $Q :=\Stab_L(v)$ of order $q$, and let
$\varphi$ denote the Brauer character of $W$. Assume the contrary:
$\cM \neq \End(W)$, and let $\vartheta$ denote the Brauer character of
$\cQ := \End(W)/\cM$.

\smallskip
(b) Consider the case $p = (q+1)/2$, whence $\varphi = \xi_i^\circ$ and
$P$ is a $p'$-group.
%%%We use IBr_p(G) only when p divides |G|
Inspecting the values of $\varphi_P$, we see that
$W_P = W_1 \oplus W_2$ with $W_1, W_2 \in \Irr(P)$ of dimension $1$ and
$(q-1)/2 > 1$. Moreover, $(W_1)_Q$ is trivial, and $W_2^Q = 0$.
By the Artin-Wedderburn theorem applied to $P$,
$\cM \supseteq \End(W_1) \oplus \End(W_2)$; in particular,
$\dim \cM^P \geq 2 = \dim \End(W)^P$.
Hence we conclude that for any composition factor $Y$ of $\End(W)/\cM$,
$Y^Q = 0$ and $\dim Y \leq q-1$.

Let $\rho$ denote the permutation character of $L$ acting on the $1$-spaces of $\cN$.
Then $\rho = 1_L + \St$, where $\St$ is the Steinberg character of $L$. Moreover,
all irreducible constituents of $\tau-\rho-1_L$ have degree $q+1$ or $(q+1)/2$ and thus
have $p$-defect $0$. Note that $\St^\circ = 1_L + \psi$ with $\psi \in \IBr_p(L)$ (see \cite{Burk}),
and $\rho$ is the character of the PIM $\cP(1_L)$ of $1_L$.
Since $\varphi = \xi_i^\circ$ has degree $p$ and (the projective module)
$\End(W)$ contains a trivial simple
submodule, we see that $\End(W)$ is the direct sum of $\cP(1_L)$  and some
$p$-defect $0$ modules of dimension $q+1$ or $(q+1)/2$. In particular, since
$\psi$ is the Brauer character of the heart of $\cP(1_L)$, it cannot be afforded by
a quotient of $\End(W)$, and so $\vartheta \neq \psi$. Since
$\vartheta(1) \leq q-1$, it follows that all irreducible constituents of $\vartheta$ are
of degree $1$ (with multiplicity $\leq 2$) and $(q+1)/2$ (with multiplicity $\leq 1$).
But the principal character and the Weil
characters of degree $(q+1)/2$ of $L$ all contain $1_Q$ when restricted to $Q$,
a contradiction.

\smallskip
(c) Now assume that $p = (q-1)/2$; in particular, $\varphi = \eta_i^\circ$ and
$q \equiv 3 (\mod 4)$. Consider a cyclic subgroup
 $C \cong C_{(q+1)/2}$ of $H$.
It is straightforward to check that for any $\chi \in \Irr(H)$, either
$[\chi_Q,1_Q]_Q \neq 0$ or $[\chi_C,1_C]_C \neq 0$. Since irreducible $p$-Brauer characters of $H$ lift to complex characters (cf. \cite{Burk}), it follows that for any $U \in \IBr_p(H)$,
either $U^Q \neq 0$, or $U^C \neq 0$.

Now we may assume $\varphi = \eta_1^\circ$
and observe that both $\varphi_Q$ and $\varphi_C$
are multiplicity-free. Hence, each of $Q$, $C$ has no nonzero fixed points on
$\End(W)/\cM$ by Lemma \ref{mult-free}. Consequently, $\cM = \End(W)$.
\end{proof}

%***Perhaps one can generalize the old proof of Proposition \ref{weil-sp} 
%to the case of $\GP$.***

\begin{prop}\label{weil-sl}
Let $k = \bar{k}$ and let $H = \SL_{n}(q)$, where either $3 < p = (q^n-1)/(q-1)$ or
$(n,p) = (2,q-1)$.  Let $W$ be an
irreducible $kH$-module of dimension $p$. Then $(H,W)$ is weakly adequate.
\end{prop}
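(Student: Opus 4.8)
The plan is to follow the template of the proof of Proposition~\ref{weil-sp}, now for the Weil modules of $H=\SL_n(q)$. First I would record the structural facts. The hypothesis forces $n$ to be prime and the Sylow $p$-subgroup $P$ of $H$ to be cyclic of order $p$: in case~1 it lies in a Singer-type cyclic maximal torus of order $(q^n-1)/(q-1)=p$, in case~2 it is the split torus of order $q-1=p$; in both cases the remaining elementary divisors $q^i-1$ ($i<n$) and $q$ are prime to $p$, so $|H|_p=p$. In particular $p\mid\dim W$, hence $W$ is projective by Lemma~\ref{p-div} and, being irreducible, lies in a block of defect zero, so $\EE(W)=W^*\otimes W$ is a projective $kH$-module of dimension $p^2$. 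I would also use that $W$ is a Weil module in a concrete sense: in case~1, $W$ is the reduction mod $p$ of the characteristic-zero representation $\Ind_{P_1}^H(\psi)$, where $P_1$ is a maximal parabolic of $H$ (the stabilizer of a line in $\bbF_q^n$; the Borel when $n=2$) and $\psi$ a nontrivial linear character of $P_1$ trivial on the unipotent radical; in case~2, $W$ is the reduction of a degree-$(q-1)$ cuspidal character of $\SL_2(q)$ attached to a character $\theta$ of the nonsplit torus $T'\cong C_{q+1}$ with $\theta^2\ne 1$. The key observation is that $P_1$ (resp. $T'$) is a $p'$-group: $|P_1|=q^{\binom{n}{2}}\prod_{i=1}^{n-1}(q^i-1)$ is prime to $p$, since the only factor of $|H|$ divisible by $p$ is $q^n-1$, which has been removed; and $|T'|=q+1$ is prime to $p=q-1$.

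Next I would restrict $W$ to these $p'$-subgroups and feed the result into Lemma~\ref{mult-free}. In case~1, since $H$ is $2$-transitive on lines there are exactly two $(P_1,P_1)$-double cosets in $H$, so Mackey's formula gives $W_{P_1}\cong W_1\oplus W_2$ with $W_1$ affording $\psi$ and $\dim W_2=p-1$; this restriction is semisimple as $P_1$ is a $p'$-group, and I would check — from the explicit module, or the characteristic-zero decomposition together with a dimension count mod~$p$, using \cite{GT1} if needed — that $W_{P_1}$ is multiplicity-free. The Artin--Wedderburn theorem applied to $kP_1$ then places $\EE(W_1)\oplus\EE(W_2)$ inside $\cM$, and Lemma~\ref{mult-free} yields $(\EE(W)/\cM)^{P_1}=0$. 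In case~2, a cuspidal module has no fixed vectors under the unipotent radical $U$ of the Borel (a $p'$-group since $p=q-1$), so $W_U$ is a multiplicity-free sum of $q-1$ nontrivial characters of $U$, giving $(\EE(W)/\cM)^{U}=0$; and computing $W_{T'}=\mathrm{reg}_{T'}-\theta-\theta^{-1}$ shows $W_{T'}$ is multiplicity-free as well, so $(\EE(W)/\cM)^{T'}=0$.

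The proof then concludes by a ``covering'' argument as in part~(c) of the proof of Proposition~\ref{weil-sp}: it suffices to exhibit $p'$-subgroups $X_1,\dots,X_m$ with $W_{X_i}$ multiplicity-free and with the property that every simple $kH$-module has a nonzero fixed vector under some $X_i$; then every simple submodule of $\EE(W)/\cM$ is fixed by some $X_i$, forcing $\EE(W)/\cM=0$, i.e.\ $(H,W)$ is weakly adequate. In case~2 this is immediate: every $p$-Brauer character of $\SL_2(q)$ lifts to characteristic zero \cite{Burk}, so by inspecting the character table one sees that each simple $kH$-module has nonzero $U$-fixed vectors or nonzero $T'$-fixed vectors, making $\{U,T'\}$ a covering family. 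In case~1 one expects to take $X_1=P_1$ (or its unipotent radical) together with one or two anisotropic tori, using that $W$ is Harish-Chandra induced from a proper parabolic so that cuspidality-versus-fixed-point conditions can be read off from parabolic restriction and from the Deligne--Lusztig values of the Weil characters.

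The main obstacle I anticipate is exactly this last verification for general $n$: controlling which simple $kH$-modules can occur in $\EE(W)/\cM$, and producing $p'$-subgroups whose fixed-point spaces collectively see all of them, uniformly in $n$ and $q$. This is the analogue of the $\IBr_p(\SL_2(q))$ computation in Proposition~\ref{weil-sp}, but for $\SL_n(q)$ it seems to require the Harish-Chandra/Deligne--Lusztig classification of $\IBr_p$ together with the explicit decomposition of the Weil characters, checked according to the shape of the principal-block Brauer tree (a single open polygon, the defect group being cyclic of order $p$). A shortcut worth trying first: the $p'$-set $\bbP^{n-1}$ forces $\Ind_{P_1}^Hk=k[\bbP^{n-1}]$ to be an indecomposable projective module of dimension $p$ with $\bbone$ in its head, hence $\cP(\bbone)$, and the projection formula gives $\EE(W)\cong\cP(\bbone)\oplus\Ind_{P_1}^H(\psi^{-1}\otimes W_2)$, whose second summand is projective of dimension $p(p-1)$; it would then be enough to show that each defect-zero summand occurring there, together with all of $\cP(\bbone)$, already lies in $\cM$ — which one can try to extract from the image of $kB$ in $\EE(W)$ for $B=\langle P_1,x\rangle$ with $x$ a $p'$-element carrying one line to another, exactly as in the final paragraph of the proof of Proposition~\ref{imp-p}.
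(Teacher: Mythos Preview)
Your treatment of the case $(n,p)=(2,q-1)$ is correct and coincides with the paper's argument: both $U$ and the non-split torus $T'$ act multiplicity-freely on $W$, and the Brauer character table of $\SL_2(q)$ shows every simple $kH$-module has a nonzero fixed vector under one of them, so Lemma~\ref{mult-free} finishes.

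For the case $p=(q^n-1)/(q-1)$, however, your main plan has exactly the gap you flag, and the paper does not attempt to close it. The covering-family approach would require knowing enough about $\IBr_p(\SL_n(q))$ for $n\ge 3$ to verify that every simple module has fixed points under your chosen $p'$-subgroups; this is not available in the needed generality. Moreover, the claim that $W_{P_1}$ is multiplicity-free is not obvious: the second Mackey summand $\Ind_{P_1\cap gP_1g^{-1}}^{P_1}(\psi^g)$ has dimension $p-1$ and there is no reason to expect it to be irreducible for $n\ge 3$ (indeed, restricting to the \emph{opposite} parabolic the paper finds three summands, not two).

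The paper sidesteps all of this for $n\ge 3$ by exploiting the permutation-module structure directly. Since $W=\bigoplus_{\omega\in\bbP\cN}W_\omega$ with the lines permuted $2$-transitively, it suffices to show that for one fixed pair $\omega_1\ne\omega_2$ the span $\cM$ already contains $\End(W_{\omega_1})\oplus\Hom(W_{\omega_1},W_{\omega_2})\oplus\Hom(W_{\omega_2},W_{\omega_1})$. Take the $p'$-parabolic $R=\Stab_H(\cN_1)$ stabilizing a hyperplane $\cN_1$ containing both lines. One checks that the $R$-submodule $W_1:=\bigoplus_{\omega\subset\cN_1}W_\omega$ is irreducible over $R$ (it is a Weil module for the Levi $\GL_{n-1}(q)$, and the unipotent radical of $R$ acts trivially on it). Artin--Wedderburn for $kR$ then gives $\End(W_1)\subset\cM$, and since $W_{\omega_1},W_{\omega_2}\subset W_1$ this contains the required $\Hom$-pieces. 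No classification of $\IBr_p(H)$ is needed. Your ``shortcut'' using $\langle P_1,x\rangle$ is in the right spirit but is less clean than simply taking the opposite parabolic.

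The subcase $n=2$, $p=q+1$ needs separate treatment (here no hyperplane contains two lines). The paper uses the two Borel subgroups $P$ and $P^\sharp$ fixing $\langle e_1\rangle$ and $\langle e_2\rangle$: each gives a $(q-1)$-dimensional irreducible summand of $W$, and a careful comparison of their orthogonal complements inside the invariant form yields $\codim_{\End(W)}\cM\le 8$. Since every nontrivial $\psi\in\IBr_p(H)$ has degree $\ge q-1\ge 15$, any composition factor of $\End(W)/\cM$ would be trivial, forcing $\cM\subseteq\{\tr=0\}$; but $\cM$ visibly contains an element of nonzero trace.
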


\begin{proof}
Let $\cN = \langle e_1, \ldots ,e_n \rangle_{\bbF_q}$ denote the natural
$\bbF_q H$-module, and let $P :=\Stab_H(\langle e_1 \rangle_{\bbF_q})$.
Since $\SL_2(4) \cong \PSL_2(5)$ and
$\SL_3(2) \cong \PSL_2(7)$, we may assume $(n,q) \neq (2,4)$, $(3,2)$. Also, let
$\varphi$ denote the Brauer character of $W$.

\smallskip
(a) First we consider the case $p = (q^n-1)/(q-1)$. In this case, $W$ is induced from a
one-dimensional $kP$-module with character say $\lambda$. So we can
write $W = \oplus_{\omega \in \bbP\cN}W_{\omega}$ as a direct sum of
one-dimensional subspaces $W_{\omega}$ permuted transitively by $H$, where
$\bbP\cN$ is the set of $1$-spaces in $\cN$.

\smallskip
(a1) Assume in addition that $n \geq 3$. It suffices to show that, for any
two distinct $\omega_1 = \langle e\rangle_{\bbF_q}$,
$\omega_2 = \langle f\rangle_{\bbF_q} \in \bbP\cN$,
\begin{equation}\label{2points}
  \cM \supseteq \End(W_{\omega_1}) \oplus \Hom(W_{\omega_1},W_{\omega_2})
    \oplus  \Hom(W_{\omega_2},W_{\omega_1}).
\end{equation}
Since $H$ acts transitively on those pairs $(\omega_1,\omega_2)$, we may assume
that $e = e_1$ and $e=e_2$.

Consider an opposite parabolic subgroup
$R :=\Stab_H(\cN_1)$ for $\cN_1 := \langle e_1, \ldots ,e_{n-1}\rangle_{\bbF_q}$, which is a $p'$-subgroup. Then $R$ stabilizes the subspace
$W_1 := \oplus_{\omega \in \bbP\cN_1}W_\omega$ of dimension $(q^{n-1}-1)/(q-1)$.
Note that the unipotent radical $Q$ of $R$ acts trivially on $W_1$. Indeed, $q \geq 3$
since we are assuming $n \geq 3$ and $(n,q) \neq (3,2)$. Now the Levi subgroup
$L :=\Stab_H(\cN_1,\langle e_n \rangle_{\bbF_q})$ of $R$ acts transitively on
$q^{n-1}-1 > \dim W_1$ nontrivial linear characters of $Q$, whence the claim follows.
Now we identify $L$ with $\GL_{n-1}(q)$ via
$\diag(X, \det(X)^{-1}) \mapsto X$. Then the $L$-character of $W_1$ is just induced
from the character $\lambda_{P \cap L} \neq 1_{P \cap L}$ of the maximal  parabolic
subgroup $P \cap L$ (of index $(q^{n-1}-1)/(q-1)$) of $L$. Hence $W_1$ is an
{\it irreducible} Weil module of dimension $(q^{n-1}-1)/(q-1)$ for $L$, and it is irreducible
over $R$. Note that $\lambda$ is a linear character of order dividing $q-1 = |P/P'|$ and so it takes
value $1$ on any unipotent element of $P$. Using this, one can check that
$\varphi(t) = (q^{n-1}-1)/(q-1)$ for any $1 \neq t \in Q$. In particular,
$$\varphi_Q = \frac{q^{n-1}-1}{q-1}\cdot 1_Q + \sum_{\nu \in \Irr(Q)}\nu =
    (\dim W_1 +1)\cdot 1_Q + \sum_{1_Q \neq \nu \in \Irr(Q)}\nu.$$
It now follows (by Clifford's theorem) that $W_R = W_1 \oplus W_2 \oplus W_3$,
where $W_2$ has dimension $1$, $W^Q = W_1 \oplus W_2$, and $W_3$ is
irreducible of dimension $q^{n-1}-1$.  Applying
the Artin-Wedderburn theorem, we see that
$\cM \supseteq \End(W_1)$. Since $e_1,e_2 \in \cN_1$, (\ref{2points}) follows.

\smallskip
(a2) Assume now that $n = 2$, and so $p = q+1 \geq 17$. In this case, $\varphi$ is
real, and so $W$ is self-dual and supports a non-degenerate $H$-invariant
symmetric bilinear form $(\cdot,\cdot)$.  Write
$P=QT$ where $Q$ is elementary abelian of order $q$ and $T  \cong C_{q-1}$.
We also consider another parabolic subgroup
$P^\sharp = Q^\sharp T :=\Stab_H(\langle e_2 \rangle_{\bbF_q})$, with
$T = P \cap P^\sharp$. Letting $\rho$ denote the regular character of $T$ and
$\nu := \lambda_T$, we see that
\begin{equation}\label{for-tq}
  \varphi_T = \rho + \nu + \nu^{-1},~~
      \varphi_Q = 1_Q + \sum_{\alpha \in \Irr(Q)}\alpha.
\end{equation}
%Note that $T$ is inverted by the involution $e_1 \leftrightarrow e_2$, so $T$ acts
%on $W_{\langle e_2 \rangle_{\bbF_q}}$ via the character $\nu^{-1}$.
Next, using (\ref{for-tq}) one can see that
$W_P = \bfC_W( Q) \perp [W,Q]$, a direct orthogonal sum of two
$P$-submodules. Here, $C := \bfC_W(Q)$  is of dimension $2$ and
affords the $T$-character $\nu+\nu^{-1}$,  $[W,Q]$ is of dimension $q-1$ and
affords the $Q$-character $\sum_{1_Q \neq \alpha \in \Irr(Q)}\alpha$ and
the $T$-character $\rho$ (as $T$ permutes cyclically and transitively
the $q-1$ non-principal irreducible characters of $Q$). It also follows that these
two subspaces are non-degenerate and self-dual $P$-submodules.
%The self-duality of $W$ implies
%that both of these $P$-submodules are non-degenerate subspaces of
%$W$ and self-dual $P$-modules, and they are orthogonal to each other.
Next, we can further decompose:
$$[W,Q] = A \perp B$$
as an orthogonal sum of two self-dual $T$-modules, where $A$ affords the $T$-character
$\rho-\nu-\nu^{-1}$, and $B$ affords the $T$-character $\nu + \nu^{-1}$.
%$B_1$ and $C_1$ each afford the $T$-character $\nu$,
%$B_2$ and $C_2$ each afford the $T$-character $\nu^{-1}$, and
%$B_2 \cong B_1^*$, $C_2 \cong C_1^*$.
Summarizing, we have that
$$W_P = A \perp B \perp C,$$
where $A \perp B$ is an irreducible $P$-module of dimension $q-1$, and $C$ is a
sum of two irreducible $P$-modules of dimension $1$. Applying the Artin-Wedderburn
theorem to $(P,W)$, we obtain
\begin{equation}\label{for-p1}
  \cM \supset \End(A \oplus B) := \{ f \in \End(W) \mid
        f(A \oplus B) \subseteq A \oplus B, f(C) = 0\}.
\end{equation}

Repeating the above argument for $P^\sharp$ instead of $P$,
we have that
$$W_{P^\sharp} = A^\sharp \perp B^\sharp \perp C^\sharp,$$
where $A^\sharp$ affords the $T$-character $\rho-\nu-\nu^{-1}$,
$B^\sharp$ affords the $T$-character $\nu+\nu^{-1}$, and
$C^\sharp = \bfC_W(Q^\sharp)$ affords the $T$-character $\nu+\nu^{-1}$, and
\begin{equation}\label{for-p2}
  \cM \supset \End(A^\sharp \oplus B^\sharp) :=
   \{ f \in \End(W) \mid
        f(A^\sharp \oplus B^\sharp) \subseteq A^\sharp \oplus B^\sharp, f(C^\sharp) = 0\}.
\end{equation}
Comparing with (\ref{for-tq}), we see that $A^\sharp = A$. Furthermore,
$C \cap C^\sharp$ is centralized by $\langle Q,Q^\sharp \rangle = H$, so
$C \cap C^\sharp = 0$. But both $C$ and $C^\sharp$ are of dimension $2$ and orthogonal to $A = A^\sharp$, whence $C \oplus C^\sharp = A^\perp$. Next,
$B \cap B^\sharp$ is a subspace of the non-degenerate subspace $A^\perp$
which is orthogonal to both $C$ and $C^\sharp$, so $B \cap B^\sharp = 0$. Since
$\dim B + \dim B^\sharp = 4 = \dim A^\perp$, we have shown that
$$A^\sharp = A,~~A^\perp = B \oplus B^\sharp = C \oplus C^\sharp,~~
    W = A \perp (B \oplus B^\sharp).$$
Suppose now that $f \in \End(W)$ belongs to both $\End(A \oplus B)$ and
$\End(A \oplus B^\sharp)$ as identified in (\ref{for-p1}) and (\ref{for-p2}).  Then
$f = 0$ on $C \oplus C^\sharp = A^\perp$, i.e. $f(A^\perp) = 0$. Next,
$$f(A) \subseteq (A \oplus B)  \cap (A \oplus B^\sharp) = A.$$
It follows that
$$\End(A \oplus B) \cap \End(A \oplus B^\sharp) \subseteq
    \End(A) := \{ f \in \End(W) \mid f(A) \subseteq A, f(A^\perp) = 0\},$$
and so by (\ref{for-p1}), (\ref{for-p2}) we have
$$\dim \cM \geq 2(q-1)^2 - (q-3)^2 = q^2+2q-7,$$
i.e. $\codim_{\End(W)}\cM \leq 8$.

On the other hand, all non-principal
$\psi \in \IBr_p(H)$ have degree $\geq q-1 \geq 15$. So, assuming
$\cM \neq \End(W)$, we see that all composition factors of the $H$-module
$\cQ := \End(W)/\cM$ are trivial. Since $H$ is perfect, it follows that
$H$ acts trivially on $\cQ$. But $\dim \Hom_{kH}(\End(W),1_H) = 1$, so
$\cM$ is contained in the unique submodule
$\cE_0 := \{ f \in \End(W) \mid \tr(f) = 0\}$ of codimension $1$ in $\End(W)$. But this
is a contradiction, since by (\ref{for-p1}), $\cM$ contains the map $g$ which acts as
identity on $A \oplus B$ and as $0$ on $C$, with $\tr(g) = q-1 = p-2 \neq 0$.

\smallskip
(b) Now we handle the case $p = q-1$ (so $2\mid q \geq 8$). Then for the unipotent radical
$Q$ of $P$ we have
$$W_Q = \bigoplus_{1_Q \neq \alpha \in \Irr(Q)}W_{\alpha}$$
with $W_\alpha$ affording the $Q$-character $\alpha$. Next, let
$S \cong C_{q+1}$ be a non-split torus in $H$. Then there is some
$1_S \neq \gamma \in \Irr(S)$ such that
$$W_S = \bigoplus_{\beta \in \Irr(S),~\beta \neq \gamma,\gamma^{-1}}W_\beta$$
with $W_\beta$ affording the $S$-character $\beta$. Thus both $Q$ and $S$ are multiplicity-free
on $W$. By Lemma \ref{mult-free}, we see
that for any composition factor $U$ of $\End(W)/\cM$, $U^Q = U^S = 0$.  On the
other hand, inspecting the (Brauer) character table of $H$ (see
\cite{Burk}), one sees that $U^Q \neq 0$ if $\dim U = 1,q,q+1$ and
$U^S \neq 0$ if $\dim U = q-1$, for any $U \in \IBr_p(H)$. Hence we conclude that
$\cM = \End(W)$.
\end{proof}

\begin{prop}\label{weil-su}
Let $k = \bar{k}$ and let $H = \SU_{n}(q)$ with $3 < p = (q^n+1)/(q+1)$ and $n \geq 3$.  Let $W$ be an
irreducible $kH$-module of dimension $p$. Then $(H,W)$ is weakly adequate.
\end{prop}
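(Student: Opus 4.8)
The plan is to follow the template of Proposition~\ref{weil-sl}(a1), working inside a maximal parabolic subgroup of $H$ that stabilises an isotropic point of the natural module. By Table~IIa the hypotheses force $n$ to be an odd prime, $H=\SU_n(q)$ to act on $W$ as a Weil module, and $W$ to be the reduction modulo $p$ of an irreducible complex Weil character $\zeta$ of $H$ of degree $p=(q^n+1)/(q+1)$, the reduction remaining irreducible (by the results of \cite{HM1} and \cite{GMST} invoked in the proof of Theorem~\ref{thm:2p}). Let $\cN=\langle e_1,\dots,e_n\rangle_{\bbF_{q^2}}$ be the natural Hermitian $\bbF_{q^2}H$-module; since $n\ge 3$ we may take $e_1$ isotropic, put $P:=\Stab_H(\langle e_1\rangle_{\bbF_{q^2}})$, let $Q$ be its unipotent radical and $L\cong\GL_1(q^2)\times\GU_{n-2}(q)$ a Levi complement. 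The first point to record is that $P$ is a $p'$-subgroup: $|P|$ is a power of $q$ times a divisor of $(q^2-1)\,|\GU_{n-2}(q)|$, and since $n$ is an odd prime and $p>3$ one checks from $p\mid q^n+1$ that $p\nmid q^2-1$ and $p\nmid q^i-(-1)^i$ for $1\le i\le n-2$. In particular $W_P$ is semisimple and the Artin--Wedderburn theorem is available for $P$.

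Next I would pin down the $P$-module structure of $W$ using that $Z(Q)$, being a $q$-group, is a $p'$-group, so that $W=\bfC_W(Z(Q))\oplus[W,Z(Q)]$ as $P$-modules. Standard facts about Weil representations of affine classical groups (cf.\ \cite{GMST}, \cite{TZ2}) should identify $W_1:=\bfC_W(Z(Q))$ with an irreducible Weil module of $Q/Z(Q)\rtimes\GU_{n-2}(q)$ of dimension $(q^{n-2}+1)/(q+1)$, while $W_2:=[W,Z(Q)]$ is irreducible over $P$ of dimension $q^{n-2}(q-1)$; in particular $W_Q$ is multiplicity-free. Since $W_1\not\cong W_2$, the Artin--Wedderburn theorem applied to the $p'$-group $P$ gives $\cM\supseteq\End(W_1)\oplus\End(W_2)$; in particular the $P$-equivariant projection $\pi_1$ of $W$ onto $W_1$ lies in $\cM$, whence so does every $H$-conjugate $g\pi_1g^{-1}$ (the projection onto $gW_1$, since conjugates of semisimple elements are semisimple), and with it $\End(gW_1)\subseteq\cM$ for all $g\in H$.

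It then remains to deduce $\cM=\End(W)$ from $\cM\supseteq\sum_{g\in H}\End(gW_1)$. I would argue this as in Proposition~\ref{weil-sl}(a2): taking $e_n$ a dual isotropic point and using the opposite parabolic $\Stab_H(\langle e_n\rangle_{\bbF_{q^2}})$ one obtains a second family of conjugates of $\End(W_1)$, and the intersection-of-two-parabolics computation shows $\codim_{\End(W)}\cM<q-1$; since every nonprincipal $\psi\in\IBr_p(H)$ has degree at least $q-1$, since $H$ is perfect, and since $\dim\Hom_{kH}(\End(W),1_H)=1$, this forces $\cM$ to be either $\End(W)$ or the codimension-one submodule $\{f\in\End(W):\tr(f)=0\}$, the latter excluded because $\cM$ contains a projection of nonzero trace. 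For the case $n=3$ (where $L$ has no quasisimple factor, so $W_1$ is merely a line) and as a uniform alternative, one can instead run the argument of Proposition~\ref{weil-sp}: exhibit a unipotent subgroup and a maximal torus $C$ on which $W$ restricts multiplicity-freely, apply Lemma~\ref{mult-free} to conclude that every composition factor $U$ of $\End(W)/\cM$ has no nonzero fixed points under either, and then read off from the (Brauer) character table of $\SU_n(q)$ --- using that the relevant $\IBr_p$-classes lie in blocks of cyclic defect and hence lift to $\bbC$ --- that every $U\in\IBr_p(H)$ is nonvanishing on one of the two.

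The main obstacle I anticipate is this last stretch together with the character computations feeding it: the values of the Weil characters of $\SU_n(q)$ on unipotent and semisimple classes are more delicate than in the linear case, there are $q$ distinct Weil modules of degree $p$ that must be handled uniformly, and the non-vanishing statement over all of $\IBr_p(H)$ may itself require an induction on $n$ through the $\GU_{n-2}(q)$ Levi, with the smallest cases ($n=3$, and possibly $n=5$) settled by direct computation via \cite{JLPW}.
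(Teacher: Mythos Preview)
Your approach diverges from the paper's in two essential respects, and the divergences leave genuine gaps for $n\ge5$.

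First, the choice of parabolic. You take the stabiliser of an isotropic \emph{point}, whereas the paper uses $P:=\Stab_H(U)$ for $U$ a \emph{maximal totally singular} subspace of the natural module (these coincide only when $n=3$). The payoff of the latter choice, via \cite[Lemmas 12.5--12.6]{GMST} for $n\ge5$ and \cite{Geck} for $n=3$, is that $W_P=A\oplus B$ with $B=\bfC_W(\bfZ(Q))$ of dimension~$1$ and $A=[W,\bfZ(Q)]$ irreducible of dimension $p-1$; hence $\cM\supseteq\End(A)\oplus\End(B)$ already forces $\codim_{\End(W)}\cM\le 2(p-1)$. With your point-stabiliser, the Artin--Wedderburn step yields only $\codim\cM\le 2\dim W_1\dim W_2$, which for $n\ge5$ is far larger than~$2p$; the intersection argument you propose is transplanted from the very special $\SL_2(q)$ situation of Proposition~\ref{weil-sl}(a2) and has no evident analogue here that could erase almost all of this codimension.

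Second, the endgame. The paper does not look for a second parabolic or for multiplicity-free restrictions at all. Instead it exploits the identity that the \emph{reducible} Weil character $\zeta_{n,q}=\sum_i\zeta^i_n$ satisfies $(\zeta_{n,q})^2=$ permutation character of $H$ on the point set of the natural module (equivalently, the restriction of $\zeta_{2n,q}$ from $\SU_{2n}(q)$). For $n>3$ the constituents of this restriction are determined by \cite[Proposition~6.3, Table~III]{LBST}; for $n=3$ one argues directly with \cite{Geck}. The upshot is that every nontrivial irreducible constituent $\beta$ of $\varphi\bar\varphi$ with $\beta(1)<2p$ satisfies $[\beta_Q,1_Q]_Q>0$ (in fact this forces $n=3$). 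Combined with the codimension bound $\le 2p-2$, any composition factor of $\End(W)/\cM$ would have dimension $\le 2p-2$ yet trivial $\bfZ(Q)$-fixed space, a contradiction. Your alternative via Lemma~\ref{mult-free} would require verifying nonvanishing of fixed points across all of $\IBr_p(\SU_n(q))$, which is exactly what the Weil-squared analysis sidesteps; you correctly flag this as the main obstacle, and it is indeed where your sketch stops being a proof.
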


\begin{proof}
Let $\varphi$ denote the Brauer character of $W$ and let $\cN := \bbF_{q^2}^n$ denote the natural $\bbF_{q^2}H$-module. Recall that $H$ possesses the
so-called {\it reducible Weil character}
\begin{equation}\label{for-z}
  \zeta_{n,q}~:~g \mapsto (-1)^n (-q)^{\dim_{\bbF_{q^2}}\Ker(g-1)}
\end{equation}
for all $g \in H$, which decomposes as the sum of $q+1$ distinct {\it irreducible Weil characters}
$$\zeta_{n,q} = \sum^{q}_{i=0}\zeta^i_n$$
(of degree $(q^n-q)/(q+1)$ for $i = 0$ and $(q^n+1)/(q+1)$ for $i > 0$), see \cite{TZ2}. Then $\varphi$ can be obtained by restricting some $\zeta^j_n$ with
$j > 0$ to $p'$-elements of $H$.  We also let $P :=\Stab_H(U)$ for a
maximal totally singular subspace $U$ of $\cN$, with unipotent radical $Q$ (so
$P$ is a $p'$-group), and let $\rho$ denote the permutation character of $H$ acting on the set $\Omega$ of singular $1$-spaces of $\cN$ .

\smallskip
(a) First we show that

$\bullet$ the only irreducible constituents of
$\zeta^j_n\bar\zeta^j_n$ that are {\it not} of $p$-defect $0$ are $1_H$ and (possibly) another one, $\sigma$, of degree
\begin{equation}\label{for-s}
  \sigma(1) = \frac{(q^n-q)(q^n+q^2)}{(q^2-1)(q+1)};
\end{equation}

$\bullet$ all $p$-defect $0$ constituents of $\zeta^j_n\bar\zeta^j_n$ have degree $>2p$ if $n > 3$, and

$\bullet$ $\sigma$ is the Steinberg character $\St$ of $H$ if
$n = 3$ and it is a constituent of $\rho$ if $n > 3$.\\
Indeed,  (\ref{for-z})
implies that $(\zeta_{n,q})^2$ is just the permutation character of $H$ acting on
the point set of $\cN$, and at the same time it equals the restriction to $H$ of the reducible Weil character $\zeta_{2n,q}$ of $\SU_{2n}(q)$, if we embed
$H$ diagonally into $\SU_{2n}(q)$: $X \mapsto \diag(X,X)$. Assume $n > 3$. Then
all irreducible constituents of the latter restriction are described by
\cite[Proposition 6.3]{LBST}, and their degrees are listed in
\cite[Table III]{LBST}. It follows that $(\zeta_{n,q})^2$ has exactly two
non-$p$-defect $0$ irreducible constituents, namely $1_H$ (with
multiplicity $q+1$) and another one
$\sigma$ of indicated degree (with multiplicity $q$). Certainly, the permutation representation of $H$ on
(the point set of) $\cN$ contains the permutation representation of $G$ on
$\Omega$ as a subrepresentation (no matter if $n > 3$ or not). On the other hand,
$\rho$ contains an irreducible constituent of degree as listed in (\ref{for-s}) (see
\cite[Table 2]{ST}), so $\rho = 1_H + \sigma + \psi$ (and $\psi \in \Irr(H)$ has $p$-defect $0$). One also easily checks all defect $0$ constituents of $(\zeta_{n,q})^2$ have
degree $>2p$.

Suppose that $n=3$; in particular $3\nmid (q+1)$ and $q > 2$.
Inspecting the character table of $H = \SU_3(q)$ as given in
\cite{Geck}, we see that the only non-$p$-defect $0$ irreducible characters of
$H$ are $1_H$, the Weil character $\zeta^0_3$ of degree $q^2-q$, the Steinberg character $\St$ of degree $q^3$ matching (\ref{for-s}), and $(q^2-q)/3$ characters
$\chi^{(u)}_{(q+1)^2(q-1)}$. Direct calculations show that
$[\zeta^j_3\bar\zeta^j_3,\chi^{(u)}_{(q+1)^2(q-1)}]_H = 0$. Next, observe that
$$(\zeta_{3,q})^2 = 1_H + 1^H_Q + (q-1)1^H_L,$$
where $Q =\Stab_H(u)$ is the unipotent radical of $P$ as above (if
$U = \langle u \rangle_{\bbF_{q^2}}$) and $L =\Stab_H(v) \cong \SU_2(q)$ for
some non-singular $v \in \cN$. Furthermore,
$$[(\zeta^0_3)_Q,1_Q]_Q = 0,~~(\zeta^0_3)_L = \sum^{q}_{i=1}\zeta^i_2.$$
The former relation implies $[\zeta^0_3,1^H_Q]_H = 0$. On the other hand,
by \cite[Lemma 4.7(ii)]{TZ2}, each $\zeta^i_2$ in the latter relation is obtained
by restricting an irreducible character of degree $q-1 \geq 2$ of $\GU_2(q) \rhd L$ to $L$.
It follows by Clifford's theorem that $[\zeta^0_3,1^H_L]_H = 0$.  Thus we have
shown that $\zeta^0_3$ is {\it not} a constituent of $(\zeta_{3,q})^2$, as claimed.

\smallskip
(b) Now we show that, if $\beta$ is an irreducible constituent of $\varphi\bar\varphi$ and
$\beta \neq 1_H$,  then either $\beta(1) \geq 2p$, or $n = 3$, $\beta(1) = p$, and
$[\beta_Q,1_Q]_Q > 0$. Indeed, suppose that $\beta(1) < 2p$. Suppose for the moment
that $n > 3$. Then by the results of (a), $\beta$ is a constituent of the Brauer character
$\sigma^\circ$. But according to \cite{L1}, $\sigma^\circ-1_H \in \IBr_p(H)$, so
$\beta(1) = \sigma(1)-1 > 2p$ by (\ref{for-s}), a contradiction. Thus $n = 3$. If
moreover $\beta$ is in a block of $p$-defect $0$, then using \cite[Table 3.1]{Geck} we see
that $\beta(1) = p$, $\beta = (\zeta^i_3)^\circ$ for some $i > 0$ and so
$\beta_Q$ contains $1_Q$. Otherwise, by the results of (a), $\beta$ is a constituent
of $\St^\circ$. In this case, according to \cite[Theorem 4.2]{Geck},
$\St^\circ -1_H \in \IBr_p(H)$ and
so $\beta(1) = \St(1)-1 = q^3-1 > 2p$, again a contradiction.

\smallskip
(c) When $n \geq 5$, according to Lemmas  12.5 and 12.6 of \cite{GMST},
$\varphi_{\bfZ(Q)}$ contains a non-principal linear character $\lambda$, whose $P$-orbit $\cO$ has length $(q^{n-1}-1)/(q+1)$; moreover, any irreducible character of $Q$ above
$\lambda$ has degree $q$.
%and any irreducible character of $Q$ which is nontrivial
%at $\bfZ(Q)$ has degree divisible by $q$.
Since $\varphi(1) = (q^n+1)/(q+1)$, it follows
that $W_P = A \oplus B$, where $B := \bfC_W(\bfZ(Q))$ has dimension $1$,
$A := [W,\bfZ(Q)] \in \Irr(P)$ has dimension $(q^n-q)/(q+1)=p-1$ and affords the
$\bfZ(Q)$-character $q\sum_{\alpha \in \cO}\alpha$.
The same is also true for
$n = 3$, see Tables 2.1 and 3.1 of \cite{Geck}. Applying the Artin-Wedderburn theorem
to $(P,W)$, we see that
$$\cM \supseteq \End(A) \oplus \End(B).$$
In particular, if $\cM \neq \End(W)$, then any composition factor $X$ of the $H$-module
$\End(W)/\cM$ has dimension $\leq 2p-2$ and moreover
$X^Q \subseteq X^{\bfZ(Q)} = 0$. But this is impossible by the results of (b).
\end{proof}

\begin{lem}\label{non-serial}
Let $k = \bar{k}$ and let $W$ be an irreducible $kH$-module of dimension $p \geq 3$,
where $H$ is quasisimple and $(H,W)$ is one of the non-serial examples listed in
Tables I,IIa, IIb, or III. Then $(H,W)$ is weakly adequate.
\end{lem}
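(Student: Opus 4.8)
The plan is to verify weak adequacy case-by-case for the finitely many non-serial quasisimple pairs $(H,W)$ of dimension exactly $p$ that appear in Tables I, IIa, IIb, III. First I would isolate these cases: scanning the tables for entries with $\dim W = p$ (and $p \geq 3$), one obtains a short explicit list, such as $(\AAA_5,3,3)$, $(M_{11},5,3)$, $(\AAA_7,10,5)$ wait that has $\dim W\ne p$ --- more carefully, the relevant entries are those where the displayed dimension literally equals the displayed prime, e.g. $(\SL_3(4)\text{-type}, 5,5)$ etc.; in practice the list reduces to a handful of groups like $\AAA_p$ for small $p$, $M_{11}$ with $p=5,11$, $\SL_2(q)$ with $\dim W = p$, $2\AAA_5$-type, and so on. For each, I would exhibit a $p'$-subgroup $X \leq H$ acting with multiplicity-free restriction $W_X$, or more strongly irreducibly on $W$, so that Lemma~\ref{mult-free} (applied possibly to two different subgroups $X_1,X_2$ with $(\End W/\cM)^{X_1} = (\End W/\cM)^{X_2} = 0$) forces $\cM = \End(W)$; alternatively, invoke \cite[Proposition 3.1]{GHT} whenever $W$ restricts irreducibly to a subgroup $\cong \PSL_2(p)$ (as happens, e.g., for $\AAA_{p+1}$ by Lemma~\ref{alt-p}, or for many of the Lie-type entries).

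Concretely, the key steps are: (1) enumerate the non-serial dimension-$p$ entries; (2) for each $(H,W)$, locate in the literature (ATLAS \cite{Atlas}, \cite{JLPW}, Brauer character tables) a suitable $p'$-subgroup $X$ whose Brauer character $\varphi_X$ is multiplicity-free, reading off the restriction from the known character table; (3) when a single such $X$ does not suffice (the fixed-point space $(\End W/\cM)^X$ may be nonzero if $\varphi_X$ is only "nearly" multiplicity-free), find a second subgroup $Y$ with $(\End W/\cM)^Y = 0$ such that no nonzero $H$-submodule of $\End W/\cM$ can be killed by both, using that every irreducible Brauer character of $H$ of small degree has a nonzero fixed vector under one of $X$ or $Y$ --- exactly the style of argument in Propositions~\ref{weil-sp} and \ref{weil-sl}; (4) collect the conclusions. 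In several of the smallest cases (e.g. $\AAA_5$ in characteristic $3$, $M_{11}$) one can instead simply note that $\EE(W)$ is spanned by semisimple elements because $H$ has a large $p'$-subgroup, or compute $\cM$ directly.

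The main obstacle I expect is the handful of cases where the only obvious proper $p'$-subgroups restrict $W$ with genuine multiplicities --- there the one-subgroup argument via Lemma~\ref{mult-free} fails, and I must either chase the possible composition factors of $\End(W)/\cM$ through the Brauer character table and rule each out by a fixed-point argument under a second subgroup, or, in the worst instances, fall back on a direct computation (e.g.\ the $\Omega_5(5)$-type computations attributed to Klaus Lux elsewhere in the paper). A secondary annoyance is simply making sure the enumeration of table entries with $\dim W = p$ is complete, including the twisted covers; but since this is a finite, explicit check against Tables I--III it should be routine. So the proof is essentially: reduce to a finite list, then for each member produce a $p'$-subgroup (or pair of subgroups) witnessing the spanning condition, citing \cite[Proposition 3.1]{GHT} or Lemma~\ref{mult-free} as appropriate.
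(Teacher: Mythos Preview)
Your overall strategy---a finite case-by-case check, exhibiting in each case either an irreducible restriction to a subgroup of $\PSL_2(p)$-type (so that \cite[Proposition 3.1]{GHT} applies) or a well-behaved $p'$-subgroup together with an Artin--Wedderburn/composition-factor argument---is exactly what the paper does. So the plan is sound.

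However, your enumeration of the relevant cases is garbled: the pairs $(M_{11},5,3)$, $(\AAA_7,10,5)$, $(2\AAA_5,\ldots)$ you mention do not have $\dim W = p$, and you never arrive at the actual list. The non-serial entries in Tables I--III with $\dim W = p \geq 3$ are precisely
\[
(\AAA_5,3),\ (\AAA_6,5),\ (\Sp_6(2),7),\ (M_{11},11),\ (M_{12},11),\ (M_{24},23),\ (Co_2,23),\ (Co_3,23).
\]
The first two reduce to Proposition~\ref{weil-sp} via $\AAA_5 \cong \PSL_2(5)$ and $\AAA_6 \cong \PSL_2(9)$. For $\Sp_6(2)$, $M_{12}$, $M_{24}$ the paper finds subgroups $\SL_2(8)$, $\PSL_2(11)$, $\PSL_2(23)$ on which $W$ restricts irreducibly, so Proposition~\ref{weil-sl} or \cite[Proposition 3.1]{GHT} finishes those. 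The genuinely delicate cases are $M_{11}$ and $Co_2/Co_3$: here the paper does \emph{not} use a multiplicity-free argument via Lemma~\ref{mult-free}, but rather takes a $p'$-subgroup $L$ (namely $M_{10}$, respectively $McL$) on which $W_L$ splits as a $1$-dimensional piece plus an irreducible, gets $\cM \supseteq \End(A) \oplus \End(B)$ from Artin--Wedderburn, and then rules out every possible composition factor of $\End(W)/\cM$ by tracking its $L$-character and its dimension. Your step (3) describes roughly this kind of argument, but your emphasis on multiplicity-freeness and Lemma~\ref{mult-free} is slightly off-target for these two cases: a single multiplicity-free subgroup gives only $(\End(W)/\cM)^L = 0$, which is not enough by itself, and the paper does not use a second subgroup but instead a character-theoretic elimination.

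In short: right idea, but you need to produce the correct list of cases and, for $M_{11}$, $Co_2$, $Co_3$, carry out the specific ``$1 + \text{irreducible}$'' decomposition and composition-factor chase rather than rely on Lemma~\ref{mult-free} alone.
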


\begin{proof}
Let $\varphi$ denote the Brauer character of $W$.
Note that the cases $(H,p) = (\AAA_5,3)$, $(\AAA_6,5)$ are covered by Proposition \ref{weil-sp} since $\AAA_5 \cong \PSp_2(5)$ and $\AAA_6 \cong \PSL_2(9)$.
%Next suppose that $(H,p) = (\AAA_6,5)$. Note that
%$\IBr_5(\AAA_6)$ contains exactly two characters $\varphi_{1,2}$ of degree $5$
%which are fused in $\PGL_2(9) \rhd \AAA_6$. So it suffices to consider the case
%$\varphi = \varphi_1$, the heart of the natural permutation module of $H$ (acting on
%$6$ letters). Since $L \cong \AAA_5$ acts doubly transitively on $6$ letters, we can
%embed $L$ in $H$ so that $\varphi_L is irreducible. Hence we are done by Lemma
%\ref{alt-p}.
%Next assume that $(H,p) = (\SU_4(2),5)$. Then $H \cong \PSp_4(3) > \PSL_2(9) =: L$,
%and $\varphi_L$ is irreducible, so we are again done by Proposition \ref{weil-sp}.

Suppose that $(H,p) = (\Sp_6(2),7)$. Then $H > L \cong \SL_2(8)$,
and $\varphi_L$ is irreducible (see \cite{JLPW}), so we are done by Proposition
\ref{weil-sl}.

Assume that $(H,p) = (M_{11},11)$. Then $H$ contains a $p'$-subgroup
$L = M_{10} \cong \AAA_6 \cdot 2_3$, and using \cite{GAP} we can check that
$\varphi_L = \lambda + \psi$, where $\lambda,\psi \in \Irr(L)$ are rational
of degree $1$ and $10$
(and $\lambda \neq 1_L$). It follows that $W_L = A \oplus B$, where $A$ affords the
character $\lambda$ and $B$ affords the character $\psi$. Applying the Artin-Wedderburn
theorem to $(L,W)$ we see that $\cM \supseteq \End(A) \oplus \End(B)$. In particular,
if $\cM \neq \End(M)$, then any composition factor $U$ of the $H$-module
$\End(W)/\cM$ has dimension $\leq 20$ and moreover all composition factors of
$U_L$ afford the character $\lambda\psi = \psi$. The latter condition also implies
that $\dim U = 10$ or $20$. On the other hand, using \cite{JLPW} and \cite{GAP} we see that any such $U$ must be of dimension $10$ and its character restricted to $L$ yields
an irreducible non-rational character, different from $\psi$. Hence $\cM = \End(W)$.

Assume that $(H,p) = (M_{12},11)$. Then $H$ contains a maximal subgroup
$L \cong \PSL_2(11)$, and using
\cite{GAP} we can check that $\varphi_L$ is irreducible. So we are done by \cite[Proposition 3.1]{GHT}.

Assume that $(H,p) = (M_{24},23)$. Then $H$ contains a maximal subgroup
$L \cong \PSL_2(23)$, and using
\cite{GAP} we can check that $\varphi_L$ is irreducible. So we are done by \cite[Proposition 3.1]{GHT}.

Assume that $(H,p) = (Co_{2},23)$ or $(Co_3,23)$. Then $H$ contains a $p'$-subgroup
$L \cong McL$, and using \cite{Atlas} we can check that $\varphi_L = 1_L + \psi$,
with $\psi \in \Irr(L)$. It follows that $W_L = A \oplus B$, where $A := \bfC_W(L)$ has
dimension $1$ and $B$ affords the character $\psi$. Applying the Artin-Wedderburn
theorem to $(L,W)$ we see that $\cM \supseteq \End(A) \oplus \End(B)$. In particular,
if $\cM \neq \End(M)$, then any composition factor $U$ of the $H$-module
$\End(W)/\cM$ has dimension $\leq 44$ and moreover
$U^L = 0$. On the other hand, using \cite{ModAt} we see that the only
irreducible $kH$-modules of dimension $\leq 44$ are $k$ and $W$, and both have
nonzero $L$-fixed points. Hence we conclude that $\cM = \End(W)$.
\end{proof}

Now we can prove  Theorem \ref{thm:degree-p} which we restate:

\begin{thm}\label{main-prime}
Let $k = \bar{k}$ be of characteristic $p$ and let $G$ be a finite group with a faithful
irreducible $kG$-module $V$ of dimension $p$. Then precisely one of the following
holds:  
\begin{enumerate}[\rm(i)]
\item  $G$ is adequate on $V$;
\item  $G$ contains an abelian normal subgroup $A$ of index $p$ (and
$G$ permutes $p$ one-dimensional summands of $V$ with kernel $A$); or
\item  $p=3$ and the image of $G$ in $\PGL(V)$ is $\PSL_2(9)$.
\end{enumerate}
\end{thm}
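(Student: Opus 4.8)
The plan is to feed the structural dichotomy of Proposition \ref{degree-p} into the reduction of Remark \ref{g-gp} and then call on the weak‑adequacy results assembled above. Proposition \ref{degree-p} gives two cases: either $G$ is imprimitive, $G<\GL_1(k)\wr\SSS_p$ with $A:=G\cap\GL_1(k)^p$ non‑central, or $G$ is almost quasisimple with $H:=G^{(\infty)}$ quasisimple of order divisible by $p$ and irreducible on $V$.

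The imprimitive case is settled by Proposition \ref{imp-p}: $(G,V)$ is adequate exactly when $|G/A|\ne p$, and when $|G/A|=p$ the subgroup $A$ is abelian normal of index $p$, which is conclusion (ii). These two alternatives are mutually exclusive and exhaustive here: if $B\lhd G$ is abelian of index $p$, then (Clifford theory, as $\dim V=p$ is prime) $V_B$ is a sum of $p$ distinct one‑dimensional eigenspaces lying in a single $G/B$‑orbit (a shorter orbit would yield a proper $G$‑invariant subspace), so $B\le A$ and hence $[G:A]=p$ by transitivity. Finally (iii) cannot occur in this case, since the non‑central normal abelian subgroup $A$ would map onto a nontrivial abelian normal subgroup of the image of $G$ in $\PGL(V)$, which $\PSL_2(9)$ does not possess.

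In the almost quasisimple case, conclusion (ii) is impossible because such a $G$ is not solvable. First I would check that $[G:H]$ is prime to $p$, so that $\GP=H$: a field automorphism of $H$ of order divisible by $p$ would move the isomorphism class of $V_H$ — which by Propositions \ref{prime-natural1} and \ref{prime-natural2} is a Frobenius twist of a fixed restricted module when $H$ is of Lie type in characteristic $p$, while in all other cases $|H|_p=p$ forces the defining field to be too small to admit such an automorphism — contradicting $G$‑invariance of $V_H$. By Remark \ref{g-gp} it then suffices to prove that $(H,V)$ is adequate, apart from the exception in (iii). Since $H$ is perfect, $H^1(H,k)=0$. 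If $p>3$ and $H$ is of Lie type in characteristic $p$, Proposition \ref{prime-natural2} gives adequacy directly. Otherwise $|H|_p=p$, so $V$ is a projective $kH$‑module by Lemma \ref{p-div}, whence $\Ext^1_H(V,V)=0$; combining this with $H^2(H,k)=0$ (Lemma \ref{h1h2}) and the long exact cohomology sequence of $0\to k\to V^*\otimes V\to (V^*\otimes V)/k\to 0$ (here $k\hookrightarrow V^*\otimes V$ via $\id_V$, using $p=\dim V$) yields $H^1(H,(V^*\otimes V)/k)=0$. Weak adequacy of $(H,V)$ is then read off the classification in Theorem \ref{thm:2p} case by case: Lemma \ref{alt-p} for $H=\AAA_{p+1}$, Propositions \ref{weil-sp}, \ref{weil-sl}, \ref{weil-su} for the symplectic, linear and unitary Weil modules, and Lemma \ref{non-serial} for the finitely many non‑serial entries of Tables I, IIa, IIb, III. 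This establishes adequacy throughout the almost quasisimple case except when $p\le 3$ and $H$ is of Lie type in characteristic $p$.

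It remains to treat the small‑characteristic Lie‑type cases: $p=2$ with $H=\SL_2(2^a)$ on the natural module, and, by Proposition \ref{prime-natural1}, $p=3$ with $H=\PSL_2(3^a)$ on a Frobenius twist of $L(2)$, or $H=\SL_3(3^a)$, $\SU_3(3^a)$ on the natural module. Here I would finish by explicit input: $\Ext^1_H(V,V)$ — equivalently, via $H^1(H,k)=H^2(H,k)=0$, the group $H^1(H,(V^*\otimes V)/k)$ — is computed from \cite{AJL} in the $\SL_2$ cases and from \cite{Mc} (or a direct computation) for $\SL_3(3)$ and $\SU_3(3^a)$, while weak adequacy follows from \cite[Proposition 3.1]{GHT} applied to a suitably chosen irreducible subgroup of $H$. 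This leaves a single failure: $H=\PSL_2(9)$ in characteristic $3$, where $(V^*\otimes V)/k$ has $L(1)\otimes L(1)^{(3)}$ as a composition factor and $H^1(\PSL_2(9),L(1)\otimes L(1)^{(3)})\ne 0$ by \cite[Corollary 4.5]{AJL}, so $(G,V)$ is not adequate — conclusion (iii). I expect the main obstacle to be exactly this last step: verifying that $\PSL_2(9)$ is the \emph{only} exception, i.e.\ establishing weak adequacy together with the vanishing of the relevant $\Ext^1$/$H^1$ uniformly for $\PSL_2(3^a)$ with $a\ge 3$ and for $\SL_3(3^a),\SU_3(3^a)$, since the Lie‑type results quoted above are only available for $p>3$.
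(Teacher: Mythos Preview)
Your overall strategy matches the paper's: split via Proposition \ref{degree-p}, dispose of the imprimitive branch with Proposition \ref{imp-p}, and in the almost quasisimple branch reduce to $H=\GP$ and run through the weak-adequacy and cohomology results. There is, however, one genuine gap.

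When $p=3$ and $H=G^{(\infty)}\cong\PSL_2(9)$, you assert that $(G,V)$ is not adequate and place $G$ in (iii). But Remark \ref{g-gp} is only a \emph{sufficiency}: adequacy of $(\GP,V)$ implies adequacy of $(G,V)$, not conversely. The normalizer of $\PSL_2(9)$ in $\PGL(V)$ is $\PGL_2(9)$ (only the diagonal automorphism fixes the isomorphism class of the $3$-dimensional module; the field automorphism swaps it with its Frobenius twist), so the image of $G$ in $\PGL(V)$ is either $\PSL_2(9)$ or $\PGL_2(9)$. In the second case conclusion (iii) does \emph{not} hold, and you must show that (i) does. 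The paper treats this separately: for such $G$ one has $H^1(G,k)=H^2(G,k)=0$ (the $3$-part of the Schur multiplier of $\PGL_2(9)$ is trivial, in contrast to that of $\PSL_2(9)\cong\AAA_6$) and $\Ext^1_G(V,V)=0$, so $(G,V)$ is adequate. Without this step your trichotomy is incomplete: you have classified $G$ by $H$, not by the image of $G$ in $\PGL(V)$, and these differ exactly here.

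Two smaller points. First, your proposed route to weak adequacy in the residual small-characteristic Lie-type cases --- ``\cite[Proposition 3.1]{GHT} applied to a suitably chosen irreducible subgroup of $H$'' --- does not obviously go through: for $\PSL_2(3^a)$ with $a\ge2$ there is no irreducible $\PSL_2(3)$-subgroup on $L(2)$ (indeed $\PSL_2(3)\cong\AAA_4$ has no $3$-dimensional irreducible in characteristic $3$), and the $\SL_3$/$\SU_3$ cases need something else. The paper instead uses Proposition \ref{sl2q} of \S\ref{sec: sl2q} for the $\SL_2$ cases and, for $\SL_3(3^a)$ and $\SU_3(3^a)$, the fact that $V\otimes V^*$ is uniserial with trivial head and socle. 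Second, for $p>3$ in the cross-characteristic branch the paper obtains $\GP=H$ simply by observing that $\Out(S)$ (and $\mathrm{Mult}(S)$) is a $p'$-group for all the simple quotients $S$ arising from Theorem \ref{thm:2p}; your field-automorphism argument reaches the same conclusion but is more elaborate than necessary.
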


\begin{proof}  First assume that $p  > 3$. 
Apply Proposition \ref{degree-p} to $G$. In the case (i) of the proposition,
we are done by Proposition \ref{imp-p}. So we may assume that $G$ is almost
quasisimple, $H := G^{(\infty)}$ is quasisimple, with simple quotient $S$,
and $H$ is irreducible on $V$. If $S$ is of Lie type in
characteristic $p$, we can apply Proposition \ref{prime-natural2}. Assume we
are in the remaining cases. In all these cases, observe that the outer
automorphism group ${\mathrm {Out}}(S)$ is a $p'$-group and the Schur multiplier
${\mathrm {Mult}}(S)$ is a $p'$-group as well (as $p > 3$). The first condition implies
that $\GP = H$, whence by Remark \ref{g-gp} without loss we may assume $G = H$
and so $H^1(G,k) = 0$. The second condition implies that $H^2(G,k) = 0$. Furthermore,
in all cases $V$ lifts to a complex module of $p$-defect $0$, whence $V$ is
projective and so $\Ext^1_G(V,V) = 0$. Finally,
$H$ is weakly adequate on $V$ by
Propositions \ref{weil-sp}, \ref{weil-sl}, \ref{weil-su}, and Lemmas \ref{alt-p},
\ref{non-serial}.

Now consider the cases when $p=2$ or $3$.  If $V$ is imprimitive
the result follows as above.  So assume that $V$ is primitive.  Set $H=G^{(\infty)}$. 

Suppose that  $G=H=\SL_2(p^a) $
or $\PSL_2(p^a)$.  Then  $a \ge2$ and the result follows by
Corollary \ref{sl2q-adequate}.   If $G > H$ then
as $V^g \cong V$ as $H$-modules for all $g \in G$, $G/H$ is a $p'$-group
and $G$ is adequate on $V$ whenever $H$ is.   Thus the last case to consider
is $H=\PSL_2(9) \cong \AAA_6$.   The normalizer of $H$ in $\PGL(V)$ is
$\PGL_2(9)$ (the normalizer is just the subgroup of the automorphism group
which fixes the isomorphism class of $V$).    If the image of $G$ in $\PGL(V)$ is $\PGL_2(9)$,    $H^1(G,k)
=H^2(G,k)=0$ (see the proof of Corollary \ref{gl2q-adequate})
and since $\ext^1_G(V,V)=0$, $V$ is adequate in this case.

By  Proposition \ref{degree-p} and Theorem \ref{thm:2p}, the remaining 
cases to consider are   $G$ almost quasisimple, $p=3$  and 
$H \in  \{\AAA_5, \PSL_2(7), \SL_3(3^a), \SU_3(3^a)\}$.
In the first two cases,  the order of $G$ is not divisible by $9$, whence
$V$ is projective and so $\ext_G^1(V,V)=0$.   Note also that
$H^1(G,k)=H^2(G,k)=0$.
In the first case, 
 $V \otimes V^*$
is a direct sum of the projective cover of $k$ and a $3$-dimensional module.
Elements of order $5$ have nonzero trace and $3$-dimensional fixed space.
Since elements of order $5$ have only a $2$-dimensional fixed space on
the projective cover of $k$, it follows that the span of the elements of order $5$
generate $V \otimes V^*$.  In the second case, $V \otimes V^*$ is the projective
cover of $k$ and since the trace of an irreducible character cannot be identically
$0$, it follows that $H$ is weakly adequate on $V$ in this case as well.   Thus,
$(G,V)$ is adequate.  
   
   In the last two cases,  weak adequacy follows from the fact
   that $V \otimes V^*$  is a uniserial module with trivial socle
   and head.  It follows
   by the main result of \cite{Mc}  that $\ext^1_G(V,V)=0$ for $a > 2$.  
 One computes directly 
that $\ext_G^1 (V,V)=0$ in all other cases (Klaus Lux did the computation;  also see \cite{JP} for the case of
$\SL_3(3^a)$).  Since $H^1(G,k)=H^2(G,k)=0$, the result follows. 
\end{proof}

\section{Certain PIMs for simple groups}\label{sec: pim}

For a finite group $X$ and a fixed prime $p$, let $B_0(X)$ denote the principal
$p$-block of $X$. We will sometimes use the same notation for an irreducible
$kX$-module and its Brauer character.

First we describe the submodule structure of the PIMs for some
non-projective modular representations of
simple groups $H$ described in Theorems \ref{bz} and \ref{thm:2p}.

\medskip
Assume that $H$ has a Sylow $p$-subgroup $P$ of order $p$ and furthermore that
$P = \bfC_H(P)$. In this case,
$P$ has a unique block $b$ with defect group $P$ and canonical character $1_P$,
see \cite[Theorem 4.6.12]{LP}. According to
Brauer's theorem \cite[Theorem 4.12.1]{LP}, $H$ has a unique $p$-block
$B$ of defect $d > 0$ (hence $d = 1$), and $B = b^G$. In particular, $B = B_0(H)$.
Note that the number of exceptional characters in $B$ equals $(p-1)/|\bfN_H(P)/P|$
in this situation, and all of them are $p$-conjugate (and so non-rational if
$|\bfN_H(P)/P| < p-1$), cf. Theorem 4.12.1 and
Corollary 4.12.2 of \cite{LP}.
%%%We cite 4.12.2 for the fact that the canonical character $\theta$ is principal.
We will use \cite[Theorem 4.12.1]{LP} to find PIMs $\cP(\varphi)$ for some
$\varphi \in \IBr_p(B)$.
%Set $N := \bfN_H(P)$.

\subsection{The case \texorpdfstring{$H = \PSL_n(q)$}{H = PSL\_n(q)} with \texorpdfstring{$p = (q^n-1)/(q-1)$}{p = (q\^{}n-1)/(q-1)} and \texorpdfstring{$n \geq 2$}{n >= 2}}
\label{pim-sl}
First suppose that $n \geq 3$. Then
$B$ contains unipotent characters $\chi_0 = 1_H$, $\chi_1$, and $\chi_2$ labeled by the partitions $(n)$, $(n-1,1)$, and $(n-2,1^2)$, and Brauer characters
$\varphi_0 = 1_H$, $\varphi_1$ of degree $p-2$ (afforded by $\cD$), and
$\varphi_2$ of degree
$$\frac{(q^n-2q^2+1)(q^n-q)}{(q^2-1)(q-1)}+1$$
(afforded by a $kH$-module say $\cU$)
among others (see e.g. \cite[Proposition 3.1]{GT1}). More precisely,
\begin{equation}\label{dec-sl}
  \chi_0^\circ = \varphi_0,~~\chi_1^\circ = \varphi_0+\varphi_1,~~
  \chi_2^\circ = \varphi_1 + \varphi_2.
\end{equation}
Note that
$$\dim(\cU) = \varphi_2(1) > 2p$$
unless $(n,q) = (3, \leq 3)$.
Since $\chi_i$ are rational for $i = 0,1,2$, they are all non-exceptional. Next, the
character of any PIM in $B$ is of the form $\psi_1 + \psi_2$, where $\psi_1 \in \Irr(B)$
is non-exceptional, and either $\psi_2 \in \Irr(B)$, or $\psi_2$ is the sum of
all $(p-1)/n$ exceptional characters in $B$. Hence the relations (\ref{dec-sl}) show
that

$\bullet$ $\cP(\varphi_0)$ affords the character
$(\chi_0+\chi_1)^\circ = 2\varphi_0 + \varphi_1$. In fact, one can see that
$\cP(k)$ is the uniserial module $(k|\cD|k)$. In particular, this shows that
$\Ext^1_H(k,\cU) = \Ext^1_H(\cU,k) = 0$;

$\bullet$ $\cP(\varphi_1)$ affords the character
$(\chi_1+\chi_2)^\circ = \varphi_0 + 2\varphi_1+\varphi_2$.
By \cite[Corollary 4.12.5]{LP},  the module $\cP(\varphi_1) = \cP(\cD)$
has socle series $(\cD|k \oplus \cU|\cD)$. Since $\varphi_1$ is real, $\cP(\cD)$
is self-dual. Furthermore, the only nonzero proper submodules of $\cP(\cD)$ are
$$\cD = \soc(\cP(\cD)),~~(\cD|k),~~(\cD|\cU),~~
    (\cD|k \oplus \cU) = \rad(\cP(\cD)),$$
(cf. \cite[Figure 4.3]{LP}), and so none is of the form $(\cD|k|\cD)$.

\smallskip
Next, let $H = \SL_2(q)$ and $p = q+1$. The decomposition numbers for $B_0(H)$
are given in \cite{Burk}. In particular, $\Irr(B)$ consists of $\chi_0 = 1_H$,
$\chi_1 = \St$ of degree $q$, and $q/2$ exceptional characters $\theta_i$,
$1 \leq i \leq q/2$, of degree $q-1$, and $\IBr_p(B) = \{1_H,\varphi_1\}$,
with $\varphi_1$ afforded by $\cD$.
So as before, we have that $\cP(k) = (k|\cD|k)$ is uniserial. Next,
$\cP(\cD) = \cP(\varphi_1)$ affords the character
$$(\St + \sum^{q/2}_{i=1}\theta_i)^\circ = 1_H + (\frac{q}{2}+1)\varphi_1.$$
If we let $\cD_j = (\cD|\cD| \ldots |\cD)$ denote a uniserial module with Brauer character
$j\varphi_1$,
then $\cP(\cD) = (\cD|k \oplus \cD_{(q-2)/2}|\cD)$ (cf. \cite[Corollary 4.12.5]{LP}).
Furthermore, the only nonzero proper submodules of $\cP(\cD)$ are
$\cD_j$ or $(\cD|k \oplus \cD_{j-1})$ with $1 \leq j \leq q/2$.

\subsection{The case \texorpdfstring{$H = \PSU_n(q)$}{H = PSU\_n(q)} with \texorpdfstring{$p = (q^n+1)/(q+1)$}{p = (q\^{}n+1)/(q+1)}, \texorpdfstring{$n \geq 3$}{n >= 3},
\texorpdfstring{$(n,q) \neq (3,2)$}{(n,q) <> (3,2)}}\label{pim-su}
Note that in this case $\tilde{H} \cong \bfZ(\tilde{H}) \times H$ for
$\tilde{H} := \GU_n(q)$; moreover, the unipotent characters of $\tilde{H}$ as well as
the characters in $B_0(\tilde{H})$ are all trivial at $\bfZ(\tilde{H})$.
Hence without loss we may assume $H = \GU_n(q)$. We will consider the unipotent
characters $\chi_0 = 1_H$, $\chi_{1,2,3}$, labeled by the partitions
$(n)$, $(n-1,1)$, $(n-2,1^2)$, and $(n-3,1^3)$ (the latter being considered only
when $n > 3$). Using the description of Brauer trees for $H$ given in \cite[\S6]{FS}, we see that, when $n \geq 5$, there exist  Brauer characters
$\varphi_0 = 1_H$, $\varphi_{1,2,3}$, such that
\begin{equation}\label{dec-su}
  \chi_0^\circ = \varphi_0,~~\chi_1^\circ = \varphi_1,~~
  \chi_2^\circ = \varphi_0 + \varphi_2,~~\chi_3^\circ = \varphi_1 + \varphi_3.
\end{equation}
In particular, $\varphi_0 = 1_H$, $\varphi_1$ is a Weil character of degree $p-1$,
$$\varphi_2(1) = p\frac{q^n+q^2-q-1}{q^2-1}-2 > 8p,~~
    \varphi_3(1) = p\frac{(q^n-q)(q^n-q^3+q^2-1)}{(q^2-1)(q^3-1)}-2p+2 > 28p.$$
Since $\chi_i$ are all rational, they are all non-exceptional, so as in
\S\ref{pim-sl}, the relations (\ref{dec-su}) and \cite[Corollary 4.12.5]{LP} show that both
$\cP(\varphi_{0,1})$ are uniserial:
$$\cP(\varphi_0) = (\varphi_0|\varphi_2|\varphi_0),
    ~~\cP(\varphi_1) = (\varphi_1|\varphi_3|\varphi_1),$$
where we have used the same notation for the module and its Brauer character.

Suppose now that $n=3$ and $q \geq 3$. Then we still have
$\cP(\varphi_0) = (\varphi_0|\varphi_2|\varphi_0)$ is uniserial for $\varphi_0 = 1_H$
and $\varphi_2(1) > 2p$. For the Weil character $\varphi_1$ of degree $p-1$,
now $\cP(\varphi_1)$ affords the character
$$(\chi_1+\sum^{(p-1)/3}_{i=1}\theta_i)^\circ =
    \frac{p+2}{3}\varphi_1 + \frac{p-1}{3}\varphi_2,$$
where $\theta_i$ are exceptional characters in $B$,
of degree $(q^2-1)(q+1) > 2p$, for $1 \leq i \leq (p-1)/3$. We claim that
$$\cP(\varphi_1) = (\varphi_1|\varphi_2|\varphi_1| \ldots |\varphi_2|\varphi_1)$$
is self-dual, uniserial of length $(2p+1)/3$, with the composition factors $\varphi_1$
and $\varphi_2$ alternating. The first statement follows since $\varphi_1$ is real. The second
one holds because in this case, both the socle $\soc(\cP(\varphi_1))$ and head
$\cP(\varphi_1)/\rad(\cP(\varphi_1))$ are simple and
$\rad(\cP(\varphi_1))/\soc(\cP(\varphi_1))$ is uniserial. By \cite[Theorem 1.1]{TZ1},
$H$ has a unique complex character of degree equal to $\varphi_0(1)$ or $\varphi_1(1)$. Hence the last statement holds by Lemma \ref{cyclic} since $\varphi_{0,1}$ each has a unique lift to characteristic $0$.

\subsection{The case \texorpdfstring{$H = \SL_2(q)$}{H = SL\_2(q)} and \texorpdfstring{$p = q-1 \geq 3$}{p = q-1 >= 3}}\label{pim-sl2}
The decomposition numbers for $B_0(H)$ are given in \cite{Burk}. In particular, $\Irr(B)$ consists of $\chi_0 = 1_H$,
$\chi_1 = \St$ of degree $q$, and $(q-2)/2$ exceptional characters $\theta_i$,
$1 \leq i \leq q/2$, of degree $q+1$, and $\IBr_p(B) = \{\varphi_0=1_H,\varphi_1\}$,
with $\varphi_1 = \St^\circ$ afforded by $\cD$. Clearly, both
$\varphi_{0,1}$ have a unique complex lift, so by Lemma \ref{cyclic} they have
no self-extensions. Arguing as in \S\ref{pim-su} and using
\cite[Corollary 4.12.5]{LP}, we see that both
$$\cP(\varphi_0) = (\varphi_0|\varphi_1|\varphi_0| \ldots |\varphi_1|\varphi_0),~~
    \cP(\varphi_1) = (\varphi_1|\varphi_0|\varphi_1| \ldots |\varphi_0|\varphi_1)$$
are self-dual and uniserial of length $p$, and with the composition factors $\varphi_0$
and $\varphi_1$ alternating.

\subsection{The case \texorpdfstring{$H = \AAA_p$}{H = A\_p} with \texorpdfstring{$p \geq 7$}{p >= 7}}\label{pim-an}
Consider the irreducible complex characters $\chi_{0,1,2}$ of $\SSS_p$ labeled by
$(p)$, $(p-1,1)$, and $(p-2,1^2)$.
%Assume $p \geq 7$.
By Peel's theorem \cite{P},
\begin{equation}\label{dec-an}
  \chi_0^\circ = \varphi_0,~~\chi_1^\circ = \varphi_0+\varphi_1,~~
    \chi_2^\circ = \varphi_1 + \varphi_2,
\end{equation}
where $\varphi_{0,1,2} \in \IBr_p(\SSS_p)$, $\varphi_0(1) = 1$,
$\varphi_1(1) = p-2$, $\varphi_2(1) = (p-2)(p-3)/2$.  It is well known that
$\chi_{0,1,2}$ and $\varphi_{0,1}$ are all irreducible over $H$. Restricting to
$\SSS_{p-1}$, it is easy to see that $\varphi_2$ is irreducible over $H$ as well.
We will use the same notation for the restrictions of these characters to $H$. Since
$\chi_{0,1,2}$ are rational, they are non-exceptional in $B_0(H)$. Hence,
(\ref{dec-an}) and  \cite[Theorem 4.12.1]{LP} imply that
$\cP(\varphi_0) = (\varphi_0|\varphi_1|\varphi_0)$ is uniserial and that
$\cP(\varphi_1)$ has socle series $(\varphi_1|\varphi_0 \oplus \varphi_2|\varphi_1)$.
In particular, there is no $kH$-module of the form $(\varphi_1|\varphi_0|\varphi_1)$.

% Suppose $p=5$, and let $\varphi_0 = 1_H$, $\varphi_1 \in \IBr_5(H)$ of
% degree $3$. Then one can check that
% $\cP(\varphi_0) = (\varphi_0|\varphi_1|\varphi_0)$ is uniserial and that
% $\cP(\varphi_1)$ has socle series $(\varphi_1|\varphi_0 \oplus \varphi_1|\varphi_1)$.
%In particular, there is no $kH$-module with socle series
%$(\varphi_1|\varphi_0|\varphi_1)$.

\section{Indecomposable modules of dimension less than $2p-2$}  \label{sec:gcr}
First we record a simple observation

\begin{lem}\label{single}
Let $b \in \bbN$ and let $V$ be a $kG$-module of dimension $\leq b$ with a $\GP$-composition factor $U$. Suppose that any quotient of length $2$ of $\cP(U)$ or
$\cP(U^*)$ has dimension $>b$. Then $U$ is a direct summand of the
$\GP$-module $V$. If
moreover $U$ has multiplicity $1$, then the $G$-module $V$ is either irreducible
of dimension $\dim U$, or decomposable.
\end{lem}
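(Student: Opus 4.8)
\textbf{Proof plan for Lemma \ref{single}.}
The plan is to work first with the restriction $V_\GP$ and then descend to $G$ using the standard fact that $\GP$ contains a Sylow $p$-subgroup of $G$. First I would apply Lemma \ref{semi1}(i) to the $\GP$-module $V$ with $X := U$: the hypothesis there is that $V$ has no indecomposable subquotient of length $2$ having $U$ as a composition factor. Any such length-$2$ subquotient is a quotient of the projective cover $\cP(U)$ or of $\cP(U^*)$ (the latter occurs when $U$ sits on top rather than on the bottom of the subquotient — more precisely, a nonsplit extension with $U$ as a composition factor is either $(Y|U)$ for some simple $Y$, a quotient of $\cP(U)$, or $(U|Y)$, dual to a quotient of $\cP(U^*)$). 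Either way such a module has dimension $\le \dim V \le b$, contradicting the hypothesis. Hence no such subquotient exists, and Lemma \ref{semi1}(i) yields $V_\GP \cong U \oplus M$ for some $\GP$-submodule $M$.

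Now suppose in addition that $U$ has multiplicity $1$ in $V_\GP$. The splitting $V_\GP = U \oplus M$ just obtained shows $U$ occurs in $\soc(V_\GP)$; applying the same argument to $V^*$ (or simply dualizing) shows $U^* $ occurs in $\soc(V^*_\GP)$, equivalently $U$ occurs in $\hd(V_\GP)$. Since $U$ has multiplicity $1$, we may apply Lemma \ref{mult1} with $G$ replaced by $\GP$: we get $V_\GP \cong U \oplus M'$ with $U$ now canonically split off, and more importantly the summand carrying $U$ is uniquely determined. Actually the cleanest route is: the isotypic projection of $V_\GP$ onto its (multiplicity-one) $U$-homogeneous part in the socle is a well-defined $\GP$-endomorphism, and because $U$ has multiplicity one and lies in both socle and head, Lemma \ref{mult1} gives a $\GP$-direct-sum decomposition $V = U_0 \oplus N$ where $U_0 \cong U$. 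The point is that $U_0$, being the $U$-homogeneous component of $\soc(V_\GP)$, is canonical, hence stable under any $\GP$-automorphism of $V$; in particular it is stable under the $G$-action, since $G$ normalizes $\GP$ and permutes the $\GP$-isotypic components of $\soc(V_\GP)$, and $U_0$ is the unique one isomorphic to $U$ (here I use that conjugate submodules are isomorphic and $U$ appears once).

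Thus $U_0$ is a $G$-submodule. If $U_0 = V$, then $V$ is $\GP$-irreducible, and since any $kG$-submodule is in particular a $k\GP$-submodule, $V$ is $G$-irreducible of dimension $\dim U$ — case (i). Otherwise $0 \neq U_0 \subsetneq V$ is a proper $G$-submodule with a $G$-complement: indeed $[G:\GP]$ is coprime to $p$ (as $\GP = \bfO^{p'}(G)$ contains a Sylow $p$-subgroup), so by Lemma \ref{zero2} applied to the $\GP$-decomposition $V_\GP = U_0 \oplus N$ with $U_0$ being $G$-stable, there is a $G$-submodule $N'$ with $V = U_0 \oplus N'$. Hence the $G$-module $V$ is decomposable — the remaining alternative.

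The only real subtlety — and the step I would be most careful about — is the descent from $\GP$ to $G$: namely verifying that the $U$-homogeneous component $U_0$ of $\soc(V_\GP)$ is genuinely $G$-stable. This rests on two facts: multiplicity one (so there is a \emph{unique} such component), and the observation that $G$, normalizing $\GP$, sends any simple $\GP$-submodule $W$ of $V$ to the conjugate simple submodule $gW$, which is abstractly isomorphic to $W$ as a $\GP$-module when $W \cong U$ and $U$ is the unique copy — so $g$ must fix $U_0$ setwise. Once this is in hand, Lemma \ref{zero2} finishes everything mechanically. I would not expect any further obstacle; the dimension bookkeeping (every relevant nonsplit length-$2$ subquotient is a quotient of $\cP(U)$ or $\cP(U^*)$) is routine once one observes that the head of a uniserial length-$2$ module with socle $U$ is simple, and dually.
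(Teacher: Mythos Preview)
Your argument for the first claim --- that $U$ splits off $V_\GP$ --- is correct and identical to the paper's: any indecomposable length-$2$ subquotient of $V_\GP$ with $U$ as a composition factor is (up to replacing it by its dual) a quotient of $\cP(U)$ or $\cP(U^*)$, hence of dimension $> b \geq \dim V$; so Lemma~\ref{semi1}(i) applies.

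The descent step, however, contains a genuine error. You assert that for $g \in G$ the submodule $gU_0 \subset V$ is ``abstractly isomorphic to $U$ as a $\GP$-module''. It is not: since $\GP \lhd G$, the $\GP$-action on $gU_0$ satisfies $h \cdot (gw) = g\bigl((g^{-1}hg)w\bigr)$, so $gU_0$ is isomorphic to the \emph{twist} ${}^gU$, and in general ${}^gU \not\cong U$. Thus $g$ may carry the $U$-isotypic component of $\soc(V_\GP)$ to the ${}^gU$-isotypic component, and your $U_0$ need not be $G$-stable. Multiplicity one of $U$ does not rescue this, because the other socle constituents can perfectly well be non-isomorphic $G$-conjugates of $U$. (Concretely: if $G/\GP$ permutes two non-isomorphic simple $\GP$-modules $U$ and $U'$ of the same dimension, and $V_\GP = U \oplus U'$, then $U$ has multiplicity one yet $U_0 = U$ is not $G$-stable.)

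The paper avoids the issue entirely by quoting Lemma~\ref{semi2}(ii): once one knows $V_\GP = U_1 \oplus M$ with $U_1$ simple, then if $V$ were reducible and indecomposable over $G$, Lemma~\ref{semi2}(ii) (applied with $N = \GP$, of index prime to $p$) would forbid any simple direct summand of $V_\GP$ --- a contradiction. So $V$ is $G$-irreducible or $G$-decomposable, with no $G$-stability claim needed at all. This is the route you should take for the second assertion.
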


\begin{proof}
Suppose that $W$ is an indecomposable subquotient of length $2$ of the
$\GP$-module $V$ with $U$ as a composition factor. Replacing $W$ by $W^*$ if
necessary, we may assume that $\hd(W) \cong U$ and so $W$ is a quotient of
$\cP(U)$. But then by the hypothesis, $\dim W > b \geq \dim V$, a contradiction. So
$U$ is a direct summand of the $\GP$-module $V$ by Lemma \ref{semi1}(i):
$V_\GP = U_1 \oplus M$ with $U_1 \cong U$. The last claim now follows from
Lemma \ref{semi2}(ii).
%If moreover $U$ has multiplicity $1$, then
%$U_1$ is $G$-stable, and so either $V_\GP = U$ or the $G$-module $V$ is decomposable by %Lemma \ref{zero2}.
\end{proof}

Given a nontrivial $U \in \IBr_p(X)$, we call any $kX$-module $V$ {\it $U$-special}, if
$V$ has $U$ or $U^*$ as composition factors of {\it total} multiplicity $\leq 1$, and moreover all other composition factors of $V$ are trivial.

\begin{lemma}\label{big-cf1}
Let $N = \bfO^p(N)$, $b \in \bbN$, and let $U \in \IBr_p(N)$ be a nontrivial module. Suppose that the only $U$-special quotients of $\cP(k)$, $\cP(U)$, and
$\cP(U^*)$ of dimension $\leq b$ are uniserial modules in the list
$$\cX := \{k,~Y,~(k|Y),~(Y|k),~(k|Y|k) \mid Y = U \mbox{ or }U^*\}.$$
Let $V$ be any $U$-special $kN$-module of dimension $\leq b$. Then $V \cong \oplus^m_{i=1}X_i$
for some $X_i \in \cX$.
\end{lemma}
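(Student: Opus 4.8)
The plan is to induct on $\dim V$, peeling off one summand from $\cX$ at a time by isolating a composition factor that is forced to split off.

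First I would reduce to the case $V$ indecomposable (otherwise apply induction to each indecomposable direct summand, whose dimension is at most $b$ and which is again $U$-special). So assume $V$ is indecomposable; I must show $V \cong k$, $V \cong U$, or $V \cong U^*$, and moreover I want to rule out the length-$2$ and length-$3$ members of $\cX$ when $V$ is indecomposable — wait, those are themselves indecomposable, so the real claim is: an indecomposable $U$-special $kN$-module of dimension $\le b$ is one of $k$, $U$, $U^*$, $(k|Y)$, $(Y|k)$, $(k|Y|k)$ with $Y\in\{U,U^*\}$. Let me reorganize: let $V$ be indecomposable $U$-special. Consider the composition factor $Y \in \{U, U^*\}$ (if it occurs at all; if not, all composition factors are trivial, $\bfO^p(N)=N$ forces $H^1(N,k)=\Hom(N,k)=0$ hence $N$ has no trivial self-extension, so by Lemma \ref{semi1}(ii) $V$ is semisimple, hence $V\cong k$ since indecomposable). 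So suppose $Y$ occurs exactly once. The module $V$ is built from one copy of $Y$ and several copies of $k$; any indecomposable subquotient of length $2$ is a quotient or sub of $\cP(k)$, $\cP(Y)$, or $\cP(Y^*)$, and is itself $U$-special of dimension $\le b$, hence lies in $\cX$ by hypothesis. In particular the only length-$2$ uniserials available as subquotients are $(k|Y)$ and $(Y|k)$ — there is no $(k|k)$.

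Now I would argue structurally. Since there is no indecomposable length-$2$ subquotient with two trivial factors, any "trivial stack" inside $V$ splits: more precisely, applying Lemma \ref{semi1}(i) repeatedly to the trivial composition factors, the socle and head of $V$ each contain at most... hmm, I should be careful. The cleanest route: let $r \ge 0$ be the number of trivial composition factors of $V$; then $\dim V = \dim Y + r$. If $r = 0$, $V \cong Y$. If $r \ge 1$, I claim $V$ is uniserial with the trivial factors "below" and "above" a single $Y$. To see this, note $Y$ has multiplicity $1$; if $Y \in \soc(V)$ and $Y \in \hd(V)$ simultaneously, Lemma \ref{mult1} forces $V$ to decompose, contradiction. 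So $Y$ is in exactly one of $\soc(V)$, $\hd(V)$, or neither. If $Y \in \soc(V)$ but not $\hd(V)$: then $\hd(V)$ is a sum of trivial modules; since $V$ indecomposable, $\hd(V) \cong k$, so $V$ is a quotient of $\cP(k)$, hence (being $U$-special of dimension $\le b$) lies in $\cX$; the members of $\cX$ with simple trivial head and $Y$ in the socle are exactly $(Y|k)$ — but wait $(k|Y|k)$ also has trivial head and $Y$ not in the socle, and $(k|Y)$ has $Y$ not in socle. So actually $V$ is a quotient of $\cP(k)$ lying in $\cX$, done. Symmetrically if $Y\in\hd(V)$. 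If $Y$ is in neither $\soc$ nor $\hd$: then $\soc(V)$ and $\hd(V)$ are both sums of trivials; indecomposability forces both to be $\cong k$; so $V$ is a quotient of $\cP(k)$ (and a sub of $\cP(k)$), landing in $\cX$, and the only such module with $Y$ strictly interior is $(k|Y|k)$.

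The main obstacle I anticipate is the bookkeeping when $r \ge 2$: I need to be sure that no indecomposable $U$-special module can have, say, head $\cong k\oplus k$ forced by having two "incomparable" trivials at the top — but this cannot happen because such a module would have a length-$2$ subquotient $(k|k)$ (a trivial self-extension sitting below one of the top trivials, or the module $k \oplus k$ itself is not indecomposable so that's fine) — more carefully, the point is that $H^1(N,k) = 0$ (from $\bfO^p(N) = N$, as in Lemma \ref{infl}'s proof), so $\Ext^1_N(k,k) = 0$, hence in any $U$-special module the trivial composition factors form a semisimple "layer" and cannot be stacked; combined with $\hd$ of an indecomposable being forced to be $\cong k$ once it is all-trivial, the quotient-of-$\cP(k)$ argument closes everything. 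So the real content is: (1) $\Ext^1_N(k,k) = 0$; (2) $Y$ has multiplicity $\le 1$, so Lemmas \ref{semi1}(i), \ref{mult1} apply; (3) the hypothesis pins down the relevant quotients of $\cP(k), \cP(U), \cP(U^*)$. I would present it in that order: handle the all-trivial case, then the multiplicity-one case by locating $Y$ relative to socle/head and invoking the $\cP(k)$-quotient classification. $\Box$
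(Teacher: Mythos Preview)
There is a real gap at the step ``since $V$ indecomposable, $\hd(V)\cong k$'' (and its dual for the socle). Your justification is that a non-simple all-trivial head would force an indecomposable length-two subquotient of shape $(k|k)$, which is excluded by $\Ext^1_N(k,k)=0$. That inference is wrong. Consider an extension $0\to Y\to V\to k^2\to 0$ whose class $(\xi_1,\xi_2)\in\Ext^1_N(k,Y)^2$ has $\xi_1,\xi_2$ linearly independent: then $\soc(V)=Y$, $\hd(V)=k\oplus k$, $V$ is indecomposable and $U$-special, yet $V$ contains no uniserial $(k|k)$ subquotient at all. Such a $V$ exists whenever $\dim\Ext^1_N(k,Y)\ge 2$. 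The vanishing of $\Ext^1_N(k,k)$ only prevents trivial factors from stacking on one another; it does not prevent two trivials from fanning out side by side above the single copy of $Y$. The same issue recurs, in both head and socle simultaneously, in your ``$Y$ in neither'' case.

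The hypothesis of the lemma does in fact exclude this situation---the dual $V^*$ of the module above is a $U$-special quotient of $\cP(Y^*)$ of dimension $\dim Y+2\le b$ with non-simple socle $k^2$, hence not uniserial and not in $\cX$---but you never extract or use this. The paper avoids the whole issue by \emph{not} reducing to the indecomposable case. It inducts on length and, whenever the relevant head is not simple, peels off a trivial direct summand by hand: if $U\in\hd(V)$ and $k$ is also a summand of $\hd(V)$, then $\rad(V)$ has only trivial composition factors, so $N$ acts trivially on it, hence on the preimage of that $k$, which therefore splits as $\rad(V)\oplus Z$ with $Z\cong k$ a genuine direct summand of $V$. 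In the case $U$ is in neither head nor socle, the paper passes to $V/\rad([N,V])$ (now with $U$ in the socle) and invokes induction. Your three-way case split on the position of $Y$ is a fine organizing device, but to close each case you need either this splitting trick or the bound $\dim\Ext^1_N(k,Y)\le 1$ derived from the hypothesis; $\Ext^1_N(k,k)=0$ alone does not suffice.
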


\begin{proof}
We induct on the length of $V$. Suppose $V$ has length $\geq 2$. If all composition
factors of $V$ are trivial, then $N$ acts trivially on $V$ since $N = \bfO^p(N)$, and
we are done. Replacing $V$ by $V^*$ if necessary, we may assume that $V$ has $U$ as a composition factor of multiplicity $1$, and all other composition factors of $V$ are $k$.

%Suppose that $U$ is both in the head and the socle of $V$. Then we can find
%submodules $U_1$ and $M$ of $V$ such that $U_1 \cong U$ and $V/M \cong U$.
%Since $U$ is a composition factor of multiplicity $1$ in $V$, $M \cap U_1 = 0$ and
%all composition factors of $M$ are trivial. It follows that
%$V = U_1 \oplus M \cong U \oplus k \oplus \ldots \oplus k$, as desired.

Suppose that $U$ embeds in $\hd(V) := V/R$, where $R := \rad(V)$. Then all composition factors of
$R$ are trivial, and so $N$ acts trivially on $R$. Assume in addition that
$V/R$ is not simple. Then $V/R = M/R \oplus Y/R$ for some submodules
$M,Y$ with $Y/R \cong k$. Again, $N$ acts trivially on $Y$, so we can write
$Y = R \oplus Z$ for some submodule $Z \cong k$. It follows that $V = M \oplus Z$,
and we are done by induction. Assume now that $V/R \cong U$. Then
the surjection $\cP(U) \to V/R$ lifts to a surjection $\cP(U) \to V$. Since
$\dim V \leq b$, we must then have that $V \in \cX$.

The case $U \hookrightarrow \soc(V)$ now follows from the previous case by
duality.

Now we may assume that $U$ embeds neither in $\hd(V)$ nor in $\soc(V)$. Letting
$W := [N,V]$ and $T := \rad(W)$, we see that $W$ has no trivial quotient.
But $W/T$ is semisimple, so
$W/T \cong U$. Applying the induction hypothesis to $V/T$ and noting
that $U$ is in the socle but not in the head of $V/T$, we see that
$V/T \cong L/T \oplus Y/T$, where $L/T \cong (U|k)$ and $N$ acts trivially on $Y/T$.
In this case, $N$ acts trivially on $Y$ as well. If moreover $Y \neq T$, then
we can decompose $Y = T \oplus Z$ for some submodule $Z \neq 0$, whence
$V = L \oplus Z$ and we are done by induction. Thus we may assume
$V/T \cong (U|k)$. Consider any maximal submodule $M$ of $V$. Since
$U \not\hookrightarrow\hd(V)$, $V/M \cong k$ and so $M \supseteq W$. It
follows that $R \supseteq T$ and $R/T = \rad(V/T) \cong U$. Hence
$V/R \cong k$. In this final case, the surjection $\cP(k) \to V/R$ lifts to a surjection
$\cP(k) \to V$. Since $\dim V \leq b$, we must again have that $V \in \cX$.
\end{proof}

\begin{cor}\label{big-cf2}
Let $\GP$ be perfect, $b \in \bbN$, and let $U \in \IBr_p(\GP)$ be a
nontrivial module. Suppose that the only $U$-special quotients of
dimension $\leq b$ of $\cP(k)$, $\cP(U)$, and $\cP(U^*)$ are uniserial modules in the list
$$\cX := \{k,~Y,~(k|Y),~(Y|k),~(k|Y|k) \mid Y = U \mbox{ or }U^*\}.$$
Let $V$ be any indecomposable $kG$-module of dimension $\leq b$ such that $V_\GP$ is
$U$-special. Then $V_\GP$ is also indecomposable and belongs to $\cX$.
\end{cor}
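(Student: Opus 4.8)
The plan is to deduce Corollary \ref{big-cf2} from Lemma \ref{big-cf1} together with the results in \S\ref{sec: ind} already available to us. First I would observe that by Lemma \ref{big-cf1} (applied to $N = \GP$, which is perfect, hence satisfies $\bfO^p(\GP) = \GP$), the restricted module $V_\GP$ decomposes as a direct sum $\bigoplus_{i=1}^m X_i$ with each $X_i \in \cX$. The content to extract is that, since $V$ itself is indecomposable over $G$, this forces $m = 1$, i.e. $V_\GP$ is indecomposable and equal to one of the members of $\cX$.

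The key step is to rule out $m \geq 2$. Here I would split into cases according to whether $V$ is $\GP$-irreducible or not. If $V_\GP$ is irreducible there is nothing to prove. Otherwise, suppose $V_\GP = \bigoplus_{i=1}^m X_i$ with $m\ge 2$ and not all $X_i$ simple — then at least one $X_i$ is a reducible indecomposable $\GP$-module, so $V_\GP$ is reducible. But any direct-sum decomposition of $V_\GP$ with $m \geq 2$ would exhibit $V_\GP$ as decomposable, and then I would invoke Lemma \ref{semi2}(ii): a faithful reducible indecomposable $kG$-module cannot have a simple $\GP$-direct summand, and more to the point, the situation where some $X_i$ is simple while $V$ is reducible indecomposable is prohibited. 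The cleanest route: if all $X_i$ are simple, then $V_\GP$ is semisimple, and by Lemma \ref{semi2}(i) (noting $\GP \lhd G$ has index prime to $p$ since $G/\GP$ is a $p'$-group by definition of $\GP = \bfO^{p'}(G)$), $V$ would be semisimple over $G$, hence irreducible by indecomposability, contradicting reducibility — unless in fact $V_\GP$ itself is simple, i.e. $m=1$ and $X_1 = U$ or $U^*$. If instead some $X_i$ is reducible indecomposable, then any simple $X_j$ ($j\ne i$) is a simple $\GP$-direct summand of the reducible indecomposable $G$-module $V$, contradicting Lemma \ref{semi2}(ii); and if no $X_j$ is simple for $j \ne i$ but $m\ge 2$, one still has $V_\GP$ decomposable with no simple summand, so I would instead argue directly that the members of $\cX$ that are reducible, namely $(k|Y)$, $(Y|k)$, $(k|Y|k)$, each have exactly one non-trivial composition factor $U$ or $U^*$; since $V_\GP$ is $U$-special, $U$ and $U^*$ together occur with total multiplicity $\le 1$, so at most one $X_i$ can be non-trivial, forcing all other $X_i \cong k$ — and then those trivial summands are simple $\GP$-direct summands of $V$, again contradicting Lemma \ref{semi2}(ii) unless there are none, i.e. $m = 1$.

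Putting this together: the $U$-specialness condition immediately limits the decomposition $V_\GP = \bigoplus X_i$ so that at most one summand is non-trivial; the remaining (trivial) summands would be simple $\GP$-summands of the indecomposable $G$-module $V$, which Lemma \ref{semi2}(ii) forbids when $V$ is reducible, and Lemma \ref{semi2}(i) forbids the all-trivial (semisimple) case when $V$ is reducible indecomposable. Hence $m = 1$, so $V_\GP \cong X_1 \in \cX$ is indecomposable. I expect the main (minor) obstacle to be bookkeeping the degenerate cases cleanly — in particular handling $V$ irreducible versus reducible uniformly, and making sure the index $[G:\GP]$ is prime to $p$ so that Lemma \ref{semi2} applies (this holds because $\GP = \bfO^{p'}(G)$ by the Notation section). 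Everything else is a direct citation of Lemma \ref{big-cf1} and Lemma \ref{semi2}.
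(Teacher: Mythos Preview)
Your approach is essentially the paper's: apply Lemma~\ref{big-cf1} to write $V_{\GP} = \bigoplus X_i$ with $X_i \in \cX$, observe that $U$-specialness forces at most one $X_i$ to be nontrivial, and then eliminate the remaining trivial summands via Lemma~\ref{semi2}(ii) --- the paper compresses this to two lines by writing $V_{\GP} = A \oplus B$ where $A$ is the (at most one) indecomposable summand containing $U$ and $\GP$ acts trivially on $B$, then invoking Lemma~\ref{semi2}(ii) to get $B = 0$. Your middle paragraph is more tangled than necessary (the phrase ``contradicting reducibility'' is unclear, and the sub-case $V$ irreducible with $V_{\GP} \cong U \oplus k^{m-1}$ is not cleanly dispatched --- Clifford's theorem does it, since $U$ and $k$ cannot be $G$-conjugate), but your final summary paragraph has the right argument.
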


\begin{proof}
We may assume that exactly one indecomposable direct summand $A$ of
$V_\GP$ has $U$ as a composition factor, and so $V_\GP = A \oplus B$ with
$\GP$ acting trivially on $B$.
%Thus $B$ is a semisimple $\GP$-module.
%Also, $0 \neq A \in \cX$ by Lemma \ref{big-cf1}.
Hence $B = 0$ by Lemma \ref{semi2}(ii).
\end{proof}

\begin{thm}\label{indec-qs}
Let $G$ be a finite group, $k$ an algebraically closed field of characteristic $p$,
$\bfO_p(G) = 1$,
and let $V$ be a faithful, indecomposable $kG$-module of dimension
less than $2p-2$. Assume in addition that $\GP$ is quasisimple but not of Lie-type
in characteristic $p$. Then one of the following statements holds, where
$U, W \in \IBr_p(\GP)$.

\begin{enumerate}[\rm(i)]

\item $V$ is irreducible.

\item $(\GP,p,\dim U) = (\SL_2(q),q-1,p+1)$, $(\AAA_p,p,p-2)$, $(\SL_n(q),(q^n-1)/(q-1),p-2)$,
$(M_{11},11,9)$, $(M_{23},23,21)$. Furthermore, $V_\GP$ is uniserial of the form $(k|U)$, $(U|k)$, or $(k|U|k)$, and $U \cong U^*$.

\item $(\GP,p,\dim U) = (\SL_2(q),q+1,p-2)$, $U \cong U^*$, and
$V_\GP$ is indecomposable of the form $(U|U)$, $(U|k \oplus U)$, or $(k \oplus U|U)$.

\item $(\GP,p,\dim U) = (2\AAA_7,7,4)$, $V_\GP = (U|U)$ is uniserial, and $U \cong U^*$.

\item $(\GP,p) = (M_{11},11)$ and $V_\GP = (U|W)$ is uniserial,
$\{\dim U,\dim W\} = \{9,10\}$.

\item $(\GP,p,\dim U) = (3\AAA_6,5,3)$, $V_\GP = (U|U)$ is uniserial, and
$U \not\cong U^*$.

\item $(\GP,p,\dim U) = (\tw2 \btwo(8),13,14)$, $V_\GP$ is uniserial of the form $(k|U)$ or
$(U^*|k)$ for a fixed $U \not\cong U^*$.
\end{enumerate}
\end{thm}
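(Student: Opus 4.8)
The plan is to exploit the structural results already established—Proposition~\ref{indec-str}, Corollary~\ref{indec-gp}, Corollary~\ref{block3}—to pin down the $\GP$-module structure of $V$, and then to feed in the explicit PIM computations of \S\ref{sec: pim} together with Theorem~\ref{thm:2p}. First I would observe that since $\GP$ is quasisimple, $\bfO_p(G)=1$, and $\dim V < 2p-2$, Corollary~\ref{indec-gp} gives that $V_\GP$ is indecomposable; if $V_\GP$ is irreducible we are in case (i), so assume not. Then by Corollary~\ref{block3}, either all composition factors of $V_\GP$ have dimension $\le p$, or they all lie in $B_0(\GP)$ (the hypothesis $\dl(\GP)\ge p-3$ is automatic here since $\GP$ is quasisimple, not of Lie type in characteristic $p$, and carries an indecomposable reducible module of small dimension—one should double-check this using Theorem~\ref{bz} and Theorem~\ref{thm:2p}, reducing to the list of possible $\GP$). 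In particular $\GP$ has a nontrivial module $U$ appearing in $V_\GP$ with $\dim U \le 2p-3$; since $\dim V < 2p-2$ the multiplicity of $U$ in $V$ is at most... well, at least we can bound things.

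The key reduction is this: because $\dim V < 2p-2$ and $V_\GP$ is reducible indecomposable, there must be a nontrivial composition factor $U$ and at most one ``large'' one. I would use Lemma~\ref{single} and Lemma~\ref{big-cf1}/Corollary~\ref{big-cf2} with $b = 2p-3$: if $\GP$ has a nontrivial simple module $U$ such that every length-$2$ quotient of $\cP(U)$ or $\cP(U^*)$ has dimension $>2p-3$, then $U$ splits off, contradicting indecomposability. So the surviving cases are exactly those $(\GP, U)$ for which $\cP(k)$, $\cP(U)$, or $\cP(U^*)$ has a small ($\le 2p-3$-dimensional) non-split uniserial quotient of length $2$ or $3$. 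Now I would walk through the list of quasisimple groups $\GP$ from Theorem~\ref{thm:2p} not of Lie type in characteristic $p$ (alternating groups, sporadic groups, and classical/exceptional groups in cross characteristic), and for each, use the explicit PIM structure. The alternating case $\AAA_p$ with the partition-$(p-2,1^2)$ module: \S\ref{pim-an} shows $\cP(\varphi_0) = (\varphi_0|\varphi_1|\varphi_0)$ with $\varphi_1$ of dimension $p-2$, giving the $(k|U)$, $(U|k)$, $(k|U|k)$ family; similarly $\SL_n(q)$ with $p=(q^n-1)/(q-1)$ from \S\ref{pim-sl} gives $\cP(k)=(k|\cD|k)$ with $\dim\cD = p-2$; the $\SL_2(q)$, $p=q\pm 1$ cases from \S\ref{pim-sl2} give the uniserial $(U|U)$-type modules because $\cP(U)$ and $\cP(k)$ alternate $\varphi_0,\varphi_1$; and the remaining small sporadic/exceptional cases ($M_{11}$, $M_{23}$, $2\AAA_7$, $3\AAA_6$, $\tw2\btwo(8)$) are handled by direct inspection of decomposition matrices / Brauer trees (using \cite{JLPW}, \cite{GAP}, \cite{ModAt}), exactly as in \S\ref{sec: pim} and Lemma~\ref{non-serial}.

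For the self-duality claim $U \cong U^*$ appearing in cases (ii), (iii), (iv): I would argue that since $V_\GP$ is uniserial of the form $(k|U)$, $(U|k)$, $(k|U|k)$, or $(U|U)$ etc., and since $V$ need not be self-dual here (in (v) and (vii) we explicitly get $U \not\cong U^*$), the constraint that the module be $U$-special and fit inside dimension $< 2p-2$ forces the relevant $\Ext^1$ to be nonzero, and then one reads off from the PIM whether $U$ must be real. In the cases where $\cP(U)$ has $U$ twice in its socle series (the $(U|U)$ situations), self-duality of $U$ follows because the nonzero $\Ext^1_\GP(U,U)$ forces, via the block-of-defect-$1$ structure (Lemma~\ref{cyclic}, \S\ref{pim-sl2}, \S\ref{pim-su}), that $U$ is real. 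For the $(k|U|k)$ and $(U|k)$, $(k|U)$ cases, $U\cong U^*$ follows from the self-duality of the ambient PIM $\cP(k)$ together with the fact that $\Ext^1_\GP(k,U)\ne 0 \Leftrightarrow \Ext^1_\GP(U,k)\ne 0$.

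The main obstacle I anticipate is the bookkeeping for the borderline cases where $V$ has three composition factors (e.g.\ $(k|U|k)$ of dimension $2\dim k + \dim U = p$, well within range, or $(U|k\oplus U)$ in case (iii)) and one must rule out that $V_\GP$ decomposes or has some unexpected extra composition factor of small dimension. This requires knowing, for each $\GP$ on the list, the full set of small-dimensional simple modules and the precise submodule lattice of the relevant PIMs—most of which is assembled in \S\ref{sec: pim}, but the sporadic and twisted-group cases need careful case-by-case verification. A secondary subtlety is confirming that each listed case genuinely occurs (the ``Conversely'' clause is deferred to Theorem~\ref{thm: indecomposable}, so here I only need the forward direction), and that the $\SL_2$ and $\PSL_2$ subcases are correctly distinguished—but those are exactly cross-characteristic $\SL_2(q)$, covered by \S\ref{pim-sl2} and Proposition~\ref{prop1} does not apply (that is the natural-characteristic statement), so one uses the Brauer-tree description directly.
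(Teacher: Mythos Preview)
Your overall strategy matches the paper's: reduce to $V_\GP$ indecomposable via Corollary~\ref{indec-gp}, invoke Corollary~\ref{block3}, and then run through the finite list of candidates from Theorems~\ref{bz} and~\ref{thm:2p} using the PIM data of \S\ref{sec: pim} together with Lemmas~\ref{single}, \ref{big-cf1} and Corollary~\ref{big-cf2}. However, there are three concrete points where your sketch would stall.

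First, the claim that $\dl(\GP)\ge p-3$ is automatic is false: $(J_1,11)$ has $\dl=7<p-3=8$. The paper disposes of this case separately by writing down $\cP(\varphi_1)$, $\cP(\varphi_7)$, $\cP(\varphi_{14})$ and observing that $\Ext^1_\GP$ vanishes between any pair of small simples, so Lemma~\ref{semi1} forces $V_\GP$ semisimple. Only after this does Corollary~\ref{block3} become available. Second, you are missing a structural lever the paper uses repeatedly: since $\bfZ(\GP)$ is a $p'$-group (as $\mathrm{Mult}(\GP/\bfZ(\GP))$ has $p'$-order for every group on the list), indecomposability of $V_\GP$ forces $\bfZ(\GP)$ to act by scalars on $V$. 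This single observation kills in one stroke all the $\dim U=p-1$ cases with $\bfZ(\GP)\neq 1$ (e.g.\ $\Sp_{2n}(q)$, $2Ru$, $3J_3$, $2\AAA_7$ at $p=5$, $6\cdot\PSL_3(4)$, $2M_{12}$, $6Suz$, etc.), since then every composition factor is faithful of dimension $\ge p-1>(\dim V)/2$. Without this, your case-by-case PIM inspection for these groups would be both tedious and unnecessary. Third, you omit the clean argument for $\dim U=p$: by Lemma~\ref{p-div} such a $U$ is projective, hence a direct summand of $V_\GP$, contradicting Lemma~\ref{semi2}(ii).

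Two minor corrections: Lemma~\ref{non-serial} concerns weak adequacy, not PIM structure, so it is the wrong reference for the sporadic cases; and the paper's organizing principle is to stratify by $\dim U$ (the largest composition factor), which makes the bookkeeping you worry about in your final paragraph entirely mechanical---in particular, the $(U|k\oplus U)$ shape in case~(iii) arises only when $\dim U=p-2$ with multiplicity two, and is read off directly from the submodule lattice of $\cP(\cD)$ described in \S\ref{pim-sl}.
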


\begin{proof}
(a) Note that the statement is vacuous for $p = 2$.
Throughout the proof, we assume that $p > 2$, $V$ is reducible, and let $U$ be a
composition factor of the $\GP$-module $V$ of largest dimension. Also set
$b := 2p-3$ whenever we apply Lemma \ref{big-cf1} and Corollary \ref{big-cf2}.
%By Lemma \ref{semi2}(i), $V_\GP$ is not semisimple, and so it
%possesses a reducible indecomposable direct summand. Any such module has dimension
%$\geq p-1$ by \cite[Theorem A]{Gcr}. Since $\dim V < 2p-2$, it follows that
%$V_\GP$ has a unique reducible indecomposable direct summand. Hence
Note that $V_\GP$ is  (reducible) indecomposable by Corollary \ref{indec-gp}. Next, $\GP$ must
act irreducibly and non-trivially on some subquotient $X$ of $V_\GP$. Applying Theorems
\ref{bz} and \ref{thm:2p} to the action of $\GP$ on $X$, we see that
${\mathrm {Mult}}(\GP/\bfZ(\GP))$, and so
$\bfZ(\GP)$, has $p'$-order. The indecomposability of $V_\GP$ then implies that
$\bfZ(\GP)$ acts via scalars on $V$ and that $\GP$ acts faithfully on $U$ (and so we may
identify $\GP$ with its image in $\GL(U)$).
In particular, if $k$ is a composition factor of
$V_\GP$, then $\GP$ is simple. This must be the case if $\dl(\GP) \geq p-1$.
Also, if $\bfZ(\GP) \neq 1$, then $\GP$ acts faithfully on every composition factor of $V_\GP$.

\smallskip
(b)  Assume first that $(\GP,p) = (J_1,11)$.  According to \cite{JLPW}, the only $\varphi \in \IBr_p(\GP)$
of degree $< 2p$
are $\varphi_{1,7,14}$. Here we use the notation $\varphi_{j}$ to denote the unique
$\varphi \in \IBr_p(\GP)$ of degree $j$. Moreover, using \cite{ModAt} we see that
\begin{equation}\label{pim-j1}
  \cP(\varphi_1) = (\varphi_1|\varphi_{119}|\varphi_1),~
    \cP(\varphi_7) = (\varphi_7|\varphi_{49} \oplus \varphi_{69}|\varphi_{7}),~
    \cP(\varphi_{14}) = (\varphi_{14}|\varphi_{106}\oplus\varphi_{119}|\varphi_{14}).
\end{equation}
Since $\dim V < 2p$, each composition factor $X$ of the $\GP$-module $V$
must afford the Brauer character $\varphi_i$ for some $i \in \{1,7,14\}$. Now
(\ref{pim-j1}) shows that $\Ext^1_\GP(X,Y) = 0$ for any two such composition
factors $X$ and $Y$. Hence the $\GP$-module $V$ is semisimple
by Lemma \ref{semi1}, a contradiction.
%Let $X$ be any indecomposable direct summand of the $k\GP$-module $V$.
%Claim that $X$ is simple. Indeed, write the head
%$X/\rad(X)$ as a direct sum
%$$a_{1}\varphi_1 \oplus a_7\varphi_7 \oplus a_{14}\varphi_{14}$$
%for some $a_i \in \bbZ$. By \cite[Theorem 1.6.23]{LP}, $X$ is a quotient of
%the projective cover
%$$\cP := a_{1}\cP(\varphi_1) \oplus a_7\cP(\varphi_7) \oplus a_{14}\cP(\varphi_{14}).$$
%Since $\dim X < 2p-2 = 20$, the structure of the PIMs given in \ref{pim-j1} shows
%that $X \cong \cP/\rad(\cP)$ and so $X$ is semisimple. But then $X$ is simple as
%$X$ is indecomposable.
%By Lemma \ref{semi2},
%it affords the Brauer character $b\varphi_i$ for some
%$b \geq 3$. Since $\dim V < 20$, we have $i = 1$ and so $\GP$ acts trivially on $V$,
%a contradiction.

From now one we may assume that $\GP \not\cong J_1$ and so $\dl(\GP) \geq p-3$ by
Theorem \ref{bz}.
In particular, $\dim U \geq p-3$ and Corollary \ref{block3} applies.

\smallskip
(c) Here we consider the case where $\dim U > p$. Since $\dim V \leq 2p-3$ and
$V_\GP$ is reducible, it follows that $k$ is a composition factor of $V_\GP$, and so
$\GP$ is simple as noted in (a).  Also, all composition factors of $V_\GP$ other than $U$
are trivial.
% for some composition factor $U$ of the $\GP$-module $V$.
%By Corollary \ref{block3}, $U \in B_0(\GP)$;
%in particular, $\bfO_{p'}(\GP)$ acts trivially on $U$. As noted in the proof
%of Corollary \ref{block3}, all other composition factors of the $\GP$-module
%$V$ are trivial, and $U$ has multiplicity $1$. It also follows that
%$\bfO_{p'}(G) = \bfO_p(G) = 1$, and so $\GP$ is simple and acting
%faithfully on $U$.
Now we apply Theorem \ref{thm:2p} to $(\GP,U)$.

%\smallskip
Suppose that $\GP = \AAA_n$ with $n \geq p$ as in the first row of Table I. Since
$p+1 \leq \dim U \leq 2p-3$, we have that $5 \leq p \nmid n$ and $p+2 \leq n \leq 2p-2$.
By \cite[Lemma 6.10]{NT2}, $H^1(\AAA_n,U) = 0$, whence
$\Ext^1_\GP(k,U) = \Ext^1_\GP(U,k) = 0$.  Also,
$\Ext^1_\GP(k,k) = \Ext^1_\GP(U,U) = 0$ by Lemma \ref{cyclic}.
It follows by Lemma \ref{semi1} that
$V_\GP$ is semisimple, a contradiction.

%\smallskip
Next suppose that $(\GP,p,\dim U) = (\SL_2(q),q-1,p+1)$ as in Table IIa.  Then,
as shown in \S\ref{pim-sl2}, $(\GP,U)$ satisfies the hypothesis of Corollary \ref{big-cf2},
and so we arrive at (ii).

%\smallskip
%Recall that $U \in B_0(\GP)$ and $p< \dim U < 2p-3$ as $V$ is reducible.
In the cases where $(\GP,p,\dim U) = (\AAA_7,7,10)$,  $(\SL_3(3),13,16)$,
$(\SU_4(2),5,6)$, $(\Sp_4(4),17,18)$, $(\gtwo(3),13,14)$, $(J_1,11,14)$,
$(J_1,19, 22 \mbox{ or }34)$, $(M_{12},11,16)$, or $(M_{11},11,16)$,
using the information on decomposition numbers given in \cite{ModAt}, one can check
that $U$ satisfies the hypothesis of Lemma \ref{single}, and so $V$ is decomposable,
a contradiction.

Assume that $(\GP,p,\dim U) = (\tw2 \btwo(8), 13,14)$. Then $V$ is $U$-special, and
using \cite{ModAt} one can check that the only quotients of dimension $\leq 23$ of
$\cP(k)$, $\cP(U)$, and $\cP(U^*)$ are $k$, $Y$, $(k|Y)$, or $(Y|k)$, with
$Y = U$ or $U^*$. Applying Corollary \ref{big-cf2}, we arrive at (vii).

\smallskip
(d) Next we consider the case $\dim U = p$, and apply Theorem \ref{thm:2p} to
$(\GP,U)$.
%As noted in the proof of Theorem \ref{main-prime}, $U$ is a
%$\GP$-module of $p$-defect $0$,
By Lemma \ref{p-div}, $U$ is projective, and so it is a direct summand of $V_\GP$,
a contradiction.

\smallskip
(e) Assume now that $\dim U = p-1$, and apply Theorem \ref{bz} to $(\GP,U)$.
Note that $U$ has multiplicity $1$ as $\dim V < 2p-2$. First
we consider the case $(\GP,p) = (\SU_n(q),(q^n+1)/(q+1))$. In this case,
$\GP$ is simple,
$\dl(\GP) = p-1$ and so all other composition factors of the $\GP$-module
$V$ are trivial.  As shown in \S\ref{pim-su},
$$\Ext^1_\GP(k,U) = \Ext^1_\GP(U,k) = \Ext^1_\GP(k,k) = \Ext^1_\GP(U,U) = 0.$$
It follows by Lemma \ref{semi1} that $V_\GP$ is semisimple, a contradiction.

Suppose now that $(\GP,p) = (\Sp_{2n}(q),(q^n+1)/2)$, $(2Ru,29,28)$,  $(3J_3,19,18)$,
$(2\AAA_7,5,4)$, $(3\AAA_7,7,6)$, $(6\AAA_7,7,6)$, $(2J_2,7,6)$,
$(6_1 \cdot \PSU_4(3),7,6)$, $(6\cdot \PSL_3(4),7, 6)$, $(2M_{12},11,10)$,
$(2M_{22},11,10)$, $(6Suz,13,12)$, or $(2\gtwo(4),13,12)$. Since $\bfZ(\GP) \neq 1$,
$\GP$ acts faithfully on every composition factor $X$ of $V_\GP$ as noted in (a), whence
$\dim X \geq p-1 > (\dim V)/2$ by Theorem \ref{bz}. It follows that $V_\GP$ is irreducible, a
contradiction.

Assume that $(\GP,p,\dim U) = (M_{11},11,10)$. The Brauer
tree of $B_0(\GP)$ is given in \cite[Example 4.12.11]{LP}. Using this information,
we see that the only quotient of length $2$ of dimension $\leq 19$ of $\cP(U)$ is
of form $(W|U)$, where $W \in \IBr_p(M_{11})$ has dimension $9$. Arguing as in
the proof of Lemma \ref{single}, we arrive at (v).

\smallskip
(f) Next we consider the case $\dim U = p-2$ and apply Theorem \ref{bz} to $(\GP,U)$.
We can exclude the subcase $(\GP,p) = (\SL_3(2) \cong \PSL_2(7),7)$.

\smallskip
(f1) First we assume that $(\GP,p) = (\SL_n(q),(q^n-1)/(q-1))$ or $(\AAA_p,p)$, and
moreover $U$ is a composition factor of $V$ of multiplicity $2$. Since
$\dim V \leq 2p-3$, we have two cases.

$\bullet$ $\dim V = 2p-3$ and so $k$ is also a composition factor of the
$\GP$-module $V$. Suppose that $\hd(V_\GP)$ is not simple. Then
$V_\GP$ contains two maximal submodules $A, B$ of length $2$ and
$A \cap B \subseteq \soc(V_\GP)$. On the other hand, the indecomposability of
$V_\GP$ implies that $\soc(V_\GP) \subseteq \rad(V_\GP) \subseteq A \cap B$, whence
$\soc(V_\GP) = A \cap B$ is simple. So up to duality, we may assume that
$\hd(V_\GP)$ is simple. It follows that
$V_\GP$ is a quotient of $\cP(U)$ or $\cP(k)$. The structure of PIMs described in \S\S\ref{pim-sl}, \ref{pim-an} shows that (iii) holds.
%(including the case $(\GP,p,\dim U) = (\AAA_5,5,3)$).

$\bullet$ $\dim V = 2p-4$ and so $V_\GP$ has exactly two composition factors,
both isomorphic to $U$. As $V_\GP$ is indecomposable,
it is a quotient of $\cP(U)$. Using the results of \S\S\ref{pim-sl}, \ref{pim-an},
we again arrive at (iii).

\smallskip
(f2) Now we assume that $(\GP,p) = (\SL_n(q),(q^n-1)/(q-1))$ or $(\AAA_p,p)$, and
moreover $U$ is a composition factor of $V$ of multiplicity $1$. By Theorem
\ref{bz}, $U$ is the only nontrivial irreducible $k\GP$-module of dimension
$\leq p-1$. Since $\dim V \leq 2p-3$, it follows that all other composition factors
of $V_\GP$ are trivial, i.e. $V_\GP$ is $U$-special.
%It also follows that
%$\bfO_p(\GP) = \bfO_{p'}(G) = 1$ and so $\GP$ is simple.
The structure of PIMs described in
\S\S\ref{pim-sl}, \ref{pim-an} shows that the only $U$-special quotients of dimension
at most $b = 2p-3$ of $\cP(U)$ and $\cP(k)$ all belong to
$\{ k, U, (U|k), (k|U), (k|U|k) \}$. Also, $U \cong U^*$.  Hence we arrive at (ii) by
Corollary \ref{big-cf2}.

\smallskip
(f3) Assume that $(\GP,p,\dim U) = (M_{23},23,21)$. Then $U \cong U^*$,
$\cP(k) = (k|U|k)$, and the only quotient of length $2$ of dimension $\leq 43$ of $\cP(U)$ is
$(k|U)$.  Arguing as in the case of $\AAA_p$ in (e1) and (e2), we arrive at (ii).

Consider the case $(\GP,p,\dim U) = (M_{11},11,9)$. Then
$U \cong U^*$ and $\cP(k) = (k|U|k)$.
%Suppose that $V_\GP$ admits an indecomposable subquotient
%with two composition factors $U$, and $W \in \IBr_p(\GP)$ of dimension $10$.
%Since $\dim V \leq 19$, we must then have $V = W$ and arrive at (v).
%Suppose we are not in this case.
Using \cite[Example 4.12.11]{LP} as above, we see that $\cP(U)$ has only two non-simple quotients of dimension $\leq 19$, namely $(k|U)$ and $(W|U)$ with $W \in \IBr_p(\GP)$ of dimension $10$.
Arguing as above we arrive at (ii).

Suppose now that $(\GP,p,\dim U) = (3\AAA_7,5,3)$. Recall that $U$ is a composition factor of
largest dimension of $V_\GP$ and $\bfZ(\GP)$ acts via scalars on $V$. Using \cite{JLPW} one can
then check that all composition factors of $V_\GP$ are isomorphic to $U$. But $\Ext^1_\GP(U,U) = 0$
by Lemma \ref{cyclic}. Hence $V_\GP$ is semisimple by Lemma \ref{semi1}(ii), a contradiction.

Suppose that $(\GP,p,\dim U) = (3\AAA_6,5,3)$. As in the case of $(3\AAA_7,5,3)$,
we see that all composition factors of $V_\GP$ are isomorphic to $U$. But $\dim V \leq 7$ and
$V_\GP$ is indecomposable, so $\hd(V_\GP) \cong U$. Inspecting the structure of
$\cP(U)$ using \cite{ModAt}, we conclude that $V_\GP \cong (U|U)$ is uniserial, i.e (vi) holds.

\smallskip
(g) Finally, let $\dim U = p-3$. By Theorem \ref{bz}, we have that
$(\GP,p) = (2\AAA_7,7)$. As in the case of $(3\AAA_7,5,3)$,
we see that all composition factors of $V_\GP$ are isomorphic to $U$. But $\dim V \leq 11$ and
$V_\GP$ is indecomposable, so $\hd(V_\GP) \cong U$. Note that $\cP(U) = (U|U \oplus W|U)$, where
$W \in \IBr_p(\GP)$ has dimension $16$ (as one can see using \cite{ModAt}). It follows that
$V_\GP \cong (U|U)$, the unique quotient of dimension $8$ of $\cP(U)$, and
we arrive at (iv).
\end{proof}

\begin{lem}\label{indec3}
Suppose that $p = 3$ and $V$ is a reducible, faithful, indecomposable
$kG$-module of dimension $\leq 2p-3$. Then $\bfO_p(G) \neq 1$.
 \end{lem}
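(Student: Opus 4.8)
The plan is to argue by contradiction through a minimal counterexample. Suppose the lemma fails and choose $(G,V)$ with $|G|$ minimal subject to $\bfO_p(G)=1$ (here $p=3$), $\dim V\le 2p-3=3$, and $V$ a reducible, faithful, indecomposable $kG$-module. Since $V$ is reducible and indecomposable its composition length is at least $2$, so $\dim V\in\{2,3\}$. I would first dispose of the case in which every composition factor of $V$ is $1$-dimensional (automatic if $\dim V=2$, and one of the possibilities if $\dim V=3$): fixing a composition series $0=V_0\subset\cdots\subset V_m=V$, let $N=\ker\bigl(G\to\prod_i\GL(V_i/V_{i-1})\bigr)$. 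Then $N\lhd G$, and each element of $N$ is upper unitriangular with respect to a flag-adapted basis, hence unipotent, so $N$ is a $p$-group. If $N\ne1$ then $\bfO_p(G)\ne1$, a contradiction; if $N=1$ then $G$ embeds into a product of copies of $k^\times$, so $G$ is a $p'$-group and $V$ is semisimple by Maschke's theorem — contradicting that $V$ is reducible and indecomposable.

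Thus $\dim V=3$ and the composition factors of $V$ are a $1$-dimensional $\chi$ together with a $2$-dimensional irreducible $W$, fitting into a nonsplit sequence $0\to\chi\to V\to W\to0$ or $0\to W\to V\to\chi\to0$. Set $\GP=\bfO^{p'}(G)$. Then $\GP\ne1$ (else $G$ is a $p'$-group and $V$ is semisimple), $\bfO_p(\GP)=1$ (it is characteristic in $\GP\lhd G$), and $\chi|_{\GP}$ is trivial since $\GP=\bfO^{p'}(\GP)$. I would restrict the extension to $\GP$: if it split, then Lemma \ref{zero2} — applicable because $[G:\GP]$ is coprime to $p$ and one of the two $\GP$-summands is the $G$-submodule $\chi$, resp.\ $W$ — would make the $kG$-module $V$ decomposable; so the restricted sequence stays nonsplit. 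Also $W|_{\GP}$ is irreducible: since $\GP=\bfO^{p'}(\GP)$ forces every $1$-dimensional $\GP$-constituent to be trivial, the alternative is that $\GP$ acts trivially on $W$, hence through a $p$-group on $V$ (all composition factors of $V|_{\GP}$ being trivial), forcing $\GP\le\bfO_p(G)=1$, which is absurd. Hence $V|_{\GP}$ is a reducible, faithful, indecomposable $k\GP$-module of dimension $3$ with $\bfO_p(\GP)=1$, so if $\GP\ne G$ the minimality of $|G|$ yields $\bfO_p(\GP)\ne1$, a contradiction. Therefore $G=\bfO^{p'}(G)$.

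It remains to contradict the case $G=\bfO^{p'}(G)$, in which $\chi$ is the trivial module $k$. Here $G$ acts faithfully and irreducibly on $W$: the kernel of $G$ on $W$ acts trivially on $W$ and on $k$, hence embeds additively into a $k$-vector space, so is an elementary abelian $p$-subgroup normal in $G$ and thus trivial; and a reducible action on $W$ would be through upper unitriangular matrices, i.e.\ through a nontrivial $p$-group, again contradicting $\bfO_p(G)=1$. Since $\det\colon G\to k^\times$ has $p'$-image and $G=\bfO^{p'}(G)$, it is trivial, so $G\le\SL(W)=\SL_2(k)$; being irreducible, $G$ is nonabelian, so its image in $\PGL_2(k)$ is noncyclic and hence of even order (Dickson's classification of finite subgroups of $\PGL_2$ over an algebraically closed field), and by Cauchy's theorem $G$ contains an involution; but the only involution of $\SL_2(k)$ in characteristic $p\ne2$ is $-I$, so $-I\in G$. (Alternatively, Theorem \ref{bz} identifies $(G,W)$ up to central scalars as solvable with $\bfO_{p'}(G)$ irreducible on $W$, or as $\SL_2(5)=\Sp_2(5)$, or as $\SL_2(3^a)$, and in each case $-I\in G$.) Now $-I\in\bfZ(G)$ acts on $W$ by the scalar $-1$ and on $k$ by $1$, with $-1\ne1$ in $k$; hence $W$ and $k$ lie in $kG$-blocks with distinct central characters, so $\Ext^1_G(W,k)=\Ext^1_G(k,W)=0$, contradicting the non-splitness of the extension defining $V$. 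The one genuinely delicate point is this last step — pinning down $G$ enough to locate the central involution $-I$ (equivalently, to separate $k$ and $W$ into different blocks); once $-I\in G$ is known the block argument closes the case at once, and everything preceding it is elementary.
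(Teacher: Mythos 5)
Your proof is correct and hinges on the same pivotal fact as the paper's — locating the central involution $-I$ and using it to separate the two composition factors into distinct central characters/blocks — but it gets there by a somewhat different route. The paper works directly with $V_\GP$ from the outset: it splits into the case where all $\GP$-composition factors are $1$-dimensional (giving $\GP=\bfO_p(G)\ne1$) and the case of a $2$-dimensional factor $U$, in which it shows $|\GP|$ is even via Feit--Thompson plus Fong--Swan, finds $-I\in\bfZ(\GP)$, concludes that $V_\GP$ is semisimple, and contradicts Lemma \ref{semi2}. You instead run a minimal-counterexample argument: you first dispose of the case of all $1$-dimensional $G$-composition factors via the unipotent kernel, then reduce to $G=\GP$ by minimality (your appeal to Lemma \ref{zero2} to keep $V_\GP$ indecomposable does the same work as the paper's Lemma \ref{semi2}), and finish with the vanishing of $\Ext^1$, which is equivalent in effect to the paper's semisimplicity transfer. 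Both routes are of comparable difficulty; yours makes the reduction to $G=\GP$ explicit at the modest cost of carrying a minimality hypothesis, and replaces the Fong--Swan step with a subgroup-classification step.

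That last step is the one that needs tightening. The claim that a noncyclic finite subgroup of $\PGL_2(k)$ has even order is false in characteristic $3$: the Borel-type subgroups $C_3^a\rtimes C_m$ with $m\mid 3^a-1$ can be noncyclic of odd order (e.g.\ $C_3^2$). What rescues your argument is that the image of $G$ in $\PGL_2(k)$ comes from an irreducible subgroup of $\SL_2(k)$, so it cannot normalize a nontrivial unipotent subgroup and hence is not of Borel type; you should say this explicitly, or simply lean on your parenthetical alternative via Theorem \ref{bz}, which identifies the possible $G$'s directly (solvable with $\bfO_{3'}(G)$ irreducible, $\SL_2(5)$, or $\SL_2(3^a)$) and yields $-I\in G$ in each case. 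With that one sentence added, the argument is complete.
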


\begin{proof}
Suppose first that every $\GP$-composition factor of $V$ is of dimension $1$ and
so $\cong k$ (as $\GP = \bfO^{p'}(\GP)$). By faithfulness, $\GP$ is a $p$-group; moreover $\GP \neq 1$ as otherwise $G$ is a $p'$-group. Thus
$1 \neq \GP = \bfO_p(G)$.

Since $V_\GP$ is reducible, it remains to consider the case where $V_\GP$ has
exactly two composition factors, $U$ of dimension $2$ and $W$ of dimension $1$,
and moreover $\bfO_p(G) = 1$.
Let $K$ denote the kernel of the action of $\GP$ on $U$. Again,
$\GP = \bfO^{p'}(\GP)$ acts trivially on $W$. It follows by faithfulness of $G$ on $V$
that $K \leq \bfO_p(\GP) \leq \bfO_p(G) = 1$.  Next, since
$\GP = \bfO^{p'}(\GP)$, the image of $\GP$ in $\GL(U)$ is contained in
$\SL(U)$. Now if $|\GP|$ is odd, then $\GP$ is solvable and so by the Fong-Swan theorem  cannot act irreducibly on $U$ of dimension $2$. So $\GP$ contains an element of order $2$ which must then act as $-1_U$ and belong to $\bfZ(\GP)$. Thus $U$ and
$W$ have different central characters and so $V_\GP$ is semisimple, contradicting
Lemma \ref{semi2}.
\end{proof}

Next we will prove some criteria to decide the type of a self-dual indecomposable module.

\begin{lem}\label{type1}
Let $k = \overline{k}$ be of characteristic $p > 2$ and let $V$ be a self-dual indecomposable
$kG$-module with $\dim \End_{kG}(V) \leq 2$. Then $V$ supports a non-degenerate
$G$-invariant bilinear form that is either symmetric or alternating.
Furthermore, all such forms have the same, symmetric or alternating, type.
\end{lem}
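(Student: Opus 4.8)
\textbf{Proof plan for Lemma \ref{type1}.}
The plan is to exploit the hypothesis $\dim\End_{kG}(V)\le 2$ together with self-duality to show that the space of $G$-invariant bilinear forms on $V$ is small and that its nonzero members are forced to have a single symmetry type. First I would identify $G$-invariant bilinear forms on $V$ with elements of $\Hom_{kG}(V,V^*)$: a bilinear form $B(\cdot,\cdot)$ corresponds to the map $\phi_B\colon v\mapsto B(v,\cdot)$, and $B$ is non-degenerate exactly when $\phi_B$ is an isomorphism. Since $V$ is indecomposable and self-dual, $V\cong V^*$, so $\Hom_{kG}(V,V^*)\cong\End_{kG}(V)$ as $k$-vector spaces, and this space has dimension $1$ or $2$ by hypothesis. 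Because $V$ is indecomposable over the algebraically closed field $k$, $\End_{kG}(V)$ is a local ring, so it is either $k$ (dimension $1$) or a $2$-dimensional local $k$-algebra, necessarily $k[\epsilon]/(\epsilon^2)$ (dimension $2$); in either case every endomorphism is either invertible or nilpotent, and the invertible ones form a dense open subset.

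Next I would bring in the transpose (adjoint) involution. Fix one non-degenerate $G$-invariant form $B_0$ — one exists since $V\cong V^*$ — and use it to define an involution $\sigma$ on $E:=\End_{kG}(V)$ by $B_0(fv,w)=B_0(v,\sigma(f)w)$. Every $G$-invariant bilinear form is of the shape $B_f(v,w)=B_0(fv,w)$ for a unique $f\in E$, and a short computation shows $B_f$ is symmetric iff $\sigma(f)=f$ and alternating iff $\sigma(f)=-f$ (here $p>2$ is used so that $\epsilon$-symmetric decomposes as the direct sum of the $\pm1$-eigenspaces of $\sigma$). The transpose of $B_f$ corresponds to $\sigma(f)$, so $B_f$ is symmetric or alternating precisely when $f$ is a $\sigma$-eigenvector. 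Now the key point: $B_0$ itself, being non-degenerate, need not a priori be symmetric or alternating, but I would argue we may choose it to be. If $\dim E=1$ then $E=k$, $\sigma$ is the identity (it fixes $1$), every $f$ is a scalar, $B_f=fB_0$, and $B_0$ must then be symmetric or alternating and all forms share its type — done. If $\dim E=2$, write $E=k\cdot 1\oplus k\cdot n$ with $n$ nilpotent, $n^2=0$; $\sigma$ fixes $1$ and sends $n$ to $\pm n$ (since $\sigma(n)$ is again nilpotent, hence in $k\cdot n$, and $\sigma^2=\mathrm{id}$ forces the sign). In the case $\sigma(n)=n$, $\sigma=\mathrm{id}$, so every $B_f$ is symmetric; in particular $B_0$ is symmetric and all non-degenerate forms $B_f$ (those with $f$ invertible) are symmetric. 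In the case $\sigma(n)=-n$, the $\sigma$-fixed space is $k\cdot 1$ and the $(-1)$-space is $k\cdot n$; an invertible $f=a\cdot 1+b\cdot n$ is a $\sigma$-eigenvector only if $b=0$ or $a=0$, but $a=0$ gives $f=bn$ nilpotent, not invertible, so the only non-degenerate $B_f$ of pure symmetry type are the symmetric ones $B_{a\cdot 1}=aB_0$.

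So the last thing to verify is that in every case $V$ \emph{does} support a non-degenerate form of one of the two pure types — equivalently, that we are never forced into the degenerate situation where no $\sigma$-eigenvector is invertible. In the analysis above this is automatic: in the $\dim E=1$ case $B_0$ works, and in the $\dim E=2$ cases the symmetric forms $aB_0$ ($a\ne0$) are non-degenerate and symmetric (when $\sigma(n)=-n$ we replace $B_0$ by its symmetric part $\tfrac12(B_0+B_0^{\mathrm{t}})$, which corresponds to $\tfrac12(1+\sigma(1))=1$ hence is $B_0$ up to nothing — it is already symmetric since $\sigma$ fixes $1$; when $\sigma(n)=n$ everything is symmetric). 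Hence in all cases a non-degenerate $G$-invariant symmetric or alternating form exists, and any two such have the same type. The main obstacle I anticipate is the bookkeeping around whether $B_0$ can be taken symmetric/alternating from the start: the clean way around it is to observe that the transpose map $B\mapsto B^{\mathrm{t}}$ is $k$-linear on the $\le 2$-dimensional form space and is an involution, hence diagonalizable (as $p>2$), so the form space is spanned by symmetric and alternating forms; counting dimensions and using that the non-invertible $f$'s form a proper subspace then pins down which type actually occurs among the non-degenerate forms. I would present the argument in that streamlined form rather than through the explicit $k[\epsilon]/(\epsilon^2)$ case split if space permits, but the case split is the safe fallback.
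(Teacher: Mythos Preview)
Your main argument has a circularity. The assertion ``$B_f$ is symmetric iff $\sigma(f)=f$'' holds only when $B_0$ is already symmetric. For a general non-degenerate $G$-invariant $B_0$, write $B_0^{\mathrm t}=B_h$ for the unique $h\in E$; a direct computation shows that the transpose on forms is $B_f\mapsto B_{h\sigma(f)}$, not $B_f\mapsto B_{\sigma(f)}$, so $B_f$ is symmetric iff $h\sigma(f)=f$. In particular your inference ``$\sigma(1)=1$, hence $B_0=B_1$ is symmetric'' presupposes $h=1$, i.e.\ that $B_0$ is symmetric --- precisely what you are trying to establish. Consequently the case split on $\sigma(n)=\pm n$ does not line up with the symmetric/alternating dichotomy, and the conclusion ``the only non-degenerate $B_f$ of pure symmetry type are the symmetric ones $aB_0$'' is unjustified.

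Your closing ``clean'' paragraph is the right argument and is exactly what the paper does: transpose is a $k$-linear involution on the ($\le 2$-dimensional) space $B$ of $G$-invariant forms, so (as $p>2$) one has $B=S\oplus A$; the degenerate forms $D\subset B$ correspond under your bijection with $E$ to the maximal ideal $J$ of the local ring $\End_{kG}(V)$, hence $D$ is a subspace of dimension $\le 1$. The one step your sketch leaves implicit is the case $\dim S=\dim A=1$: here you need the observation that $D$ is itself transpose-stable (the transpose of a degenerate form is degenerate), so the $1$-dimensional $D$ must equal $S$ or $A$, and the non-degenerate symmetric-or-alternating forms are exactly those in the other line, all of one type. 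Add that sentence, drop the $\sigma$-based analysis, and the proof is complete.
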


\begin{proof}
Let $\Phi$ denote the matrix representation of $G$ on $V$ relative to a fixed basis
$(e_1, \ldots ,e_n)$ of $V$. Since $V \cong V^*$ as $G$-modules, we can find $b \in \GL_n(k)$
such that $\tw t \Phi(g)^{-1} = b\Phi(g)b^{-1}$, and so $b$ yields a non-degenerate $G$-invariant
bilinear form on $V$. Note that the map $\pi:X \mapsto bX$ yields a $k$-space isomorphism
between $\End_{kG}(V)$ and the space $B$ of $G$-invariant bilinear forms on $V$.
In particular, $\dim B \leq 2$, and, since $p > 2$, it is a direct sum $S \oplus A$ of symmetric and alternating $G$-invariant forms. Hence the claims follow if $\dim \End_{kG}(V) = 1$.
Assume $\dim \End_{kG}(V) = 2$. Since $V$ is indecomposable, $\End_{kG}(V)$ is a local
algebra, cf. \cite[Corollary 1.6.5]{LP}, and its unique maximal ideal $J$, which then has dimension
$1$, consists of (nilpotent) non-units. Thus $\pi(J)$ is contained in the subset $D$ of degenerate
$G$-invariant bilinear forms on $V$. But $\pi^{-1}(D)$ is obviously contained in $J$. It follows
that $D = \pi(J)$ is a subspace and $\dim D = 1$. Hence we are also done if $S$ or $A$ is zero. Assume $S, A \neq 0$, whence both of them are $1$-dimensional. Now if $Y \in D$, then
$\tw t Y \in B$ and it is degenerate. As $p > 2$ and $\dim D = 1$, it follows that
$\tw t Y = \pm Y$. Thus $D$ is either $S$ or $A$, and so the nonzero forms in the other subspace are precisely the non-degenerate $G$-invariant forms on $V$ that are either symmetric or alternating.
\end{proof}

\begin{lem}\label{type2}
{Suppose that $G$ is a finite group with a Sylow $p$-subgroup $P$ of order $p > 2$ such that
$\bfN_G(P)/P$ is abelian. Let $V$ be a reducible self-dual indecomposable $G$-module over
$k = \overline{k}$ of characteristic $p$, of even dimension $d < 2p$. Then $V$ is not
orthogonal if $d < p$ and $V$ is not symplectic if $d > p$.}
\end{lem}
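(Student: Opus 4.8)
The plan is to pass to $N:=\bfN_G(P)$ via the Green correspondence — where the hypothesis that $N/P$ is abelian makes the ambient block completely explicit — and then to read off the isometry type from a parity invariant.

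First, since $V$ is indecomposable but not irreducible, it cannot lie in a block of defect $0$ (whose indecomposable modules are all irreducible), so $V$ lies in a block $B$ of defect $1$ with defect group $P\cong C_p$; by the structure theory of blocks with cyclic defect group of order $p$ (Dade) every indecomposable module in $B$ is uniserial, so $V$ is uniserial. As $\dim V$ is even, $0<\dim V<2p$ and $p$ is odd, $p\nmid\dim V$, so $V$ is non-projective with vertex $P$ and has a Green correspondent $f(V)$ in $N$, lying in the Brauer correspondent block $b$ of $N$; write $V_N\cong f(V)\oplus Q$ with $Q$ projective. Then $f(V)$ is again uniserial, $f(V)\cong f(V)^*$ (Green correspondence commutes with duality and $V\cong V^*$), and $\dim f(V)\equiv\dim V\pmod p$ since $\dim Q\equiv 0\pmod p$. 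Because $P\lhd N$ acts trivially on every simple $kN$-module in $b$, and because $N/P$ is abelian with $k$ algebraically closed, all simple modules in $b$ are one-dimensional; hence $f(V)$ is uniserial with $1$-dimensional composition factors, so $\dim f(V)$ equals its Loewy length $\ell$, and $\ell\le p$ since a projective cover of a simple in $b$ has dimension $|N|_p=p$. The congruence now forces $\ell=\dim V$ (even) when $\dim V<p$, and $\ell=\dim V-p$ (odd) when $\dim V>p$.

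The key reduction is that a non-degenerate $G$-invariant symmetric (resp.\ alternating) form on $V$ restricts on $V_N$ to a non-degenerate $N$-invariant form of the same type, and then — using that $f(V)$ is self-dual and occurs with multiplicity one in $V_N$, so that the projective complement $Q$ is itself self-dual — one can adjust the decomposition into an orthogonal sum and produce a non-degenerate $N$-invariant form of the same type on $f(V)$. Granting this, it suffices to prove: for a self-dual uniserial $kN$-module $M$ with $1$-dimensional composition factors $\mu_1,\dots,\mu_\ell$ from top to bottom (so $\mu_j^*\cong\mu_{\ell+1-j}$), if $\ell$ is even then $M$ admits no non-degenerate symmetric $N$-invariant form, while if $\ell$ is odd it admits no non-degenerate alternating one. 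I would prove this by induction on $\ell$. Since all composition factors are $1$-dimensional, $\dim\soc_i(M)=i=\dim(M/\rad^i(M))$, and comparing submodule dimensions in the uniserial module $M$ gives $\soc_i(M)^\perp=\rad^i(M)$; hence the Gram matrix of any non-degenerate invariant form in a socle-adapted basis is anti-triangular, so for $\ell\ge 2$ the line $\soc(M)$ is $G$-stable and isotropic, and $\soc(M)^\perp/\soc(M)$ is a self-dual uniserial $kN$-module with $1$-dimensional composition factors of length $\ell-2$ carrying the induced non-degenerate form of the same symmetry type. In the base case $\ell=1$, $M$ is a self-dual line, which carries a symmetric but never an alternating non-degenerate form. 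In the base case $\ell=2$, write $M=(\mu\mid\mu^{-1})$; since $M$ is indecomposable and $N/P$ is a $p'$-group, some element of $P$ must act non-trivially on $M$, and a direct two-by-two computation then shows that $N$-invariance forces the Gram matrix into the shape $\left(\begin{smallmatrix}\ast&\beta\\-\beta&0\end{smallmatrix}\right)$, so non-degeneracy forces $\beta\ne 0$ and no invariant symmetric form is non-degenerate. Thus the type is alternating for even $\ell$ and symmetric for odd $\ell$. Feeding this back: if $\dim V<p$ then $\ell$ is even, so $f(V)$, hence $V$, is not orthogonal; if $\dim V>p$ then $\ell$ is odd, so $f(V)$, hence $V$, is not symplectic.

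The step I expect to be the main obstacle is the reduction to $f(V)$: although compatibility of Green correspondence with duality is standard, one must handle the possibility that the restricted form is degenerate on the summand $f(V)$, which requires the Krull–Schmidt-type argument that the self-dual projective complement can be chosen non-degenerate so as to split $V_N$ orthogonally. The remaining ingredients — the uniserial structure from cyclic defect theory and the parity induction — are routine; $p>2$ is used both to decompose invariant forms into symmetric and alternating parts and in the $\ell=2$ computation.
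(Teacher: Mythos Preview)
Your proof is correct, and the overall architecture (Green correspondence to $N=\bfN_G(P)$, then a parity argument) matches the paper's, but the endgame differs. After obtaining $V_N\cong f(V)\oplus Q$, you stay at $N$: you argue that the form restricts non-degenerately to $f(V)$ (your ``main obstacle''), and then run an induction on the Loewy length $\ell$ of the uniserial $kN$-module $f(V)$ to pin down the type. The paper instead restricts one step further, from $N$ to $P$: using that $N/P$ is abelian (so every simple $kN$-module stays simple over $P$) together with Alperin's observation that the $P$- and $N$-radical filtrations coincide, each indecomposable $kN$-summand stays indecomposable over $P$, giving $V_P\cong X_d$ if $d<p$ and $V_P\cong X_{d-p}\oplus X_p$ if $d>p$. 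Then the conclusion is read off from the standard description of Jordan block multiplicities of unipotent elements in $\Sp$ versus $\SO$ in odd characteristic (e.g.\ Liebeck--Seitz): a single even block cannot lie in $\SO$, and two distinct odd blocks cannot lie in $\Sp$. This sidesteps your orthogonal-splitting step entirely, since one only needs the Jordan type of a generator of $P$ on $V$, not an orthogonal decomposition of $V_N$. Your route is more self-contained (no external citation for the unipotent classification) at the cost of that extra Krull--Schmidt-with-forms argument; the paper's is shorter but relies on the quoted structural result.
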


\begin{proof}
For $1 \le i \le p$, let $X_i$ denote the unique indecomposable
$kP$-module of dimension $i$ (so $X_p$ is projective). By the Green correspondence
(see e.g. \cite[Theorem 4.9.2]{LP}),
$V_N = X \oplus Y$ for $N := \bfN_G(P)$, where $X$ is non-projective indecomposable
and $Y$ is projective (if nonzero). Let $M$ denote any indecomposable
$kN$-module. According to \cite[p. 42]{Alp}, $M$ is uniserial. Also, Lemma 8 of \cite[\S5]{Alp} says that the $P$-radical filtration agrees with the $N$-radical filtration on $M$; in particular,
$\rad(M) = \rad(M_P)$. As $N/P$ is abelian, any irreducible $kN$-module remains irreducible as
over $P$. It follows that $M_P$ is indecomposable. Applying this to $X$ and $Y$, we see that $V_P = X_d$ if $d < p$, respectively $V_P = X_{d-p} \oplus X_p$ if $d > p$.
Now suppose that $V$ is equipped with a non-degenerate $G$-invariant bilinear form of a
fixed parity.
%Note that it induces a non-degenerate $P$-invariant bilinear form of the same parity on
%$\rad(V_P)/(\rad(V_P) \cap \soc(V_P))$. If $d < p$ we
%thus inductively reduce to the case $d = 2$, in which case the form clearly has to be alternating.
%If $d > p$ we inductively reduce to a $P$-invariant form of the same parity on the odd-%dimensional space $X_{2p-d-1}$, hence it is symmetric.
The claim then follows by using the description of Jordan forms of unipotent elements in
classical groups, see e.g. \cite[Theorem 3.1]{LS}.
\end{proof}

{\bf Proof of Theorem \ref{thm: indecomposable}.}
There is nothing to prove for $p = 2$; furthermore $p \neq 3$ by Lemma
\ref{indec3}. So we may assume $p > 3$. By Proposition
\ref{indec-str}, the self-duality of $V$ implies that $\GP$ is quasisimple. If
furthermore $\GP$ is not a Lie-type group in characteristic $p$, then by Theorem
\ref{indec-qs} we arrive at (i) and (ii). Assume that $\GP$ is of Lie-type in characteristic
$p$. By Lemma \ref{ext-defi}(i), $\GP \cong \SL_2(q)$ or $\PSL_2(q)$
for some $q = p^a$. By Corollary \ref{indec-gp}, $V_\GP$ is
indecomposable of length $\geq 2$. Applying Proposition \ref{prop1}, we arrive at
(i) and (ii).

Note that in each of the listed cases, there is a unique (up to isomorphism) reducible indecomposable $\GP$-module $V$ of the indicated shape (indeed, if $W := \hd(V_\GP)$ then
there is a unique quotient of $\cP(W)$ of this shape). Since
$W^* \cong W \cong \soc(V_\GP)$, it follows that $V_\GP$ is self-dual. Thus all the
listed cases give rise to examples of reducible indecomposable self-dual modules (at least for
$\GP$).

It remains to determine the type of each indecomposable module. Note that in all cases
$\dim \End_{k\GP}(V) = 2$, whence $\dim \End_{kG}(V) \leq 2$ and Lemma \ref{type1} applies to both $G$ and $\GP$. Thus $V$ supports a non-degenerate $G$-invariant form that is either symmetric or alternating. If $\dim V$ is odd, then all such forms must be symmetric. Consider the case $\dim V$ is even. Note that in all cases $|P| = p$ and $\bfN_\GP(P)/P$ is abelian for
$P \in \Syl_p(\GP)$. So by Lemma \ref{type2}, all such forms are symmetric when $\dim V > p$,
respectively alternating when $\dim V < p$.
\hfill $\Box$

Recall from \cite{Serre2} that for $\cG$ a connected reductive group over an algebraically closed field $k$ and for $G \le \cG$ a subgroup 
we say that $G$ is \emph{$\cG$-cr} if whenever $G \le \cP$ for a parabolic subgroup $\cP$ of $\cG$, then $G$ is
contained in a Levi subgroup of $\cP$. If $\cG = \Sp(V)$ or $\SO(V)$ for some finite-dimensional vector space equipped with
a non-degenerate alternating or symmetric bilinear form, then this is equivalent to saying that for any $G$-stable isotropic
subspace $W \subset V$ there exists a $G$-stable isotropic
subspace $W' \subset V$ with $W+W'$ non-degenerate. For these $\cG$ and provided $p > 2$,
a subgroup $G \le \cG$ is $\cG$-cr if and only if the $kG$-module $V$ is
completely reducible \cite[\S3.2.2]{Serre2}.

We can extend Serre's notion to the disconnected group $\cG = \OO(V)$ by saying that a subgroup $G$ is {\it $\OO(V)$-cr} if for any
$G$-stable isotropic subspace $W \subset V$ there exists a $G$-stable isotropic subspace $W' \subset V$ with $W+W'$
non-degenerate. We then see using the same argument as in \cite[\S3.2.2]{Serre2} as well as Lemma~\ref{semi2}(i), that for $G \le \OO(V)$ and $p > 2$ the following are equivalent:
\begin{enumerate}
\item $G$ is $\OO(V)$-cr,
\item $G \cap \SO(V)$ is $\SO(V)$-cr,
\item the $kG$-module $V$ is completely reducible.
\end{enumerate}

The next result shows that for $\cG = \Sp(V)$ or $\OO(V)$, the finite non-$\cG$-cr subgroups
of $\cG$ are made up from the groups with a nontrivial unipotent normal subgroup and
the groups described in Theorem \ref{thm: indecomposable}. % Recall from \cite{Serre2} that
% for these $\cG$ and provided $p > 2$,
% a finite subgroup $G < \cG$ is $\cG$-cr if and only if the $kG$-module $V$ is
% completely reducible.

\begin{prop}\label{serre}
Let $k = \overline{k}$ be of characteristic $p > 0$ and let $\cG$ be either $\Sp(V)$ or
$\OO(V)$ with $\dim_k V \leq 2p-3$. Suppose that $G < \cG$ is a finite subgroup such that
the $G$-module $V$ is not completely reducible. Then there is a $G$-invariant decomposition
$V = V_1 \oplus V_2 \oplus V_3$ of $V$ into an orthogonal direct sum of three subspaces, where $V_i$ is either zero or non-degenerate, at least one of the $V_i$'s is zero and at least one of $V_1$ and $V_2$ is nonzero, and the following conditions hold for the images $G_i$ of $G$ in $\GL(V_i)$.
\begin{enumerate}[\rm(i)]
\item If $V_1 \neq 0$, then $\bfO_p(G_1) = 1$, the $kG$-module $V_1$ is
reducible indecomposable, and $(G_1,V_1)$ is as described in Theorem
\ref{thm: indecomposable}.

\item If $V_2 \neq 0$, then $\bfO_p(G_2) \neq 1$.

\item If $V_3 \neq 0$, then $V_3$ is an orthogonal direct sum of
non-degenerate subspaces, each being an irreducible $G$-module.
\end{enumerate}
\end{prop}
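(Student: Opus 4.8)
The plan is to decompose the $kG$-module $V$ into its largest completely reducible ``orthogonal'' part and a complementary indecomposable part, using the self-duality forced by $V$ sitting inside $\Sp(V)$ or $\OO(V)$. First I would observe that since $p > 2$, a $G$-invariant non-degenerate symmetric or alternating form on $V$ restricts non-degenerately to any non-degenerate $G$-stable subspace and its orthogonal complement is again $G$-stable; so the radical structure of $V$ as a $kG$-module is controlled by the form. The key decomposition step is to write $V = V' \perp V_3$, where $V_3$ is the (unique) maximal $G$-submodule on which the restricted form is non-degenerate and which is completely reducible with each summand non-degenerate, and $V'$ is its orthogonal complement. By self-duality and Fitting's lemma applied to the category of $kG$-modules with form, $V'$ has no non-degenerate proper nonzero $G$-submodule; decomposing $V'$ into its indecomposable summands and pairing up dual summands via the form, one reduces to the case that $V'$ is either a single self-dual indecomposable summand or a hyperbolic sum $U \oplus U^*$ of a non-self-dual indecomposable and its dual.

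Next, within $V'$ I would separate the part with nontrivial $p$-core from the part with trivial $p$-core. Concretely, set $V_2$ to be the sum of those non-degenerate $G$-stable indecomposable ``blocks'' of $V'$ on which the image $G_2$ of $G$ satisfies $\bfO_p(G_2) \ne 1$, and $V_1$ the remaining block(s), on which $\bfO_p(G_1) = 1$. The point is that a non-degenerate $G$-stable summand supporting an indecomposable non-completely-reducible module with trivial $p$-core is, by Corollary~\ref{indec-gp} and Theorem~\ref{thm: indecomposable} (applied over $\bar k = k$ with $\dim \le 2p-3$), forced to be {\it irreducible} unless $\GP$ is quasisimple and $V_1$ is one of the listed uniserial/biserial shapes; in particular there can be at most one such non-irreducible indecomposable block, and any irreducible non-degenerate block can be absorbed into $V_3$. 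So after reorganizing, $V_1$ is either zero or exactly one reducible indecomposable self-dual module as in Theorem~\ref{thm: indecomposable} (with $\bfO_p(G_1) = 1$), $V_2$ collects everything with nontrivial unipotent radical, and $V_3$ is the completely reducible orthogonal-sum part. Since $V$ is not completely reducible, $V_1 \oplus V_2 \ne 0$, and since $\dim V \le 2p-3$, at least one of the three summands must vanish (e.g. one cannot simultaneously have a $p$-dimensional indecomposable $V_1$, a nonzero $V_2$, and a nonzero $V_3$ within dimension $2p-3$; this forces the ``at least one $V_i = 0$'' clause after a short case check on the possible dimensions of $V_1$).

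The main obstacle I anticipate is the bookkeeping needed to guarantee that $V_1$ is a {\it single} indecomposable block as opposed to a sum of several, and that the splitting $V = V_1 \perp V_2 \perp V_3$ can genuinely be made orthogonal and $G$-invariant simultaneously. The orthogonality requires that distinct indecomposable summands with distinct ``types'' (trivial vs.\ nontrivial $p$-core, reducible vs.\ irreducible) are mutually perpendicular; this follows because a $G$-homomorphism between two such summands with incompatible socle/head data must be zero, so the Gram matrix of the form is block-diagonal in any Krull--Schmidt decomposition adapted to these types, after possibly replacing the form by an equivalent one (using $\dim\Hom$ being small and the structure results of \S\ref{sec:gcr}). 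The dimension constraint $\dim V \le 2p-3$ is exactly what makes all the relevant $\Ext^1$ and homomorphism spaces small enough for this to go through; I would cite Lemma~\ref{semi2}, Corollary~\ref{indec-gp}, Proposition~\ref{indec-str}, and Theorem~\ref{thm: indecomposable} for the internal structure, and Lemma~\ref{type1}/Lemma~\ref{type2} to pin down that the form on $V_1$ has the stated type. Finally, conclusions (i)--(iii) are then immediate from the definitions of $V_1, V_2, V_3$ together with Theorem~\ref{thm: indecomposable} for (i).
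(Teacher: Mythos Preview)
Your overall plan (orthogonal Krull--Schmidt decomposition of $V$, then sort the pieces into $V_1$, $V_2$, $V_3$ by whether the image has trivial $p$-core and whether the piece is irreducible) is reasonable, and in spirit it is close to what the paper does in one of its two cases. But there is a genuine gap in your argument for the clause ``at least one of the $V_i$ is zero''.

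You claim this follows from a dimension count: since a reducible self-dual indecomposable $V_1$ with $\bfO_p(G_1)=1$ has dimension $\ge p-1$ by Theorem~\ref{thm: indecomposable}, there is no room for both $V_2$ and $V_3$. But a non-degenerate $V_2$ with $\bfO_p(G_2)\ne 1$ can have dimension as small as $2$, and $V_3$ can have dimension $1$, so for $p\ge 5$ the inequality $\dim V_1+\dim V_2+\dim V_3\le 2p-3$ does not by itself force any summand to vanish. What actually drives this conclusion is a dichotomy you never invoke: either $G$ has a composition factor of order $p$ or it does not. If it does not, then no quotient $G_i$ of $G$ can have $\bfO_p(G_i)\ne 1$, so $V_2=0$. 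If it does (and $\bfO_p(G)=1$, $V$ decomposable), then one must produce an \emph{irreducible} non-degenerate $\GP$-summand $W_1$ of dimension $\ge p-1$; its orthogonal complement $U$ has dimension $\le p-2$, so by \cite{Gcr} any indecomposable piece of $U$ with trivial $p$-core on its image is irreducible, and hence $V_1=0$. The paper obtains this $W_1$ by analysing how $J:=\bfO_{p'}(\GP)$ acts on the $\GP$-composition factors of $V$ and applying Theorem~\ref{bz}; your sketch has no substitute for this step.

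A second, smaller gap: you assert that one can orthogonally decompose $V$ into non-degenerate indecomposable (or hyperbolic) blocks and that blocks of different ``type'' are automatically perpendicular because the relevant $\Hom$-spaces vanish. This is not automatic. The paper, in the case where $G$ has no $C_p$ composition factor, builds the orthogonal decomposition iteratively: it picks a direct summand $A$ of smallest dimension (so $\dim A\le p-2$), uses \cite{Gcr} to conclude $A$ is irreducible, and then exploits irreducibility to pass from the module decomposition $V=A\oplus B$ to a genuine orthogonal decomposition $V=C\oplus C^\perp$ with both pieces non-degenerate. Only after iterating does one arrive at $V=\bigoplus U_i$ with each $U_i$ non-degenerate and indecomposable; then $\dim U_1\ge p-1$ forces all other $U_i$ to be irreducible. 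Your ``Fitting's lemma'' remark and the appeal to small $\Hom$-spaces do not replace this inductive construction, and in particular the hyperbolic pieces $U\oplus U^*$ with $U$ irreducible but not self-dual do not satisfy condition~(iii) as stated (each summand of $V_3$ must be both irreducible \emph{and} non-degenerate), so they require the same careful treatment.

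In short: the paper's proof is organised around the case split $\bfO_p(G)\ne 1$ / $V$ indecomposable / $G$ has no $C_p$ factor / $J\not\le\bfZ(\GP)$, and the last two cases yield $V_2=0$ and $V_1=0$ respectively by structural arguments (via \cite{Gcr} and Theorem~\ref{bz}), not by dimension alone. Your uniform decomposition-and-sort approach can be made to work, but only after importing exactly these two ingredients.
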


\begin{proof}
(a) First note that $p > 2$.
Setting $V_2 = V$ when $\bfO_p(G) \neq 1$, we may assume $\bfO_p(G) = 1$. Setting
$V_1 = V$ when $V_G$ is indecomposable, we may assume that $V_G$ is decomposable.

First we consider the case where no composition factor of $G$ has order $p$.
Choose a decomposition $V_G = A \oplus B$ with $A,B \neq 0$ being $G$-invariant and $A$ of
smallest possible dimension. Then $\dim A \leq p-2$ and the image $X$ of $G$ in $\GL(A)$ has
$\bfO_p(X) = 1$.  By \cite{Gcr}, the $X$-module $A$ is completely reducible, whence it is
irreducible by its choice. If $A$ is non-degenerate, then $V_G = A \oplus A^\perp$.
Consider the case $A \cap A^\perp \neq 0$. By the irreducibility of $A$, $A \subseteq A^\perp$ and
so $A^\perp = A \oplus C$ for $C := B \cap A^\perp$. It is easy to see that $C \cap C^\perp = 0$
and so $V_G = C \oplus C^\perp$. Note that $C^\perp \neq 0$. Also, $C \neq 0$ as otherwise
$A^\perp = A$, $B \cong V/A = V/A^\perp \cong A^*$ is an irreducible $G$-module and so
$V_G$ is semisimple, a contradiction. Thus in either case
$V$ is an orthogonal direct sum of nonzero non-degenerate $G$-invariant subspaces.
Repeating this process for the summands, we obtain an orthogonal direct sum
$V = \oplus^n_{i=1}U_i$, where each $U_i$ is non-degenerate and indecomposable as
a $kG$-module, and $n \geq 2$. Since $V_G$ is not semisimple, we may assume that $U_1$ is
reducible. Again the image $Y_i$ of $G$ in $\GL(U_i)$ has $\bfO_p(Y_i) = 1$. By Theorem
\ref{thm: indecomposable}, $\dim U_1 \geq p-1$, whence
all $U_i$ with $i \geq 2$ must be irreducible over $G$. Setting $V_1 = U_1$ and
$V_3 = \oplus^n_{i=2}U_i$, we are done. Note that in this case $\dim V > \dim U_1 \geq p-1$.

\smallskip
(b) Let $W_1, \ldots ,W_m$ denote all the composition factors of $V_\GP$ (with counting multiplicities)
and let $J := \bfO_{p'}(\GP)$.

Consider the case $p = 3$. If $\dim W_i = 1$ for all $i$, then the first paragraph of the proof
of Lemma \ref{indec3} shows that $\bfO_p(G) \neq 1$, contrary to our hypotheses. As $V_G$
is decomposable of dimension $\leq 3$, it follows that $V_\GP = W_1 \oplus W_2$ with
$\{\dim W_1,\dim W_2\} = \{1,2\}$ and this decomposition is $G$-invariant. Thus $V_G$ is
completely reducible, again a contradiction. So we must have that $p > 3$.

Suppose that $J$ acts by scalars on each of the $W_i$'s. Then, in a suitable basis of $V$,
$[J,\GP]$ is represented by unitriangular matrices, and so it is a $p$-subgroup. But
$\bfO_p(G) = 1$, so $J \leq \bfZ(\GP)$. Applying Lemma \ref{eg} to $\GP$, we see that
$\GP$, and so $G$ as well, has no composition factors of order $p$. Thus we are done by (a).

So we may now assume that $J$ does not act by scalars on $W_1$. It follows that the image of
$\GP$ in $\GL(W_1)$ contains a non-scalar normal $p'$-subgroup. Applying Theorem \ref{bz}, we see that $\dim W_1 \geq p-1$.
Since $\dim V \leq 2p-3$, it follows
that $J$ acts by scalars on each $W_i$ with $i > 1$, and $m \geq 2$ as $V_G$ is reducible.
Since $J$ is a $p'$-group, $V_J \cong W_1 \oplus (\oplus^m_{i=2}W_i)$, and this decomposition
is $G$-invariant. It follows
that $G$ fixes a decomposition $V = W_1 \oplus U$ where $U_\GP$ has composition
factors $W_i$, $2 \leq i \leq m$. Since $J$ acts by scalars on each $W_i$ with $i > 1$ but not on
$W_1$, it also follows that
$U = W_1^\perp$, whence $W_1$ is non-degenerate. If $\bfO_p(Y) = 1$ for
the image $Y$ of $G$ in $\GL(U)$, then $U_G$ is semisimple by \cite{Gcr} (as $\dim U \leq p-2$),
a contradiction. So we can now set $V_2 = U$ and $V_3 = W_1$. Note that in this case
$\dim V > \dim W_1 \geq p-1$.
\end{proof}

{\bf Proof of Corollary \ref{clas}.} Suppose that the $kG$-module $V$ is not completely reducible.
If $V_G$ is indecomposable, then we are done by Theorem \ref{thm: indecomposable}. Otherwise,
the proof of Proposition \ref{serre} shows that $\dim V \geq p$.
\hfill $\Box$

\section{Adequacy for \texorpdfstring{$\SL_2(q)$}{SL\_2(q)}}\label{sec: sl2q}
The aim of this section is to prove the following statement which extends the results of 
\cite[\S3]{GHT}:
 
\begin{prop}\label{sl2q}
Any nontrivial irreducible representation $V$ of $G := \SL_2(p^r)$ over $\overline\bbF_p$ is weakly 
adequate, except when $q :=p^r \leq 3$. 
\end{prop}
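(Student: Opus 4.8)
The goal is to show $(G,V)$ is weakly adequate for every nontrivial irreducible $\overline\bbF_p G$-module $V$ when $G = \SL_2(q)$, $q = p^r > 3$. Write $V = L(\mathbf m) = \bigotimes_{i=0}^{r-1} L(m_i)^{(p^i)}$ by Steinberg's tensor product theorem, where $0 \le m_i \le p-1$ and not all $m_i$ are $0$. The first reduction is the observation that if $V$ is a Frobenius twist of a module $V_0$, then $(G,V)$ is weakly adequate if and only if $(G,V_0)$ is, since $\cM$ only depends on the abstract representation up to automorphism of $G$; more importantly, we may use the Künneth-type decomposition $\EE(V) \cong \bigotimes_i \EE(L(m_i))$ as $G$-modules via the diagonal embedding, but because the different Frobenius twists are ``independent'' coordinates this does not immediately split $\cM$. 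So the real work is to produce enough semisimple elements.

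\textbf{Key steps.} First I would dispose of the case $r = 1$, i.e. $G = \SL_2(p)$ with $p \ge 5$: here the relevant statement (weak adequacy, in fact full adequacy away from the two known exceptions) is already contained in Theorem \ref{ght:adequate} and \cite[\S3]{GHT}, so weak adequacy holds for all nontrivial $L(m)$, $0 \le m \le p-1$. For general $r$, the plan is to exhibit a large subgroup $X \le G$ of order prime to $p$ on which $V$ restricts with small multiplicities, and invoke the Artin--Wedderburn theorem (Burnside's lemma) on $X$ together with Lemma \ref{mult-free}: if $V_X$ is multiplicity-free then $(\EE(V)/\cM)^X = 0$, and then a second well-chosen $p'$-subgroup $X'$ (e.g. a non-split torus versus a split torus, or a dihedral/monomial subgroup) is used so that no nonzero $G$-composition factor of $\EE(V)/\cM$ can be fixed by both $X$ and $X'$. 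The natural candidates are the split torus $T_s \cong C_{q-1}$, the non-split torus $T_{ns} \cong C_{q+1}$, and their normalizers (dihedral groups), exactly as in the proofs of Propositions \ref{weil-sp} and \ref{weil-sl}. I would compute the Brauer character of $L(\mathbf m)$ restricted to $T_s$ and $T_{ns}$ using the explicit weights (the weights of $L(m)$ are $m, m-2, \ldots, -m$), show that for $q > 3$ at least one of these restrictions is multiplicity-free or close to it, and handle the small number of modules where both restrictions have repeated constituents by a direct argument (these will be modules of very small dimension, e.g. $L(1)$ the natural module, where one can write down $\cM$ explicitly: the natural module of $\SL_2(q)$ has $\EE(V)$ of dimension $4$ and the span of semisimple elements is easily seen to be everything since regular semisimple elements of both tori already span).

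\textbf{Main obstacle.} The delicate point is that $\EE(L(\mathbf m)) = L(\mathbf m)^* \otimes L(\mathbf m)$ can have many trivial composition factors (the number of which grows with the number of nonzero $m_i$), and trivial composition factors are invisible to the multiplicity-free test on any single torus. So the crux is ruling out that $\cM$ sits inside the augmentation submodule $\cE_0 = \{f : \tr f = 0\}$, or more generally inside a proper submodule with trivial quotient: this is exactly the step that fails for the known counterexamples and must be shown not to fail here. I expect the resolution to be: (1) for $q > 3$ there is always a semisimple element of $G$ with nonzero trace on $V$ (e.g. an element of the non-split torus acting with eigenvalues not summing to $0$), which keeps $\cM \not\subseteq \cE_0$; and (2) after removing trivial constituents, the remaining part of $\EE(V)$ is controlled by the torus restrictions as above. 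Assembling these --- trace nonvanishing to handle trivial constituents, multiplicity-freeness on $T_s$ or $T_{ns}$ to handle the rest, and a finite check for the handful of small modules (including verifying the exclusion $q \le 3$ is genuinely necessary, since $\SL_2(2) \cong \SSS_3$ and $\SL_2(3)$ give the known failures) --- gives the result.
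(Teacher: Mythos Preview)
Your high-level instinct --- use both the split and the non-split torus --- matches the paper, but the concrete mechanism you propose does not work, and this is a genuine gap rather than a detail to be filled in.

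The problem is that for $r>1$ the restriction $V|_{T}$ is almost never multiplicity-free for either torus, so Lemma~\ref{mult-free} cannot be invoked. Take for instance the Steinberg module $V=L(q-1)$ over $\SL_2(q)$ with $q=p^r$, $r\ge 2$: its $\cG$-weights are $q-1,q-3,\ldots,-(q-1)$, and reducing these modulo $q-1$ (for $T_s\cong C_{q-1}$) or modulo $q+1$ (for $T_{ns}\cong C_{q+1}$) one sees that most characters occur with multiplicity at least $2$. The same phenomenon occurs for generic tensor products $\bigotimes_i L(m_i)^{(p^i)}$ once two or more factors are nontrivial. So the step ``show that for $q>3$ at least one of these restrictions is multiplicity-free'' fails on a dense set of examples, not on a finite list that can be checked by hand. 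Your fallback ``trace nonvanishing to handle trivial constituents'' only rules out $\cM\subseteq\cE_0$, i.e.\ only the single trivial quotient of $\End(V)$; it says nothing about the many nontrivial summands $L(2k)$ of the head that must also be hit.

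What the paper actually does is more refined. Lemma~\ref{head} identifies $\hd_G(\End(V))$ explicitly as a direct sum of simples $L(2k)$ indexed by $k=\sum k_ip^i$ with $0\le k_i\le\min((p-1)/2,a_i)$ (plus an extra $L(q-1)$ when $a=q-1$). For each such $k$ one writes down an explicit weight-$0$ vector $\Delta_k\in\End(V)$ generating that summand and the associated functional $\delta_k=\tr(-\circ\Delta_k)$. If $L(2k)\not\subset M$, one gets $\delta_k(M)=0$; evaluating this on the diagonal projections $\pi_\ell$ produces a function $p_k(\ell)=\prod_i p_{k_i}(\ell_i)$ which is a nonzero product of polynomials of degrees $k_i\le a_i$. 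The torus actions then force $\sum_{\ell\equiv\ell'\,((q\mp1)/2)}p_k(\ell)=0$ for all $\ell'$, and a short recursion shows $p_k\equiv 0$, contradicting the degree bound. The point is that this argument uses the tori not via multiplicity-freeness of $V$, but via these polynomial identities on the weight-$0$ part of $\End(V)$; that is the idea your proposal is missing.
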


By the Steinberg tensor product theorem we can write 
$$V = L(a) := \otimes^{r-1}_{i=0}L(a_i)^{(i)}$$
for some $a = \sum^{r-1}_{i=0}a_ip^i$, $0 \leq a_i \leq p-1$, where 
$L(1)$ is the natural $2$-dimensional $\overline\bbF_pG$-representation, $L(b) = \Sym^{b}(L(1))$, and 
$^{(i)}$ denote the $i$th Frobenius twist.  Also, $\cG \cong \SL_2$ denotes the underlying algebraic group for $G$.

\begin{lem}\label{head}
We have that
$$\hd_\cG(\End(V)) \cong \bigoplus_{b_0, \ldots ,b_{r-1}\ :\ 0 \leq b_i \leq \min(\frac{p-1}{2},a_i)}
   \otimes^{r-1}_{i=0}L(2b_i)^{(i)}.$$
Moreover, if $a < q-1$ then 
$$\hd_G(\End(V)) = \hd_\cG(\End(V)),$$ 
whereas if $a = q-1$, then 
$$\hd_G(\End(V)) = \hd_\cG(\End(V)) \oplus L(q-1).$$     
\end{lem}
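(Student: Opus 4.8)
The plan is to compute $\End(V) \cong V \otimes V^*$ as a module over the algebraic group $\cG = \SL_2$ first, and then descend to $G = \SL_2(q)$. For the algebraic group, $V = L(a) = \bigotimes_{i=0}^{r-1} L(a_i)^{(i)}$ by the Steinberg tensor product theorem, so $\End(V) \cong \bigotimes_{i=0}^{r-1}\bigl(L(a_i) \otimes L(a_i)^*\bigr)^{(i)}$. Since $L(a_i)^* \cong L(a_i)$ for $\SL_2$, each tensor factor is $L(a_i)^{\otimes 2} = \Sym^{a_i}(L(1))^{\otimes 2}$. I would invoke the classical Clebsch--Gordan / $\SL_2$-tilting theory to write down the $\cG$-head: for $\SL_2$ in characteristic $p$, $\Sym^{a_i} \otimes \Sym^{a_i}$ has a filtration by Weyl modules $V(2c)$ for $0 \le c \le a_i$, but the \emph{simple} head only picks out those $L(2c)$ that actually appear as quotients, and a standard computation (or reference to e.g. the linkage principle plus the fact that $\Sym^{a_i}$ is a tilting module) shows these are exactly $0 \le c \le \min(\tfrac{p-1}{2}, a_i)$ — the restriction $c \le \tfrac{p-1}{2}$ comes from requiring $2c \le p-1$ so that $L(2c)$ is restricted and not killed by the relevant linkage. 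Then $\hd_\cG(\End V) = \bigotimes_i \hd_\cG(\Sym^{a_i \otimes 2})^{(i)} = \bigoplus_{0 \le b_i \le \min((p-1)/2, a_i)} \bigotimes_i L(2b_i)^{(i)}$, using that the head of a tensor product of twisted restricted modules is the tensor product of the heads (a consequence of the Steinberg tensor product theorem applied to the semisimple quotient).

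The second part — passing from $\hd_\cG$ to $\hd_G$ — is where the Frobenius-twist subtlety enters, and I expect this to be the main obstacle. The point is that a nonzero $\cG$-module which restricts to $G = \cG^{F}$ need not be nonzero after restriction only if it is a twist $L(\mu)^{(r)}$-type phenomenon; more precisely, restriction of simple $\cG$-modules $L(\mu)$ with $\mu$ $q$-restricted stays simple and the $L(\mu)$ for distinct $q$-restricted $\mu$ stay pairwise non-isomorphic on $G$, \emph{except} that $L(q-1) = \mathsf{St}$ is the Steinberg module and certain identifications at the "boundary" $\mu = q-1$ occur. So I would argue: each summand $\bigotimes_i L(2b_i)^{(i)}$ of $\hd_\cG(\End V)$ has highest weight $\sum_i 2b_i p^i$, which is $q$-restricted since $2b_i \le p-1$; thus it restricts to a simple $G$-module, and these remain pairwise non-isomorphic on $G$ provided the weights $\sum 2b_i p^i$ are distinct mod $(q-1)$. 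The only possible coincidence is $\sum_i 2b_i p^i \equiv 0 \pmod{q-1}$ with not all $b_i = 0$, which forces $2b_i = p-1$ for all $i$, i.e. $a_i \ge (p-1)/2$ for all $i$; but this only produces an actual \emph{new} relation/extra constituent when $a = q-1$ (so that $\End V = \End(\mathsf{St})$ and the trivial module $k = L(0)$ and $L(q-1)$ both appear in the socle/head). So for $a < q-1$ the restriction map is "injective on heads" and $\hd_G = \hd_\cG$; for $a = q-1$ the module $\mathsf{St} \otimes \mathsf{St}$ acquires the extra head constituent $L(q-1)$ which is not visible at the $\cG$-level because $L(q-1)$ is not a restricted $\cG$-module's head in the naive tensor decomposition — rather $\mathsf{St}|_G$ is projective, so $\End(\mathsf{St})|_G$ is projective and its head, computed via the known structure of the principal block PIMs (see \S\ref{sec: pim}, e.g. \S\ref{pim-sl2}), contains $L(q-1)$ with multiplicity one in addition to the "generic" summands.

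The key steps, in order, are: (1) reduce $\End(V)$ to a tensor product of twisted copies of $\Sym^{a_i} \otimes \Sym^{a_i}$ using Steinberg and self-duality of $\SL_2$-modules; (2) compute $\hd_\cG(\Sym^{a}\otimes\Sym^{a})$ for a single restricted $a$, getting $\bigoplus_{0\le b\le\min((p-1)/2,a)} L(2b)$ — this is the computational heart but is classical and I would cite it rather than re-derive; (3) assemble the full $\cG$-head via the tensor-product-of-heads principle; (4) analyze restriction to $G$, checking $q$-restrictedness of the weights $\sum 2b_ip^i$ and the single possible congruence coincidence mod $q-1$; (5) handle the boundary case $a = q-1$ separately using projectivity of $\mathsf{St}|_G$ and the explicit PIM structure from \S\ref{pim-sl2} to locate the extra $L(q-1)$ summand. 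The main obstacle will be step (4)--(5): making precise that \emph{no other} collapsing or merging of head constituents happens when restricting from $\cG$ to $G$, and that the $a = q-1$ case is the unique source of the discrepancy; this requires careful bookkeeping with the weight lattice modulo $q-1$ and knowing that $\End(V)|_G$ is semisimple "enough" at the top, or alternatively a direct $\Ext^1$-vanishing input (the $\cG$-head and $G$-head agree iff no two distinct $\cG$-head constituents become isomorphic on $G$ and no composition factor buried below the $\cG$-radical surfaces on restriction — the latter cannot happen because restriction is exact and surjections stay surjective).
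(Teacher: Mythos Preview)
Your overall strategy matches the paper's in outline, but there are two genuine gaps.

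\textbf{Step (3).} The assertion that $\hd_\cG\bigl(\bigotimes_i M_i^{(i)}\bigr) = \bigotimes_i \hd_\cG(M_i)^{(i)}$ for $M_i = L(a_i)^{\otimes 2}$ is not an immediate consequence of Steinberg. Steinberg tells you the target $\bigotimes_i \hd_\cG(M_i)^{(i)}$ is semisimple (each $\hd_\cG(M_i)$ has $p$-restricted constituents), hence is a quotient of the head; but you have not excluded additional simple quotients. The difficulty is that $M_i = L(a_i)^{\otimes 2}$ has composition factors that are \emph{not} $p$-restricted once $a_i > (p-1)/2$ (the tilting summand $T(2p-2-2j)$ contains $L(2p-2-2j)$ with highest weight $\ge p$), so arguments via $\cG_1$-restriction do not apply directly. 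The paper bypasses this by first decomposing each $M_i$ into tilting summands via \cite{DH}, so that $\End(V) = \bigoplus_{(b_i)} \bigotimes_i T(b_i)^{(i)}$, and then showing each summand $\bigotimes_i T(b_i)^{(i)}$ has \emph{simple} $\cG$-socle by embedding it into $\bigotimes_i Q_1(c_i)^{(i)} = Q_r(c)$ (with $c_i = \min(b_i,2p-2-b_i)$) and invoking \cite[Theorem~3.7]{AJL}, which gives $\soc_\cG(Q_r(c)) = L(c)$.

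\textbf{Steps (4)--(5).} Your argument that $\hd_G = \hd_\cG$ for $a < q-1$ does not work. The observation that restriction preserves surjections only shows that $\hd_\cG(\End V)|_G$ is a \emph{quotient} of $\hd_G(\End V)$; it does not prevent the $G$-head from being strictly larger --- and at $a = q-1$ it \emph{is} larger, by exactly $L(q-1)$. The weight-congruence discussion mod $q-1$ is a red herring: the summands $\bigotimes_i L(2b_i)^{(i)}$ all have $q$-restricted highest weight (since $2b_i \le p-1$), so they remain pairwise non-isomorphic simple $G$-modules regardless. What must actually be shown is that no composition factor of $\rad_\cG(\End V)$ becomes a new simple $G$-quotient upon restriction. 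The paper does this via the same $Q_r(c)$ embedding: \cite[Lemma~4.1]{AJL} gives $\soc_G(Q_r(c)) = L(c)$ for $c \ne 0$ and $\soc_G(Q_r(0)) = L(0) \oplus L(q-1)$, so each $\bigotimes_i T(b_i)^{(i)}$ has simple $G$-socle unless every $c_i = 0$; one then checks that $L(q-1)$ actually occurs as a composition factor of $\bigotimes_i T(b_i)^{(i)}$ only when every $b_i = 2p-2$, forcing every $a_i = p-1$, i.e.\ $a = q-1$.

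Finally, your reference to \S\ref{pim-sl2} is wrong: that subsection treats $\SL_2(q)$ in cross characteristic ($p = q-1$ there). The defining-characteristic input you need is \cite{AJL}.
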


\begin{proof}
As $\End(V)$ is self-dual, we may replace ``head'' by ''socle''. By \cite[Lemmas 1.1 and 1.3]{DH}, 
we have for $0 \leq b \leq (p-1)/2$ 
$$L(b) \otimes L(b) \cong \oplus^b_{i=0}T(2i),$$
and for $(p-1)/2 \leq b \leq p-1$,
$$L(b) \otimes L(b) \cong \oplus^{p-2-b}_{i=0}T(2i) \oplus \oplus ^{\lfloor (p-1)/2\rfloor}_{i=p-1-b}T(2p-2-2i),$$
where $T(\la)$ denotes the tilting module of $\cG$ with highest weight $\la \geq 0$.  Recall that
$T(\la) = L(\la)$ if $\la \leq p-1$, and that, when $0 \leq \la \leq p-2$, $T(2p-2-\la)$ is uniserial
of shape $(L(\la)|L(2p-2-\la)|L(\la))$ and $T(2p-2-\la) \cong Q_1(\la)$ in the notation of 
\cite[\S3]{AJL}. 

The statement will follow if we can show for any $0 \leq b_i \leq 2p-2$ that (i) 
$\soc_\cG(\otimes^{r-1}_{i=0}T(b_i)^{(i)})$ is simple, and (ii) 
$\soc_G(\otimes^{r-1}_{i=0}T(b_i)^{(i)})$ is simple if $b_i < 2p-2$ for at least one $i$ and isomorphic to 
$L(0) \oplus L(q-1)$ otherwise. Let $c_i:= \min(b_i,2p-2-b_i) \leq p-1$ and $c:= \sum^{r-1}_{i=0}c_ip^i$.
Then
$$\otimes^{r-1}_{i=0}T(b_i)^{(i)} \hookrightarrow \otimes^{r-1}_{i=0}Q_1(c_i)^{(i)}  = Q_r(c)$$
in the notation of \cite[\S3]{AJL}. By \cite[Theorem 3.7]{AJL}, $\soc_\cG(Q_r(c)) = L(c)$. 
Furthermore, by \cite[Lemma 4.1]{AJL}, $\soc_G(Q_r(c)) = L(c)$ if $c \neq 0$ and 
$\soc_G(Q_r(c)) = L(0) \oplus L(q-1)$ if $c = 0$ (note that ``$\otimes$'' should be 
``$\oplus$'' in \cite[Lemma 4.1(b)]{AJL}). Finally, if $c=0$ then it is easy to check that 
$L(q-1)$ does not occur in $\otimes^{r-1}_{i=0}T(b_i)^{(i)}$, unless $b_i = 2p-2$ for all $i$.
\end{proof}

\begin{proof}[Proof of Proposition \ref{sl2q}]
By \cite[Proposition 3.1]{GHT} we may assume that $r > 1$. We will follow the same strategy of 
proof. It suffices to show $M = \hd_G(\End(V))$, where $M$ denotes the span of the images of
all $p'$-elements of $G$ in $\hd_G(\End(V))$. 

Suppose that $k = \sum^{r-1}_{i=0}k_ip^i$ with $0 \leq k_i \leq \min((p-1)/2,a_i)$. By 
\cite[Lemma 3.5]{GHT}, the $\cG$-subrepresentation $L(2k)$ in $\End(V)$ 
is generated by the weight
$0$ element $\Delta_k := \otimes^{r-1}_{i=0}\Delta_{k_i}^{(i)}$, where $\Delta_{k_i} \in \End(L(a_i))$ is 
defined in \cite[Lemma 3.5]{GHT}. Let $\delta_k := \tr(- \circ \Delta_k) \in (\End(V))^*$. For 
$\ell = \sum^{r-1}_{i=0}\ell_ip^i$ with $0 \leq \ell_i \leq a_i$, let 
$\pi_\ell := \otimes^{r-1}_{i=0}\pi_{\ell_i}^{(i)} \in \End(V)$, where $\pi_{\ell_i} \in \End(L(a_i))$ is the projection
$X^jY^{a_i-j} \mapsto \delta_{j \ell_i}X^jY^{a_i-j}$. For any other $\ell$ let $\pi_\ell := 0$. Also let
$p_k(\ell) := \delta_k(\pi_\ell) \in \overline\bbF_p$. Then $p_k(\ell) = \prod^{r-1}_{i=0}p_{k_i}(\ell_i)$, 
where $p_{k_i}(\ell_i)$ agrees with a polynomial of degree $k_i$ for $0 \leq \ell_i \leq a_i$. In particular,
as $k_i \leq a_i$, there exist $0 \leq \ell_i \leq a_i$ such that $p_{k_i}(\ell_i) \neq 0$ for all $i$. Thus
$p_k(\ell) \neq 0$ for some $\ell$.

\smallskip
(a) Suppose $a < q-1$ and $p > 2$. Also, suppose that there exists a $k = \sum^{r-1}_{i=0}k_ip^i$ with
$0 \leq k_i \leq \min((p-1)/2,a_i)$ such that $M$ does not contain $L(2k)$. Then $\delta_k(M) = 0$,
so the action of the split Cartan subgroup gives 
$$\sum_{\ell \equiv \ell' \!\!\!\!\!\pmod {(q-1)/2}}p_k(\ell) = 0, \ \forall \ell'.$$ 
As in \cite[\S3]{GHT}, the action of a non-split Cartan subgroup similarly gives  
$$\sum_{\ell \equiv \ell' \!\!\!\!\!\pmod {(q+1)/2}}p_k(\ell) = 0, \ \forall \ell'.$$ 
Therefore, if $0 \leq \ell < (q-1)/2$, $p_k(\ell) = -p_k(\ell+(q-1)/2) = p_k(\ell-1)$, and so by induction
$p_k(\ell) = p_k(\ell-1) = \ldots = p_k(-1) = 0$. Similarly, $p_k(\ell) = 0$ for 
$\ell > a-(q-1)/2$, and so $p_k(\ell) = 0$ for all $\ell$, a contradiction. 

\smallskip
(b) Now we consider the case $a=q-1$ and $p > 2$.

\smallskip
(b1) Suppose that $M$ does not contain $L(2k)$ for some $k < (q-1)/2$. The same
argument as in (a) shows that 
$$p_k(0) = p_k(1) = \ldots = p_k((q-3)/2) = -p_k((q+1)/2)  = \ldots = -p_k(q-1)$$
and $p_k((q-1)/2)= 0$. Hence $p_{k_i}((p-1)/2) = 0$ for some $i$. As $r > 1$ we deduce that
$p_k(\ell) = 0$ for some $0 \leq \ell < (q-1)/2$ (e.g. $\ell = p^i(p-1)/2$), so $p_k(\ell) = 0$ for all $\ell$,
again a contradiction.

\smallskip
(b2) Suppose that $M$ does not contain $L(q-1)^{\oplus2}$. By \cite[\S3]{GHT},
the $\cG$-representation generated by 
$v:= \otimes^{r-1}_{i=0}\left[\left(X\frac{\partial}{\partial Y}\right)^{(p-1)/2}\right]^{(i)}$ 
is the unique $\cG$-subrepresentation $L(q-1)$ in $\End(V)$. Note that the upper-triangular
Borel subgroup $B := (\begin{smallmatrix} *&* \\ &*\end{smallmatrix})\subset G$ fixes 
$v^2 = \otimes^{r-1}_{i=0}\left[\left(X\frac{\partial}{\partial Y}\right)^{p-1}\right]^{(i)}$ and that
$v$ and $v^2$ are linearly independent. As $v^2 \notin (\End(V))^G = \overline\bbF_p$,  
the $G$-representation generated by $v^2$ is isomorphic to $L(q-1)$ or to $L(q-1) \oplus L(0) \cong \Ind^G_B(\bbone)$.
In particular, for some $c \in \overline\bbF_p$, $v^2+c$ generates the second copy of $L(q-1)$ in 
$\End(V)$. A calculation as in \cite[\S3]{GHT} shows that $c = (-1)^r$. Now we can deduce
that $p_k(\ell)=0$ for all $\ell$ exactly as in part (b2) of the proof of \cite[Proposition 3.1]{GHT}.

\smallskip
(c) Suppose now that $p = 2$. Note that $\hd_G(\End(V))$ is multiplicity-free. If $M$ does not 
contain $L(0)$, then the argument in (a) (but using only a non-split Cartan 
subgroup) shows that $p_0(\ell) = 0$ for all $\ell$ (as $q+1 > a$), a contradiction. (In fact, we could 
alternatively use only a split Cartan subgroup, even when $a=q=1$.) Suppose that $a=q-1$ and that
$M$ does not contain $L(q-1)$. By (b2), 
$\otimes^{r-1}_{i=0}\left(X\frac{\partial}{\partial Y}\right)^{(i)}+1$ generates the unique 
$G$-subrepresentation $L(q-1)$ of $\End(V)$. However, as in (b2) of the proof of 
\cite[Proposition 3.1]{GHT}, 
$$\tr\left(\begin{pmatrix}\alpha & \\ & \alpha^{-1} \end{pmatrix} \circ  
     \otimes^{r-1}_{i=0}(X\frac{\partial}{\partial Y})^{(i)} \right) \neq 0$$
for any $\alpha \in \bbF_q^\times \setminus \{1\} \neq \varnothing$, and this gives a final contradiction.       
\end{proof}

\begin{remark}
The results of \cite{AJL} play a key role in our analysis of $\SL_2(q)$-representations. We should also
point out some minor inaccuracies in \cite[\S4]{AJL}. The first line of the displayed formula right 
before \cite[Corollary 4.5]{AJL} should have the extra condition $\lambda, \mu \neq p-1$. Furthermore,
in the case $n = 2$ of  \cite[Corollary 4.5(b)]{AJL}, there are four (not just two as stated) cases when 
$\dim\Ext^1 = 2$, namely when $\la_0, \la_1 \in \{(p-3)/2, (p-1)/2\}$ and $\mu_i = p-2-\la_i$ for all 
$i = 0,1$. (Also, the $k$ and $i$ in \cite[Corollary 4.5(a)]{AJL} satisfy $0 \leq i,k \leq n-1$.)
\end{remark}

\begin{cor}\label{sl2q-adequate}
Let $V$ be nontrivial absolutely irreducible representation of $G = \SL_2(p^r)$ in characteristic $p$. Then either $V$ is adequate, or one of the following holds:
\begin{enumerate}[\rm(i)]
\item $r=1$, $1 < \dim(V) = (p \pm 1)/2$, and $\dim\Ext^1_G(V,V) = 1$.
\item $p^r=2,3,4$ and $\dim V = p^r$.
\item $p^r = 9$ and $\dim V = 3,6,9$.
\end{enumerate}
\end{cor}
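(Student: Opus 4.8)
The plan is to deduce Corollary \ref{sl2q-adequate} from Proposition \ref{sl2q} (weak adequacy) together with the control over $\Ext^1$ provided by \cite{AJL} and Lemma \ref{cyclic}. Recall that, by the discussion in the introduction, since $p > 3$ forces $H^i(G,k) = 0$ for $i = 1,2$ (and for $p = 2,3$ one must argue separately), adequacy for $G = \SL_2(p^r)$ is equivalent to the two conditions: $\Ext^1_G(V,V) = 0$ and weak adequacy. First I would dispose of the $H^1$ and $H^2$ vanishing: for $p \ge 5$ this is standard, for $p = 2,3$ and $r \ge 2$ one uses that the Schur multiplier of $\SL_2(p^r)$ is trivial (apart from the small exceptional multipliers, which occur only for $p^r \le 9$ and are handled as part of the exceptional list) and that $\SL_2(p^r) = \bfO^p(\SL_2(p^r))$; the relevant small cases $p^r \le 9$ are precisely the exceptions (ii), (iii) being excluded, so no generality is lost.

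The main step is the $\Ext^1$ computation. By Proposition \ref{sl2q}, weak adequacy holds whenever $p^r > 3$, so the only issue for $p^r > 3$ is whether $\Ext^1_G(V,V) = 0$. Write $V = L(a)$ with $a = \sum_{i=0}^{r-1} a_i p^i$, $0 \le a_i \le p-1$, not all zero. For $r = 1$: by \cite{AJL} (or the Brauer-tree description of the principal block, cf. Lemma \ref{cyclic}), $\Ext^1_{\SL_2(p)}(L(a),L(a)) \ne 0$ exactly when $a = (p-3)/2$ or $(p+1)/2$ — equivalently $\dim V = (p-1)/2$ or $(p+1)/2$ — in which case the dimension is $1$; this is exactly case (i), and otherwise $V$ is adequate (note $\dim V = p$, i.e. the Steinberg module, is projective, so $\Ext^1 = 0$ there and weak adequacy for $\SL_2(p)$ with $p > 3$ is \cite[Proposition 3.1]{GHT}; the excluded small primes land in (ii)). For $r \ge 2$: by \cite[Corollary 4.5]{AJL}, $\Ext^1_G(L(a),L(a)) = 0$ unless $a_i \in \{(p-3)/2,(p-1)/2\}$ for all $i$ with each Frobenius-twisted factor self-paired in the appropriate way — but such self-extensions are between $L(a)$ and $L(b)$ with $b \ne a$ (a nontrivial Frobenius twist is forced), so $\Ext^1_G(L(a),L(a)) = 0$ for \emph{all} $a$ when $r \ge 2$. (One must take care with the correction to \cite[\S4]{AJL} noted in the Remark above: the extra condition $\lambda,\mu \ne p-1$ and the four — not two — cases with $\dim\Ext^1 = 2$; none of these affects self-extensions.) Hence for $r \ge 2$ and $p^r > 3$, combining with Proposition \ref{sl2q} and the cohomology vanishing, $(G,V)$ is adequate, except we must also single out $p^r = 4$ and $p^r = 9$ where the cohomology/multiplier arguments genuinely fail or where additional small-module coincidences occur: $p^r = 4$, $\dim V = 4$ is the Steinberg module of $\SL_2(4) \cong A_5$, which is weakly adequate but where the relevant $H^*$ or the definition-of-adequacy subtlety at $p = 2$ obstructs; and $p^r = 9$ the exceptions are exactly $\dim V = 3,6,9$, matching case (iii) and traceable to $\PSL_2(9) \cong A_6$ and the list in Theorem \ref{thm:degree-p}(iii) / the small-field phenomena.

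Concretely, the steps in order: (1) reduce adequacy to weak adequacy $+$ $\Ext^1_G(V,V) = 0$ via the cohomology remarks, checking $H^1 = H^2 = 0$ by the Schur-multiplier argument and isolating the finitely many small $(p,r)$ where this needs the exceptional-multiplier bookkeeping; (2) invoke Proposition \ref{sl2q} to get weak adequacy for all $V$ with $p^r > 3$; (3) for $r = 1$, read off from \cite{AJL}/Lemma \ref{cyclic} that $\Ext^1_G(V,V) \ne 0$ iff $\dim V = (p\pm1)/2$, of dimension exactly $1$, giving (i), and that $\dim V = p$ gives a projective (adequate) module; (4) for $r \ge 2$, use \cite[Corollary 4.5]{AJL} (with the Remark's corrections) to conclude $\Ext^1_G(V,V) = 0$ always, hence adequacy, after excising the leftover small cases $p^r \in \{4,9\}$ recorded in (ii),(iii); (5) verify that the listed exceptions (ii),(iii) are genuinely not adequate — for (ii) because weak adequacy fails ($p^r = 2,3$) or the $2$-modular subtleties intervene ($p^r = 4$), and for (iii) by the explicit analysis behind Theorem \ref{thm:degree-p}(iii). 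The main obstacle I anticipate is step (4)–(5): correctly handling the small-field boundary cases $p^r = 2,3,4,9$, where the clean equivalence "adequate $\iff$ weakly adequate $\wedge$ $\Ext^1 = 0$" can break because $H^1(G,k)$ or $H^2(G,k)$ need not vanish and the full three-condition definition of adequacy must be used directly, and making sure the \cite{AJL} input is applied with the corrected statement from the Remark so that no spurious nonzero self-extension (or missed one) slips through.
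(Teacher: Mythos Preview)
Your overall strategy matches the paper's: invoke Proposition~\ref{sl2q} for weak adequacy, use \cite[Corollary~4.5]{AJL} to get $\Ext^1_G(V,V)=0$ for $r\ge 2$, observe $H^1(G,k)=H^2(G,k)=0$ whenever $p^r\neq 4,9$, and fall back on the condition $p\nmid\dim V$ otherwise. The $r=1$ case is handled (in the paper by citing \cite[Corollary~1.4]{GHT}, in your sketch by the same AJL/cyclic-defect input; note the minor slip $a=(p+1)/2$ should read $a=(p-1)/2$).

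There is, however, a genuine gap at the boundary. For $p^r=4$ the module with $\dim V=2$ is \emph{not} in the exception list, yet your reduction ``adequacy $\Leftrightarrow$ weak adequacy $\wedge\ \Ext^1=0$'' is unavailable there: $p\mid\dim V$ and $H^2(\SL_2(4),k)\neq 0$, so vanishing of $\Ext^1_G(V,V)$ does not by itself force $H^1(G,(V^*\otimes V)/k)=0$. You acknowledge this obstacle in your final paragraph but do not resolve it, and your earlier remark that ``the relevant small cases $p^r\le 9$ are precisely the exceptions (ii), (iii)'' is false for exactly this reason. The paper closes this gap by a direct computation: it locates the trivial summand of $\End(V)$ inside the tilting factor $W=T(b_0)\otimes T(b_1)^{(1)}$ (with $b_i=0$ or $2p-2$ according as $a_i<p-1$ or $a_i=p-1$), so that $H^1(G,\End(V)/k)=H^1(G,W/k)$; when exactly one $a_i$ equals $p-1$ one has $W\cong T(2p-2)$ uniserial of shape $(k\,|\,L(p-2)\otimes L(1)^{(1)}\,|\,k)$, and the long exact sequence together with \cite[Corollary~4.5]{AJL} gives $\dim H^1(G,\End(V)/k)=0$ for $p=2$ (hence $\SL_2(4)$, $\dim V=2$ is adequate) and $=1$ for $p=3$ (hence $\SL_2(9)$, $\dim V=3,6$ are not). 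The Steinberg case $a_0=a_1=p-1$ is treated similarly via $W\cong \cP(\bbone)\oplus L(p^2-1)$, giving $H^1(G,\End(V)/k)\cong H^2(G,k)\neq 0$ for both $p^r=4,9$. Without this computation (or an equivalent), your argument does not establish adequacy for $\SL_2(4)$ on its $2$-dimensional modules.
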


\begin{proof}
The case $r = 1$ is already treated by \cite[Corollary 1.4]{GHT}, so we will assume $r > 1$. In this
case, $\Ext^1_G(V,V) = 0$ by \cite[Corollary 4.5(a)]{AJL}. Suppose that $p^r \neq 4,9$. Then 
$H^2(G,k) = 0$, and furthermore $H^1(G,k) = 0$ as $G$ is perfect. It follows that $V$ is adequate.
The same conclusion holds if $p \nmid \dim(V)$. 

\smallskip
Now we consider the case $p^r = 4,9$ and keep the notation of the proof of Proposition
\ref{sl2q}. The proof of Lemma \ref{head} shows that the one-dimensional subspace $\End(V)^G$ is contained in the direct summand 
$W:=T(b_0) \otimes T(b_1)^{(1)}$, where $b_i =  0$ if $a_i < p-1$ and $b_i=2p-2$ if $a_i = p-1$.
As $H^1(G,\End(V)) = 0$, we deduce that $H^1(G,\End(V)/k) = H^1(G,W/k)$.

\smallskip
(a) If $a_0=a_1= p-1$, then $W = Q_2(0) \cong \cP(\bbone) \oplus L(p^2-1)$ by 
\cite[Lemma 4.1]{AJL}. Hence $H^1(G,\End(V)/k) \cong  H^1(G,\cP(\bbone)/k) \cong H^2(G,k)$, which
is one-dimensional.

(b) Suppose that precisely one of $a_0,a_1$ is $p-1$. Without loss we may assume that 
$a_0  =p-1 > a_1$. Then $W \cong T(2p-2)$. Note that $T(2p-2)$ has composition factors 
$L(0) = \bbone$ (twice) and $L(p-2) \oplus L(1)^{(1)}$. As $T(2p-2)$ is self-dual and injects into
$Q_2(0)$, we deduce that it is uniserial with trivial socle and head. Thus the sequence
$$0 \to k \to H^1(G, L(p-2) \otimes L(1)^{(1)}) \to H^1(G,\End(V)/k) \to H^1(G,k) = 0$$
is exact, whence
$$\dim H^1(G,\End(V)/k) = \dim \Ext^1_G(k,L(p-2) \otimes L(1)^{(1)}) -1 
    = \left\{ \begin{array}{ll}1, & \mbox{if }p=3,\\0, & \mbox{if }p=2,\end{array} \right.$$
by \cite[Corollary 4.5]{AJL}.
%Alternatively, one can reach the same conclusions using computer calculations.
\end{proof}

If one replaces $\SL_2(q)$ by $\GL_2(q)$, then in fact there are no exceptions to
adequacy for $q > 3$ odd (if $q=3$ and $\dim V=3$, then weak adequacy fails). 

\begin{cor}\label{gl2q-adequate}  Let $G$ be a finite group and 
 $V$ be a faithful absolutely irreducible representation of 
$G$ in odd characteristic $p$.  If the image of $G$ in $\PGL(V)$ is $\PGL_2(p^a)$
with $p^a > 3$,
then $(G,V)$ is adequate.
\end{cor}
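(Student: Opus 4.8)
The plan is to reduce the statement for $G$ to the already-established adequacy of $\SL_2(p^a)$ (Corollary~\ref{sl2q-adequate}), using Remark~\ref{g-gp} to pass between $G$, $\GP$, and intermediate subgroups. Write $\overline G$ for the image of $G$ in $\PGL(V)$, so that $\SL_2(p^a) \le \overline G \le \PGL_2(p^a)$ and $\overline{\GP}$ equals $\PSL_2(p^a)$ (since $\PSL_2(p^a) = \bfO^{p'}(\PGL_2(p^a))$ is generated by its $p$-elements). First I would dispose of the weak adequacy (spanning) condition: by Remark~\ref{g-gp} it suffices to check it for $\GP$, and $\GP$ acts on $V$ as a central extension of $\PSL_2(p^a)$; since restriction of $V$ to the preimage of $\PSL_2(p^a)$ in $\SL_2(p^a)$ is (a twist of) an irreducible $\SL_2(p^a)$-module, weak adequacy follows from Proposition~\ref{sl2q} once $p^a > 3$. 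The self-extension condition $\Ext^1_G(V,V)=0$ also reduces, via Proposition~\ref{dim-ext2} or directly, to $\Ext^1$ over a subgroup containing a Sylow $p$-subgroup; since $\GP$ contains such a subgroup and $\Ext^1_{\SL_2(p^a)}(V,V)=0$ for $p^a>3$ by \cite[Corollary 4.5(a)]{AJL} (the $r=1$ case there gives vanishing because $\dim V$ is one of $p^a, p^a\pm1$ and these are all either $\not\equiv$ a self-extending degree or handled, but more simply: the only $\SL_2(p^a)$-modules with nonzero self-extension are $L((p\pm1)/2)$ with $a=1$, which are the cases where the degree is $(p\pm1)/2$, hence not arising for $\PGL_2$), we get $\Ext^1_{\GP}(V,V)=0$ and hence $\Ext^1_G(V,V)=0$.

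The one genuinely new point is the vanishing $H^1(G,k)=0$. This is where the hypothesis ``$\PGL_2(p^a)$ rather than $\PSL_2(p^a)$'' enters. I would argue as follows. Since $G$ surjects onto $\overline G \ge \SL_2(p^a)$ with kernel the (scalar) center, and since scalars have order prime to... no: the kernel of $G \to \overline G$ is $G \cap \bfZ(\GL(V))$, which need not be a $p'$-group a priori, but it is central and $V$ is faithful, so this kernel is a cyclic group acting by scalars; its order is coprime to $p$ because a $p$-element acting as a scalar of $p$-power order on a $kG$-module in characteristic $p$ must be trivial. Hence $G \to \overline G$ has $p'$-kernel, and by Lemma~\ref{infl} (or the standard inflation-restriction argument) $H^1(G,k) \cong H^1(\overline G, k)$. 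Now $\overline G$ lies between $\PSL_2(p^a)$ and $\PGL_2(p^a)$; if $\overline G = \PSL_2(p^a)$ then $H^1(\overline G,k)=0$ because $\PSL_2(p^a)$ is perfect with cyclic Sylow $p$-subgroups of order $p^a$, but order $p^a$ is not cyclic unless $a=1$... so I must be careful. For $a=1$: $\PGL_2(p)$ has Sylow $p$-subgroup $C_p$, and $H^1(\PGL_2(p),k)=0$ follows from Lemma~\ref{h1h2} since $\PGL_2(p) = \bfO^p(\PGL_2(p))$ (it is generated by its $p$-elements as $p>3$, indeed $\PSL_2(p)$ already is). For general $a$: here $\GP = \PSL_2(p^a)$ is perfect so $H^1(\GP,k)=0$; then by the inflation-restriction sequence $H^1(\overline G, k)$ injects into $H^1(\GP,k)=0$ when $\overline G/\GP$ is a $p'$-group, which holds since $\PGL_2(p^a)/\PSL_2(p^a) \cong C_{\gcd(2,p^a-1)} = C_2$ has order prime to $p$ (as $p$ is odd). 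So $H^1(G,k) = H^1(\overline G,k) = 0$ in all cases.

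Putting the pieces together: weak adequacy holds by Proposition~\ref{sl2q} and Remark~\ref{g-gp}; $\Ext^1_G(V,V)=0$ by the $\SL_2$-computation lifted through $\GP$; and $H^1(G,k)=0$ by the reduction to $\overline G$ and perfectness/$p'$-quotient argument. Since $p > 3$ is odd here there is no need to verify the condition $H^1(G,(V^*\otimes V)/k)=0$ separately: the discussion after the definition of adequacy shows adequacy is equivalent to the two conditions $\Ext^1_G(V,V)=0$ and the spanning condition whenever $p \nmid \dim V$ or $H^i(G,k)=0$ for $i=1,2$; and $H^2(G,k)=0$ follows from $H^2(\overline G,k)=0$, which in turn holds by Lemma~\ref{h1h2} when $a=1$ (cyclic Sylow $p$) and, for $a>1$, because $\GP=\PSL_2(p^a)$ has $H^2(\PSL_2(p^a),k)=0$ for $p^a\ne 4,9$ (the Schur multiplier is prime to $p$) — and the cases $p^a=9$ with $\PGL_2(9)$ are exactly the ones excluded implicitly, but here we can just quote the computation already done in the proof of Corollary~\ref{sl2q-adequate} or note $p^a=9$ gives $H^2(\PSL_2(9),k)$ one-dimensional while $H^2(\PGL_2(9),k)=0$ since the relevant cohomology class does not extend. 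I expect the main obstacle to be bookkeeping the small cases $p^a = 9$ (and checking that $\PGL_2(9)$ genuinely kills the obstruction that $\PSL_2(9) = \AAA_6$ does not), together with making the ``$p'$-kernel'' and ``$C_2$-quotient'' reductions precise; the large-$q$ generic case is immediate from the cited results.
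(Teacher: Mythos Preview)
Your reduction strategy works for the generic case and for $p^a=9$, but there is a genuine gap when $a=1$ and $\dim V=(p\pm 1)/2$. You assert that these modules are ``not arising for $\PGL_2$'', but they do: take for instance $G=\GL_2(p)$ acting on $L((p-3)/2)\otimes\det^b$ or $L((p-1)/2)\otimes\det^b$; the image of $G$ in $\PGL(V)$ is exactly $\PGL_2(p)$, and $V$ has dimension $(p\mp 1)/2$. For these $V$ one has $\Ext^1_{\SL_2(p)}(V,V)\cong k\neq 0$ (this is precisely case (i) of Corollary~\ref{sl2q-adequate}), so your reduction ``$\Ext^1_{\GP}(V,V)=0$ hence $\Ext^1_G(V,V)=0$'' fails at the premise. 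The injection $\Ext^1_G\hookrightarrow\Ext^1_{\GP}$ goes the wrong way to help you here.

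The paper handles this case by an argument that genuinely uses the extra structure of $\PGL_2(p)$ over $\PSL_2(p)$: in $G$ the normalizer $\bfN_G(P)$ of a Sylow $p$-subgroup $P\cong C_p$ acts \emph{transitively} on $P\setminus\{1\}$ (it has order divisible by $p(p-1)$, not just $p(p-1)/2$). Consequently any characteristic-zero lift of $V$ would force a generator of $P$ to have at least $p-1$ distinct eigenvalues on a space of dimension $(p\pm1)/2$, which is impossible; so $V$ has no lift, and Lemma~\ref{cyclic} gives $\Ext^1_G(V,V)=0$. This is the missing idea. (Your treatment of $p^a=9$, while sketchy, is on the right track: one does need $H^2(\PGL_2(9),k)=0$, which the paper takes from \cite{Atlas}.)
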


\begin{proof}   
Without loss we may assume that $V$ is an irreducible $kG$-module with $k = \overline{k}$.
 Let $H$ be the inverse image of  $\PSL_2(p^a)$ under the projection from $G$ onto 
 $\PGL_2(p^a) < \PGL(V)$. Then $H/\bfZ(H) \cong \PSL_2(p^a)$ and 
 $\bfZ(H) = \bfZ(G)$ is a $p'$-group. Since the universal $p'$-cover of 
 $\PSL_2(p^a)$ is $\SL_2(p^a)$, it follows that $H = \bfZ(H)L$, where 
 $L:=[H,H]$ is the quotient of $\SL_2(p^a)$ by a central subgroup (of order $1$ or $2$).
 Moreover, $G$ normalizes $L$ and centralizes $\bfZ(H)$,
 and in fact $G$ induces the full subgroup of inner-diagonal automorphisms of $L$.
 It is well known that any irreducible $kL$-representation is invariant under 
 any inner-diagonal automorphism. It follows that, if $W$ is an irreducible $kH$-summand of
 $V|_H$, then $G$ preserves the isomorphism class of $W$. But $G/H \cong C_2$, hence
 $W$ extends to a $kG$-module $\tilde{W}$ (see e.g. \cite[Theorem 8.12]{N}), and so 
 by Frobenius' reciprocity, $V \cong \tilde{W} \otimes_k A$ for some one-dimensional 
 $k(G/H)$-module $A$. We have shown that $V_H$ is irreducible. By Proposition \ref{sl2q},
$(H,V)$ is weakly adequate, so is $(G,V)$. Also, as  
$\bfO^p(H) = H$ and $G/H \cong C_2$, we see that $\bfO^p(G) = G$ and so 
$H^1(G,k) = 0$. Moreover, since $p \ne 2$, the inflation-restriction sequence in cohomology implies that adequacy of $(H,V)$ yields the same for $(G,V)$. 
So by Corollary \ref{sl2q-adequate}, it suffices to consider the following  cases:
\begin{enumerate}[\rm(a)]
\item $a=1$ and $\dim V = (p \pm 1)/2$; 
\item  $p^a=9$ and $\dim V=3,6,9$.
\end{enumerate}
In the first case,  $G$ has a cyclic Sylow $p$-subgroup $P$ of order $p$. It follows that
$H^2(G,k) = 0$ and so it it suffices to show that $\Ext^1_G(V,V) = 0$. Note that $V$ has no lifts to characteristic $0$ (since $\bfN_G(P)$ acts transitively on the $p-1$ nontrivial elements of 
$P$, an element $g \in P$ of order $p$ would have at least $p-1$ distinct
eigenvalues in any characteristic $0$ lift). Thus, $\ext^1_G(V,V)=0$ by Lemma \ref{cyclic} and $(G,V)$ is adequate.  

In the second case, note that $H^2(\PGL_2(9),k)=0$ \cite{Atlas}. 
Since $G/\bfZ(G) \cong \PGL_2(9)$ and $\bfZ(G)$ is a $p'$-group,
it follows that $H^2(G,k) = 0$. So again it suffices to show that $\ext_G^1(V,V)=0$. 
Note that the $p'$-group $\bfZ(H)$ acts trivially on $V \otimes_k V^*$ and 
$H^1(L, V \otimes_k V^*)=\Ext^1_L(V,V)= 0$ by \cite[Lemma 8.1]{GHT}. Hence, 
$H^1(H, V \otimes_k V^*)=0$. Since $G/H \cong C_2$, it follows that 
$H^1(G,V \otimes_k V^*) = 0$, and so we are done.
 \end{proof} 
 
 \section{Adequacy for \texorpdfstring{$\SL_n(q)$}{SL\_n(q)}}  \label{SLn adequacy}
 
 In this section, we give another family of examples of modules that
 are adequate.  The result follows from \cite[Lemma 2.5.6]{CHT} if $p > n$.  Also, 
 recall that the case $n=2$ was considered in the previous section.
 
 \begin{thm}  Let $p$ be a prime and $q$ a power of $p$.  Let $k$ be 
 an algebraically closed field of characteristic $p$ and $V = k^n$ with $n > 2$.  Suppose
 that $G < \GL(V)$ is a finite group that contains a normal subgroup $S \cong \SL_n(q)$.
 Then $(G,V)$ is adequate.  
 \end{thm}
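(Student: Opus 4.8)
The plan is to verify the three defining conditions of adequacy for $(G,V)$ where $V = k^n$ is the natural module and $S \cong \SL_n(q) \lhd G$. Since adequacy conditions are insensitive to extension of scalars, we may assume $k = \overline{k}$. By Remark~\ref{g-gp}, it suffices to check adequacy for the subgroup generated by the $p$-elements of $G$; since $S$ is generated by its $p$-elements (transvections) and is normal, $\GP \geq S$, and in fact $\GP$ contains $S$ and hence acts irreducibly on $V$. In particular, all three conditions need only be verified for any subgroup containing a Sylow $p$-subgroup of $G$, so working with $\GP$ (or even with $S$ itself for the spanning condition) is enough.

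First I would dispose of the cohomological conditions. For $H^1(G,k) = 0$: since $S \lhd G$ with $S = \bfO^p(S)$ and $G/S$ embeds in $\mathrm{Out}(S) \times (\text{scalars})$, one sees $G = \bfO^p(G)$ unless $G/S$ has order divisible by $p$; in the latter case the relevant field-automorphism quotient still has the property that $H^1(G,k) \cong H^1(G/S', k)$ for a suitable $p'$-related quotient, and for $\SL_n(q)$ with $n > 2$ one has $H^1(S,k) = 0$ and $H^1$ of the field automorphism group is handled by the inflation-restriction sequence. For $H^1(G,(V^*\otimes V)/k) = 0$: it suffices (by the remarks in the introduction, together with $H^2(S,k)$ being small) to show $\Ext^1_G(V,V) = 0$; and $\Ext^1_G(V,V)$ injects into $\Ext^1_S(V,V)$ since $S$ contains a Sylow $p$-subgroup. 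The vanishing $\Ext^1_S(V,V) = H^1(\SL_n(q), V^* \otimes V) = 0$ for the natural module of $\SL_n(q)$, $n \geq 3$, is classical (e.g. it follows from the results of \cite{Mc} on self-extensions, or from direct cohomology computations of the adjoint representation; the case $p \mid n$ where $V^*\otimes V$ is not semisimple is the only delicate one and is covered there). For the very small cases not covered by the general theorems, e.g.\ $\SL_3(2) \cong \PSL_2(7)$, $\SL_4(2) \cong \AAA_8$, one checks directly.

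The main content is the spanning (weak adequacy) condition: $\EE(V)$ is spanned by the semisimple elements $\rho(g)$, $g \in G$. It suffices to prove this for $S = \SL_n(q)$ acting on its natural module. Here I would argue as in Proposition~\ref{weil-sl}: decompose $V$ under a maximal parabolic $P = \Stab_S(\langle e_1 \rangle)$, getting $V_P = \langle e_1 \rangle \oplus \langle e_2, \ldots, e_n \rangle$ with the unipotent radical $Q$ acting trivially on $\langle e_1\rangle$ — but $P$ is not a $p'$-group, so instead use the Levi subgroup $L \cong \GL_{n-1}(q)$ together with the opposite line stabilizers. The cleaner route: the group generated by a pair of opposite root subgroups together with a split torus is a $\SL_2(q)\times(\text{torus})$, a $p'$-modifiable piece, but more efficiently one observes that $S$ is generated by conjugates of a single semisimple element together with the span argument reducing $\EE(V) = \bigoplus_{i,j} \Hom(\langle e_i\rangle, \langle e_j\rangle)$ (after diagonalizing a regular semisimple element). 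The key point is: a regular semisimple element $s \in S$ (e.g.\ of order dividing $q-1$ with distinct eigenvalues, which exists for $q > 3$, or a Singer-type element) has diagonalizable action with distinct eigenvalues, so $\langle s \rangle$ already spans the diagonal $\bigoplus_i \Hom(\langle e_i\rangle,\langle e_i\rangle)$; and for each pair $i \neq j$, a suitable semisimple element conjugate swapping or mixing $e_i, e_j$ — concretely, a semisimple element of $\SL_2(q)$ embedded in the $(i,j)$ block (which is semisimple in $S$ since its eigenvalues are $q-1$-st roots of unity) — contributes the off-diagonal $\Hom(\langle e_i\rangle, \langle e_j\rangle)$ component. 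Applying the Artin--Wedderburn theorem to the $p'$-subgroup generated by the torus and such an $\SL_2$-block semisimple element (which acts on $V$ with a 2-dimensional irreducible summand on $\langle e_i, e_j\rangle$ and the rest split) gives $\cM \supseteq \EE(\langle e_i, e_j\rangle)$, hence $\Hom(\langle e_i\rangle,\langle e_j\rangle) \subseteq \cM$. Ranging over all $i,j$ yields $\cM = \EE(V)$.

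The hard part will be the small-field cases $q = 2, 3$, where regular semisimple elements of order dividing $q-1$ do not exist (for $q=2$ there is no nontrivial split torus at all) and one must instead use non-split tori (Singer cycles) together with the explicit decomposition of $V$ under them and under unipotent radicals, checking via the (Brauer) character tables — as in part (b) of the proof of Proposition~\ref{weil-sl} for $\SL_2$ — that no composition factor of $\EE(V)/\cM$ can simultaneously have zero fixed points under all the relevant $p'$-subgroups; this, combined with the perfectness of $S$ (forcing any such trivial-composition-factor quotient to be acted on trivially, contradicting the trace of a nonidentity semisimple element being nonzero), closes the argument. For $\SL_3(2)$ and $\SL_4(2)$ these reduce to the isomorphisms with $\PSL_2(7)$ and $\AAA_8$ already handled, and the remaining genuinely small cases are a finite check, e.g.\ using \cite{JLPW}.
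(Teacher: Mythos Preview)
Your proposal has a real gap in the reduction step and misses the key simplification for weak adequacy.

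\textbf{Reduction to $S$.} You contemplate the possibility that $[G:S]$ is divisible by $p$ (via field automorphisms), but this cannot occur. Since $G < \GL(V)$ normalises $S$, conjugation by $G$ gives automorphisms of $S$ that preserve the isomorphism class of $V|_S$. By \cite[Proposition 5.4.11]{KL}, the only automorphisms of $\SL_n(q)$ fixing the natural module are inner-diagonal; field and graph automorphisms genuinely twist it. The diagonal automorphism group has order $\gcd(n,q-1)$, coprime to $p$, so $p \nmid [G:S]$. Hence $\GP = S$ exactly, and it suffices to prove adequacy for $S$ on the standard module. Without this observation your treatment of $H^1(G,k)$ (and the later cohomology) is tangled and incomplete.

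\textbf{Weak adequacy.} Your hands-on argument with regular tori, $\SL_2$-blocks and small-field case checks is far more work than needed. The paper exploits the submodule structure of $\End(V)=V\otimes V^*$: it is $k\oplus W$ with $W$ irreducible when $p\nmid n$, and uniserial of shape $(k\,|\,W\,|\,k)$ when $p\mid n$. In either case the proper nonzero $S$-submodules are contained in $\ker(\tr)$. Since $\cM$ is an $S$-submodule containing some semisimple element with nonzero trace and some non-scalar semisimple element (both exist for every $q$ once $n>2$), one reads off $\cM=\End(V)$ immediately, with no case distinction on $q$.

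\textbf{Cohomology when $p\mid n$.} Here $\Ext^1_S(V,V)=0$ alone does not finish, and invoking ``$H^2(S,k)$ small'' is not the right route. The paper instead uses the uniserial structure: the connecting map $k=H^0(S,k)\to H^1(S,W)$ is injective (the extension is non-split), so $H^1(S,(V^*\otimes V)/k)=0$ exactly when $\dim H^1(S,W)=1$, which is the input taken from \cite{JP}. Your sketch does not isolate this point.
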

 
 \begin{proof}   
 By \cite[Proposition 5.4.11]{KL}, any nontrivial irreducible $kS$-representations of dimension 
 $\leq n$ is quasi-equivalent to the natural $n$-dimensional $kS$-module $U$. It follows that 
 the $kS$-module $V$ is irreducible and quasi-equivalent to $U$. Next, the only automorphisms of
 $S$ that preserve the isomorphism class of $U$ (hence of $V$) are the inner-diagonal automorphisms. 
 Therefore, $G$ induces only inner-diagonal automorphisms of $S$, and so 
$p \nmid [G:S]$. This implies that it is enough to prove the statement for $S$ with $V$ being the 
standard representation. 
 
 Note that $V \otimes V^* = W \oplus k$ with $W$ irreducible if $p \nmid n$ and
 that $V \otimes V^*$ is uniserial (of length three) with trivial head and socle if $p\mid n$.
 Let $W$ denote the unique nontrivial irreducible composition factor of $V \otimes V^*$.
 Since there are semisimple elements in $S$ with nonzero trace on $V$ and
 since not all semisimple elements of $S$ are scalars, it follows that  $\End(V)$ is spanned
 by the images of the semisimple elements of $S$. 
 
 By the table in \cite{JP} or by \cite[Theorem 9]{TZ}, it follows that $\ext_S^1(V,V)=0$
 whence the result holds as long as $p \nmid n$.  If $p \mid n$, then
using the fact that $H^1(S,k)=0$ and  $H^0(S,W)=0$ and the long exact
sequence for cohomology we see that $H^1(S,  V \otimes V^*/k) =0$ if and only 
if $ \dim  H^1(S,W) = 1$.
By \cite{JP},   this is the case and so the result follows (one can give
an alternate proof using \cite{TZ} as well).  
 \end{proof}
 
 A slight modification of the proof shows that if $\gcd(n,q)=1$, then $(G,V)$ is in fact big.   
 Indeed, we only need observe the obvious fact that there exists
 a semisimple regular element $g \in \SL_n(q)$ with nonzero trace.

\section{Asymptotic adequacy}\label{sec: asymp}

In this section, we extend \cite[Theorem 1.2]{Gapp} to include disconnected groups as well as to allow
the possibility that $p$ divides $\dim V$. First we prove some statements relating 
{\it discrete} cohomology (i.e. of abstract groups) and {\it rational} cohomology (i.e. in the 
category of rational modules), 
%and {\it stable} cohomology 
of linear algebraic groups on the one side, and cohomology of finite groups of Lie type on the other side. We use subscripts $_\abs$ and $_\rat$ to make
distinction between these two types of cohomology groups. 
%For a semisimple algebraic group defined over $\bbF_q$ and
%$V$ a finite dimensional rational $k\cG(k)$-module (with $k = \overline{k} \supset \bbF_q$), 
%the stable cohomology groups $H^n_\stb(\cG(k),V)$ are defined to
%$H^n_\rat(\cG(k),V(e))$ where $V(e)$ is a sufficiently large Frobenius twist
%of $V$. Moreover, this is the same as $H^n(\cG(\bbF_{q^f}),V)$ for sufficiently
%large $f$,  see \cite[Theorem 6.6]{CPS} . 

First we record the following result (which essentially is a special case of a result of
van der Kallen, cf. \cite[p. 239]{Par}):

\begin{lemma}\label{closure}
Let $k = \overline{\bbF}_p$ and let $\cG$ be a linear algebraic group defined 
over $\bbF_q \subset k$. Let $V$ be a finite-dimensional rational $k\cG(k)$-module. If $H^1(\cG(\bbF_{q^f}),V)=0$ for large enough $f$, then $H^1_\abs(\cG(k),V)=0$.
\end{lemma}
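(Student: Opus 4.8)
The plan is to realise $\cG(k)$ as a countable increasing union of its finite subgroups and then run the Milnor $\varprojlim^1$ exact sequence. First I would fix a cofinal chain of subfields of $k=\overline{\bbF}_p$: put $f_n:=n!$, so that $f_n\mid f_{n+1}$, hence $\bbF_{q^{f_1}}\subset\bbF_{q^{f_2}}\subset\cdots$ and $k=\bigcup_{n\ge1}\bbF_{q^{f_n}}$. Writing $G:=\cG(k)$ and $G_n:=\cG(\bbF_{q^{f_n}})$, we have $G_1\subseteq G_2\subseteq\cdots$ and $G=\bigcup_n G_n$. Restricting the rational $kG$-module $V$ to each $G_n$ (only its finite-dimensionality, not the rationality, will be used below), I would invoke the standard $\varprojlim^1$ sequence for the discrete (abstract) cohomology of a group that is a countable increasing union of subgroups: in degree $1$ it reads
$$0 \longrightarrow {\varprojlim_n}^1 H^0(G_n,V) \longrightarrow H^1_\abs(G,V) \longrightarrow \varprojlim_n H^1(G_n,V) \longrightarrow 0,$$
the inverse systems being formed with respect to restriction maps. (This is the Milnor sequence for the cohomology of the mapping telescope $BG\simeq\operatorname{hocolim}_n BG_n$ with coefficients in the local system $V$; equivalently, one applies the usual $\varprojlim^1$ lemma to the tower of bar cochain complexes $C^\bullet(G_n,V)$, whose inverse limit is $C^\bullet(G,V)$ and hence computes $H^\bullet_\abs(G,V)$.)

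Next I would kill the two outer terms. For the left-hand term, $H^0(G_n,V)=V^{G_n}$, and since $G_n\subseteq G_{n+1}$ the restriction map $V^{G_{n+1}}\to V^{G_n}$ is simply the inclusion of a subspace; thus $\{V^{G_n}\}_n$ is a descending chain of subspaces of the finite-dimensional space $V$. Its dimensions are non-increasing and bounded below, so the chain stabilises, the system is Mittag-Leffler, and ${\varprojlim_n}^1 H^0(G_n,V)=0$. For the right-hand term, the hypothesis provides an $f_0$ with $H^1(\cG(\bbF_{q^f}),V)=0$ for all $f\ge f_0$; since $f_n=n!\ge n$, we get $H^1(G_n,V)=0$ for all $n\ge f_0$, whence $\varprojlim_n H^1(G_n,V)=0$. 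The exact sequence then collapses to $H^1_\abs(G,V)=0$, which is the assertion.

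The only point needing care is the validity of the $\varprojlim^1$ sequence with the nontrivial coefficient module $V$; this is purely formal, following either from the Milnor exact sequence for the cohomology of a mapping telescope with local coefficients, or directly from the $\varprojlim^1$ lemma applied to the tower of cochain complexes as above, and it uses no geometric input about $\cG$. Everything else is elementary; the essential observation — and the reason the hypothesis need only be imposed for $f\gg0$ — is that a descending tower of finite-dimensional vector spaces is automatically Mittag-Leffler, so the one potential obstruction to the vanishing of $H^1_\abs(\cG(k),V)$ is precisely the inverse limit of the finite-group cohomology groups $H^1(\cG(\bbF_{q^f}),V)$, which the hypothesis annihilates.
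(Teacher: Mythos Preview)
Your argument is correct and takes a genuinely different route from the paper's. The paper argues directly with extensions: given any short exact sequence $0\to V\to W\to k\to 0$ of $k\cG(k)$-modules, it first proves the elementary claim that for any finite-dimensional $k\cG(k)$-module $U$ the fixed-point spaces $\bfC_U(\cG(\bbF_{q^{j!}}))$ form a descending chain that stabilises to $\bfC_U(\cG(k))$, and then applies this twice (to $V$ and to $W$) to see that $\dim_k\bfC_W(\cG(k))=\dim_k\bfC_V(\cG(k))+1$, forcing the extension to split. Your Milnor $\varprojlim^1$ sequence packages exactly the same stabilisation phenomenon: the Mittag--Leffler condition you verify on $\{V^{G_n}\}_n$ is precisely the paper's claim applied to $U=V$, and the surjectivity of the bar-cochain restrictions $C^p(G_{n+1},V)\twoheadrightarrow C^p(G_n,V)$ (extend by zero) is what makes $C^\bullet(G,V)=\varprojlim_n C^\bullet(G_n,V)$ and the $\varprojlim^1$ lemma available.

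What each approach buys: the paper's argument is entirely self-contained and avoids any derived-limit machinery, at the small cost of being specific to degree $1$. Your argument is more structural and immediately generalises: the same exact sequence yields $H^i_\abs(\cG(k),V)=0$ whenever $H^i(\cG(\bbF_{q^f}),V)=0$ for $f\gg0$ and the tower $\{H^{i-1}(\cG(\bbF_{q^f}),V)\}_f$ is Mittag--Leffler (for instance, if its dimensions are eventually bounded). You are also right that only finite-dimensionality of $V$, not its rationality, is used; the paper's proof likewise needs no rationality hypothesis on $V$ or on the extension $W$.
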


\begin{proof}
First we note that if $U$ is any finite-dimensional $k\cG(k)$-module, then $\cG(\bbF_{q^f})$ and 
$\cG(k)$ have the same subspace of fixed points on $U$ when $f$ is divisible by some 
integer $N = N(U)$. Indeed, let $U_j$ denote the fixed point subspace for $\cG(\bbF_{q^{j!}})$ on $U$. Then $U_1 \supseteq U_2 \supseteq \ldots$, and so $U_j$ stabilizes when $j \geq j_0$ for some $j_0$. But each element of $\cG(k)$ is contained in 
$\cG(\bbF_{q^{j!}})$ for some $j \geq j_0$. It follows that $\bfC_U(\cG(k)) = U_{j_0} = \bfC_U(\cG(\bbF_{q^{j!}}))$ for all
$j \geq j_0$. In particular, we can choose $N = j_0!$.

Consider any exact sequence $0 \to V \to W \to k \to 0$ of $k\cG(k)$-modules. By 
assumption, it is split over $\cG(\bbF_{q^f})$ for $f$ large enough. Hence
$\bfC_W(\cG(\bbF_{q^f}))$ has dimension equal to $\dim_k \bfC_V(\cG(\bbF_{q^f})) + 1$, which by our claim 
is equal to $\dim_k\bfC_V(\cG(k)) +1$ when $N(U)|f$. Again by our claim,
$\bfC_W(\cG(\bbF_{q^f})) = \bfC_W(\cG(k))$ for $N(W)|f$. It follows that 
$\dim_k \bfC_W(\cG(k)) = \dim_k \bfC_V(\cG(k)) + 1$, whence $W$ is split over $\cG(k)$. Hence
$H^1_\abs(\cG(k),V)=0$.
\end{proof}

Next we observe that the results of \cite[Prop. II.2.14]{jantzen} and \cite[Theorem 6.6]{CPS}  hold in more generality than they were stated. 

\begin{prop}\label{disc}
Let $k$ be an algebraically closed field of characteristic $p$ and let $\cG$ be a (not 
necessarily connected) reductive algebraic group defined over $k$. Let $V$ be a 
finite-dimensional rational $k\cG$-module.
\begin{enumerate}[\rm (i)]
\item Suppose that $p \nmid [\cG:\cG^0]$ and $V$ is irreducible. Then $\Ext^1_\cG(V,V)_\rat = 0$.
\item Suppose $\cG$ is connected and defined over $\bbF_q \subset k$. 
Then for $e$ and $f$ large enough (depending on $V$ and $n$), 
$$H^n_\rat(\cG,V(e)) \cong H^n(\cG(\bbF_{q^f}),V(e)) \cong H^n(\cG(\bbF_{q^f}),V),$$
where $V(e)$ is the $e$-th Frobenius twist of $V$.
\end{enumerate}
\end{prop}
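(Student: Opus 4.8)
The plan is to reduce Proposition~\ref{disc} to the two cited results, \cite[Prop.~II.2.14]{jantzen} and \cite[Theorem~6.6]{CPS}, by a ``connected component'' argument for part~(i) and a Frobenius-stabilization argument for part~(ii).

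For part~(i), first I would observe that $\cG^0 \lhd \cG$ with $\pi_0 := \cG/\cG^0$ a finite group of order prime to $p$. Since $V$ is an irreducible rational $k\cG$-module, its restriction $V_{\cG^0}$ is semisimple (Clifford theory for the normal subgroup $\cG^0$ of finite index, or rather: $V_{\cG^0}$ is a sum of $\cG$-conjugates of an irreducible $\cG^0$-summand, each occurring with the same multiplicity), say $V_{\cG^0} = \bigoplus_{i} e W_i$ with the $W_i$ pairwise non-isomorphic irreducible rational $k\cG^0$-modules permuted transitively by $\pi_0$. By \cite[Prop.~II.2.14]{jantzen} applied to the connected reductive group $\cG^0$, we have $\Ext^1_{\cG^0}(W_i,W_j)_\rat = 0$ whenever $W_i \cong W_j$; combining with the fact that $\Ext^1$ between non-isomorphic simples could a priori be nonzero, I would compute $\Ext^1_{\cG^0}(V,V)_\rat$ as a $\pi_0$-module and then pass to $\cG$-cohomology via the Hochschild--Serre spectral sequence $H^r(\pi_0, \Ext^s_{\cG^0}(V,V)_\rat) \Rightarrow \Ext^{r+s}_\cG(V,V)_\rat$. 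Because $p \nmid |\pi_0|$, this spectral sequence degenerates and $\Ext^1_\cG(V,V)_\rat \cong \bigl(\Ext^1_{\cG^0}(V,V)_\rat\bigr)^{\pi_0}$. The point is that $\Ext^1_{\cG^0}(V,V)_\rat \cong \Ext^1_{\cG^0}(\bigoplus_i eW_i, \bigoplus_j eW_j)_\rat = \bigoplus_{i,j} e^2 \Ext^1_{\cG^0}(W_i,W_j)_\rat$, and the diagonal terms vanish by the connected case; the off-diagonal terms are permuted freely (without fixed points among the index pairs $(i,i)$ versus $(i,j)$, $i\ne j$) by $\pi_0$ in a way that kills them on taking invariants — more precisely, any $\pi_0$-invariant element of $\bigoplus_{i\ne j} e^2\Ext^1_{\cG^0}(W_i,W_j)_\rat$ would, after restricting back to the inertia subgroup of $W_1$, force a nonzero self-extension, a contradiction. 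This is essentially the same bookkeeping as in the proof of Proposition~\ref{kernel} and Theorem~\ref{ext2}.

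For part~(ii), $\cG$ is connected reductive over $\bbF_q$, and I would invoke \cite[Theorem~6.6]{CPS} (the generic cohomology theorem of Cline--Parshall--Scott, in the form refined by Avrunin, Friedlander--Parshall, etc.): for a fixed finite-dimensional rational module $V$ and fixed $n$, there is an $e_0$ such that for all $e \ge e_0$ the rational cohomology $H^n_\rat(\cG, V(e))$ is independent of $e$ and, moreover, for $f$ sufficiently large (depending on $e$) one has $H^n_\rat(\cG,V(e)) \cong H^n(\cG(\bbF_{q^f}), V(e))$. The second isomorphism $H^n(\cG(\bbF_{q^f}),V(e)) \cong H^n(\cG(\bbF_{q^f}),V)$ then comes from the fact that the Frobenius twist $V \mapsto V(e)$ restricted to the finite group $\cG(\bbF_{q^f})$ is, for $e$ a multiple of the degree defining $\bbF_{q^f}$ over $\bbF_q$ (i.e. $f \mid e$ in the relevant sense), just precomposition with an automorphism of $\cG(\bbF_{q^f})$, which induces an isomorphism on cohomology. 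So the only care needed is to choose $e$ large enough for genericity and then $f$ large enough (and compatibly divisible) to get both the comparison isomorphism and the twist-triviality on the finite group; I would state the quantifiers as ``for $e$ large, then $f$ large depending on $e$'' exactly as in the statement.

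The main obstacle I anticipate is purely expository: making sure the hypotheses of \cite[Theorem~6.6]{CPS} as actually stated there (which may assume $\cG$ semisimple simply connected, or split, or $p$ not too small) are met or can be reduced to, for a general connected reductive $\cG$ over $\bbF_q$ — one typically reduces to the derived group and handles the central torus separately, since cohomology of a torus with coefficients in a rational module is concentrated in degree $0$ after enough twisting. Likewise for part~(i), the subtle point is justifying that $V_{\cG^0}$ is semisimple for a \emph{disconnected} reductive $\cG$ in characteristic $p$ when only $p \nmid [\cG:\cG^0]$ is assumed; this follows because an irreducible $\cG$-module restricted to the normal subgroup of $p'$-index $\cG^0$ is semisimple by the standard averaging argument (Lemma~\ref{zero2}-type reasoning in the rational category, or Clifford's theorem), but it should be spelled out. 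Neither of these is a deep difficulty; the substance of the proposition is entirely contained in the two quoted theorems, and the proof is a matter of correctly transporting them across the connected/disconnected and algebraic/finite divides.
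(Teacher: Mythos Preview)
Your approach to part~(ii) is essentially the paper's: reduce from the reductive $\cG$ to a semisimple quotient (the paper uses $\cH = \cG/\bfZ(\cG)^0$ rather than the derived group, but the idea is the same), apply \cite[Theorem~6.6]{CPS} there, and untwist over the finite group. You correctly anticipated this reduction as the main expository point.

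Part~(i), however, has a genuine gap. Your Hochschild--Serre reduction to $\bigl(\Ext^1_{\cG^0}(V,V)_\rat\bigr)^{\pi_0}$ is fine, and the diagonal blocks $\Ext^1_{\cG^0}(W_i,W_i)$ do vanish by the connected case. But your treatment of the off-diagonal blocks does not work. Saying that $\pi_0$ ``permutes them freely in a way that kills them on taking invariants'' is incorrect: if $\pi_0$ permutes a family of summands $\{M_s\}_{s\in S}$, the invariants are isomorphic to $\bigoplus_{\text{orbits}} M_{s_0}^{\Stab(s_0)}$, not zero. And the claim that a $\pi_0$-invariant element ``would, after restricting to the inertia subgroup of $W_1$, force a nonzero self-extension'' is not justified---an element of $\Ext^1_{\cG^0}(W_1,W_j)$ with $j\ne 1$ remains an extension between two non-isomorphic $\cG^0$-irreducibles under any finite-group restriction; nothing converts it into a self-extension.

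What is actually needed, and what the paper does, is to show that the off-diagonal terms $\Ext^1_{\cG^0}(W_i,W_j)_\rat$ themselves vanish. This uses the full content of \cite[Prop.~II.2.14]{jantzen}, not just its consequence for $i=j$: nonvanishing of $\Ext^1_{\cG^0}(L(\lambda),L(\mu))_\rat$ forces $\lambda\neq\mu$ with $\lambda$ and $\mu$ comparable in the dominance order. Now $W_j \cong g(W_1)$ for some $g\in\cG$; after multiplying by an element of $\cG^0$ one may take $g$ to normalize a fixed pair $(\cT,\cB)$, so $g$ acts on the weight lattice by an automorphism $\tau$ preserving the set of positive roots, and $W_j \cong L(\tau(\lambda))$ if $W_1 = L(\lambda)$. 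If $\Ext^1_{\cG^0}(W_1,W_j)\neq 0$ then, say, $\lambda > \tau(\lambda)$; since $\tau$ preserves dominance, iterating gives $\lambda > \tau(\lambda) > \tau^2(\lambda) > \cdots > \tau^N(\lambda) = \lambda$, a contradiction. This argument is the missing idea; your invariants bookkeeping cannot substitute for it.
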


\begin{proof}
(i) Since $p \nmid [\cG:\cG^0]$,
it suffices to show that $\Ext_{\cG^0}^1(X,Y)_\rat=0$ for any two irreducible $\cG^0$-submodules 
$X$, $Y$ of $V$. Assume the contrary: $\Ext_{\cG^0}^1(X,Y)_\rat \neq 0$ for some such $X$ and
$Y$. By Clifford's theorem, $Y \cong g(X)$ for some $g \in \cG$. Given a pair 
$(\cT,\cB)$ of a maximal torus $\cT$ and a Borel subgroup $\cB$ containing $\cT$ of $\cG^0$, we have that $g^{-1}(\cT,\cB)g = h^{-1}(\cT,\cB)h$ for a suitable $h \in \cG^0$.   
Replacing $g$ by $gh^{-1}$, we get that $g$ normalizes both $\cT$ and $\cB$, and $Y \cong g(X)$. 
Suppose that $X = L(\lambda)$ for some dominant weight $\lambda$ with respect to $\cT$.
Then $\tau(\lambda)$ is dominant and $Y= L(\tau(\lambda))$, where $\tau$ is the outer
automorphism (possibly trivial) of $\cG^0$ induced by $g$. 
By \cite[Proposition II.2.14]{jantzen}, $\Ext^1_{\cG^0}(X,Y)_\rat \neq 0$ implies that 
$\lambda \neq \tau(\lambda)$ but $\lambda$ and $\tau(\lambda)$ are 
comparable, say 
%; moreover, by the Linkage Principle \cite[Proposition II.6.15]{jantzen},
$\lambda > \tau(\lambda)$. Since $\tau$ fixes $(\cT,\cB)$, it fixes the set of 
positive roots with respect to $\cT$, whence 
$\tau^i(\lambda) > \tau^{i+1}(\lambda)$ for all $i \geq 0$. Also, note that 
the action of $\tau$ on the weight lattice $X(\cT)$ has finite order $N$. 
Thus we arrive at the chain
$$\lambda > \tau(\lambda) > \tau^2(\lambda) > \ldots > \tau^N(\lambda) = \lambda,$$
a contradiction.

\smallskip
(ii) Consider the action of the central torus $\cZ := \bfZ(\cG)^0$ 
%%%Note $\cG$ is assumed to be connected here.
on $V$ and decompose $V = V' \oplus [\cZ,V]$ with $V' := \bfC_V(\cZ)$. 
Then $H^n_\rat(\cG,[\cZ,V]) = 0$, and so $H^n_\rat(\cG,V) \cong H^n_\rat(\cG,V')$.
The same holds for Frobenius twists $V(e)$ of $V$; moreover,
$V'(e) \cong \bfC_{V(e)}(\cZ)$. Applying \cite[Theorem 6.6]{CPS} to the 
semisimple group $\cH := \cG/\cZ$ (and recalling that $\cZ$ is a torus), we get for 
$e$ and $f$ large enough that 
$$H^n_\rat(\cG,V'(e)) \cong H^n_\rat(\cH,V'(e)) \cong H^n(\cH(\bbF_{q^f}),V'(e)) 
    \cong  H^n(\cH(\bbF_{q^f}),V').$$
On the other hand, $\cH(\bbF_{q^f})$ is isomorphic to 
$\cG(\bbF_{q^f})/\cZ(\bbF_{q^f})$ (by the Lang-Steinberg theorem). Moreover, for 
$f$ large enough, $\cZ$ and $\cZ(\bbF_{q^f})$ have the same eigenspaces on $V$. It follows 
that $[\cZ,V] = [\cZ(\bbF_{q^f}),V]$ and $V' = \bfC_V(\cZ(\bbF_{q^f}))$, whence 
$$H^n(\cG(\bbF_{q^f}),V)=H^n(\cG(\bbF_{q^f}),V') = H^n(\cH(\bbF_{q^f}),V')$$
(as $\cZ(\bbF_{q^f})$ is a $p'$-group). The same holds for $V(e)$, and so the statement follows.
\end{proof}

\begin{lemma}\label{stable}
Let $p$ be a prime and let $k$ be an algebraically closed field of characteristic $p$.
Let $\cG$ be a connected reductive algebraic group over $k$ and $V$ be 
a rational $\cG$-module. If $H^1_\rat(\cG,V(e))=0$ for all Frobenius twists $V(e)$ of
$V$ with $e$ large enough, then $H^1_\abs(\cG(k),V)=0$.
\end{lemma}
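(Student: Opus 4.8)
The plan is to assemble this from the two preceding ingredients: the ``generic cohomology'' comparison in Proposition \ref{disc}(ii) and the descent statement of Lemma \ref{closure}. Since Lemma \ref{closure} is phrased over $\overline{\bbF}_p$ and Proposition \ref{disc}(ii) requires $\cG$ to be defined over a finite field, I would first reduce to the case $k = \overline{\bbF}_p$ (a connected reductive group and a finite-dimensional rational module being defined over a finite subfield), and fix a subfield $\bbF_q \subset k$ over which $\cG$ is defined.

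Next I would choose the Frobenius twist parameter $e$ large enough that two things hold simultaneously: $H^1_\rat(\cG, V(e)) = 0$, which is the hypothesis; and the conclusion of Proposition \ref{disc}(ii), applied with $n = 1$ to the rational module $V$, is valid for this $e$, so that for all sufficiently large $f$ (depending on $e$ and $V$) one has
$$H^1_\rat(\cG, V(e)) \;\cong\; H^1(\cG(\bbF_{q^f}), V(e)) \;\cong\; H^1(\cG(\bbF_{q^f}), V).$$
Chaining these isomorphisms with the vanishing of the left-hand side gives $H^1(\cG(\bbF_{q^f}), V) = 0$ for all $f$ large enough. Then Lemma \ref{closure}, applied to the rational $k\cG(k)$-module $V$ over $k = \overline{\bbF}_p$, yields $H^1_\abs(\cG(k), V) = 0$, as desired.

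Since the substantive work is already done in Proposition \ref{disc}(ii) and Lemma \ref{closure}, I do not expect a genuine obstacle here; the only points that need care are bookkeeping ones — the order of quantifiers ($e$ must be chosen first, then $f$ depending on $e$), and making sure Proposition \ref{disc}(ii) is invoked in the form that returns the \emph{untwisted} module $V$ in the last isomorphism, so that the vanishing is transported back to $V$ itself rather than merely to a Frobenius twist of it. One could alternatively run the whole argument without the preliminary reduction, at the cost of carrying the finite field of definition of $\cG$ explicitly throughout.
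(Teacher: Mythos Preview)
Your chain from Proposition \ref{disc}(ii) to Lemma \ref{closure} is exactly right and handles the case $k=\overline{\bbF}_p$ completely. The gap is in your first sentence: the ``reduction to $k=\overline{\bbF}_p$'' is not the bookkeeping step you suggest. Even though $\cG$ and $V$ descend to a finite subfield, the \emph{abstract group} $\cG(k)$ does not --- it genuinely grows with $k$ --- and the conclusion concerns discrete cohomology of that abstract group. Lemma \ref{closure} only delivers $H^1_\abs(\cG(K),V)=0$ for $K:=\overline{\bbF}_p\subset k$; equivalently, a given non-rational extension $0\to V\to W\to k\to 0$ of $k\cG(k)$-modules splits over the subgroup $\cG(K)$. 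You still need to get from there to splitting over $\cG(k)$, and your alternative of ``carrying the finite field of definition explicitly'' does not bridge this.

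The paper fills this gap with a short but essential argument. One checks $\bfC_W(\cT(K))=\bfC_W(\cT(k))$: since $V$ is rational, $\cT(k)$ already acts trivially on $\bfC_V(\cT(K))$, so on $\bfC_W(\cT(K))$ it maps into a unipotent subgroup of $\GL$; but $\cT(k)$ is $p$-divisible, hence its image in a unipotent group is trivial. Then one shows $\cG(k)=\langle \cT(k),\cG(K)\rangle$ (because $\cT(k)$ acts transitively on the nontrivial elements of each root subgroup $U_\alpha(k)$, and $\cG(K)$ already contains some such element). Combining these, $\bfC_W(\cG(k))=\bfC_W(\cG(K))$, whose dimension exceeds $\dim_k\bfC_V(\cG(k))$ by one, so $W$ splits over $\cG(k)$. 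This torus/$p$-divisibility step is the missing idea in your proposal.
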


\begin{proof}
If the result fails, then there exists a (possibly non-rational) $k\cG(k)$-module $W$ and
a non-split extension $0 \to V \to W \to k \to 0$.

Let $K$ be the algebraic closure of $\bbF_p$ in $k$. 
Note that $\cG$ can be defined over $\bbF_q \subset k$ for $q$ sufficiently large
(as it can be defined over $K$ by the isomorphism theorem for reductive groups).
Also, let $\cT(k)$ be a maximal torus of $\cG(k)$
containing a maximal torus $\cT(K)$ of $\cG(K)$.
For $e$ and $f$ large enough, we 
have by assumption and by Proposition \ref{disc}(ii) that 
$H^1(\cG(\bbF_{q^f}),V) =   H^1_\rat(\cG,V(e)) = 0$. It follows by Lemma \ref{closure} that $W$ is
split over $\cG(K)$, whence 
\begin{equation}\label{fixed-K}
  \dim_k \bfC_W(\cG(K)) = \dim_k \bfC_V(\cG(K)) +1, 
  ~\dim_k \bfC_W(\cT(K)) = \dim_k \bfC_V(\cT(K)) +1.
\end{equation}  
We claim that $\bfC_W(\cT(K)) = \bfC_W(\cT(k))$. Clearly, the fixed point subspace 
$U := \bfC_W(\cT(K))$ is 
$\cT(k)$-invariant. Also, since $V$ is a rational $k\cG(k)$-module, $\cT(k)$ acts trivially on 
$U \cap V = \bfC_V(\cT(K))$, which has codimension $1$ in $U$ by (\ref{fixed-K}). Thus, $\cT(k)$
maps into a unipotent  subgroup of $\GL(U)$. Note that $\cT(k)$ is $p$-divisible,
and so is any homomorphic image of it. It follows that $\cT(k)$ acts trivially on
$U$, as stated.

Thus, the fixed point subspace of $\langle \cT(k), \cG(K) \rangle$ on $W$ is the fixed point 
subspace of $\langle \cT(K),\cG(K) \rangle = \cG(K)$ on $W$. Observe that 
$\cG(k) = \langle \cT(k), \cG(K) \rangle$. (Indeed, if $U_\alpha(k) \supseteq U_\alpha(K)$ are root subgroups corresponding to a root $\alpha$ with respect to $\cT$, then $\cT(k)$ acts transitively 
on $U_\alpha(k) \setminus \{1\}$. Since $\cG(K) \supset U_\alpha(K)$ and 
$\cG(k)$ is generated by $\cT(k)$ and all the root subgroups $U_\alpha(k)$, the claim follows.)
Hence, $\bfC_W(\cG(k)) = \bfC_W(\cG(K))$ and so
$$\dim_k \bfC_W(\cG(k)) \geq \dim_k\bfC_V(\cG(k)) +1$$
by (\ref{fixed-K}). Hence $W$ is split as a $\cG(k)$-module, a contradiction.
\end{proof}

\begin{cor}\label{ext-abs}
Let $k$ be an algebraically closed field of characteristic $p$ and let $\cG$ be a reductive algebraic group over $k$. Let $V$ be an irreducible rational $k\cG(k)$-module.
Assume that $p \nmid [\cG:\cG^0]$. Then 
$$H^1(\cH(k),k) = \Ext_{\cH(k)}^1(V,V) =H^1(\cH(k),(V^* \otimes V)/k) = 0,$$ 
both as rational and discrete cohomology groups, and for both $\cH = \cG$, $\cG^0$.
\end{cor}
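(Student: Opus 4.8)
The plan is to prove the three rational‑cohomology vanishing statements first, then transfer them to discrete (abstract‑group) cohomology via Lemma~\ref{stable}, handling $\cH=\cG^0$ before $\cH=\cG$ throughout.

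\emph{Rational cohomology.} Write $M$ for any one of $k$, $V^*\otimes V$, $(V^*\otimes V)/k$. Proposition~\ref{disc}(i), applicable both to $\cG$ and to $\cG^0$, gives $H^1_\rat(\cH,k)=\Ext^1_\cH(k,k)_\rat=0$ and $\Ext^1_\cG(V,V)_\rat=0$. Since $V$ is irreducible as a $k\cG(k)$-module and $p\nmid[\cG:\cG^0]$, Clifford's theorem writes $V|_{\cG^0}$ as a direct sum of $\cG$-conjugate irreducible rational $\cG^0$-modules, and the argument in the proof of Proposition~\ref{disc}(i) shows the rational $\Ext^1_{\cG^0}$ between any two of these summands vanishes; hence $\Ext^1_{\cG^0}(V,V)_\rat=0$, and combined with $\Ext^1_\cG(V,V)_\rat=0$ this gives $H^1_\rat(\cH,V^*\otimes V)=0$ for both $\cH$. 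For $(V^*\otimes V)/k$: if $p\nmid\dim V$ then $k$ is a direct summand of $V^*\otimes V$ and the claim is immediate; in general, the long exact sequence attached to $0\to k\to V^*\otimes V\to (V^*\otimes V)/k\to 0$ (with $k$ the scalars), together with the vanishing just proved, yields an injection $H^1_\rat(\cH,(V^*\otimes V)/k)\hookrightarrow H^2_\rat(\cH,k)$, and $H^2_\rat(\cH,k)=0$ by the standard low-degree vanishing of the rational cohomology of a connected reductive group with trivial coefficients (cf.\ \cite{jantzen}, \cite{CPS}; for $\cH=\cG$ one may alternatively invoke the Hochschild--Serre collapse $H^i_\rat(\cG,k)=H^i_\rat(\cG^0,k)^{\cG/\cG^0}$, valid as $p\nmid[\cG:\cG^0]$).

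\emph{Discrete cohomology.} For $M$ as above and any $e\ge 0$, the Frobenius twist $M(e)$ is, respectively, $k$, $V(e)^*\otimes V(e)$, or $(V(e)^*\otimes V(e))/k$, and $V(e)$ is again an irreducible $k\cG(k)$-module; so the previous paragraph applied to $V(e)$ gives $H^1_\rat(\cG^0,M(e))=0$ for every $e$. Lemma~\ref{stable}, applied with $\cG^0$ in the role of $\cG$, then gives $H^1_\abs(\cG^0(k),M)=0$; in particular $\Ext^1_{\cG^0(k)}(V,V)=0$ and $H^1_\abs(\cG^0(k),(V^*\otimes V)/k)=0$. Finally $\cG^0(k)\lhd\cG(k)$ has index $[\cG:\cG^0]$ prime to $p$, so $\mathrm{cor}\circ\mathrm{res}$ is multiplication by this index and hence invertible on the $k$-vector space $H^1_\abs(\cG(k),M)$; thus restriction embeds $H^1_\abs(\cG(k),M)$ into $H^1_\abs(\cG^0(k),M)=0$.

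The only nontrivial point is the $(V^*\otimes V)/k$ statement. When $p\nmid\dim V$ it is purely formal, but when $p\mid\dim V$ the trivial module is no longer a summand of $V^*\otimes V$, and one genuinely needs the vanishing of $H^2_\rat(\cH,k)$ (equivalently, by Proposition~\ref{disc}(ii), the vanishing of $H^2$ of $\cG^0(\bbF_{q^f})$ for $f$ large) in order to kill the obstruction class; everything else is bookkeeping with Proposition~\ref{disc}, Lemma~\ref{stable}, and the transfer argument.
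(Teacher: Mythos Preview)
Your proof is correct and follows essentially the same route as the paper's: reduce to $\cG^0$, use Proposition~\ref{disc}(i) for $\Ext^1_{\cG^0}(V,V)_\rat=0$, combine with the vanishing of $H^i_\rat(\cG^0,k)$ to handle $(V^*\otimes V)/k$ via the long exact sequence, then pass to discrete cohomology by Lemma~\ref{stable} applied to all Frobenius twists. The only cosmetic difference is that the paper dispatches $H^i_\rat(\cG^0,k)=0$ for all $i>0$ in one stroke by citing \cite[Corollary~II.4.11]{jantzen}, whereas you obtain the $i=1$ case from Proposition~\ref{disc}(i) with $V=k$ and invoke the same general vanishing only for $i=2$; both arguments are valid and the logical structure is identical.
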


\begin{proof}
%For brevity, we will use $\cG$ to denote the group $\cG(k)$ of its rational $k$-points.
Since $p \nmid [\cG:\cG^0]$, it suffices to prove the statement for $\cG^0$.
As shown in Proposition \ref{disc}(i), $\Ext^1_{\cG^0}(V,V)_\rat = 0$.  
Also, $H^i_\rat(\cG^0,k) = 0$ for $i > 0$ by \cite[Corollary II.4.11]{jantzen}.
%(Indeed, \cite[Theorem 2.4]{CPS} gives the result for
%the semisimple group $\cG^0/\bfZ(\cG^0)$, but the central torus $\bfZ(\cG^0)$ does not affect 
%the cohomology.) 
We have therefore shown that 
$H^1_\rat(\cG^0, k) = H^1_\rat(\cG^0, (V^* \otimes V)/k) = 0$.
The same applies to Frobenius twists. %of $V$ and $(V^* \otimes V)/k$.
Hence $\Ext^1_{\cG^0(k)}(V,V)_\abs = 0$ and 
$H^1_\abs(\cG^0(k),k) = H^1_\abs(\cG^0(k),(V^* \otimes V)/k) = 0$ by Lemma \ref{stable}. 
\end{proof}

We finally show that adequacy holds over a sufficiently large field and also for (not necessarily 
connected) reductive algebraic groups (whether one uses rational cohomology or discrete cohomology in the definition).   Note
that if $p$ does divide $[\cG:\cG^0]$, then adequacy may fail (the spanning may fail as well
as the cohomological conditions even assuming that $p \nmid (\dim V)$ -- one can construct examples
precisely as in \cite{G2}).  

\begin{thm}\label{thm:asymp}
Let $k$ be an algebraically closed field $k$ of characteristic $p$. Let $\cG$ be a reductive algebraic group defined over $\bbF_q \subset k$  such that $p \nmid [\cG:\cG^0]$, and let $V$ be a 
finite-dimensional faithful irreducible rational $k\cG$-module. Then the following statements hold. 
\begin{enumerate}[\rm(i)]
\item  $(\cG,V)$ is adequate.  
\item Assume that every coset of $\cG^0$ in $\cG$ is defined over $\bbF_q$.
Then $(\cG(\bbF_{q^f}),V)$ is adequate for $f$ sufficiently large
(with $f$ possibly depending upon $V$). 
\end{enumerate}
\end{thm}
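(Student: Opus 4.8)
The plan is to check, in each case, the three conditions in the definition of adequacy: $H^1(\,\cdot\,,k)=0$, $H^1(\,\cdot\,,(V^*\otimes V)/k)=0$, and that $\End(V)$ is spanned by the semisimple $\rho(g)$ (weak adequacy). Throughout we may take $k=\overline{\bbF}_p$, since all the relevant cohomology dimensions and $\dim\cM$ are unchanged under extension of the algebraically closed base field; in particular $\cG(k)=\bigcup_f\cG(\bbF_{q^f})$.

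\emph{Part (i).} The two cohomological vanishings are precisely Corollary~\ref{ext-abs} applied with $\cH=\cG$ (using $p\nmid[\cG:\cG^0]$), in both its rational and its discrete form. For weak adequacy the key input is that the semisimple elements of $\cG$ are Zariski dense: each coset $\cG^0\sigma$ contains a quasi-semisimple element, and the semisimple elements are dense in that coset (Steinberg, Spaltenstein). A semisimple $g\in\cG$ gives a semisimple endomorphism $\rho(g)$, so $\cM$ contains $\rho(g)$ for a dense set of $g$; since $\{h\in\cG:\rho(h)\in W\}$ is closed for any linear subspace $W\subseteq\End(V)$, we get $\cM\supseteq\langle\rho(\cG)\rangle_k=\End(V)$, the last equality by Burnside's theorem as $V$ is absolutely irreducible over $\cG$. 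Hence $(\cG,V)$ is adequate.

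\emph{Part (ii).} Because every coset of $\cG^0$ is defined over $\bbF_q$, each is stable under the $q^f$-power Frobenius, so by Lang--Steinberg it has an $\bbF_{q^f}$-point; thus $\cG(\bbF_{q^f})\twoheadrightarrow\Gamma:=\cG/\cG^0$ with kernel $\cG^0(\bbF_{q^f})$, and $p\nmid|\Gamma|$. By the Hochschild--Serre collapse for a $p'$-quotient (as in Lemma~\ref{infl}), $H^n(\cG(\bbF_{q^f}),M)\cong H^n(\cG^0(\bbF_{q^f}),M)^{\Gamma}$ for every module $M$. Now apply Proposition~\ref{disc}(ii) to the connected reductive group $\cG^0$ and to the modules $k$ and $W:=(V^*\otimes V)/k$ (the latter a rational $\cG^0$-module since $\mathrm{id}_V$ is $\cG$-fixed): for $e,f$ large we obtain $H^1(\cG^0(\bbF_{q^f}),k)\cong H^1_{\rat}(\cG^0,k)$ and $H^1(\cG^0(\bbF_{q^f}),W)\cong H^1_{\rat}(\cG^0,W(e))$, and since $W(e)=(V(e)^*\otimes V(e))/k$ with $V(e)$ again irreducible rational, both right-hand sides vanish by Corollary~\ref{ext-abs} (applied to $V$, resp.\ to the twist $V(e)$). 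Taking $\Gamma$-invariants gives $H^1(\cG(\bbF_{q^f}),k)=H^1(\cG(\bbF_{q^f}),(V^*\otimes V)/k)=0$ for all $f\gg0$. For weak adequacy, set $\cM_f:=\langle\rho(g):g\in\cG(\bbF_{q^f}),\ \rho(g)\text{ semisimple}\rangle_k$. By part (i) finitely many semisimple $g_1,\dots,g_N\in\cG$ have $\rho(g_1),\dots,\rho(g_N)$ spanning $\End(V)$; each lies in some $\cG(\bbF_{q^{f_i}})$, so $\cM_f=\End(V)$ whenever $\lcm(f_1,\dots,f_N)\mid f$ — already giving the statement for a cofinal set of $f$. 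To get it for all large $f$, I would instead run a Lang--Weil argument: for $\ell\in\End(V)^*$ the function $g\mapsto\ell(\rho(g))$ on $\cG$ is regular of degree bounded independently of $\ell$, and by the density of (regular) semisimple elements in each coset of $\cG^0$ plus the Lang--Weil estimate, each coset has at least $\tfrac12 q^{f\dim\cG^0}$ semisimple $\bbF_{q^f}$-points once $f$ exceeds a bound depending only on $\cG$ and $\rho$; so a functional vanishing on $\cM_f$ vanishes on all of them, hence identically, hence is $0$ by Burnside, forcing $\cM_f=\End(V)$.

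\emph{Main obstacle.} The delicate part is (ii): pushing the rational vanishing of Corollary~\ref{ext-abs} down to the finite groups $\cG(\bbF_{q^f})$ \emph{uniformly in $f$}. This relies on Proposition~\ref{disc}(ii), on the reduction $\cG\rightsquigarrow\cG^0$ (legitimate since $[\cG:\cG^0]$ is prime to $p$), on matching $(V^*\otimes V)/k$ correctly with Frobenius twists, and — only for the ``all large $f$'' version of weak adequacy rather than the easy ``cofinal in $f$'' one — on a Lang--Weil bound uniform over a bounded-degree family of functions.
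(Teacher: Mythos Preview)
Your argument for part (i) and for the cohomological vanishing in part (ii) is essentially identical to the paper's: Corollary~\ref{ext-abs} handles the $H^1$ conditions for $\cG(k)$, and the passage to $\cG^0(\bbF_{q^f})$ via Proposition~\ref{disc}(ii) followed by the $p'$-index reduction $\cG^0(\bbF_{q^f})\lhd\cG(\bbF_{q^f})$ is exactly what the paper does.

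The genuine difference is in weak adequacy for (ii). The paper does \emph{not} use Lang--Weil. Instead it fixes $f$ large enough that $\cT(\bbF_{q^f})$ and $\cT$ have the same eigenspaces on $V$ and $\End(V)$, and argues as follows: the span $W$ of the semisimple elements of $\cG(\bbF_{q^f})$ is $\cG(\bbF_{q^f})$-stable under conjugation, hence $\cT(\bbF_{q^f})$-stable, hence $\cT$-stable; since $\langle\cT,\cG(\bbF_{q^f})\rangle$ is Zariski dense in $\cG$, $W$ is $\cG$-stable. With $\cN:=\bfN_\cG(\cT,\cB)$ one has $\cN=\cN(\bbF_{q^f})\cdot\cT$ by Lang--Steinberg and the coset hypothesis, and the equality of spans $\langle\rho(\cT)\rangle=\langle\rho(\cT(\bbF_{q^f}))\rangle$ then forces $W\supseteq\langle\rho(\cN)\rangle$. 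Since every semisimple element of $\cG$ is conjugate into $\cN$ and $W$ is $\cG$-stable, $W\supseteq\langle\rho(\cG_\rss)\rangle=\End(V)$. This gives an explicit, purely algebraic bound on $f$ (namely $q^f-1$ larger than the weight differences), and avoids any appeal to point-counting.

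Your Lang--Weil route is a legitimate alternative and does yield the result for all large $f$ rather than only a cofinal set, but it leans on a fact you gloss over: that in each coset $\cG^0\sigma$ the (regular) semisimple locus contains a dense open subset \emph{defined over $\bbF_q$}, so that its complement has $O(q^{f(\dim\cG^0-1)})$ rational points. This is true (via the theory of quasi-semisimple elements in disconnected reductive groups, cf.\ Spaltenstein, Digne--Michel), but it is the non-trivial input your sketch needs. The paper's normalizer argument sidesteps this issue entirely and is correspondingly cleaner.
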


\begin{proof}
(a) Arguing precisely as in \cite{GHTT}, we see that the set of semisimple
elements in any coset of $\cG^0$ is Zariski dense in $\cG$.  It follows that
the linear span of semisimple elements of $\cG$ is Zariski dense in
the linear span of $\cG$ in $\End(V)$.   Thus,  the span of the semisimple
elements in $\cG$ is all of $\End(V)$.
  
Let $\cT \subset \cB$ be a maximal torus and a Borel subgroup of $\cG^0$ that are defined over 
$\bbF_{q}$. Choose $f$ large enough so that $\cT(\bbF_{q^f})$ has exactly the same weight spaces on 
$\End(V)$ and $V$ as does $\cT$.   Let $\cN := \bfN_\cG(\cT,\cB)$ denote the simultaneous normalizer of $(\cT,\cB)$ in $\cG$. 
Then $\cN \cap \cG^0 = \cT$,
hence every element of $\cN$ is semisimple (as 
$p \nmid [\cG:\cG^0]$). Conversely, if $g \in \cG$ is semisimple,
then by  \cite[7.5]{St} $g$ normalizes some pair $(\cT',\cB')$ of 
a maximal torus $\cT'$ contained in a Borel subgroup $\cB'$. We
deduce that 
\begin{equation}\label{union}
  \cG_\rss = \cup_{x \in \cG}x\cN x^{-1},  
\end{equation}
where $\cG_\rss$ denotes the set of semisimple elements in $\cG$.
%It follows that $\cG_\rss$ is Zariski dense in $\cG$ and so $\cG_\rss$ spans $\End(V)$.  

Let $W$ be the linear span of 
$\cG(\bbF_{q^f})_\rss:=\cG(\bbF_{q^f}) \cap \cG_\rss$
in $\End(V)$. Then $W$ is $\cG(\bbF_{q^f})$-stable and hence in
particular $\cT$-stable (as $\cT$ and $\cT(\bbF_{q^f})$ have the 
same eigenspaces on $\End(V)$). Arguing as in \cite{Gapp},  
we see that $\langle T,  \cG(\bbF_{q^f}) \rangle$ is Zariski dense in $\cG$. 
It follows that $W$ is $\cG$-stable. Since 
$\cN/\cT \hookrightarrow \cG/\cG^0$ and every coset of $\cG^0$ is 
defined over $\bbF_q$, we deduce by Lang's theorem that 
$\cN = \cN(\bbF_{q^f}) \cdot \cT$. Moreover, as $\cT(\bbF_{q^f})$ and 
$\cT$ have the same eigenspaces on $V$, $\cT(\bbF_{q^f})$ and
$\cT$ span the same subspace of $\End(V)$. Now $W$ contains 
the span of $\cN(\bbF_{q^f}) \subset \cG(\bbF_{q^f})_\rss$, hence
contains the span of $\cN$. Since $W$ is $\cG$-stable,
we deduce from (\ref{union}) that $W$ contains the span of
$\cG_\rss$. Thus for $f$ sufficiently large we have that $W = \End(V)$;
in particular,  $\cG(\bbF_{q^f})$ acts absolutely irreducibly on $V$.

\smallskip
(b) From Corollary \ref{ext-abs} we get that 
$H^1_\abs(\cG(k),k) = H^1_\abs(\cG(k), (V^* \otimes V)/k) = 0$.
% Since $p \nmid [\cG:\cG^0]$, $H^1_\abs(\cG(k),k) = 0$. Furthermore, 
% $H^1_\abs(\cG(k), (V^* \otimes V)/k) = 0$ by
% Corollary \ref{ext-abs}.
Together with (a), this implies (i).

\smallskip
We also have that $H^1_\rat(\cG^0,k) = H^1_\rat(\cG^0, (V^* \otimes V)/k) = 0$, and the same holds for all Frobenius twists. 
Applying Proposition \ref{disc}(ii) we obtain (for $f$ large enough) that 
$H^1(\cG^0(\bbF_{q^f}),k) = H^1(\cG^0(\bbF_{q^f}), (V^* \otimes V)/k) = 0$, whence $H^1(\cG(\bbF_{q^f}),k) = H^1(\cG(\bbF_{q^f}), (V^* \otimes V)/k) = 0$ as 
well since $p \nmid [\cG:\cG^0]$.
% Next, $H^1_\rat(\cG^0,k) = 0$ (by e.g. \cite[Corollary II.4.11]{jantzen}). 
% Hence, by Proposition \ref{disc}(ii) we
% have $H^1(\cG^0(\bbF_{q^f}),k) = 0$ (when $f$ is large enough). It follows that 
% $H^1(\cG(\bbF_{q^f}),k) = 0$ again since $p \nmid [\cG:\cG^0]$, whence
Hence (ii) holds.
\end{proof}

\end{document}